\documentclass[a4paper,twoside,11pt]{article}

\input{preamble}
\input{macros}

\begin{document}

\title{Dimension theory of group actions by circle diffeomorphisms I: Stationary measures}

\author{Weikun He, Yuxiang Jiao and Disheng Xu}

\date{}

\maketitle

\begin{abstract}
We establish a structure theorem for smooth random walks on the circle and study the dimensional properties of their stationary measures, including exact dimensionality and a Ledrappier-Young-type dimension formula. 
These results provide the technical tools needed for the main theorems in Part II \cite{HJX2}. 
They also yield an approximation theorem in the spirit of Katok, Avila-Crovisier-Wilkinson, and Morris-Shmerkin, together with a time-reversal symmetry for circle actions: every semigroup $T$ contained in $\Diff_+^{1+\alpha}(\SS^1)$ without finite orbits has the same number of minimal sets as its inverse semigroup $T^{-1}$.

\end{abstract}

\tableofcontents

\section{Introduction}

Group and semigroup actions on manifolds are central objects in modern dynamical systems, and it is natural to study the random walks associated with such actions. 
The two viewpoints are closely related: minimal invariant sets describe the asymptotic geometry of the deterministic action, while ergodic stationary measures encode the asymptotic statistics of the random walk. 
In this two-part series, we study the dimensions of dynamically defined sets and measures arising from smooth circle actions. 
The present paper concerns stationary measures of smooth random walks; the companion paper concerns invariant sets of group actions.

Let $\Diff_+^{1+\alpha}(\SS^1)$ be the group of orientation-preserving circle diffeomorphisms whose derivatives are $\alpha$-H\"older continuous for some $\alpha>0$. 
We will consider random walks on the circle generated by elements of $\Diff_+^{1+\alpha}(\SS^1)$; that is, let $\mu$ be a finitely supported probability measure on $\Diff_+^{1+\alpha}(\SS^1)$. 
A \emph{random walk} of law $\mu$ is defined by the random compositions $f^n_\omega \defeq f_{n-1}(\omega) \circ \dots \circ f_0(\omega)$ acting on $\SS^1$, where $(f_n(\omega))_{n\geqslant 0}$ is a sequence of independent, identically distributed random variables taking values in $\Diff_+^{1+\alpha}(\SS^1)$ with distribution $\mu$.

A Borel probability measure $\nu$ on $\SS^1$ is said to be \emph{$\mu$-stationary} if
\[\nu=\mu*\nu \defeq \int f_*\nu\dd\mu(f),\]
where $f_*\nu$ is the pushforward of $\nu$ by the diffeomorphism $f$. 
We call a $\mu$-stationary measure \emph{ergodic} if it cannot be written as a nontrivial convex combination of two distinct $\mu$-stationary measures.
The \emph{Lyapunov exponent} of an ergodic $\mu$-stationary measure $\nu$ is given by
\begin{equation}\label{eqn: Lyapunov exponent}
	\lambda(\mu,\nu)=\iint \log f'(x) \dd \mu(f)\dd\nu(x).
\end{equation}

Assume that $\supp\mu$ does not preserve any common probability measure on the circle. 
It is known that there are only finitely many ergodic stationary measures associated with such a random walk. 
Moreover, typical trajectories of this random walk are asymptotically contracting, meaning that $\lambda(\mu,\nu)<0$ for all ergodic stationary measures.

Our first main result concerns the dimensional properties of these stationary measures, in particular, their exact dimensionality and related dimension formulas.

\paragraph{Exact dimensionality of ergodic stationary measures.}
Let $\eta$ be a Borel probability measure on some metric space. 
Then $\eta$ is said to be \emph{exact dimensional} if there exists some  $\beta\geq 0$ such that  
\begin{equation}\label{eqn: exact dim}
	\lim_{\rho\to 0^+}\frac{\log\eta(B(x,\rho))}{\log\rho}=\beta,\quad\ae[\eta\mr{-}] x.
\end{equation}
In this case, we call $\beta$ the \emph{exact dimension} of $\eta$ and denote it by $\dim\eta.$
The concept of exact dimensionality usually reveals the regularity of a fractal measure and is one of its fundamental properties.   

Many fractal measures arise from dynamical iterations, for example self-similar measures of iterated function systems and stationary measures of random walks.
In many settings, these dynamically-defined measures are expected to be exact dimensional. Indeed, exact dimensionality has been studied extensively and established across various mathematical contexts, spanning from smooth ergodic theory to iterated function systems and random walks \cite{Led83, LY2, Y82, FLR, BaPeSc, FengHu, HS17, Rap, LedLe, LedLe2}.

To state our theorem on the exact dimensionality of stationary measures, we recall the notion of Furstenberg entropy for random walks.
\begin{definition}\label{def: Furstenberg entropy}
Let $\nu$ be a $\mu$-stationary measure. The \emph{Furstenberg entropy} of $(\mu,\nu)$ is defined by
\[h_{\mr F}(\mu,\nu)\defeq \iint \log \frac{\dr f_*\nu}{\dr\nu}(x)\,\dr f_*\nu(x)\dr \mu(f).\]
 \end{definition}
In this paper, we prove that stationary measures are exact dimensional for general $C^{1+\alpha}$ random walks on the circle, thereby generalizing the results in \cite{HS17,Led83} to the smooth setting.
\begin{theorem}\label{thm: exact dimensionality}
	Let $\mu$ be a finitely supported probability measure on $\Diff_+^{1+\alpha}(\SS^1)$ such that $\supp\mu$ does not preserve any probability measure on $\SS^1.$ 
	If $\nu$ is an ergodic $\mu$-stationary measure on $\SS^1,$ then $\nu$ is exact dimensional and
	\[\dim\nu=\frac{h_{\mr F}(\mu,\nu)}{|\lambda(\mu,\nu)|}. \] 
\end{theorem}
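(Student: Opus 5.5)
The plan is to follow the Hochman--Solomyak/Ledrappier strategy for linear fractional random walks on $\mathbb{P}^1$, and to use the structure theorem for smooth random walks (announced in the abstract) to handle the nonlinearity. The argument has three steps.

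\textbf{Step 1: reduction to a model with disjoint basins.} Since $\supp\mu$ preserves no probability measure, $\lambda(\mu,\nu)<0$. Hence for $\mu^{\mathbb N}$-a.e.\ $\omega$ the backward compositions $f_0\circ\cdots\circ f_{n-1}$ contract $\nu$-most of the circle toward a point $x_\omega$. This gives the boundary-map representation
\[\nu=\int\delta_{x_\omega}\,\dr\mu^{\mathbb N}(\omega),\]
which comes from the martingale convergence of $(f_0\cdots f_{n-1})_*\nu$. Using the structure theorem, I would pass to a conjugate or induced random walk if necessary. The aim is a situation where the forward iterates $f^n_\omega$ have exponentially contracting derivative, at rate $\lambda$, on an interval $I_\omega(n)$ of $\nu$-mass bounded away from zero that contains the typical point.

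\textbf{Step 2: distortion control and the scale--time dictionary.} Since $f\in C^{1+\alpha}$, there are uniform bounded distortion estimates on intervals that stay contracted. For $\nu$-typical $x$ and $\rho$ small, I would choose $n\approx\log\rho/\lambda$. Then the inverse branch $g_n=(f_{n-1}\circ\cdots\circ f_0)^{-1}$ expands a neighborhood of $x$ of size $\rho$ to size $\asymp 1$ with distortion $e^{o(n)}$. Stationarity gives $\nu=\mu^{*n}*\nu$, hence
\[\nu(B(x,\rho))\approx\sum_{g}\mu^{*n}(g)\,\nu\bigl(g^{-1}B(x,\rho)\bigr).\]
The lower bound on the local dimension should then follow from the Shannon--McMillan--Breiman-type estimate
\[\frac{1}{n}\log\frac{\dr (f^n_\omega)^{-1}{}_*\nu}{\dr\nu}\to h_{\mr F}(\mu,\nu).\]
This convergence comes from the ergodic theorem applied to the cocycle $\log\frac{\dr f_*\nu}{\dr\nu}$ on the skew product, whose mean is $h_{\mr F}$ by definition. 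The resulting bound is $\underline{\dim}_{\rm loc}\nu(x)\ge h_{\mr F}/|\lambda|$.

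\textbf{Step 3: the matching upper bound, which is the main obstacle.} The upper bound needs the Radon--Nikodym derivatives $\frac{\dr f_*\nu}{\dr\nu}$ to be essentially constant at scale $\rho$ around typical points. Equivalently, the conditional measures of $\nu$ on the partition by cylinder sets must not concentrate beyond what the entropy predicts. For this I would adapt Ledrappier's argument, which uses the Lebesgue density / Besicovitch differentiation theorem for the cocycle at small scales. Because the circle is one-dimensional, Besicovitch applies directly. The main difficulty is making the resolution of scale $\rho$ coincide with the Markov time $n$ uniformly along orbits. I would handle this with a stopping time $\tau_\rho(\omega)=\min\{n:\ |(f^n_\omega)'|\le\rho\}$ on the contracted interval, combined with bounded distortion. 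The random-walk Birkhoff theorem gives $\tau_\rho\sim\log\rho/\lambda$ almost surely, and the entropy along $\tau_\rho$ is then $\sim h_{\mr F}\tau_\rho$.

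Combining the lower bound from Step 2 with the upper bound from Step 3 gives $\dim\nu=h_{\mr F}(\mu,\nu)/|\lambda(\mu,\nu)|$ almost everywhere.
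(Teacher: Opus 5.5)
Your ingredients (the Radon--Nikodym cocycle with ergodic average $h_{\mathrm F}$, Besicovitch differentiation, and matching scale to time through distortion control) are those of the Hochman--Solomyak scheme that the paper follows. The proposal breaks down exactly where the smooth case differs from the projective one.

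\textbf{Step 1 is false in general.} The boundary limits are not Dirac masses. By Theorem~\ref{thm: structure of random walk 1} and \eqref{eqn: F boundary}, $(f^n_{\sigma^{-n}\omega})_*\nu_i^+$ tends to the uniform measure on the $r$-point set $\Pi(\omega,i)$, and $r>1$ does occur. Since $r$ is a topological invariant of $T_\mu$ (Theorem~\ref{thm: d,r top inv}), no conjugation removes it, and the ``induced walk'' is never specified.

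The reduction is also unnecessary. All that is used is Corollary~\ref{cor: contraction of intervals 2}: a set $\Sigma'$ with $\P(\Sigma')>0$ such that, for $\omega'\in\Sigma'$, each $y\in\Pi(\omega',i)$ lies in an interval $I$ with $\nu(I)\ge\frac{1}{2r}$ and $|f^n_{\omega'}I|\le 2^{(\lambda+\varepsilon_n)n}$, where $\varepsilon_n\to0$ independently of $\omega'$.

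The argument must then run on the two-sided skew product with the $u$-state $m$. You fix $x\in\supp m_\omega$ and pull $B(x,\rho)$ back by the past, $I_n=f^{-n}_\omega B(x,\rho)$. In Step~2 you switch to $(f_{n-1}\circ\cdots\circ f_0)^{-1}$ with $x$ merely $\nu$-typical. In that picture $B(x,\rho)$ is typically not expanded. Moreover, the Radon--Nikodym average equals $h_{\mathrm F}$ only along $F^{-1}$, because under $m$ the point $x$ has law $(f_{\sigma^{-1}\omega})_*\nu$ given $f_{\sigma^{-1}\omega}$.

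\textbf{The missing idea is the choice of time.} From
\[
-\log\nu(I_0)=-\log\nu(I_N)+\sum_{n=0}^{N-1}\log\varphi\bigl(f_{\sigma^{-(n+1)}\omega},I_n\bigr),
\]
the upper bound on local dimension needs $-\log\nu(I_N)=o(N)$. What governs $I_N$ is the shifted word $\sigma^{-N}\omega$, which changes with $N$. Your stopping time $\tau_\rho=\min\{n:|(f^n_\omega)'|\le\rho\}$ controls a derivative at one point but says nothing about $\nu(I_N)$, since the Pesin-type constants of $\sigma^{-N}\omega$ are not uniform in $N$.

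The paper instead takes $N$ among the hyperbolic times $A_\omega=\{N:\sigma^{-N}\omega\in\Sigma'\}$, namely $N(\omega,\rho)=\min\{N\in A_\omega:2^{(\lambda+\varepsilon_N)N}\le\rho\}$. Then $I_N$ contains an interval of mass $\ge\frac{1}{2r}$. Birkhoff's theorem for $\mathbf{1}_{\Sigma'}$ gives $N_{k-1}/N_k\to1$ for consecutive hyperbolic times, whence $\log\rho/N\to\lambda$.

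\textbf{The error terms need more than Besicovitch plus Birkhoff.} Replacing $\varphi$ by the Radon--Nikodym derivative at $x_n=f^{-n}_\omega x$ costs $\mathcal{O}(F^{-n}(\omega,x),|I_n|)$, at scales depending on both $n$ and $N$. One needs Maker's theorem (Theorem~\ref{thm: Maker's Theorem}) with the integrable maximal function $\sup_\rho\mathcal{O}$. One also needs the bound $|I_n|\le 2^{\lambda(N-n)+o(N)}$, which requires its own distortion argument for the expanding backward branches on $B(x,\rho)$.

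This error control enters the lower bound too. So your Step~2 claim that the lower bound follows from the cocycle ergodic theorem alone is not right. The only ingredient specific to the upper bound is $\nu(I_N)\ge\frac{1}{2r}$.
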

While the assumption that the support of $\mu$ does not preserve any invariant measure may appear restrictive, it is actually quite mild. 
If it were not satisfied, then either the support of $\mu$ would preserve a finite orbit, or the action of the generated group would be semi-conjugate to an action by rotations.
In the latter case, the associated stationary measure could be non-exact dimensional if its rotation number is Liouville, as shown in \cite{Sad}.

\paragraph{Dimension formulas involving the random walk entropy.}

In general, computing the Furstenberg entropy mentioned in Theorem \ref{thm: exact dimensionality} is challenging. 
In practice, alternative quantities are often needed to determine the exact dimension of a stationary measure. Under certain discreteness or separation assumptions, the random walk entropy provides a viable replacement for $h_{\mr F}$.

\begin{definition}\label{def: random walk entropy}
For a finitely supported measure $\mu$ on a group $G$, the \emph{Shannon entropy} $H(\mu)$ of $\mu$ is given by $-\sum_{f\in\supp\mu} \mu(f)\log \mu(f)$. 
Let $\mu^{*n}$ denote the $n$-fold convolution of $\mu$ in $G$. 
The \emph{random walk entropy} of $\mu$ is then defined as
\[h_{\rm{RW}}(\mu)\defeq \lim_{n\to \infty} \frac{1}{n}H(\mu^{\ast n}).\]
\end{definition}
The limit is guaranteed to exist by sub-additivity, as shown in \cite{KV83}.
The random walk entropy captures the extent to which the semigroup generated by $\supp\mu$ is free. 
Specifically, we have $\hRW(\mu) \leq H(\mu)$, and the two are equal if and only if the semigroup generated by $\supp\mu$ is freely generated by $\supp\mu$. 

In general, $\hRW$ provides an upper bound for $\hFur(\mu,\nu)$, as shown, for instance, in \cite{HS17}. 
Establishing equality between the Furstenberg entropy and the random walk entropy yields a more explicit and computable dimension formula, which greatly facilitates the dimensional analysis of stationary measures. 
This equality is typically expected when the random walk is supported on a group satisfying certain discreteness conditions. 
For example, these quantities coincide for random walks on the full flag manifold of $\RR^n$ driven by discrete subgroups of $\SL_n(\RR)$, a result deeply connected to the theory of Poisson boundaries \cite{Ka85,KV83,Le85,Fur02}. 
A breakthrough regarding dimension formulas for stationary measures was made by Hochman \cite{Hoch14}, who relaxed the discreteness requirement to an exponential separation condition for self-affine measures on the real line. 
Subsequently, Hochman-Solomyak \cite{HS17} established a similar dimension formula for stationary measures on $\RR\PP^1$ generated by $\SL_2(\RR)$ random walks satisfying exponential separation.

For general smooth random walks, a natural discreteness condition is $C^1$-local discreteness.
Recall that if $G\sbs\Homeo(\SS^1)$ has no finite orbits on $\SS^1$ then it preserves a unique $G$-minimal subset $\Lambda$ \cite[Theorem 2.1.1]{Na11}.
\begin{definition}\label{def: discreteness}
Let $G\subset\Diff_+^1(\SS^1)$ be a subgroup without finite orbits, and let $\Lambda$ be its unique minimal set.
We say that $G$ is \emph{$C^1$-locally discrete} if, for every open interval $I\subset\SS^1$ intersecting $\Lambda$, no sequence of pairwise distinct elements of $G$ converges to the identity in the $C^1$ topology on $I$. 
\end{definition}
If $\mu$ is a finitely supported probability measure on a locally discrete group $G\sbs\Diff_+^{1+\alpha}(\SS^1)$ preserving no probability measure on $\SS^1$ and its support generates $G$ as a semigroup, then it admits a unique stationary measure $\nu$. 
Deroin \cite[Theorem 1.1]{Deroin} showed that a finite $G$-equivariant quotient of $(\SS^1,\nu)$ realizes the Poisson boundary of $(G,\mu)$.
In particular, $h_{\mr F}(\mu,\nu)=\hRW(\mu)$.
In this paper, we establish the following explicit dimension formula for general random walks on the circle. 
\begin{theorem}\label{thm: dim formula C1+}
	 Let $\mu$ be a finitely supported probability measure on $\Diff^{1+\alpha}_+(\SS^1).$ 
	 Assume that the group generated by $\supp\mu$ preserves no probability measure on $\SS^1$ and is locally discrete.
	 Then $h_{\mr F}(\mu,\nu)=h_{\mr{RW}}(\mu)$ for every $\mu$-stationary measure $\nu.$ 
	 Consequently, every ergodic $\mu$-stationary measure is of exact dimension $\frac{h_{\mr{RW}}(\mu)}{|\lambda(\mu,\nu)|}.$
\end{theorem}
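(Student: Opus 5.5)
The plan is to reduce Theorem \ref{thm: dim formula C1+} to Theorem \ref{thm: exact dimensionality} by proving the entropy identity $h_{\mr F}(\mu,\nu)=\hRW(\mu)$ for every $\mu$-stationary $\nu$. Once this holds, the dimension formula is immediate from Theorem \ref{thm: exact dimensionality} applied to each ergodic $\nu$.

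\textbf{Upper bound.} The inequality $h_{\mr F}(\mu,\nu)\le \hRW(\mu)$ is standard and holds for any stationary measure. First, $h_{\mr F}(\mu^{*n},\nu)=n\,h_{\mr F}(\mu,\nu)$ by stationarity and the chain rule for Radon--Nikodym derivatives. Second, $h_{\mr F}(\mu^{*n},\nu)\le H(\mu^{*n})$ by convexity, since $\nu=\sum_g\mu^{*n}(g)g_*\nu$ and $\frac{\dr g_*\nu}{\dr\nu}\le \mu^{*n}(g)^{-1}$. Dividing by $n$ and letting $n\to\infty$ gives the bound.

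\textbf{Lower bound, semigroup case.} First suppose $\supp\mu$ generates $G$ as a semigroup. Then the stationary measure $\nu$ is unique, and Deroin's theorem \cite{Deroin} gives a finite $G$-equivariant quotient of $(\SS^1,\nu)$ that is the Poisson boundary of $(G,\mu)$. By Kaimanovich--Vershik, $\hRW(\mu)$ equals the Furstenberg entropy of the Poisson boundary. Finally, a $G$-equivariant quotient map $\pi\colon(\SS^1,\nu)\to(B,\nu_B)$ that is finite-to-one has $h_{\mr F}(\mu,\nu)\ge h_{\mr F}(\mu,\nu_B)$. Entropy is monotone under factors, because $\frac{\dr g_*\nu_B}{\dr\nu_B}\circ\pi$ is a conditional expectation of $\frac{\dr g_*\nu}{\dr\nu}$. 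Together with the upper bound this gives equality.

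\textbf{Lower bound, general case (main obstacle).} In general $\supp\mu$ generates only a semigroup $T$ with group closure $G$. Then $\nu$ need not be unique, and Deroin's result does not apply verbatim. I would try first to prove a direct lower bound $h_{\mr F}(\mu,\nu)\ge \hRW(\mu)$ via a local discreteness argument, as follows.
\begin{enumerate}
\item Take $\nu$ ergodic; this suffices, since $h_{\mr F}$ is affine in $\nu$.
\item Use the structure theorem for smooth random walks developed later in the paper. It gives exponential contraction of $f_\omega^n$ on intervals of definite size around $\nu$-typical points, with bounded distortion (from the $C^{1+\alpha}$ hypothesis). Consequently, for typical $\omega$ the measure $(f^n_\omega)_*\nu$ concentrates on an interval $J_n$ of length about $e^{n\lambda}$.
\item Use local discreteness to show that distinct words $g\ne g'$ in $\supp\mu^{*n}$ producing the same image interval $J_n$ up to bounded distortion must differ, after rescaling, by an element $g^{-1}g'$ that is $C^1$-close to the identity on an interval meeting $\Lambda$.
\item A compactness argument with bounded distortion shows that only boundedly many such coincidences can occur.
\end{enumerate}
From this, the conditional entropy of the word given the position of $(f^n_\omega)_*\nu$ is $o(n)$. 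The Furstenberg entropy can be identified with the entropy of the random walk relative to this position, in the Kaimanovich--Vershik / Ledrappier style, so this yields $h_{\mr F}\ge \hRW$.

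The delicate step is the coincidence count in step 3. One must show that near-coincidence of the pushed measures forces $g^{-1}g'$ to be $C^1$-close to the identity on a fixed-size interval meeting $\Lambda$. I expect this requires the uniform distortion estimates and a renormalization of the short interval $J_n$ back to unit scale. If this becomes too technical, an alternative is to pass to the group random walk: take a finite cover realising the Poisson boundary, and reduce the semigroup case to Deroin's theorem by comparing $\mu$ with a measure whose support generates $G$ as a semigroup.
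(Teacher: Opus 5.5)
\textbf{There is a genuine gap in the general-case lower bound.} Your upper bound $h_{\mr F}\le h_{\mr{RW}}$ and the special case $T_\mu=G$ via Deroin are fine. But the theorem only assumes that the \emph{group} generated by $\supp\mu$ is locally discrete, so everything rests on your ``general case'', and there the decisive step is not established.

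The sentence ``the Furstenberg entropy can be identified with the entropy of the random walk relative to this position'' is not a standard fact in the form you use it. Take $X_n\sim\mu^{*n}$ and $Z\sim\nu$ independent. Suppose ``position'' means the measure $(X_n)_*\nu$, or an interval $J_n$ on which it concentrates (for $r\ge 2$ it concentrates on $r$ separated intervals, not one). Then $X_n\to (X_n)_*\nu\to X_n(Z)$ is a Markov chain, so the mutual information satisfies $n\,h_{\mr F}(\mu,\nu)=I(X_n;X_n(Z))\le H((X_n)_*\nu)=H(\mu^{*n})-H(X_n\mid (X_n)_*\nu)$. Hence showing $H(X_n\mid (X_n)_*\nu)=o(n)$ gives no lower bound on $h_{\mr F}$. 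What is true is the exact identity $n\,h_{\mr F}(\mu,\nu)=H(\mu^{*n})-H(X_n\mid X_n(Z))$. So you would have to control the word given a single $\nu$-random image point, and your sketch never sets this up.

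The paper sidesteps this entirely:
\begin{itemize}
\item Theorem~\ref{thm: exact dimensionality} gives $h_{\mr F}=|\lambda|\dim\nu$.
\item Proposition~\ref{prop: convergence in probability}, through Lemma~\ref{lem: entropy of RW on Sr}, gives a point $x$ with $\liminf_n\frac1n H(\mu^{*n}*\delta_x,\cD_{-\lambda n})\le|\lambda|\dim\nu$.
\item If $|\lambda|\dim\nu<h_{\mr{RW}}$, then $2^{cn}$ good words send $x$ into one dyadic interval. Pigeonholing values and log-derivatives at $4k(n)+1\to\infty$ sample points (Lemmas~\ref{lem: generate similar elements} and~\ref{lem: approximate identity}) then produces $g_n^{-1}f_n\to\Id$ in $C^1$.
\end{itemize}

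\textbf{Step 3 is false as stated, and two further points are missing.} Bounded distortion plus matching image intervals only puts $g^{-1}g'$ into a $C^{1+\alpha}$-bounded, hence $C^1$-precompact, family on a fixed interval. It does not put it near $\Id$; in the paper, closeness to the identity comes only from pigeonholing exponentially many coincidences. Also, derivatives of good words agree only up to $2^{\pm\ve_n n}$, so you must fix the scale, at an $O(\log n)$ entropy cost.

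The correct form of your step 4 is a finiteness statement. If infinitely many distinct $\gamma_k\in G$ have $\gamma_k|_B$ in such a family, a $C^1$-convergent subsequence gives $\gamma_k^{-1}\gamma_l\to\Id$ on a concentric subinterval, contradicting local discreteness. So only finitely many elements occur. Combine this with the identity above, conditioning additionally on goodness of $(X_n,Z)$, on $\lfloor\log (X_n)'(Z)\rfloor$, and on a cell of a fixed finite cover containing $Z$. You would then get $H(X_n\mid X_n(Z))\le\ve n\log\#\supp\mu+O(\log n)$. That would be a genuinely different proof which does not need exact dimensionality, but you would have to supply it.

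You also apply local discreteness on intervals ``meeting $\Lambda$'' without showing $\supp\nu\subset\Lambda$. For a $T_\mu$-minimal set this is not automatic. The paper proves it as follows: the finitely many gaps of $\Lambda$ of maximal $\nu$-mass would be permuted by $\supp\mu$, and their endpoints would form a finite orbit.

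Finally, your fallback of comparing $\mu$ with a measure whose support generates $G$ as a semigroup does not work. Both $h_{\mr F}$ and $h_{\mr{RW}}$ change with the measure, and nothing transfers the identity from one to the other.
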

In fact, Theorem \ref{thm: dim formula C1+} follows from a stronger but slightly more technical version of dimension formula.
This version also plays an important role in Part II \cite{HJX2}.
Here, $T_\mu$ denotes the semigroup generated by $\supp\mu$.

\begin{theorem}\label{thm: stationary measure dimension}
    Let $\mu$ be a finitely supported probability measure on $\Diff^{1+\alpha}_+(\SS^1).$
	Let $I$ be either $\SS^1$ or a closed subinterval of $\SS^1$ which is preserved by the action of $T_\mu.$
	Assume that there is no common $T_\mu$ invariant probability measure on $I.$
	Let $\nu$ be an ergodic $\mu$-stationary measure on $I.$
	Then the Lyapunov exponent $\lambda(\mu,\nu)$ is negative, and one of the following holds:
	\begin{enumerate}[\indent(1)]
		\item\label{alt: dimension formula} either $\dim\nu=\frac{h_{\mr{RW}}(\mu)}{|\lambda(\mu,\nu)|},$
		\item\label{alt: locally nondiscrete} or there exists a closed subinterval $J\sbs I$ with $J^\circ\cap\supp\nu\ne\vn$ and two sequences $(g_n)_{n\geqslant 1},(f_n)_{n\geqslant 1}\sbs T_\mu$ with $g_n\ne f_n$ for all $n$, such that $f_n(J) \sbs g_n(I)$, and the maps $(g_n^{-1}f_n)|_J$ converge to the identity map on $J$ in the $C^1$-topology.
	\end{enumerate}
\end{theorem}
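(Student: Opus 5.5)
The plan follows the Hochman–Solomyak scheme, adapted to the smooth circle setting through the structure theorem for random walks. First, negativity of $\lambda(\mu,\nu)$ should follow from the standard Baxendale-type argument. If $\lambda\ge 0$, invariance principles (Avila–Viana / Malicet) would force $\nu$-almost every fiber measure to be preserved, and we would get a $T_\mu$-invariant probability measure on $I$, which is excluded. The subinterval case works in the same way because $T_\mu$ preserves $I$. Once $\lambda<0$, Theorem~\ref{thm: exact dimensionality}, or rather its version on $I$, gives exact dimensionality with $\dim\nu=\hFur(\mu,\nu)/|\lambda|$. It then suffices to show that if alternative (2) fails, then $\dim\nu\ge \hRW(\mu)/|\lambda|$; the reverse inequality follows from $\hFur\le\hRW$.

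The core step is a linearization. By the structure theorem, along typical words $\omega$ the map $f^n_\omega$ contracts a fixed interval $J$ meeting $\supp\nu$ to length about $e^{\lambda n}$, with bounded distortion coming from the $C^{1+\alpha}$ regularity. Consequently $\nu$ restricted to $J$ decomposes as $\sum_{g}\mu^{*n}(g)\,g_*\nu_J$, where the pieces $g_*\nu_J$ are, up to an error of $O(e^{\alpha\lambda n})$ relative to their scale, affine images of $\nu_J$. I would use the stopping-time version: stop each word when $|g(J)|\approx \rho$, obtaining a measure $\mu_\rho$ with entropy $\approx \hRW(\mu)\log(1/\rho)/|\lambda|$.

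Assume (2) fails. Then for each closed $J$ meeting $\supp\nu$ there is $\epsilon_0>0$ such that distinct $f,g\in T_\mu$ with $f(J)\subset g(I)$ have $g^{-1}f|_J$ at $C^1$ distance at least $\epsilon_0$ from the identity. Via bounded distortion, this yields a separation of the pieces: two distinct stopped words $f,g$ with overlapping images cannot produce $C^1$-close normalized maps. Hence the images $f(J),g(J)$, renormalized to unit scale, are at distance $\gtrsim\epsilon_0$ in position or in log-derivative. In particular, at scale $\rho$ a single $\rho$-ball meets only $O_{\epsilon_0}(1)$ pieces $g(J)$ with comparable derivative. The map $g\mapsto(g(x_0),\log g'(x_0))$ is therefore uniformly finite-to-one at resolution $\rho$, so the distribution of $\mu_\rho$ projected to position has entropy at scale $\rho$ essentially equal to $H(\mu_\rho)$, up to an error of $o(\log 1/\rho)$. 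The convolution identity then gives $H(\nu,\mathcal D_\rho)\ge H(\mu_\rho)-o(\log(1/\rho))$. With the Hochman entropy-porosity argument, this yields $\dim\nu\ge \hRW/|\lambda|$. This is simpler than Hochman's inverse theorem, because we have genuine discreteness at the relevant scale rather than mere exponential separation.

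The main obstacle is the passage from the global condition on $T_\mu$ (the negation of (2)) to the scale-$\rho$ separation statement. The difficulty is that the pieces overlap at a fixed scale only after composing with $g^{-1}$, and $g^{-1}$ expands wildly outside $g(J)$. I would handle this by insisting that $f(J)\subset g(I)$, which is automatic when $f(J)$ and $g(J)$ overlap and $g(I)$ is a larger neighborhood obtained from a slightly enlarged interval, using Koebe-type distortion control. Two further technical points need care. Multiple ergodic stationary measures must be localized to a component of $\supp\nu$, and the choice of $J$ must ensure that $J^\circ\cap\supp\nu\neq\emptyset$, which is guaranteed by working near a $\nu$-typical point.
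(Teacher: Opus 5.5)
Your plan is the contrapositive of the paper's argument. The paper assumes $-\lambda\beta<\hRW(\mu)$. It then uses effective convergence to the Furstenberg boundary and uniform good words to obtain $2^{cn}$ elements $f\in T_\mu$ with $f'|_{J'}\in[2^{(\lambda-\ve_n)n},2^{(\lambda+\ve_n)n}]$ and $\vk_\alpha(f,J')\le C$, all sending one point $x$ into a single cell of $\cD_{-\lambda n}$, and from these it extracts $g_n^{-1}f_n\to\Id$.

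\textbf{The separation step fails as justified.} Your linearization claim is false. For $g=f_{n-1}\cdots f_0$ and a fixed interval $J$, the bound $\vk(g,J)\le\sum_k\vk_\alpha(f_k)\,|f^k_\omega J|^\alpha$ has leading terms that do not decay with $n$, since the first letters act on intervals of size comparable to $|J|$. So good words give \emph{bounded} distortion, not exponentially small distortion. Thus $g|_J$ is an affine map composed with a nonlinear shape determined essentially by the first letters.

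Consequently, ``$g^{-1}f$ is $\ve_0$-far from $\Id$ in $C^1(J)$'' does not imply that $f$ and $g$ differ by at least order $\ve_0$ in the normalized data $(g(x_0)/\rho,\log g'(x_0))$. Two elements can agree to first order at $x_0$ and still have $C^1$-different shapes. Your $O_{\ve_0}(1)$-to-one statement is true, but it needs an extra ingredient that controls the shape. There are two ways to supply it:
\begin{itemize}
\item Use precompactness in $C^1(J')$ of the normalized maps $A_g^{-1}\circ g$, where $A_g$ is affine and matches $g$ to first order at $x_0$. This follows from Arzel\`a--Ascoli and the uniform bound on $\vk_\alpha(g,J')$. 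Then among more than $N(\delta)$ elements in one $\delta$-box, two have $\delta$-close shapes.
\item Follow Lemma~\ref{lem: approximate identity}. Pigeonhole the values and log-derivatives at $4k+1$ equally spaced points with $k=k(n)\to\infty$ slowly; this uses only subexponentially many boxes against $2^{cn}$ elements. Deduce $f(J)\subset g(J')$ from closeness at the endpoints of $J$, and interpolate between sample points using the H\"older bound. No Koebe-type argument is needed, since $f(J)\subset g(J')$ comes out of the pigeonhole.
\end{itemize}

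\textbf{The transfer from group elements to $\nu$ also needs more than you wrote.} The formula $\nu|_J=\sum_g\mu^{*n}(g)\,g_*\nu_J$ is not an identity. What holds is $\nu=\sum_g\mu^{*n}(g)\,g_*\nu$, and each $g_*\nu$ splits into $r$ clusters of size $\ll\rho$ plus a remainder coming from near $\Xi$. So concavity only bounds $H(\nu,\cD_\rho)$ below by the small average entropy of the pieces, not by the entropy of the law of $g(x_0)$. The paper handles this by coupling $\mu^{*n}*u_{\ul x}$, for $\ul x\in\supp\ul\nu$, with $\nu$ at scale $2^{(\lambda+\ve)n}$ (Proposition~\ref{prop: convergence in probability} and Lemma~\ref{lem: entropy of RW on Sr}). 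It then uses concavity to single out one $x\in\ul x$. Alternatively, the domination $\nu\ge\nu(J)\sum_g\mu^{*n}(g)g_*\nu_J$ would also work, combined with the fact that measures bounded by $K\nu$ satisfy $H(\cdot,\cD_m)\le(\beta+o(1))m$ by exact dimensionality.

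Some smaller points:
\begin{itemize}
\item Given exact dimensionality, no entropy-porosity argument is needed.
\item Working at fixed times restricted to good words, as the paper does, avoids your stopping-time entropy claim. That claim would need its own proof, and the stopping time is infinite on the positive-probability set where $\Xi(\omega)\cap J\neq\emptyset$.
\item ``Its version on $I$'' hides a reduction. The structure theory requires that no probability measure on all of $\SS^1$ be invariant. The paper therefore replaces $\mu$ by some $\wt\mu$ with the same restrictions to $I$ and works with $h_{\mr{RW},I}$. It also observes that if $\cR_I$ is not injective on $T_\mu$, then alternative (2) holds trivially.
\end{itemize}
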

\begin{remark}
    We say that $T_\mu$ preserves $I$ if $f(I)\subset I$ for every $f\in T_\mu$.
\end{remark}
\begin{remark}
    By embedding an interval into the circle, one can also deduce a dimension formula for contracting IFSs on this interval by this theorem. 
\end{remark}

It is worth mentioning that a dimension formula of measures as in Theorem \ref{thm: dim formula C1+} usually plays an important role for the study of the associated fractal sets. 
In our companion paper, in order to understand the dimension of a fractal set preserved by a group action on the circle, this dimension formula helps us to generate a stationary measure on this set with a large enough dimension, which in turn provides an estimate of the dimension of the fractal set.

\paragraph{Approximation by uniformly hyperbolic subsystems.}
In the study of dynamical systems, particularly those exhibiting weak hyperbolicity, a natural question is to determine the size and richness of their uniformly hyperbolic subsystems.

A classical theorem of Katok \cite{Ka} asserts that any ergodic hyperbolic measure of a $C^{1+}$ diffeomorphism can be approximated by a horseshoe with approximately the same entropy. 
In \cite[Theorem 3.3]{ACW}, Avila, Crovisier and Wilkinson further showed that the horseshoe can be constructed to ensure a dominated splitting while retaining approximately the same Lyapunov exponents.
For two-dimensional matrix cocycles over a full shift of finite type, Morris and Shmerkin \cite{MS} showed that any cocycle with distinct Lyapunov exponents can be approximated by a subsystem exhibiting a dominated splitting and sharing almost the same entropy and Lyapunov exponents. 
Note that a dominated splitting can be translated into the cone field conditions for dynamics on projective spaces \cite{ABY}.

For smooth random walks on the circle, we could also consider a parallel problem. 
Our goal is to approximate the original random walk by a uniformly hyperbolic subsystem, in such a way that the dimension of the subsystem's attractor is arbitrarily close to the dimension of a given stationary measure. 
Remarkably, we can construct uniformly hyperbolic subsystems that simultaneously approximate three fundamental dynamical quantities of the random walk: the entropy, the Lyapunov exponent, and the dimension of the stationary measure. 
We present two variations of this result.

For a positive integer $n$ and a subset $\cS$ of a (semi)group, we denote $\cS^{* n}=\lb{g_1\cdots g_n:g_i\in\cS}.$
\begin{theorem}\label{thm: ACW approximation}
	Let $\cS\sbs \Diff_+^{1+\alpha}(\SS^1)$ be a finite set without common invariant probability measures. 
	Then there exist positive integers $d,r>0$ such that the following hold. 
	For every nondegenerate probability measure $\mu$ supported on $\cS$ and $\ve>0$, there exist arbitrarily large integers $N$ and a subset $\Gamma\subset\cS^{*N}$ such that:
\begin{enumerate}
\item There are exactly $d$ ergodic $\mu$-stationary measures $\nu_i$. 
The Lyapunov exponent $\lambda_i$ of each $\nu_i$ is negative.
\item  $\# \{\, (f_1,\cdots,f_N) \in \cS^N : f_N\cdots f_1 \in \Gamma\,\}\geq 2^{N(H(\mu)-\ve)}$.
\item There exists $dr$ disjoint open intervals $\lb{U_{i,j}}_{0\leq i\leq d-1,\\ 0\leq j\leq r-1}$ such that for every $(i,j)$, for all $f\in\Gamma$, we have $f\overline{U_{i,j}}\subset U_{i,j}$ and $f'(x) \in [2^{N(\lambda_i-\ve)},2^{N(\lambda_i+\ve)}]$ for all $x \in U_{i,j}$, 
\item The semigroup generated by $\Gamma$ has a unique minimal set $K_{i,j}$ in each $U_{i,j}$ with  $\dimH K_{i,j}\geq \dim\nu_i-\ve.$
\end{enumerate}
\end{theorem}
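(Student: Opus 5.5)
We will build the uniformly hyperbolic subsystem from typical random-walk trajectories, using the structure theorem for smooth random walks and Theorem \ref{thm: exact dimensionality}. First we fix $d$ and $r$ from the deterministic structure of $\cS$. Since $\cS$ has no common invariant probability measure, the (semi)group generated by $\cS$ has finitely many minimal sets for the forward walk. Each ergodic stationary measure $\nu_i$ is supported on one of them, and by the structure theorem its support splits into $r$ cyclically permuted pieces. This gives the number $d$ of ergodic stationary measures, which does not depend on the nondegenerate $\mu$ because supports only depend on $\cS$. The negativity of the $\lambda_i$ is recalled from the introduction, which gives item 1. We then take $N$ to be a multiple of $r$, so that the pieces are invariant under $\cS^{*N}$, and choose the intervals $U_{i,j}$ to be small neighbourhoods of points in the $(i,j)$-th piece.

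Next, for a fixed $i$ we use the fact that typical trajectories contract. For $\mu^{\otimes\NN}$-a.e. $\omega$ the derivatives of $f^n_\omega$ on a small neighbourhood of $\nu_i$-typical points are $2^{n(\lambda_i\pm\ve)}$. This follows from the $C^{1+\alpha}$ bounded distortion plus the Birkhoff/Kingman theorem applied along the skew product. By the Shannon–McMillan–Breiman theorem, typical words of length $N$ have probability $2^{-N(H(\mu)\pm\ve)}$. Egorov's theorem then gives a set $\Omega_N$ of words of measure at least $1/2$ on which these estimates hold uniformly. We need the same words to work for all $(i,j)$ simultaneously. For this we intersect over the finitely many $i$ and use that $\mu$-typical words contract every $U_{i,j}$ (via a synchronization argument from the structure theorem: a typical word maps a neighbourhood of the $(i,j)$-th piece into itself with tiny image). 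We also require the image of $U_{i,j}$ to land well inside $U_{i,j}$. This uses the fact that the image point distribution is close to $\nu_i$ restricted to the piece, so we may prescribe the landing region by paying an $\ve$-loss in counting.

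Counting the words in $\Omega_N$ gives at least $2^{N(H(\mu)-\ve)}$ sequences, which is item 2. Item 3 follows from the derivative bounds and the strict inclusions $f\overline{U_{i,j}}\subset U_{i,j}$.

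For item 4, the semigroup generated by $\Gamma$ acting on $U_{i,j}$ is a contracting IFS. Hence there is a unique minimal set $K_{i,j}$, its attractor. We bound $\dimH K_{i,j}$ from below by the mass distribution principle applied to the Bernoulli measure on $\Gamma$. This requires controlling overlaps of the images $f(U_{i,j})$, and this is the main obstacle, since $\Gamma$ may have heavy overlaps and entropy $H(\mu)$ rather than $\hFur$. The way through is to prune $\Gamma$. We keep one word from each cluster of words whose images lie within distance $2^{N(\lambda_i-2\ve)}$ of each other. By exact dimensionality (Theorem \ref{thm: exact dimensionality}), the $\nu_i$-measure of such a ball is about $2^{N\lambda_i\dim\nu_i}$, so roughly $2^{N|\lambda_i|\dim\nu_i}=2^{N\hFur}$ separated images survive. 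This gives a separated IFS with $\dimH K_{i,j}\ge(\log\#\Gamma')/(N|\lambda_i|)-\ve\ge\dim\nu_i-O(\ve)$. The counting in item 2 uses the full $H(\mu)$ only before pruning. So we would state the pruning at the level of the set $\Gamma$ while keeping multiplicity counts in item 2, since distinct sequences may compose to the same element. If needed, we would reconcile the two by letting $\Gamma$ be the full typical set and running the pruning only inside the dimension estimate: $K_{i,j}$ contains the attractor of the pruned sub-IFS, whose dimension is already large enough.
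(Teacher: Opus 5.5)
Your argument for item 3, and therefore for the set $\Gamma$ on which items 2 and 4 rest, depends on a false picture of the constant $r$. The structure theorems do not say that $\supp\nu_i$ splits into $r$ pieces cyclically permuted by $\cS$. What they say is that the $r$ components of $W^s(\omega,i)$ each carry $\nu_i$-mass $1/r$, and these components depend on $\omega^+$ and move with $\omega$. For example, $\SL_2(\ZZ)$ acting on the circle of directions of $\RR^2$ has $d=1$, $r=2$ and $\supp\nu=\SS^1$, so there are no pieces that taking $N$ a multiple of $r$ could make invariant.

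Consequently, the claim that a typical word maps each $U_{i,j}$ into itself is false. A typical $f^N_\omega$ contracts only away from the random repelling set $\Xi(\omega)$, which may meet $U_{i,j}$. It sends $U_{i,j}$ near a point of $\Pi(\sigma^N\omega,i)$, whose law is spread out like $\nu_i$. Conditioning on the landing point one $(i,j)$ at a time does not produce a single word that works for all $dr$ cones at once. Worse, if the centres are not chosen compatibly with the joint law of $\Pi(\omega)$, say two centres that typically lie in the same component of $W^s(\omega)$, a typical word sends both cones into one tiny interval, which cannot lie in both.

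The missing idea is the paper's choice of cones and words.
\begin{itemize}
\item Take the centres to be a configuration $A\in\supp\ul\nu^+$ disjoint from some $R\in\supp\ul\nu^-$, and let $U_{i,j}$ be the components of $A^{(\rho)}$.
\item Restrict to uniformly good words (Proposition \ref{prop: good words}) with $\Pi(\omega)$ close to $A$ and $\Xi(\omega)$ close to $R$. This has positive probability because $\Pi$ depends only on $\omega^-$ and $\Xi$ only on $\omega^+$.
\item Restrict further to times $n$ with $F^n(\omega,\Pi(\omega,0,0))\in\Sigma_2\times U_{0,0}$. This has positive probability for infinitely many $n$ by ergodicity of $m_0^+$.
\end{itemize}
Then $f^n_\omega$ maps $\SS^1\sm R^{(\rho)}$ into $A^{(\rho)}$ with the right exponents. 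It induces a cyclic-order-preserving bijection of the components of $A^{(\rho)}$ that fixes $U_{0,0}$, hence fixes every $U_{i,j}$. This gives only $\mu^{*n}(\cA_{n,\ve})\ge c$ for some small $c>0$, not $1/2$, but that suffices for item 2 via Shannon--McMillan--Breiman.

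Two smaller points. First, the independence of $r$ from $\mu$ needs Theorem \ref{thm: d,r top inv}: $2dr$ is the least number of fixed points of elements of $T_\mu$ all of whose fixed points are topologically hyperbolic. It does not come from a decomposition of supports. Second, in item 4 your pruning count needs justification. You must show that the images $f(U_{i,j})$, $f\in\Gamma$, cover $\nu_i$-mass bounded below; this holds because they contain $\Pi(\sigma^N\omega,i,j)$, whose law is at most $r\nu_i$. You must also cluster at the scale of the image length $2^{N(\lambda_i+\ve)}$; the scale $2^{N(\lambda_i-2\ve)}$ is smaller than the images and does not give disjointness. With these repairs, your fixed-$N$ count via exact dimensionality is a legitimate substitute for the paper's $\limsup$-set covering lemma and the product trick $\cA_{n,\ve}^{*N_0}\sbs\cA_{nN_0,\ve}$.
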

It is worth noting that the numbers $d$ and $r$ depend solely on the set $\cS$, independent of the choice of $\mu$. 
These quantities characterize the random walk and will be discussed in detail later.
When $\mu$ is supported on a locally discrete subgroup, we obtain the following variant, which provides a direct estimate on the number of the chosen group elements instead of the number of corresponding word representations. 

\begin{theorem}\label{thm: separating approximation}
	Let $\mu$ be a finitely supported probability measure in $\Diff_+^{1+\alpha}(\SS^1)$ without common invariant probability measures on $\SS^1.$ 
	Let $\cS=\supp\mu$ and assume that the group generated by $\cS$ is locally discrete. 
	Let $\nu$ be an ergodic $\mu$-stationary measure and $\lambda=\lambda(\mu,\nu)<0.$ 
	Then for every sufficiently small $\ve>0$, there exist arbitrarily large integers $N$ and a subset $\Gamma\sbs\cS^{*N}$ such that:
	\begin{enumerate}
		\item The cardinality of $\Gamma$ is at least $2^{N(h_{\mr{RW}}(\mu)-\ve)}.$
		\item There exists an open interval $U\sbs\SS^1$ which is strictly preserved by every $f\in\Gamma.$
		\item The closures of $f(U)$ are pairwise disjoint for $f\in\Gamma.$
		\item For every $f\in\Gamma,$ $x\in U,$ $f'(x)\in [2^{N(\lambda-\ve)},2^{N(\lambda+\ve)}].$
		\item The semigroup generated by $\Gamma$ has a unique minimal set $K\sbs U$ with Hausdorff dimension at least $(\dim\nu-\ve).$
	\end{enumerate}
\end{theorem}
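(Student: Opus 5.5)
We deduce Theorem \ref{thm: separating approximation} from Theorem \ref{thm: dim formula C1+} and the same construction that underlies Theorem \ref{thm: ACW approximation}, with word-counting replaced by element-counting. The plan has four steps.

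First, Theorem \ref{thm: dim formula C1+} gives $\dim\nu=\hRW(\mu)/|\lambda|$. So it suffices to produce $\Gamma$ with properties (1)--(4) and with $\#\Gamma\ge 2^{N(\hRW(\mu)-\ve)}$. Property (5) then follows from the standard lower bound for the limit set of a uniformly contracting IFS satisfying the open set condition, here in the strong form of disjoint closures. Indeed, by (3) and (4) and a bounded-distortion estimate, Moran's equation gives
\[
\dimH K\ \ge\ \frac{\log\#\Gamma}{N(|\lambda|+\ve)}-O(\ve)\ \ge\ \frac{\hRW(\mu)-\ve}{|\lambda|+\ve}-O(\ve),
\]
which exceeds $\dim\nu-C\ve$. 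Rescaling $\ve$ completes this reduction.

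Second, we choose the candidate words. By the Shannon--McMillan--Breiman type theorem for random walk entropy (Kaimanovich--Vershik), for large $N$ there is a set $A_N\sbs \cS^{*N}$ of group elements with the following properties: $\mu^{*N}(A_N)\ge 1/2$, each $g\in A_N$ has $\mu^{*N}(g)\le 2^{-N(\hRW-\ve)}$, and $g$ is realized by a typical word. Typicality should mean two things. For $\nu$-most points $x$ in a fixed small interval $U$ near a point of $\supp\nu$, the backward/forward compositions contract $U$ with derivative in $[2^{N(\lambda-\ve)},2^{N(\lambda+\ve)}]$. Moreover, the image $g(U)$ lands back inside $U$. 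We obtain this using the structure theorem for the random walk: the contraction of random products on the circle, so that a typical $f^N_\omega$ maps $\SS^1$ minus a small neighborhood of a repelling point into a tiny interval near an attracting point. We then post-compose with a bounded-length word that brings the attracting point into $U$ and keeps the repelling point away from $U$. Bounded distortion (from $C^{1+\alpha}$) upgrades the derivative estimate at one point to all of $U$. Since only a positive $\mu^{*N}$-proportion survives these constraints, we still keep at least $2^{N(\hRW-2\ve)}$ distinct elements.

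Third, we enforce disjointness (3); this is the main obstacle, and it is exactly where local discreteness enters. Each image $g(U)$ has length about $2^{N\lambda}$. The images are distinct elements, but they could overlap heavily. Suppose we had many pairs $g\ne h$ with $g(U)\cap h(U)\ne\vn$. Then $h^{-1}g$ maps $U$ near itself. By the derivative bounds, $(h^{-1}g)'$ lies in $[2^{-2N\ve},2^{2N\ve}]$, so these maps are not yet close to the identity. A pigeonhole argument handles this. We partition the candidates according to the position of $g(U)$ at scale $2^{N(\lambda-\ve)}$ and according to a discretized value of the logarithmic derivative. Suppose some cell contained more than $2^{CN\ve}$ elements. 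Then a renormalization (zooming in by $g^{-1}$ and exploiting bounded distortion) would produce elements $h^{-1}g\ne \mathrm{id}$ that are $C^1$-close to the identity on an interval meeting $\Lambda$, contradicting $C^1$-local discreteness.

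The difficulty is to make this quantitative. Local discreteness gives a fixed neighborhood of the identity free of nontrivial elements on a fixed interval $J$. We would apply it on $J$ to the rescaled maps $h^{-1}g$, using that these are $C^{1+\alpha}$ with controlled norms, so that closeness in $C^0$ plus derivative closeness forces $C^1$-closeness. Hence each cell contains at most $2^{O(N\ve)}$ elements. Greedily selecting one element per cell, and then a $2^{-O(N\ve)}$-fraction of cells with pairwise separated positions, we obtain $\Gamma$ with disjoint closures $\overline{g(U)}$ and $\#\Gamma\ge 2^{N(\hRW-O(\ve))}$. Property (2) comes from the second step, with $\overline{g(U)}\sbs U$. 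Taking arbitrarily large $N$ along the sequence finishes the proof.
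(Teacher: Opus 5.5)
Your outline is sound, but it takes a genuinely different route from the paper.

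\textbf{The paper's route.} The paper never counts group elements directly. It takes $\Gamma$ to be the family $\cB_N$ of Proposition~\ref{prop: approximate the dimension}, a \emph{maximal} subfamily of $\cA_{N,\ve}$ with pairwise disjoint images of $U=U_{i,j}$, so disjointness is automatic. The cardinality comes from the covering Lemma~\ref{lem: cover and dim}, applied to the $\limsup$ set $E$ of these images, which has positive $\nu$-measure. This gives $\#\cB_N\ge 2^{N((|\lambda|-\ve)\dim\nu-o(1))}$ along a subsequence. The dimension formula of Theorem~\ref{thm: dim formula C1+} then turns $|\lambda|\dim\nu$ into $h_{\mr{RW}}(\mu)$. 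Local discreteness is thus used only qualitatively, inside the contradiction argument of Lemmas~\ref{lem: generate similar elements} and~\ref{lem: approximate identity}. There a \emph{fixed} entropy gap is played against $\ve_n\to0$, so the number $k(n)$ of sample points can be sent to infinity at no cost.

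\textbf{Your route.} You count distinct elements with the Kaimanovich--Vershik theorem and get disjointness from an overlap-multiplicity bound, using the dimension formula only for (5). This shows the mechanism behind (3) directly and is closer to an effective statement. It does, however, force you to make the following points explicit.
\begin{itemize}
\item \emph{Quantitative local discreteness.} The definition must be upgraded to a fixed $C^1$-neighbourhood of $\Id$ on $J$ containing no nontrivial group element. This is true: otherwise either infinitely many distinct elements accumulate at $\Id$, or a single nontrivial element equals $\Id$ on $J$. Such an element has infinite order, and its powers violate the definition.
\item \emph{Distortion independent of $\ve$.} The distortion constant of your candidates on $U$ fixes the number $k$ of sample points in the pigeonhole. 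Your multiplicity bound is therefore of the form $2^{O(k\ve N)}$, which is useless unless $k\ve\to0$. You can arrange this by fixing once and for all the good-word set that controls distortion, as in Lemma~\ref{lem: effective approximation}, where $\ve'$ does not depend on $\ve$. The $\ve$-window on derivatives is then imposed only through uniform convergence of the exponent.
\item \emph{Mass of the entropy-typical set.} It needs $\mu^{*N}$-mass greater than $1-c/2$, not $1/2$, where $c$ is the mass of the good elements; otherwise the intersection may be empty.
\item \emph{Single word length.} The bounded correction words must be pigeonholed so that $\Gamma$ lies in a single $\cS^{*N'}$. The recurrence argument of Proposition~\ref{prop: choose uniformly hyperbolic elements} avoids this issue.
\end{itemize}
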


\begin{remark}
	The discreteness condition is essential since conditions (1), (3) and (4) force $h_{\mr{RW}}(\mu)\leqslant-\lambda$. Furthermore, $h_{\mr{RW}}(\mu)$ cannot be replaced by $H(\mu)$, since  $\#\cS^{*N}$ can be substantially smaller than $2^{N H(\mu)}$ due to non-trivial relations in the group.
\end{remark}

\paragraph{A key ingredient to the proof: structure and symmetries of  random walks on the circle.}
A key ingredient in the proof is a thorough study of \textit{structures and symmetries} of smooth random walks on the circle.
A classical result of Furstenberg~\cite{Fur63} shows that, under mild assumptions, a typical random product of matrices in $\SL(2,\RR)$ acting on $\RR\PP^1$ exhibits (non-stationary) \emph{North-South dynamics}. 
We generalize this phenomenon to the smooth setting and establish a comprehensive structure theorem for $C^{1+\alpha}$ random walks on the circle (Theorems \ref{thm: structure of random walk 1} and \ref{thm: structure of random walk 2}). 
While the precise statements are somewhat lengthy and technical, roughly speaking, the typical trajectory of a random iteration of circle diffeomorphisms consists of a finite number of North-South dynamics along with certain \emph{rotational symmetries}. 
This structural description provides a clear global dynamics for smooth random walks and bridges the gap between linear $\SL(2,\RR)$ actions on $\RR\PP^1$ and non-linear dynamics.

Although we defer the precise formulations, several interesting consequences of these structure theorems warrant mention. 
The number of the copies and the rotation symmetry are entirely captured by integers $d,r>0$ appearing in Theorems \ref{thm: structure of random walk 1} and \ref{thm: structure of random walk 2}. Remarkably, these constants are not only invariant for equivalence between random walks but only depend on the dynamics of the semigroup generated by the support of $\mu$, see Theorem \ref{thm: d,r top inv}.
Moreover, these constants remain invariant when the random walk is replaced by its time-reversal.
Recall that a semigroup $T\sbs\Homeo(\SS^1)$ acting on the circle without finite orbits has only finitely many minimal invariant sets \cite{Mal17}.
As a topological application, our structure theorem directly implies the following time-reversal symmetry of semigroup actions on the circle. 
\begin{theorem}\label{thm: numb min sets}
	Let $T\subset \Diff^{1+\alpha}_+(\SS^1)$ be a semigroup without finite orbits. Then $T^{-1}\defeq\lb{f^{-1}:f\in T}$  has the same number of minimal sets as $T$.
\end{theorem}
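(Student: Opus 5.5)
We plan to deduce the statement from the structure theorems for random walks (Theorems \ref{thm: structure of random walk 1} and \ref{thm: structure of random walk 2}) and the invariance of the integers $d,r$ from Theorem \ref{thm: d,r top inv}. First reduce to a finitely generated setting. Since $T$ has no finite orbits, it has finitely many minimal sets \cite{Mal17}. We will choose a finite subset $\cS\sbs T$ whose generated semigroup $T_\cS$ has no finite orbit and no common invariant probability measure, and such that the minimal sets of $T_\cS$ correspond bijectively to those of $T$. The same should hold for $T_\cS^{-1}$ versus $T^{-1}$. Enlarging $\cS$ should stabilize the number of minimal sets, since each minimal set of $T$ contains a minimal set of $T_\cS$, and adding finitely many elements of $T$ merges those minimal sets of $T_\cS$ lying in a common $T$-minimal set. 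If $T$ preserves a probability measure but has no finite orbit, it is semi-conjugate to rotations. Then $T$ and $T^{-1}$ both have a unique minimal set, and this case is handled directly.

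Next fix a nondegenerate $\mu$ on $\cS$ and let $\check\mu$ be its image under $f\mapsto f^{-1}$, so that $T_{\check\mu}=T_\cS^{-1}$. The key claim is that minimal sets of $T_\mu$ correspond to ergodic $\mu$-stationary measures. Each minimal set supports at least one ergodic stationary measure, and the structure theorem (finitely many North--South pieces, each attracting a $\nu$-typical trajectory) should give exactly one per minimal set, with supports equal to the minimal sets. Hence the number of minimal sets of $T_\cS$ equals $d(\mu)$, the constant in Theorem \ref{thm: ACW approximation}. Since $d$ depends only on $\cS$, as in Theorem \ref{thm: d,r top inv}, and is stated to be invariant under time reversal, we get $d(\mu)=d(\check\mu)$. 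The count for $T^{-1}$ then matches.

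The main obstacle is identifying $d$, as defined in the structure theorem, with the number of minimal sets. A minimal set may carry the $r$ rotational copies, and one must check that the $dr$ attracting pieces group into exactly $d$ minimal sets, not $dr$ of them. We would prove this as follows. The stationary measures $\nu_i$ have pairwise disjoint supports, each of which is $T_\mu$-invariant and closed. Typical backward products contract toward a single point per piece, which should force minimality of $\supp\nu_i$. Conversely, any minimal set meets the basin of some $\nu_i$ and hence contains $\supp\nu_i$. Time-reversal invariance of $d$ should then come from the structure theorem's description of forward and backward dynamics. The repelling points of $f_\omega^n$ are the attracting points for the reversed walk, and these come in the same number $d$ of groups with rotational order $r$.
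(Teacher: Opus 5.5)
Your core argument is the paper's. For finitely supported $\mu$ without invariant measure, Theorem~\ref{thm: supports of stationary measures} identifies the $T_\mu$-minimal sets with the supports of the $d$ ergodic $\mu$-stationary measures. Theorem~\ref{thm: structure of random walk 1}(2), which rests on Lemma~\ref{lem: R ergodic is ergodic} (the endpoint maps $R^s_*,R^u_*$ match ergodic $u$-states with ergodic $s$-states), gives exactly $d$ ergodic $\mu^-$-stationary measures. Malicet's theorem applied to $\check\mu=\mu^-$ then counts the minimal sets of $T_\mu^{-1}$. So what you call the main obstacle is not one: matching $d$ (rather than $dr$) with the number of minimal sets is exactly the quoted theorem of Malicet. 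Also, the time-reversal invariance of $d$ should be cited from item (2) of the structure theorem, not from the informal sentence in the introduction, which is essentially the statement being proved.

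The genuine gap is in your reduction from an arbitrary $T$ to a finitely generated one, which the paper does not spell out either. Compactness of the space of probability measures gives a finite $\cS_1\sbs T$ with no common invariant measure. Monotonicity then shows that the number of $T_\cS$-minimal sets, $\cS\sps\cS_1$ finite, stabilizes at some $\cS_0$ with value $k$, and that $T$ has at most $k$ minimal sets. (For $\cS\sbs\cS'$, every $T_{\cS'}$-minimal set contains a $T_\cS$-minimal set, and minimal sets are pairwise disjoint.) The reverse inequality is your ``merging'' claim, and you give no mechanism for it. Knowing only that $gx$ comes close to another $T_{\cS_0}$-minimal set $K_{i'}$ for some $g\in T$ does not put any point of $K_{i'}$ into a finitely generated orbit closure of $x$.

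One way to supply the mechanism is an attracting element. Take $h\in T_{\cS_0}$ all of whose fixed points are hyperbolic (Lemma~\ref{lem: 2dr fixed points}); each $K_{i'}$ is infinite, hence contains an attractor $a'$ of $h$. Suppose $x\in K_i$ and $K_{i'}\sbs\ol{Tx}$, and choose $g\in T$ with $gx$ in the basin of $a'$. Then $h^ngx\to a'$, so the $T_{\cS_0\cup\{g\}}$-minimal set $\ol{T_{\cS_0\cup\{g\}}x}$ contains both $K_i$ and $K_{i'}$. Equal counts force each $T_\cS$-minimal set, $\cS\sps\cS_0$, to contain exactly one $T_{\cS_0}$-minimal set, so $i'=i$. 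Consequently, for $x_i\in K_i$, every $T$-minimal subset of $\ol{Tx_i}$ contains $K_i\ni x_i$. Thus $\ol{Tx_i}$ is itself $T$-minimal and contains no other $K_{i'}$, which gives $k$ distinct $T$-minimal sets. The same argument with $h^{-1}$ handles $T^{-1}$. The two stable values coincide because, by the finitely generated case, they coincide for every finite $\cS\sps\cS_1$.
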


A further application of our structure theorem is the construction of a pair of group elements that possess the strongest uniform hyperbolicity under iteration. 
In our approximation results Theorems \ref{thm: ACW approximation} and \ref{thm: separating approximation}, the subsystems exhibit uniform hyperbolicity only on restricted intervals and in one direction of composition.
To overcome this, we introduce the notion of a \emph{perfect ping-pong pair} (Definition \ref{def: perfect ping-pong pair}). 
Such a pair is uniformly hyperbolic on the entire circle and possesses very clear dynamics, allowing one to precisely predict the asymptotic past and future of every point. 
Furthermore, this pair freely generates a free subgroup of $\Homeo(\SS^1)$; this result can be viewed as a smooth $C^{1+\alpha}$ variant of Margulis's classical construction of free subgroups in $\Homeo(\SS^1)$ \cite{Mar00}.

\begin{theorem}\label{thm: ping-pong pair}
	Let $T\sbs \Diff^{1+\alpha}(\SS^1)$ be a semigroup with no invariant probability measure on $\SS^1.$ Then there exists a perfect ping-pong pair $(h_1,h_2)\sbs T.$
\end{theorem}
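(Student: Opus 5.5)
The plan is to obtain the perfect ping-pong pair from the structure theorems for random walks (Theorems \ref{thm: structure of random walk 1} and \ref{thm: structure of random walk 2}), applied to a finitely supported nondegenerate measure $\mu$ on a finite subset $\cS\sbs T$ chosen so that $\cS$ already has no invariant probability measure. Such a finite $\cS$ exists by compactness: the set of probability measures invariant under a given finite set is a compact convex set, and these sets decrease as the finite set grows; if all of them were nonempty, their intersection would be a $T$-invariant measure. Since $T^{-1}$ also preserves no probability measure, the same construction applies to the reversed walk $\check\mu(f)=\mu(f^{-1})$.

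The first step uses the structure theorem. For $\mu^{\otimes\NN}$-typical $\omega$, the compositions $f^n_\omega$ exhibit $d$ copies of North--South dynamics, twisted by the rotational symmetry of order $r$. More precisely, there should be finite sets $\{p_1(\omega),\dots,p_{dr}(\omega)\}$ of repelling directions and $\{q_1,\dots\}$ of attracting points such that $f^n_\omega$ maps the complement of a small neighbourhood of the repellers into tiny neighbourhoods of the attracting points. The derivative there is exponentially small, and on the attracting part it also contracts exponentially, by the negative Lyapunov exponents of the ergodic stationary measures. Taking the time-reversal, the inverses behave in the same way, with the roles of attractors and repellers exchanged. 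Because $d$ and $r$ are invariant under time-reversal, the combinatorics match.

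The second step constructs $h_1$ from a typical long word $g=f^n_\omega$, possibly composed with a fixed element $u$ realizing the rotation combinatorics. The aim is that $h_1$ is an element whose attracting and repelling sets $A_1,R_1$ are finite, interlaced, and consist of hyperbolic periodic points, with $h_1$ contracting uniformly near $A_1$ and expanding near $R_1$. I would then compose with a suitable power so that every orbit of $h_1$ converges forward to $A_1$ and backward to $R_1$. A second independent typical word gives $h_2$ with sets $A_2,R_2$. Using minimality and the absence of invariant measures, we can conjugate by an element of $T$ so that $A_1\cup R_1$ and $A_2\cup R_2$ are disjoint. Replacing $h_1,h_2$ by high powers $h_1^m,h_2^m$ then produces the ping-pong configuration on the whole circle required by Definition \ref{def: perfect ping-pong pair}: neighbourhoods $U_i^\pm$ with $h_i^{\pm m}(\SS^1\setminus U_i^\mp)\sbs U_i^\pm$, with uniform derivative bounds. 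Freeness of the generated group follows from the standard ping-pong lemma.

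The main obstacle is the second step, namely controlling the whole circle rather than a restricted interval. The structure theorem gives contraction off a neighbourhood of the repellers only in the forward direction and only for typical $\omega$. Making a single element hyperbolic everywhere requires matching the forward attractors of $f^n_\omega$ with the backward data of its inverse, and this is where the time-reversal statement (Theorem \ref{thm: d,r top inv}) is used. It also requires handling the rotation by $r$, which can permute the attracting points. To deal with this I would first take the power $h^{r}$, or a multiple of the order of the induced permutation, so that all attracting points become fixed. I would then check, via the $C^{1+\alpha}$ distortion estimates underlying the structure theorem, that the derivatives at these fixed points are exponentially small or large, which makes the points hyperbolic.
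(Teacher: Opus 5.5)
\textbf{Gap: condition (b) of the definition is never secured.} Condition (b) of Definition~\ref{def: perfect ping-pong pair} is what makes a pair \emph{perfect}, and your proposal does not establish it. You only arrange that $\Fix(h_1)$ and $\Fix(h_2)$ are disjoint and then pass to high powers $h_i^m$. But powers have the same fixed points, so (b) is unaffected. Disjointness together with uniform hyperbolicity does not imply (b).

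The examples following Definition~\ref{def: perfect ping-pong pair} show this. With the cyclic pattern $a_1\prec r_1\prec a_2\prec r_2\prec\cdots$, the fixed points are disjoint and all hyperbolic, yet $h_2h_1$ can have only two fixed points, or none.

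\textbf{The conjugation step is not available.} The step you use to obtain disjointness, conjugating by an element of $T$, does not work here. $T$ is only a semigroup, so $uh_2u^{-1}$ need not lie in $T$. Even in a group, moving $A_2\cup R_2$ rigidly gives no control over how it interleaves with $A_1\cup R_1$.

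A smaller point: Theorem~\ref{thm: d,r top inv} is about invariance under conjugacy, not time reversal. The fact that $\mu^+$ and $\mu^-$ share $d$ and $r$ is already part of Theorem~\ref{thm: structure of random walk 1}.

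\textbf{The missing idea.} You need to prescribe the attracting and repelling configurations of both elements in advance, and let the structure theorem force their order. The paper's Lemma~\ref{lem: A R and f} does this. For any disjoint, $3\rho$-separated $A\in\supp\ul\nu^+$ and $R\in\supp\ul\nu^-$, it produces $f\in T$ such that:
\begin{itemize}
\item $f(\SS^1\sm R^{(\rho)})\sbs A^{(\rho)}$;
\item $f$ fixes each component of $A^{(\rho)}$;
\item $f$ contracts off $R^{(\rho)}$, and $f^{-1}$ contracts off $A^{(\rho)}$.
\end{itemize}
There the component permutation is made trivial by Birkhoff recurrence: $(\omega,\Pi(\omega,0,0))$ returns to $\Sigma_2\times U_{0,0}$. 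This recurrence also puts the attracting cluster $\Pi(\sigma^n\omega)$ back near $A$, away from $\Xi(\omega)$, and gives inverse-contraction control at $\sigma^n\omega$. Your power trick could replace the first of these, but the other two are still needed and your sketch leaves them implicit.

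\textbf{How (b) then follows.} Two facts combine.
\begin{itemize}
\item $\Pi(\omega)$ depends only on $\omega^-$ and $\Xi(\omega)$ only on $\omega^+$. Hence Theorem~\ref{thm: structure of random walk 1}(5) implies that every such $A$ alternates with every such $R$.
\item One chooses $A_1,A_2,R_1,R_2$ pairwise disjoint, with a short interval $I$ containing exactly one point of $A_1$, one of $A_2$, and none of $R_1\cup R_2$.
\end{itemize}
Pairwise alternation alone is not enough, since the bad pattern above satisfies it. Combined with one adjacent pair of attractors, however, it propagates by induction around the circle to $a\prec a\prec r\prec r\prec a\prec a\prec\cdots$, which is (b).

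Your compactness reduction to a finite set $\cS$ is correct. It is in fact needed, since Proposition~\ref{prop: generate ping-pong pair} assumes finite generation.
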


\paragraph{Organization.}
Section~\ref{sec: random walks} develops the structure theory of $C^{1+\alpha}$ random walks on the circle, including the associated stationary measures, the topological invariants $d$ and $r$, uniform good words, and effective convergence to the Furstenberg boundary.

Section~\ref{se:5} proves the exact dimensionality of ergodic stationary measures and identifies their dimension in terms of the Furstenberg entropy and the Lyapunov exponent.

Section~\ref{se:6} establishes the dimension formula, first for random walks on the circle and then for random walks preserving a closed interval.

Finally, Section~\ref{se:7} constructs hyperbolic elements and perfect ping-pong pairs and proves the approximation theorems stated in the Introduction.

\paragraph{Notation and conventions.}
Throughout the paper, $\SS^1=\RR/\ZZ$ is endowed with its standard orientation, metric $d$, and normalized arc length. 
Unless otherwise stated, an interval in $\SS^1$ means a connected arc. For an interval $I$, we write $|I|$ for its length and $\overline I$ for its closure, and $B(x,\rho)$ denotes the open ball of radius $\rho$ centered at $x$.
If $x,y\in\SS^1$, then $[x,y]$ denotes the closed positively oriented arc from $x$ to $y$. 
For $A\subset\SS^1$ and $\rho>0$, we write
\[
  A^{(\rho)}=\bigcup_{x\in A}B(x,\rho).
\]

For $r\geq1$, $\Diff_+^r(\SS^1)$ denotes the group of
orientation-preserving $C^r$ diffeomorphisms of $\SS^1$; the notation $\Diff_+^{1+\alpha}(\SS^1)$ has the analogous meaning, with $\alpha$-H\"older derivative. Maps act on the left, and products are composed from right to left. If $\omega=(f_n)_{n\in\ZZ}$, we set
\[
f_\omega^n=
\begin{cases}
 f_{n-1}\circ\cdots\circ f_0, & n\geq1,\\
 \Id, & n=0,\\
 f_n^{-1}\circ\cdots\circ f_{-1}^{-1}, & n\leq-1.
\end{cases}
\]
For a finite subset $\cS$ of a semigroup and $n\geq1$, we write
\[
  \cS^{*n}=\{f_n\circ\cdots\circ f_1:
  f_1,\dotsc,f_n\in\cS\}.
\]
Thus $\cS^n$ is the space of words of length $n$, whereas $\cS^{*n}$ is the set of the corresponding products. 
For a finitely supported probability measure $\mu$, the semigroup generated by $\supp\mu$ is
\[
  T_\mu=\bigcup_{n\geq1}(\supp\mu)^{*n},
\]
and $\mu^{*n}$ denotes the $n$-fold convolution of $\mu$.

For a positive integer $m$, we write $[m]=\{0,\dotsc,m-1\}$; cyclic indices are regarded as elements of $\ZZ/m\ZZ$.
The identity map is denoted by $\Id$, the fixed-point set of $f$ by $\Fix(f)$, and the pushforward of a measure $\nu$ by $f$ by $f_*\nu$. 
We use $\dim\nu$ for the exact dimension of an exact-dimensional measure, $\dimH E$ for the Hausdorff dimension of a set $E$, and
\[
  \dimH\nu=\inf\{\dimH E:\nu(E)=1\}
\]
for the Hausdorff dimension of a probability measure. The Shannon, Furstenberg, and random-walk entropies are denoted by $H$, $h_{\mr F}$, and $h_{\mr{RW}}$, respectively. 

All logarithms are to base $2$.

We use Vinogradov notations: $A\ll B$ means that $A$ is bounded above by a constant multiple of $B$ and $A\ll_\square B$ means that the implicit constant depends only on $\square.$
We write $A\asymp B$ (resp. $A\asymp_\square B$) if both $A\ll B$ and $B\ll A$ (resp. $A\ll_\square B$ and $B\ll_\square A$) hold.

\paragraph{Acknowledgement.}
W. H. is supported by the National Natural Science Foundation of China (No. 12288201). 
D. X. is supported by National Key R\&D Program of China No. 2024YFA1015100 and NSFC grant 12090010, 12090015. 
Y. J. is supported by the National Natural Science Foundation of China (No. 124B2008).
Part of this work was completed during D. X.'s visits to AMSS and the University of Chicago.
He thanks both institutions for their hospitality. 
Y. J. thanks Gang Tian for his constant support and encouragement. 

We thank Amie Wilkinson, Artur Avila, De-Jun Feng, Shaobo Gan, Fran\c cois Ledrappier, Pablo Lessa, Jialun Li, Minghui Ouyang, Wenyu Pan, Yi Shi, Wenyuan Yang for useful discussions.

\section{Random walks on \texorpdfstring{$\Diff_+^{1+\alpha}(\SS^1)$}{Diff\^{}1+(S\^{}1)}}\label{sec: random walks}
\label{se:4}

\subsection{Preliminaries on random walks}\label{sec: preliminaries on random walks}
\paragraph{Cocycles.} Let $\cS\sbs\Diff_+^1(\SS^1)$ be a finite set. 
We equip $\cS$ with the discrete topology and let $\Sigma=\cS^\ZZ$ be the product space which is compact. 
We also consider the spaces $\Sigma^+=\cS^{\ZZ_{\geqslant 0}}$ and $\Sigma^-=\cS^{\ZZ_{<0}}.$ 
Elements of $\Sigma,\Sigma^+,\Sigma^-$ are denoted by $\omega,\omega^+,\omega^-,$ respectively. 
The maps $\pi^+:\Sigma\to\Sigma^+$ and $\pi^-:\Sigma\to\Sigma^-$ refer to natural projections. 
We use $\sigma$ to denote the left shift map on $\Sigma$ or $\Sigma^+.$

For an element $\omega\in\Sigma,$ we write $\omega=(\cdots,f_{-2},f_{-1},f_0,f_1,\cdots)$ where $f_n\in\cS,n\in\ZZ.$ 
Denote
\[f_\omega^n\defeq \begin{cases}
	f_{n-1}\cdots f_1f_0& \text{if } n\geq 0;\\
	f_{n}^{-1}\cdots f_{-2}^{-1}f_{-1}^{-1}& \text{if }n<0.
\end{cases} \]
We abbreviate $f_\omega^1$ to $f_\omega.$ 
It induces an invertible cocycle over $\sigma:\Sigma\to\Sigma$ defined as
\[F:\Sigma\times\SS^1\to \Sigma\times\SS^1,\quad (\omega,x)\mapsto(\sigma\omega,f_\omega x).\]
Then the map $F$ satisfies $F^n(\omega,x)=(\sigma^n\omega,f_\omega^n x)$ for every $n\in\ZZ.$

We define $f_{\omega^+}$ and $f_{\omega^+}^n$ similarly for $\omega^+\in\Sigma^+$ and $n\geqslant 0.$ 
These maps define the forward cocycle
\[F^+:\Sigma^+\times\SS^1\to \Sigma^+\times\SS^1,\quad (\omega^+,x)\mapsto (\sigma\omega^+,f_{\omega^+}x).\]
Then $F$ is semi-conjugate to $F^+$ via the natural projection, $(\pi^+,\Id)\circ F=F^+\circ(\pi^+,\Id).$ 

\paragraph{Random walks.}
Let $\mu$ be a finitely supported probability measure on $\Diff_+^1(\SS^1)$. 
We take $\cS=\supp\mu$ which induces a cocycle $F:\Sigma\times\SS^1\to\Sigma\times\SS^1$ as above.
Equip $\Sigma$ with its Borel $\sigma$-algebra.
Let $\P=\mu^\ZZ$ be the product probability measure on $\Sigma.$ 
Similarly, the spaces $\Sigma^+$ and $\Sigma^-$ are equipped with probability measures $\P^+=\mu^{\ZZ_{\geqslant 0}}$ and $\P^-=\mu^{\ZZ_{<0}}$ respectively.
In this sense, the cocycle $F:\Sigma\times\SS^1\to\Sigma\times\SS^1$ equipped with a probability measure $\P$ on $\Sigma$ is called the \textit{random walk} induced by $\mu.$

Then the stationary measures of $\mu$ correspond to the invariant measures of the forward cocycle $F^+$ in the following way.
\begin{proposition}[{\cite[Propositions 5.5 and 5.13]{Vi14}}]
Let $\nu$ be a probability measure on $\SS^1.$ Then
    \begin{enumerate}
        \item $\nu$ is $\mu$-stationary if and only if $\P^+\times\nu$ is $F^+$-invariant.
        \item $\nu$ is an ergodic $\mu$-stationary measure if and only if $\P^+\times\nu$ is an ergodic $F^+$-invariant measure.
    \end{enumerate} 
\end{proposition}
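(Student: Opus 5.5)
The plan is the standard correspondence between stationary measures and skew-product invariant measures, which rests on the fact that under $\P^+$ the first coordinate $f_0$ is independent of $\sigma\omega^+$. For part (1), I would test $F^+$-invariance of $\P^+\times\nu$ on product sets $A\times B$ with $A\subset\Sigma^+$ a cylinder and $B\subset\SS^1$ Borel; these form a $\pi$-system generating the product $\sigma$-algebra. Write $A=\{\omega^+: f_0\in A_0\}\cap\sigma^{-1}A'$. Then
\[(F^+)^{-1}(\Sigma^+\times B)=\{(\omega^+,x): f_0x\in B\},\]
and more generally
\[(F^+)^{-1}(A\times B)=\{(\omega^+,x):\sigma\omega^+\in A,\ f_0x\in B\}.\]
Using independence of $f_0$ and $\sigma\omega^+$ together with $\sigma_*\P^+=\P^+$, I get
\[\P^+\times\nu\bigl((F^+)^{-1}(A\times B)\bigr)=\P^+(A)\int \nu(f^{-1}B)\,\dd\mu(f)=\P^+(A)\,(\mu*\nu)(B).\]
Hence invariance on all such rectangles is equivalent to $\mu*\nu=\nu$. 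One direction takes $A=\Sigma^+$; the other extends from rectangles by the $\pi$–$\lambda$ theorem.

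For part (2), I would show that invariant sets correspond in both directions. First, if $\nu=t\nu_1+(1-t)\nu_2$ with $\nu_i$ distinct and stationary, then $\P^+\times\nu$ is the same nontrivial convex combination of the invariant measures $\P^+\times\nu_i$. So ergodicity of $\P^+\times\nu$ forces ergodicity of $\nu$.

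The converse is the main obstacle: I must show that an $F^+$-invariant set $E$ of intermediate $\P^+\times\nu$-measure produces a splitting of $\nu$. I would set $\varphi(x)=\P^+(E_x)$, where $E_x$ is the fiber. The plan is to prove that $\varphi$ is $\mu$-harmonic, i.e.
\[\int\varphi(fx)\,\dd\mu(f)=\varphi(x)\quad \nu\text{-a.e.}\]
This should follow because $\mathbf 1_E=\mathbf 1_E\circ F^+$ a.e. and $f_0$ is independent of $\sigma\omega^+$. Next I would show $\varphi$ is a.e. an indicator function. The route is the martingale argument: consider $\varphi(f^n_{\omega^+}x)=\mathbb E[\mathbf 1_E\circ (F^+)^n\mid \mathcal F_n]$ with $\mathcal F_n$ generated by $f_0,\dots,f_{n-1}$. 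This sequence equals $\mathbb E[\mathbf 1_E\mid\mathcal F_n]$ by invariance, and by martingale convergence it tends to $\mathbf 1_E(\omega^+,x)\in\{0,1\}$. Since the law of $\varphi(f^n_{\omega^+}x)$ under $\P^+\times\nu$ equals the law of $\varphi$ under $\nu$ (by stationarity), $\varphi$ takes values in $\{0,1\}$ $\nu$-a.e.

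Then $B=\{\varphi=1\}$ satisfies $\nu(B)\in(0,1)$. By harmonicity, $\nu(f^{-1}B\,\triangle\, B)=0$ for $\mu$-a.e. $f$. Therefore the normalized restrictions $\nu|_B$ and $\nu|_{B^c}$ are stationary and distinct, contradicting ergodicity of $\nu$.

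Alternatively, since the statement is cited from \cite[Propositions 5.5 and 5.13]{Vi14}, one may simply invoke it. The sketch above is the self-contained route, with the martingale step being the only nonformal point.
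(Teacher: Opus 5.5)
Your proof is correct. The paper gives no argument of its own for this proposition and simply imports it from \cite{Vi14}, so the comparison is between citing and proving. Your route is the standard self-contained Kakutani-type argument. Part (1) tests invariance on rectangles, using only that $f_0$ is independent of $\sigma\omega^+$ and that $\sigma_*\P^+=\P^+$. For part (2) you take an invariant set $E$ and show that its fibre average $\varphi(x)=\P^+(E_x)$ is $\mu$-harmonic and, by the martingale convergence theorem, $\{0,1\}$-valued. The advantage is that it works directly with the extremality definition of ergodicity the paper adopts, in both directions, without passing through any intermediate notion of invariant sets for the random walk.

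Two small points of precision.

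\begin{itemize}
\item In part (1), the decomposition $A=\{f_0\in A_0\}\cap\sigma^{-1}A'$ is never used. Your computation holds verbatim for every measurable $A\subset\Sigma^+$, so the $\pi$--$\lambda$ step is only needed to pass from rectangles to the product $\sigma$-algebra.
\item In the martingale step, $\mathcal F_n$ must be generated by $f_0,\dots,f_{n-1}$ \emph{together with} the $\SS^1$-coordinate $x$. If it is generated by $f_0,\dots,f_{n-1}$ alone, two things break. The conditional expectation of $\mathbf 1_E\circ(F^+)^n$ would not be $\varphi(f^n_{\omega^+}x)$, since that quantity depends on $x$. And the limit would be $\nu(E^{\omega^+})$, where $E^{\omega^+}$ is the fibre of $E$ over $\omega^+$, rather than $\mathbf 1_E(\omega^+,x)$.
\end{itemize}

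With the corrected $\mathcal F_n$, the argument goes through. Since $\sigma^n\omega^+$ is independent of $\mathcal F_n$ with law $\P^+$, the freezing lemma identifies the conditional expectation with $\varphi(f^n_{\omega^+}x)$. The $\sigma$-algebras $\mathcal F_n$ generate the full product $\sigma$-algebra, so the limit is $\mathbf 1_E$. The rest is as you wrote: the laws agree because $\mu^{*n}*\nu=\nu$, and $0\le\varphi\le 1$ gives $\nu(B\triangle f^{-1}B)=0$ for every $f\in\supp\mu$.
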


\paragraph{Lyapunov Exponents.}
For an element $(\omega^+,x)\in\Sigma^+\times\SS^1,$ recall the \textit{Lyapunov exponent} at $(\omega^+,x)$ is defined, when the limit exists, by
\[\lambda(\omega^+, x)\defeq\lim_{n\to+\infty}\frac{1}{n}\log (f_{\omega^+}^n)'(x).\]
If $\nu$ is an ergodic $\mu$-stationary measure then $\P^+\times\nu$ is an ergodic $F^+$-invariant measure. 
By Birkhoff's ergodic theorem,
\[\lambda(\omega^+, x)=\iint\log f_{\omega^+}'(y)\dd\P^+(\omega^+)\dd\nu(y)=\iint\log f'(y)\dd\mu(f)\dd\nu(y)\]
for $\P^+\times\nu$ almost every $(\omega^+,x).$ 
This corresponds to the Lyapunov exponent $\lambda(\mu,\nu)$ given in \eqref{eqn: Lyapunov exponent}.

In general, the Lyapunov exponent can be defined similarly for every ergodic $F^+$-invariant probability measure on $\Sigma^+\times \SS^1$, or every ergodic $F$-invariant probability measure on $\Sigma\times\SS^1.$
Specifically, let $m$ be an $F$-invariant probability measure on $\Sigma\times\SS^1.$ The Lyapunov exponent of $(\omega,x)$ is given by
\[\lambda(\omega,x)\defeq\lim_{n\to+\infty}\frac{1}{n}\log (f_{\omega}^n)'(x).\]
Then for $m$-almost every $(\omega,x),$ the limit exists and coincides with the backward limit
\[\lim_{n\to-\infty}\frac{1}{n}\log (f_{\omega}^n)'(x).\]
Moreover, if $m$ is ergodic, then $\lambda(\omega,x)$ is constant almost everywhere, denoted by $\lambda(m)$. We also remark that if $m$ is an ergodic $F$-invariant probability measure and $m^+=(\pi^+,\Id)_*m$ which is an ergodic $F^+$-invariant probability measure on $\Sigma\times\SS^1,$ then $\lambda(m)=\lambda(m^+).$

\subsection{Generalities on random transformations}
\label{sec: random walks 4.1}

We denote by $P$ and $Q$ the natural projections of $\Sigma\times \SS^1$ to $\Sigma$ and $\SS^1,$ respectively. 
For a Borel probability measure $m$ on $\Sigma\times\SS^1$ with $P_*m = \P$, write
\[\dr m(\omega,x)=\dr\P(\omega)\,\dr m_\omega(x)\]
for its disintegration along $P:\Sigma \times \SS^1 \to \Sigma$ in the sense of Rokhlin.
The measure $m$ being $F$-invariant translates to the following equivariant property: for $\P$-almost every $\omega \in \Sigma$,
\begin{equation}
\label{eq:moemga equiv}
m_{\sigma(\omega)} = (f_\omega)_*m_\omega.
\end{equation}

Let $\cP = \cP_{F,\P}$ denote the set of  $F$-invariant probability measures $m$ on $\Sigma \times \SS^1$ such that $P_*m=\P$.
Let $\cP^u \sbs \cP$ (resp. $\cP^s$) denote the subset of all $u$-states (resp. $s$-states). 
Recall that $m \in \cP$ is a \emph{$u$-state} (resp. \emph{$s$-state}) if its disintegration $\omega \mapsto m_\omega$  factors through $\pi^- : \Sigma \to \Sigma^-$ (resp. through $\pi^+ : \Sigma \to \Sigma^+$).
For convenience later on, we set $\mu^+=\mu$ and define $\mu^-$ as the pushforward of $\mu$ under the map $f\mapsto f^{-1}$. 

\begin{proposition}[{\cite[Proposition 5.17]{Vi14}}]
\label{prop: stationary and state}
The map $m \mapsto Q_*m$ is a bijection between the convex set $\cP^u$ (resp. $\cP^s$) and the convex set of $\mu^+$-stationary (resp. $\mu^{-}$-stationary) measures.
\end{proposition}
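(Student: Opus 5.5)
We need to prove that $m\mapsto Q_*m$ is a bijection from $\cP^u$ onto the $\mu^+$-stationary measures, and from $\cP^s$ onto the $\mu^-$-stationary measures. We treat $u$-states in detail; the $s$-state case follows by applying the same argument to the inverse cocycle. Indeed, $F^{-1}(\omega,x)=(\sigma^{-1}\omega,f_{-1}^{-1}x)$ is the cocycle driven by $\mu^-$ under the reflection $n\mapsto -n-1$ of coordinates. This reflection exchanges the roles of $\pi^+$ and $\pi^-$, so $s$-states for $F$ become $u$-states for the reversed walk.

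\emph{Stationarity of the projection.} Let $m\in\cP^u$, so $m_\omega=m_{\omega^-}$ depends only on the past coordinates $(f_n)_{n<0}$. By \eqref{eq:moemga equiv} we have $m_{\sigma\omega}=(f_0)_*m_\omega$. Put $\nu=Q_*m=\int m_\omega\dd\P(\omega)$. Since $\nu$ is the image of $m$, and $m$ is invariant, $\nu=\int m_{\sigma\omega}\dd\P$. This equals $\int (f_0)_*m_{\omega^-}\dd\P$. Under $\P$ the coordinate $f_0$ is independent of $\omega^-$ and has law $\mu$. Integrating over $\omega^-$ first therefore gives $\int f_*\nu\dd\mu(f)=\mu*\nu$, so $\nu$ is $\mu^+$-stationary.

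\emph{Construction of the inverse.} Given a $\mu$-stationary $\nu$, we want a $u$-state projecting to it. We define
\[m_\omega=\lim_{n\to\infty}(f_{-1}\cdots f_{-n})_*\nu .\]
To see that the limit exists, integrate against a fixed continuous $\varphi$: the sequence $\int\varphi\, d(f_{-1}\cdots f_{-n})_*\nu$ is a bounded martingale with respect to the filtration generated by $f_{-1},\dots,f_{-n}$. The martingale property is exactly the identity $\nu=\mu*\nu$. Hence it converges almost surely. Taking a countable dense set of $\varphi$ gives weak-$*$ convergence almost surely (the standard Furstenberg argument). The resulting limit $m_\omega$ depends only on $\omega^-$. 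Equivariance $m_{\sigma\omega}=(f_0)_*m_\omega$ is immediate from the definition, because shifting prepends $f_0$ to the product. By bounded convergence, $\int m_\omega\dd\P=\nu$. So $m=\int\delta_\omega\times m_\omega\dd\P$ lies in $\cP^u$ and satisfies $Q_*m=\nu$.

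\emph{Injectivity.} This is the main point. Let $m\in\cP^u$ and set $\nu=Q_*m$. Iterating equivariance gives $m_\omega=(f_{-1}\cdots f_{-n})_*m_{\sigma^{-n}\omega}$. The measure $m_{\sigma^{-n}\omega}$ depends only on the coordinates with index below $-n$, so it is independent of $f_{-1},\dots,f_{-n}$. Conditioning on $f_{-1},\dots,f_{-n}$ and integrating out the earlier coordinates therefore yields
\[\E[m_\omega\mid f_{-1},\dots,f_{-n}]=(f_{-1}\cdots f_{-n})_*\nu .\]
Here we use that each $f\mapsto f_*$ is affine and continuous on measures, so it commutes with the integral. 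Since $m_\omega$ is measurable with respect to the $\sigma$-algebra generated by all of $\omega^-$, the martingale convergence theorem shows that these conditional expectations converge almost surely to $m_\omega$. Thus $m_\omega$ coincides with the limit constructed above from $\nu$, so $m$ is determined by $Q_*m$ and the map is injective.

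Finally, both $m\mapsto Q_*m$ and the inverse construction are affine, so the bijection respects the convex structures.
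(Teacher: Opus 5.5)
Your proof is correct and follows the standard route. The paper does not reprove this proposition but cites \cite[Proposition 5.17]{Vi14}, which rests on the same martingale identification $m_\omega=\lim_{n\to\infty}(f_{-1}\cdots f_{-n})_*\nu$ (the paper itself recalls this limit via \cite[Lemma 5.22]{Vi14} when describing the Furstenberg boundary map), with $s$-states handled by time reversal as you do; one cosmetic point is that $\nu=\int m_{\sigma\omega}\dd\P(\omega)$ follows from the $\sigma$-invariance of $\P$, not from the invariance of $m$.
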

Note that this bijection is clearly linear, and hence preserves ergodicity.
 
The following fact is a special case of Avila-Viana invariance principle~\cite{AV10}, which is a generalization of an earlier result in the linear case by Ledrappier~\cite{Led86}. 
\begin{theorem}[{\cite[Theorem B]{AV10}}]
\label{thm: invariance principle}
Let $m$ be an $F$-invariant probability measure on $\Sigma \times \SS^1$ with $P_*m = \P$.
If $\lambda(\omega,x) \leq 0$ holds for $m$-almost every $(\omega,x)$ then $m \in \cP^u$. 
Dually, if $\lambda(\omega,x) \geq 0$ holds for $m$-almost every $(\omega,x)$  then $m \in \cP^s$.
\end{theorem}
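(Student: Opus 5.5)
This statement is quoted from Avila--Viana \cite[Theorem B]{AV10}, so in the paper it is simply invoked. The argument I would give for this one-dimensional special case runs as follows. It suffices to treat the case $\lambda\le 0$; the dual statement follows by time reversal. Indeed, the inverse cocycle is driven by $\mu^-$ on the reversed shift, and reversal exchanges $\Sigma^+$ with $\Sigma^-$ and flips the sign of the exponents.

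The strategy is to compare the entropy-like quantity
\[
\int \log \frac{\dr (f_\omega)_*m_\omega}{\dr m_{\sigma\omega}}\dd m
\]
with the exponent. Let $\mathcal F^-$ be the $\sigma$-algebra generated by the past coordinates $\pi^-$. Set
\[
\tilde m_\omega=\mathbb E[m_\cdot\mid\mathcal F^-](\omega),
\]
which is the conditional expectation of the disintegration. By independence of the coordinates under $\P$, $\tilde m$ is again $F$-invariant and is a $u$-state. The goal is to show $m_\omega=\tilde m_\omega$ almost surely.

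Following Ledrappier and Avila--Viana, I would introduce the relative entropy $h(m)$ of the past-measurable partition refined by a fine partition of $\SS^1$, measured against the next coordinate. Concretely, consider
\[
\int \log\frac{\dr m_{\omega}}{\dr \tilde m_{\omega}}\dd m,
\]
or the analogous Kullback--Leibler divergence of $m_\omega$ with respect to the average $\int (f)_*m_{\omega'}\,\dr\mu(f)$ over the last coordinate. The key steps are as follows.
\begin{enumerate}[(i)]
\item Show this divergence $\kappa(m)\ge0$, with equality iff $m$ is a $u$-state. This follows from strict convexity of $t\log t$ and the equivariance \eqref{eq:moemga equiv}.
\item Prove a Ruelle/Margulis-type inequality bounding $\kappa(m)$ by the positive part of the exponent. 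Applied to the inverse cocycle, this shows the fibre entropy generated by the random maps is controlled by $\int\lambda^+\,\dr m$, which is $0$ here.
\end{enumerate}

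For step (ii), I would use the $C^1$ regularity: diffeomorphisms distort small intervals boundedly. Take a partition $\xi$ of $\SS^1$ into intervals of size $\delta$ and refine it along orbits. If $\lambda\le 0$ almost everywhere, then $n$ iterates of the intervals do not expand exponentially. Hence the number of atoms of $\bigvee_{k<n}(f^k_\omega)^{-1}\xi$ meeting a typical point's neighbourhood grows subexponentially, and the relative entropy rate vanishes. Combining this with a monotonicity (martingale) argument gives $\kappa(m)=0$.

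The main obstacle is step (ii): making rigorous that zero fibre entropy forces equality in the Jensen step. This requires handling the case where $\lambda$ equals $0$ exactly rather than being strictly negative, where no contraction is available. Here I would follow Avila--Viana's approach. They work with the fibre entropy defined via a partition whose boundary has small measure, and use the fact that the maps in the finite set $\cS$ are bi-Lipschitz, so entropy is bounded by $\max(0,\lambda)$ through Ruelle's argument.
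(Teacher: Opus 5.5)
\paragraph{How the paper handles this.} The paper gives no argument of its own. It invokes \cite[Theorem B]{AV10}, and what must be checked is that the random walk fits AV's setting. One takes the $\sigma$-algebra $\mathcal F^+$ generated by the coordinates $f_j$, $j\ge 0$. It satisfies $\sigma^{-1}\mathcal F^+\subset\mathcal F^+$, and the $\sigma$-algebras $\sigma^n\mathcal F^+=\mathcal F_{\ge -n}$ increase to the Borel $\sigma$-algebra. The fibre map $f_\omega=f_0$ is $\mathcal F^+$-measurable and ranges over the finite set $\cS$ of $C^1$ diffeomorphisms. On a one-dimensional fibre, $\lambda_-=\lambda$. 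AV's conclusion, that $\omega\mapsto m_\omega$ is $\mathcal F^+$-measurable, is exactly $m\in\cP^s$. The $u$-case is the same theorem applied to $F^{-1}$ with $\mathcal F^-$, which is your time-reversal step. Insofar as you defer to AV you agree with the paper. Your own sketch, however, is wrong in its setup and has a genuine gap at the decisive step.

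\paragraph{Setup.} $\tilde m$ is not $F$-invariant. Write $\mathcal F_{\le k}$ for the $\sigma$-algebra of coordinates of index $\le k$, and $m^{(k)}_\omega$ for the conditional expectation of $m_\cdot$ given $\mathcal F_{\le k}$, so that $\tilde m=m^{(-1)}$. Equivariance and $\sigma$-invariance of $\P$ give $(f_0)_*m^{(k)}_\omega=m^{(k-1)}_{\sigma\omega}$ for $k\ge0$. Hence $\tilde m$ is equivariant if and only if $m^{(0)}=m^{(-1)}$, which is already equivalent to $m\in\cP^u$. For instance, for the $s$-state $m_i^-$ one finds $\tilde m_\omega\equiv\nu_i^-$, which is not $\supp\mu$-invariant.

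The quantity that works is the one-step information that $x$ carries about $f_0$ given the past,
$\kappa(m)=\int\mathrm{KL}(m^{(0)}_\omega\,\|\,m^{(-1)}_\omega)\,d\P(\omega)$. It has three properties:
\begin{itemize}
\item it is finite, since $m^{(0)}_\omega\le\mu(f_0)^{-1}m^{(-1)}_\omega$;
\item it vanishes if and only if $m\in\cP^u$: induct with the relation above, then use martingale convergence $m^{(k)}_\omega\to m_\omega$;
\item for $m=m_i^-$ it equals $h_{\mathrm F}(\mu^-,\nu_i^-)>0$.
\end{itemize}
Neither of your candidates is this quantity.
\begin{itemize}
\item By the chain rule, $\int\log\frac{dm_\omega}{d\tilde m_\omega}\,dm=\lim_n n\,\kappa(m)\in\{0,+\infty\}$, so it admits no bound by exponents.
\item The divergence of $m_\omega$ from $\int f_*m_{\omega'}\,d\mu(f)$, which averages out $f_{-1}$, vanishes exactly on $s$-states, not $u$-states.
\item Averaging the full conditional over $f_0$ instead gives, for $m=m_i^-$, the value $H(\mu)$ as soon as the sets $g^{-1}\Xi(\sigma\omega,i)$, $g\in\supp\mu$, are disjoint, whatever $\lambda_i^-$ is.
\end{itemize}
Conditioning on one side only is essential.

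\paragraph{Step (ii).} As sketched, this step never uses the sign of $\lambda$. For any cocycle of circle homeomorphisms, $\bigvee_{k<n}(f^k_\omega)^{-1}\xi$ has $O(n\,\#\xi)$ atoms, so that entropy rate is always zero. If it forced $\kappa=0$, it would make $m_i^-$ a $u$-state, although $\lambda_i^->0$ and $\kappa(m_i^-)>0$. The exponent must enter through the spatial scale at which the information is carried. By the chain rule and $F$-invariance,
\[
n\,\kappa(m)=\int\mathrm{KL}\bigl(((f^n_\omega)^{-1})_*m^{(-1)}_{\sigma^n\omega}\,\big\|\,m^{(-1)}_\omega\bigr)\,d\P(\omega).
\]
One has to show this is $o(n)$ when $f^n_\omega$ does not expand exponentially at typical points. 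This includes the borderline case $\lambda=0$, where no Pesin-type contraction is available. That estimate is the real content of AV's Theorem B (Ledrappier's theorem in the linear case). Your proposal does not supply it, so as it stands the argument reduces to the citation.
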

Consequently, every ergodic $m \in \cP$ is either a $u$-state or an $s$-state. 
It follows that $\cP$ is the convex hull of $\cP^u \cup \cP^s$.
We also have the following.
\begin{corollary}
\label{cor:u state and lambda}
Assume that $\supp\mu$ does not preserve any probability measure on $\SS^1.$ 
Then for every ergodic $u$-state $m$, $\lambda(m)<0.$ 
For every ergodic $s$-state $m,$ $\lambda(m)>0.$
\end{corollary}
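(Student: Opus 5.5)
We argue by contradiction, using the invariance principle (Theorem \ref{thm: invariance principle}) together with the duality between $u$-states and $s$-states. The $s$-state claim follows from the $u$-state claim by time reversal. Replacing $\mu$ by $\mu^-$ swaps the roles of $u$- and $s$-states and changes the sign of the exponent. Moreover, $\supp\mu^-$ preserves a probability measure exactly when $\supp\mu$ does, since $f_*\eta=\eta$ if and only if $(f^{-1})_*\eta=\eta$. So it suffices to treat an ergodic $u$-state $m$ and show that $\lambda(m)<0$.

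First, $\lambda(m)\le 0$. Suppose instead that $\lambda(m)>0$. Since $m$ is ergodic, $\lambda(\omega,x)=\lambda(m)\ge 0$ for $m$-a.e.\ $(\omega,x)$. The dual half of Theorem \ref{thm: invariance principle} then gives $m\in\cP^s$. Hence $m$ is both a $u$-state and an $s$-state, and I treat this case together with $\lambda(m)=0$.

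Now suppose $\lambda(m)=0$. Then $\lambda\le 0$ and $\lambda\ge 0$ hold $m$-a.e., so Theorem \ref{thm: invariance principle} gives $m\in\cP^u\cap\cP^s$. The fibre measure $m_\omega$ is therefore measurable with respect to $\sigma(f_n:n<0)$ and also with respect to $\sigma(f_n:n\ge 0)$. These two $\sigma$-algebras are independent under $\P=\mu^\ZZ$. A function measurable with respect to both is independent of itself, so it is $\P$-a.s.\ constant. This is the standard step, applied for instance to $\omega\mapsto\int\varphi\,dm_\omega$ for $\varphi$ in a countable dense subset of $C(\SS^1)$. Hence $m_\omega=\eta$ for $\P$-a.e.\ $\omega$, for a fixed probability measure $\eta$ on $\SS^1$.

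The equivariance relation \eqref{eq:moemga equiv} now reads $\eta=(f_\omega)_*\eta$ for $\P$-a.e.\ $\omega$. Here $f_\omega=f_0(\omega)$, and $f_0$ takes each value $f\in\supp\mu$ with positive probability. Therefore $f_*\eta=\eta$ for every $f\in\supp\mu$, contradicting the hypothesis. So $\lambda(m)<0$ for every ergodic $u$-state $m$.

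The only delicate point is the passage from measurability with respect to both $\pi^-$ and $\pi^+$ to $\P$-a.s.\ constancy of $\omega\mapsto m_\omega$. This is a routine independence argument, or equivalently an application of the Kolmogorov 0-1 law to a countable generating family of test functions. Everything else is a direct application of Theorem \ref{thm: invariance principle}.
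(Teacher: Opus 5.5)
Your proof is correct and follows the same route as the paper. The invariance principle makes any $u$-state with $\lambda(m)\geq 0$ also an $s$-state, so $m_\omega$ is almost surely constant and gives a measure invariant under every element of $\supp\mu$. You simply spell out the case $\lambda(m)>0$ and the past/future independence step, both of which the paper's two-line proof leaves implicit.
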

\begin{proof}
The equality $\lambda(m)=0$ holds if and only if $m$ is both a $u$-state and an $s$-state. 
Then the conditional measure $m_\omega$ is constant, which is a Borel probability measure on $\SS^1$ invariant for every $f\in\supp\mu.$
\end{proof}

We will also need a result of Ruelle-Wilkinson~\cite{RW01} about invertible cocycles with negative Lyapunov exponents.
\begin{theorem}[{\cite[Theorem II]{RW01}}]
Let $m$ be an ergodic $F$-invariant probability measure on $\Sigma \times \SS^1$.
Assume that $\lambda(m)<0.$ 
Then there exists a subset $X\sbs\Sigma\times\SS^1$ and a positive integer $k$ such that
	\begin{enumerate}
		\item $m(X)=1$ and
		\item for every $(\omega,x)\in X,$ $\#(X\cap \lb{\omega}\times\SS^1)=k.$
	\end{enumerate}
\end{theorem}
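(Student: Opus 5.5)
We plan to show directly that the conditional measures $m_\omega$ are, for $\P$-almost every $\omega$, uniform on $k$ points. The set $X$ will then consist of all pairs $(\omega,x)$ such that $x$ is an atom of $m_\omega$ of maximal mass. The mechanism is the exponential contraction forced by $\lambda(m)<0$, combined with the ergodicity of $m$ and the $\sigma$-invariance of $\P$.

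\emph{Step 1: local contraction.} Since $\cS$ is finite and each $f\in\cS$ is $C^1$ on the compact circle, the functions $\log f'$ are uniformly continuous with a common modulus. Fix $0<\eta<|\lambda(m)|/4$. For $m$-a.e. $(\omega,x)$ we have $\frac1n\log(f^n_\omega)'(x)\to\lambda(m)$, so there are $C(\omega,x)<\infty$ and $r(\omega,x)>0$ with
\[
\bigl|f^n_\omega\bigl(B(x,r(\omega,x))\bigr)\bigr|\le C(\omega,x)\,2^{n(\lambda(m)+2\eta)}\quad\text{for all } n\ge0 .
\]
This follows by the usual inductive argument. As long as the image interval $f^j_\omega(B(x,r))$ has length below the uniform-continuity scale, the derivative of $f^n_\omega$ on $B(x,r)$ stays within a factor $2^{n\eta}$ of $(f^n_\omega)'(x)$. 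The images therefore keep shrinking, and the induction closes when $r$ is small relative to $\sup_n 2^{-n(\lambda(m)+\eta)}(f^n_\omega)'(x)$. Choose $r_0,C_0$ so that the set $A=\{r\ge r_0,\ C\le C_0\}$ has $m(A)>1/2$. Then the set $\Omega_0=\{\omega: m_\omega(A_\omega)\ge 1/4\}$ has $\P(\Omega_0)\ge c_1>0$.

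\emph{Step 2: concentration and atoms.} For $\omega\in\Omega_0$, cover $\SS^1$ by $\lceil 1/r_0\rceil$ arcs of length $r_0$. By pigeonhole, one such arc $J$ has $m_\omega(J\cap A_\omega)\ge r_0/8$. Any $x\in J\cap A_\omega$ satisfies $J\subset B(x,r_0)$, so $|f^n_\omega J|\le C_0 2^{n(\lambda+2\eta)}$. By the equivariance \eqref{eq:moemga equiv}, $m_{\sigma^n\omega}$ gives mass at least $r_0/8$ to an interval of length at most $\epsilon_n:=C_0 2^{n(\lambda+2\eta)}\to0$. Let $\Phi_\omega(\epsilon)=\sup_y m_\omega(B(y,\epsilon))$. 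Since $\sigma$ preserves $\P$, we get $\P(\Phi_\omega(\epsilon_n)\ge r_0/8)\ge c_1$ for every $n$. The events are decreasing in $n$, and $\Phi_\omega(\epsilon)\to\max_y m_\omega(\{y\})$ as $\epsilon\to0$. Hence with probability at least $c_1$, $m_\omega$ has an atom of mass at least $r_0/8$.

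\emph{Step 3: ergodicity.} Let $a(\omega)$ be the maximal atom mass of $m_\omega$, and $k(\omega)$ the number of atoms of that mass. Diffeomorphisms push atoms to atoms of equal mass, so $a$ and $k$ are $\sigma$-invariant by \eqref{eq:moemga equiv}. The base $(\Sigma,\sigma,\P)$ is ergodic (it is the image of the ergodic $m$), so $a$ and $k$ are a.e. constant; by Step 2, $a>0$ and $k\le 1/a<\infty$. Let $X=\{(\omega,x): m_\omega(\{x\})=a\}$. Then $X$ is $F$-invariant with $m(X)=ka>0$, so ergodicity of $m$ gives $m(X)=1$. Thus each $m_\omega$ is uniform on exactly $k$ points and $\#(X\cap\{\omega\}\times\SS^1)=k$. (If one prefers the exact-everywhere statement, restrict $X$ to the full-measure set of good $\omega$.)

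The main obstacle is Step 1. It is a one-dimensional Pesin-type estimate under only $C^1$ regularity, and it must be made uniform on a set of positive measure so that the pigeonhole in Step 2 applies. Since we are on the circle with finitely many generators, I would get it from uniform continuity of $\log f'$ together with a tempered choice of the radius, rather than from a Hölder distortion bound.
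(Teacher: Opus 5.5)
The paper gives no proof of this statement; it simply imports \cite[Theorem II]{RW01}, which concerns invertible skew products with fibers of arbitrary dimension and all fiber exponents negative. Your argument is a correct, self-contained proof of the one-dimensional case.

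Your Steps 2 and 3 follow the standard mechanism behind that theorem. The exact equivariance $m_{\sigma^n\omega}=(f^n_\omega)_*m_\omega$ pushes a definite amount of fiber mass into intervals of length $\epsilon_n\to0$, over a set of base points of measure bounded below, and this forces atoms. This is where invertibility of $F$ is essential: the one-sided analogue fails, e.g.\ for $\P^+\times\nu$ with $\nu$ atomless. The maximal atom weight is then an invariant function, so ergodicity gives uniformity on $k$ points.

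Where you genuinely differ is Step 1. In general fiber dimension, the exponential contraction of a neighborhood comes from Pesin-type estimates, which need H\"older control of the derivative. In dimension one the derivatives are scalars and simply multiply. So uniform continuity of $\log f'$ over the finite set $\cS$ already bounds the distortion by $2^{n\eta}$ as long as the images stay below the continuity scale $\delta_0$. The induction then closes with $r=\delta_0/(2D)$, where $D=\sup_n 2^{-n(\lambda(m)+\eta)}(f^n_\omega)'(x)$, and one can even take $C\equiv\delta_0$. This buys an elementary proof valid exactly in the $\cS\sbs\Diff^1_+(\SS^1)$ setting in which the paper sets up the cocycle $F$. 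The cost is the generality of the cited result: higher-dimensional fibers and general measurable families of fiber maps.

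Two small repairs are needed:
\begin{enumerate}
\item The statement does not assume $P_*m=\P$. Run everything with the ergodic $\sigma$-invariant measure $P_*m$ in place of $\P$; nothing else changes.
\item Use arcs of length $r_0/2$, so that $J\sbs B(x,r_0)$ genuinely holds for every $x\in J$.
\end{enumerate}
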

By considering the inverse cocycle, we see that the same holds if we assume $\lambda(m) > 0$.
Using ergodicity and equivariance~\eqref{eq:moemga equiv}, we have the following fact about the disintegration of $m$.
\begin{corollary}
\label{cor: atomic conditional measure}
If $m \in \cP$ is ergodic and $\lambda(m) \neq 0$,
then there exists a positive integer $k$ such that for $\P$-almost every $\omega \in \Sigma$, $m_\omega$ is a uniform probability measure on a set of $k$ elements.
\end{corollary}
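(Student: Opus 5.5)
The plan is to deduce the statement from the Ruelle--Wilkinson theorem (the result immediately above) using $F$-invariance of $m$, the equivariance relation \eqref{eq:moemga equiv}, and ergodicity of $m$ under $F$.

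\textbf{Step 1: the fibres of $X$ are almost surely finite sets of one size $k$.} Apply the Ruelle--Wilkinson theorem to $m$ when $\lambda(m)<0$; when $\lambda(m)>0$, apply it to the inverse cocycle $F^{-1}$, which is a cocycle of the same type over $\sigma^{-1}$ and has exponent $-\lambda(m)<0$. This gives a full-measure set $X$ such that, for every $(\omega,x)\in X$, the fibre $X_\omega\defeq\{y:(\omega,y)\in X\}$ has exactly $k$ points. Since $m(X)=1$, for $\P$-a.e.\ $\omega$ we have $m_\omega(X_\omega)=1$. In particular such $X_\omega$ is nonempty, so it contains some point of $X$ and therefore has exactly $k$ points. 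Hence $m_\omega$ is purely atomic, supported on at most $k$ atoms.

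\textbf{Step 2: the atom profile is $F$-invariant.} Define
\[
\phi(\omega,x)\defeq m_\omega(\{x\}).
\]
This function is measurable: one way to see it is $\phi(\omega,x)=\lim_{\rho\to0}m_\omega(B(x,\rho))$. Since $f_\omega$ is a bijection, \eqref{eq:moemga equiv} gives $m_{\sigma\omega}(\{f_\omega x\})=m_\omega(\{x\})$. Thus $\phi\circ F=\phi$ $m$-almost everywhere, and by ergodicity $\phi$ equals a constant $c$ $m$-almost everywhere.

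\textbf{Step 3: conclude.} For $\P$-a.e.\ $\omega$, $m_\omega$-almost every point $x$ satisfies $m_\omega(\{x\})=c$. Because $m_\omega$ is purely atomic by Step 1, every atom of $m_\omega$ therefore has mass exactly $c$. Since the total mass is $1$, there are $1/c$ atoms, so $c=1/k'$ for a positive integer $k'\le k$ that does not depend on $\omega$. Hence $m_\omega$ is the uniform probability measure on a set of $k'$ points, which is the claim with $k'$ in place of $k$.

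The only delicate point is measurability of $\phi$ and exchanging the almost-everywhere quantifiers (the full-measure sets in $\omega$ and in $(\omega,x)$). Both are routine via the Rokhlin disintegration.
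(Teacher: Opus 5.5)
Your proposal is correct and follows the paper's own (one-line) argument: the Ruelle--Wilkinson theorem (applied to $F^{-1}$ when $\lambda(m)>0$) makes $m_\omega$ purely atomic with at most $k$ atoms, and equivariance plus ergodicity make all atoms have equal mass. Your function $\phi(\omega,x)=m_\omega(\{x\})$, its $F$-invariance, and the positivity of the constant $c$ coming from pure atomicity supply exactly the details the paper leaves implicit.
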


The next result is due to Malicet \cite{Mal17}.
Recall that $T_\mu$ is the semigroup generated by $\supp\mu.$
\begin{theorem}[{\cite[Theorem B]{Mal17}}]
\label{thm: supports of stationary measures}
Let $\mu$ be a finitely supported probability measure on $\Homeo(\SS^1)$ such that $\supp\mu$ does not preserve any Borel probability measure on $\SS^1.$
Then there are only finitely many ergodic $\mu$-stationary measures on $\SS^1.$
Their topological supports are pairwise disjoint and are exactly the $T_\mu$-minimal sets.
\end{theorem}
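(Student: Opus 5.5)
The statement to prove is Theorem \ref{thm: supports of stationary measures} (Malicet). The plan is to derive it from three ingredients: the invariance principle (Theorem \ref{thm: invariance principle}), Corollary \ref{cor:u state and lambda}, and a local contraction argument.

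\textbf{Step 1: stationary measures have no atoms.} Let $\nu$ be a $\mu$-stationary measure. The set of atoms of maximal mass is finite and, by stationarity, $T_\mu$-invariant. Hence it is a finite orbit, and the uniform measure on it is $\supp\mu$-invariant, contradicting the hypothesis.

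\textbf{Step 2: an ergodic stationary measure is supported on one minimal set.} Let $\nu$ be ergodic. Then $\supp\nu$ is closed and satisfies $f(\supp\nu)\subset\supp\nu$ for $f\in\supp\mu$. It therefore contains some $T_\mu$-minimal set $K$. I would then show $\supp\nu=K$, equivalently that the support of an ergodic stationary measure is minimal. The idea is local contraction: for $\P^+$-a.e.\ $\omega^+$ and $\nu$-a.e.\ $x$ there is a neighbourhood $B$ of $x$ with $|f^n_{\omega^+}(B)|\to0$. In $C^{1+\alpha}$ this follows from $\lambda<0$ via a distortion argument. For plain homeomorphisms one needs Antonov/Malicet's argument instead: the averaged length $\int |f^n_{\omega}(I)|\,d\P$ is driven to zero using the absence of invariant measures. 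Given local contraction, the random orbit of a small interval around a typical point shadows the orbit of any $y\in K$. Since random orbits of $y$ are $\nu$-equidistributed (Breiman) and accumulate on all of $K$, every $\nu$-typical point is approached by points of $K$. So $\supp\nu\subset K$.

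\textbf{Step 3: disjointness.} Distinct minimal sets are disjoint by minimality. So ergodic measures with distinct supports have disjoint supports. The real work is to show that two distinct ergodic measures $\nu_1\ne\nu_2$ cannot share the same minimal support $K$. For this I use Corollary \ref{cor: atomic conditional measure} together with Corollary \ref{cor:u state and lambda}. The $u$-state $m_i$ of $\nu_i$ has conditionals $(m_i)_\omega$ equal to uniform measures on $k_i$ points. These conditionals are also the limits of $(f^n_{\omega^-})_*\nu_i$ along backward words, where $f^n_{\omega^-}$ is the backward composition. Local contraction says that these backward compositions map most of the circle into shrinking neighbourhoods of finitely many points. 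Those points should be determined by $\omega$ and by the component of $K$ being captured, not by $\nu_i$. Hence the two measures have common conditional atoms, and averaging $\nu_i=\int (m_i)_\omega\,d\P$ forces $\nu_1=\nu_2$.

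\textbf{Step 4: finiteness.} The $u$-state of each ergodic measure contributes $k_i\geq1$ atoms. By the uniform contraction structure, the total number of contracting limit points is bounded. A cleaner route: the minimal sets are disjoint closed $T_\mu$-invariant sets. Each gap between them contains a point whose orbit escapes to other minimal sets or to gaps of bounded size. Finitely many gaps of length $\ge\delta$, with $\delta$ coming from uniform contraction, bounds their number.

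The main obstacle is Step 2/3's local contraction in the $C^0$ setting, since the statement is for $\Homeo$. There I would follow Malicet's argument via the expected-length supermartingale and the lack of invariant measures. Honestly, in the paper this theorem is simply cited from \cite{Mal17}.
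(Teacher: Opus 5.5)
\textbf{Verdict: genuine gap.} The paper gives no proof of this statement; it cites \cite{Mal17}. Your proposal therefore has to stand on its own, and it does not. Its one load-bearing ingredient, local contraction for homeomorphisms, is deferred. The surrounding steps either use tools that do not exist in $\Homeo(\SS^1)$ or are circular.

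\textbf{What fails.} Lyapunov exponents, Theorem~\ref{thm: invariance principle} and Corollaries~\ref{cor:u state and lambda} and~\ref{cor: atomic conditional measure} all require derivatives. Even in $C^{1+\alpha}$, $\lambda<0$ only contracts neighbourhoods of $\nu$-typical points, and such a neighbourhood need not contain any point of $K$. Your shadowing in Step 2 needs the reverse: a contracted interval around a point of $K$ that captures typical points.

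Your sketch of the $C^0$ mechanism is also wrong. For an interval meeting the support of a $\mu^-$-stationary measure, with positive probability it contains a repelling point of the future word and is not contracted. So $\int|f^n_\omega(I)|\,d\P$ does not tend to $0$ in general. Malicet's pointwise contraction theorem comes instead from a version of the invariance principle for continuous circle-fibred cocycles.

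In Steps 3--4 you assert three things: backward compositions send most of the circle near finitely many points, these points do not depend on $\nu_i$, and the number of contracting limit points is bounded. That is exactly the global structure of Theorems~\ref{thm: structure of random walk 1}--\ref{thm: structure of random walk 2} and Lemma~\ref{lem: uniform bound of Pi}. The paper derives that structure \emph{from} the finiteness in this very theorem: Lemma~\ref{lem: Pi and Wu} needs $\Pi(\omega)$ and $\Xi(\omega)$ to be finite. Using it here is circular. Your sentence that the points ``should be determined by $\omega$ \dots\ not by $\nu_i$'' is the claim to be proved, not an argument, and the gap-counting in Step 4 is not an argument either.

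Finally, Breiman's theorem only says that limit points of empirical measures are stationary. It gives $\nu$-equidistribution of orbits of $y\in K$ only if $\nu$ is already the unique stationary measure on $K$, so Step 2 silently uses Step 3.

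\textbf{How to close it.} Grant Malicet's pointwise local contraction: for every $x$ there is an open interval $B_x\ni x$ with $\P(|f^n_\omega(B_x)|\to 0)>0$. Then everything follows by a soft coupling argument, with no $u$-states.
\begin{itemize}
\item Support is minimal. Take $z$ in a minimal set $K\subset\supp\nu$. By Fubini and ergodicity of $\P^+\times\nu$, pick $\omega$ in the contraction event for $B_z$ such that $\nu$-a.e.\ point is Birkhoff-generic along $\omega$. Pick a generic $w\in B_z$, possible since $\nu(B_z)>0$. The orbit of $z$ shadows that of $w$, so its empirical measures, which are carried by the closed set $K$, tend to $\nu$. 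Hence $\nu(K)=1$ and $\supp\nu=K$.
\item Injectivity. The same coupling, with generic points for $\nu_1$ and $\nu_2$ in a common $B_z$, gives $\nu_1=\nu_2$ whenever $\supp\nu_1=\supp\nu_2$.
\item Finiteness. A contracted interval cannot meet two disjoint compact forward-invariant sets, since they are at positive distance and orbits stay in them. So each $B_x$ meets at most one minimal set, and a finite subcover bounds their number.
\item Converse inclusion, which you omitted. Every minimal set $K$ carries an ergodic stationary measure: take an extreme limit point of Ces\`aro averages of $\mu^{*n}*\delta_z$ with $z\in K$. Its support is closed and forward-invariant, hence equal to $K$.
\end{itemize}
The only genuinely hard input is Malicet's local contraction theorem, which the paper, like you, takes from \cite{Mal17}.
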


\subsection{The structure of random walks on \texorpdfstring{$\Diff^{1+\alpha}_+(\SS^1)$}{Diff\^{}1+(S\^{}1)}}\label{sec: random walks 4.3}
Fix $\alpha>0 .$
Let $\mu$ be a finitely supported probability measure on $\Diff_+^{1+\alpha}(\SS^1)$ without common invariant probability measures on $\SS^1.$ 
In order to study the structure of the random walk induced by $\mu$, we first establish some relations among all the ergodic $\mu^\pm$-stationary measures. 
One basic question is whether the number $k$ in Corollary \ref{cor: atomic conditional measure} is the same among different $\mu^\pm$-stationary measures.

This question can be answered by constructing a dynamically defined transitive permutation among all the ergodic $\mu^\pm$-stationary measures, see Lemma \ref{lem: R is transitive}. 
The construction is partially inspired by Hertz-Hertz-Tahzibi-Ures \cite{HHTU}, where they associate each invariant measure of positive center exponent to one of negative center exponent by considering the extremal points of the Pesin center manifold. 
Using this permutation, we can show that these stationary measures share similar properties. 
Thus the stationary measures form a coherent dynamical structure on $\SS^1.$

\begin{definition}
	Let $f\in\Diff^{1+\alpha}_+(\SS^1)$ and let $I\sbs\SS^1$ be an interval.
Define the \textit{distortion coefficient} of $f$ on $I$ by
\[\vk(f,I)\defeq\sup_{x,y\in I}\abs{\log f'(x)-\log f'(y)}.\]
We also consider the \textit{$\alpha$-H\"older distortion norm} of $f$ on $I$ given by
\[\vk_\alpha(f,I)\defeq \sup_{x\ne y\in I}\frac{\abs{\log f'(x)-\log f'(y)}}{d(x,y)^\alpha}.\]
\end{definition}
Clearly $\vk(f,I)\leqslant |I|^\alpha\cdot \vk_\alpha(f,I).$
Note that for every $f,g\in\Diff_+^{1+\alpha}(\SS^1)$ we have  
 $\vk(fg,I)\leq\vk(g,I)+\vk(f,g(I))$ and
\begin{equation}\label{eqn: vk alpha fg}
	\vk_\alpha(fg,I)\leqslant  \vk_\alpha(g,I)+\|g'\|^\alpha\cdot\vk_\alpha(f,g(I)). 
\end{equation}

We now return to the cocycle $F:\Sigma\times\SS^1\to \Sigma\times \SS^1.$
We consider the subset $\Theta^s \sbs \Sigma \times \SS^1$ given by
\[\Theta^s = \lb{\,(\omega,x) \in \Sigma \times \SS^1: \limsup_{n \to +\infty}\frac{1}{n} \log (f_\omega^n)'(x) < 0\,}.\]
It is clear that $\Theta^s$ is $F$-invariant. 
\begin{lemma}\label{lem: expdecay f'}
	For every $(\omega,x)\in\Theta^s,$ there are constants $\delta,c,C > 0$ such that 
\begin{equation}
\label{eq:expdecay f'}
(f_\omega^n)'(y) \leq C2^{-cn}, \quad\forall y\in B(x,\delta),\,\forall n\geq 0.
\end{equation} 
\end{lemma}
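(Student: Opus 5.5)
The plan is the standard Pliss/bounded-distortion argument for $C^{1+\alpha}$ maps: exponential contraction of derivatives along the orbit of $x$ keeps a small ball exponentially thin, and the Hölder continuity of $\log f'$ then transfers the contraction from $x$ to the whole ball.

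\textbf{Setup.} Fix $(\omega,x)\in\Theta^s$. Choose $c_0>0$ with
\[
\limsup_{n\to\infty}\frac1n\log (f_\omega^n)'(x)<-2c_0 .
\]
Then there is $C_0\ge 1$ such that $(f_\omega^n)'(x)\le C_0 2^{-2c_0 n}$ for all $n\ge 0$; the finitely many small $n$ are absorbed into $C_0$. Since $\cS$ is finite, set
\[
K=\max_{f\in\cS}\vk_\alpha(f,\SS^1)<\infty, \qquad L=\max_{f\in\cS}\|f'\|_\infty .
\]

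\textbf{Induction on the image of the ball.} Let $x_n=f_\omega^n x$ and $B_n=f_\omega^n(B(x,\delta))$. We will prove by induction on $n$ that
\[
\vk\big(f_\omega^n,B(x,\delta)\big)\le 1 \qquad\text{for all } n\ge 0 .
\]
Assume this holds up to $n$.
\begin{itemize}
\item By the mean value theorem, $|B_k|\le e\,(f_\omega^k)'(x)\cdot 2\delta\le 2e C_0\delta\, 2^{-2c_0 k}$ for every $k\le n$.
\item The subadditivity $\vk(fg,I)\le\vk(g,I)+\vk(f,g(I))$ gives
\[
\vk\big(f_\omega^{n+1},B(x,\delta)\big)\le\sum_{k=0}^{n}\vk(f_k,B_k)\le K\sum_{k=0}^n |B_k|^\alpha .
\]
\item Combining the two bounds,
\[
K\sum_{k=0}^n |B_k|^\alpha\le K(2eC_0\delta)^\alpha\sum_{k\ge 0}2^{-2c_0\alpha k}.
\]
\end{itemize}
The right-hand side is at most $1$ once $\delta$ is chosen small enough, depending on $C_0,c_0,K,\alpha$. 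This closes the induction.

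\textbf{Conclusion.} For $y\in B(x,\delta)$, the distortion bound gives
\[
(f_\omega^n)'(y)\le e\,(f_\omega^n)'(x)\le eC_0\,2^{-2c_0 n}.
\]
This is the desired estimate with $C=eC_0$ and $c=2c_0$; the weaker constant $c=c_0$ also works.

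The only delicate point is keeping the induction hypothesis consistent: the distortion bound is needed to control $|B_k|$, and the control of $|B_k|$ is needed to bound the distortion. Running both bounds in a single induction removes the circularity. No Pliss-type lemma is needed, because the hypothesis is a genuine $\limsup$ condition and therefore holds for all large $n$, not only along a subsequence. The constant $L$ is not actually needed.
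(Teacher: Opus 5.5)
Your proof is correct and follows the paper's argument essentially verbatim: an induction on $\vk(f_\omega^n,B(x,\delta))\leq 1$, using subadditivity of the distortion coefficient, the H\"older bound $\vk(f_k,B_k)\leq \vk_\alpha(f_k)\,|B_k|^\alpha$, and the summability of $\bigl((f_\omega^k)'(x)\bigr)^\alpha$ to choose $\delta$. The paper's logarithms are base $2$, so the distortion factor is really $2$ rather than $e$; since $e>2$, your bounds remain valid.
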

\begin{proof}
	Choose $\delta>0$ sufficiently small, as specified below.
	We prove by an induction on $n$ that $\vk(f_\omega^n,B(x,\delta))\leqslant 1.$
	Then the conclusion follows directly by $(f_\omega^n)'(x)\to 0$ exponentially fast.
	Assume that $\vk(f_\omega^k,B(x,\delta))\leqslant 1$ holds for all $k<n.$
	Write $\omega=(\cdots,f_0,f_1,\cdots)$ where $f_k\in\cS.$
	Then we have
	\[\vk(f_\omega^n,B(x,\delta))\leqslant \sum_{k=0}^{n-1}\vk(f_{k},f_\omega^k(B(x,\delta)))\leqslant \sum_{k=0}^{n-1}\vk_\alpha(f_{k})\cdot |f_\omega^k(B(x,\delta)) |^\alpha. \]
	By the inductive assumption, 
	\[ |f_\omega^k(B(x,\delta)) |\leqslant 2\delta\cdot (f_\omega^k)'(x)\cdot 2^{\vk( f_\omega^k,B(x,\delta))}\leqslant 4\delta\cdot (f^k_\omega)'(x). \]
	Note that $\vk_\alpha(f_k)$ is uniformly bounded and $\sum_{k=0}^\infty [(f_\omega^k)'(x)]^\alpha$ converges.
	One can choose $\delta>0$ small enough that the estimate $\vk(f_\omega^n,B(x,\delta))\leqslant 1$ holds. 
\end{proof}

Therefore, for any $\omega \in \Sigma$, the slice $W^s(\omega) = \lb{x \in \SS^1 : (\omega,x) \in \Theta^s}$ is open in $\SS^1$.
Moreover $W^s(\omega) \neq \SS^1$ since otherwise, using compactness, we could cover $\SS^1$ by finitely many open intervals with the property of~\eqref{eq:expdecay f'} leading to $(f_\omega^n)'(y) \leq C2^{-cn}$ uniformly in $y \in \SS^1$, which is absurd.

For $(\omega,x) \in \Theta^s$, let $W^s(\omega,x)$ denote the connected component of $W^s(\omega)$ containing $x$.
It is an open interval of $\SS^1$.
Using \eqref{eq:expdecay f'}, we see that
\[W^s(\omega,x) = \lb{\,y\in\SS^1: \limsup_{n \to +\infty}\frac{1}{n}\log d(f_\omega^n(y),f_\omega^n(x)) < 0\,}.\]

Let $R^s(\omega,x) = (\omega,y)$ where $y$ is the right endpoint of $W^s(\omega,x)$. 
Clearly $R^s(\omega,x) \not\in \Theta^s$.
Thus, we get a map $R^s \colon \Theta^s \to \Sigma \times \SS^1  \setminus \Theta^s$.
From the $F$-invariance of $\Theta^s$, we find that $R^s$ commutes with $F$.

Let $m \in \cP^u$. 
By Corollary~\ref{cor:u state and lambda}, we have $m(\Theta^s) = 1$.
Thus, $R^s_*m$ is a well-defined Borel probability measure on $\Sigma \times \SS^1$. 
Moreover, $R^s_*m \in \cP$ since $R^s \circ F = F \circ R^s$ and $P \circ R^s = P$.
Note that $R^s_*m(\Sigma \times \SS^1 \setminus \Theta^s) = 1$, implying that for $R^s_*m$-almost every $(\omega,x)$, $\lambda(\omega,x) \geq 0$.
By Theorem~\ref{thm: invariance principle}, $R^s_*m \in \cP^s$.
To summarize, $m \mapsto R^s_*m$ is a map $R^s_* \colon \cP^u \to \cP^s$.
Similarly, we define the left-endpoint map $L^s \colon \Theta^s \to \Sigma \times \SS^1 \setminus \Theta^s$.

In a dual manner (considering the inverse cocycle $F^{-1}$), we  define $\Theta^u$, $W^u(\omega,x)$ for $(\omega,x)\in \Theta^u$. We consider  $R^u,L^u\colon \Theta^u \to \Sigma \times \SS^1 \setminus \Theta^u$ and their induced maps $R^u_*,L^u_* \colon \cP^s \to \cP^u.$

\begin{lemma}\label{lem: R injective}
Let $m \in \cP^u$. 
For $\P$-almost every $\omega \in \Sigma$, the map $x \mapsto Q\circ R^s(\omega,x)$ is injective on $\supp m_\omega$. 
\end{lemma}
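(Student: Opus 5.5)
We need to show that for $m\in\cP^u$ and $\P$-a.e. $\omega$, distinct atoms of $m_\omega$ have distinct right endpoints of their stable intervals. First reduce to the ergodic case. Every $m\in\cP^u$ is a convex combination (ergodic decomposition inside $\cP^u$, using Proposition~\ref{prop: stationary and state} and the finiteness of ergodic $\mu$-stationary measures from Theorem~\ref{thm: supports of stationary measures}) of finitely many ergodic $u$-states $m^1,\dots,m^p$. Then $\supp m_\omega=\bigcup_i \supp m^i_\omega$ for a.e. $\omega$. By Corollary~\ref{cor:u state and lambda} each $m^i$ has negative exponent, so by Corollary~\ref{cor: atomic conditional measure} each $m^i_\omega$ is uniform on a finite set of $k_i$ points. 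Hence $\supp m_\omega$ is a finite set, all of whose points lie in $\Theta^s$ for a.e. $\omega$.

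Next, note that $W^s(\omega,x)$ is a connected component of the open set $W^s(\omega)$. If $x\neq x'$ in $\supp m_\omega$ have the same right endpoint, then $W^s(\omega,x)=W^s(\omega,x')$. So it suffices to show that a.s. no stable interval contains two atoms. Suppose it does. Then $d(f^n_\omega x,f^n_\omega x')\to0$ exponentially. Consider the finite $F$-invariant measure $\tilde m=\sum_{i}m^i$, or simply $m$ itself. For a.e. $\omega$, let $\delta(\omega)$ be the minimal distance between distinct points of $\supp m_\omega$; this is positive and measurable. By equivariance \eqref{eq:moemga equiv}, $\supp m_{\sigma^n\omega}=f^n_\omega(\supp m_\omega)$. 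Therefore, if two atoms share a stable interval, $\delta(\sigma^n\omega)\to0$ along the full forward orbit.

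Let $E$ be the set of $\omega$ for which some pair of atoms shares a stable interval. $E$ is $\sigma$-invariant, since $F$ maps stable intervals to stable intervals and commutes with $R^s$. Suppose $\P(E)>0$. Choose $\epsilon>0$ with $\P(E\cap\{\delta>\epsilon\})>0$. By Poincaré recurrence applied to $\sigma$ preserving $\P$, a.e. $\omega$ in this set returns to $\{\delta>\epsilon\}$ infinitely often. This contradicts $\delta(\sigma^n\omega)\to0$. Hence $\P(E)=0$, which gives injectivity.

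The main subtlety is making sure that shared stable intervals really force $\delta(\sigma^n\omega)\to0$, and that $\delta$ is well defined and measurable. The first point holds because only finitely many pairs are involved. The second holds because the number of atoms is a.s. constant, equal to $\sum k_i$ up to coincidences; coincidences between different $m^i$ have measure zero since their supports are disjoint by Theorem~\ref{thm: supports of stationary measures}. Measurability of $\omega\mapsto\supp m_\omega$ follows from measurability of the disintegration.
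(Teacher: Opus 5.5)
Your argument is correct and is essentially the paper's proof. Finiteness of $\supp m_\omega$ gives a positive separation scale, and recurrence of $\sigma$ then keeps atoms from being forward asymptotic, so they cannot share a stable interval; the paper gets the recurrence from ergodicity, returning infinitely often to the fixed positive-measure set of $\omega$ for which $\supp m_\omega$ is $c$-separated, whereas you apply Poincar\'e recurrence to the putative bad set. Your explicit reduction of a general $m\in\cP^u$ to a finite combination of ergodic $u$-states fills in a step the paper leaves implicit when it invokes Corollary~\ref{cor: atomic conditional measure}, which is stated only for ergodic $m$.
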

\begin{proof}
By Corollary~\ref{cor: atomic conditional measure}, for $\P$-almost every $\omega \in \Sigma$, $\supp m_\omega$ is finite. 
It follows that there exists $c>0$ such that the set
\[\Sigma'=\lb{\omega: d(x,x') \geq c ,\; \forall x \ne x' \in \supp m_\omega}\]
has a positive $\P$-measure.
By the ergodicity of $\sigma$, for $\P$-almost every $\omega \in \Sigma,$ there are infinitely many $n\in\NN$ such that $\sigma^n\omega \in \Sigma'$.
For such $\omega$, for every $x \ne x' \in \supp m_\omega$, we have $d\bigl(f_\omega^n (x),f_\omega^n (x')\bigr)\not\to 0$.
In particular, $W^s(\omega,x) \ne W^s(\omega,x')$.
\end{proof}

For $\omega \in \Sigma$, let
\[\Pi(\omega) = \bigcup_{m \in \cP^u} \supp m_\omega\quad
\text{and} \quad \Xi(\omega) = \bigcup_{m \in \cP^s} \supp m_\omega.\]
Clearly, the union can be taken over ergodic $u$-states (resp. $s$-states) and it still defines the same set. Then for $\P$-almost every $\omega \in \Sigma$, $\Pi(\omega)$ and $\Xi(\omega)$ are finite sets by Theorem \ref{thm: supports of stationary measures}.

\begin{lemma}
\label{lem: Pi and Wu}
For $\P$-almost every $\omega \in \Sigma$, we have $\#\Pi(\omega) = \#\Xi(\omega)$ and 
\[\Pi(\omega) = \SS^1 \setminus W^u(\omega) \quad \text{and} \quad \Xi(\omega) = \SS^1 \setminus W^s(\omega).\]
\end{lemma}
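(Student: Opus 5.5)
We aim to prove the four relations in two cycles of inclusions, then match cardinalities. By symmetry (replace $F$ by $F^{-1}$, which swaps $u$ and $s$, $\Pi$ and $\Xi$, $W^u$ and $W^s$), it suffices to prove $\Xi(\omega)=\SS^1\setminus W^s(\omega)$ almost surely. The equality $\#\Pi=\#\Xi$ will then come from the endpoint maps.

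\textbf{Step 1: $\Xi(\omega)\subset \SS^1\setminus W^s(\omega)$.} Let $m$ be an ergodic $s$-state. By Corollary~\ref{cor:u state and lambda}, $\lambda(m)>0$. Hence for $m$-a.e.\ $(\omega,x)$ the forward exponent is positive, so $(\omega,x)\notin\Theta^s$. Since $\supp m_\omega$ is finite (Corollary~\ref{cor: atomic conditional measure}), every atom has positive mass. Therefore, for a.e.\ $\omega$, every point of $\supp m_\omega$ lies outside $W^s(\omega)$. Taking the union over the finitely many ergodic $s$-states (Theorem~\ref{thm: supports of stationary measures}) gives the inclusion.

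\textbf{Step 2: $\SS^1\setminus W^s(\omega)\subset \Xi(\omega)$.} This is the main obstacle. We must show that every point of the complement of $W^s(\omega)$ is an atom of some $s$-state; a priori the complement could be large, for instance a Cantor set. The plan is as follows.

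First, use the endpoint maps to see that the endpoints of components of $W^s(\omega)$ meeting $\Pi(\omega)$ lie in $\Xi(\omega)$. For every ergodic $u$-state $m$, the push-forwards $R^s_*m$ and $L^s_*m$ are $s$-states. Hence $Q R^s(\omega,x)$ and $QL^s(\omega,x)$ lie in $\Xi(\omega)$ for a.e.\ $\omega$ and every $x\in\supp m_\omega$; the atoms are finite, so a.e.\ statements hold for all of them.

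Second, show that every component of $W^s(\omega)$ contains a point of $\Pi(\omega)$, and that there are only finitely many components. Take a point $y\in W^s(\omega)$ and consider the backward iterates $F^{-n}(\omega,y)$. By stationarity, the conditional measures satisfy $m_{\sigma^{-n}\omega}\to$ atoms. More concretely, for a $u$-state $m_\omega$ depends only on the past, and the forward contraction on $W^s$ forces $f_\omega^n$ to collapse the component onto a small interval. Meanwhile, by the dual statement applied to $\Pi$ (every $u$-state atom at time $n$ attracts), Lebesgue-a.e.\ point is eventually attracted to the finite set $\Pi(\sigma^n\omega)$. This uses the fact that $\Pi(\sigma^n\omega)=\{$atoms of $m_{\sigma^n\omega}\}$ and that $(f^n_\omega)_*$ of any non-atomic measure converges to $u$-state conditionals, by an argument of Furstenberg type from the invariance principle.

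A cleaner route, which we will try first, is to take any $x\notin W^s(\omega)$ and build an $F$-invariant measure from averages of $\delta_{F^{-n}(\omega,x)}$ paired with $\P$. Rather than this, we expect to use the counting argument below.

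\textbf{Step 3: cardinality and conclusion.} By Lemma~\ref{lem: R injective}, $x\mapsto QR^s(\omega,x)$ is injective from $\Pi(\omega)$ into $\Xi(\omega)$. Points of $\Pi(\omega)$ lying in distinct $u$-states have distinct stable components, by the argument of Lemma~\ref{lem: R injective} applied to disjoint supports. Hence $\#\Pi\le\#\Xi$, and dually $\#\Xi\le\#\Pi$, so the two are equal. Consequently the right-endpoint map is a bijection from $\Pi(\omega)$ onto $\Xi(\omega)$, and each component of $W^s(\omega)$ containing a $\Pi$-point has its right endpoint in $\Xi$.

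To finish, combine this with the fact that $\SS^1\setminus W^u(\omega)\supset\Pi(\omega)$ and that $\Xi(\omega)\subset W^u(\omega)$ (the dual of Step 1 together with positivity of exponents). This interleaves the finite sets $\Pi$ and $\Xi$ alternately around the circle. Each arc between consecutive $\Xi$-points then contains a $\Pi$-point whose stable interval is exactly that arc. Thus $W^s(\omega)$ is the union of these arcs, and its complement is precisely $\Xi(\omega)$.
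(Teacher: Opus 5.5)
Your Step~3 is essentially the paper's own argument. Injectivity of the endpoint maps $x\mapsto Q\circ R^s(\omega,x)$ and $x\mapsto Q\circ L^s(\omega,x)$ from $\Pi(\omega)$ into $\Xi(\omega)$, together with the dual maps $R^u,L^u$, gives $\#\Pi(\omega)=\#\Xi(\omega)=k$; the $k$ disjoint intervals $W^s(\omega,x)$, $x\in\Pi(\omega)$, with all endpoints in the $k$-point set $\Xi(\omega)$, are then forced to partition $\SS^1$ together with $\Xi(\omega)$, so the speculative second half of your Step~2 can be dropped. The interleaving comes from this count and not from the $W^u$ inclusions you cite: each $Q\circ R^s(\omega,x)$ is the first point of $\Xi(\omega)$ to the right of $x$, so injectivity leaves at most one, hence exactly one, point of $\Pi(\omega)$ in each gap between consecutive points of $\Xi(\omega)$.
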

\begin{proof}
Applying Lemma~\ref{lem: R injective} to a suitable convex combination of ergodic $u$-states and remembering $R^s_* m \in \cP^s$, we see that for $\P$-almost every $\omega \in \Sigma$, the map $x \mapsto Q\circ R^s(\omega,x)$ is injective from $\Pi(\omega)$ to $\Xi(\omega)$.
Similarly,  $x \mapsto Q\circ L^s(\omega,x)$ is injective from $\Pi(\omega)$ to $\Xi(\omega)$ and $x \mapsto Q\circ R^u(\omega,x)$ and $x \mapsto Q\circ L^u(\omega,x)$ are injective maps from $\Xi(\omega)$ to $\Pi(\omega)$, proving the claim about cardinality.

Moreover, this shows that, $W^s(\omega,x)$, $x \in \Pi(\omega)$ are disjoint open intervals with endpoints in $\Xi(\omega)$.
The total number of endpoints being equal to the number of intervals, forces these intervals and these points to partition the circle. 
Hence $\Xi(\omega) = \SS^1 \setminus W^s(\omega)$.
\end{proof}

\begin{lemma}
\label{lem: R ergodic is ergodic}
The numbers of ergodic $u$-states and $s$-states are equal.
If $m \in \cP^u$ is ergodic then so is $R^s_*m \in \cP^s$.
\end{lemma}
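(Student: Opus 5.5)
We prove the second assertion first and deduce the equality of the numbers from it together with the injectivity statements of Lemma~\ref{lem: Pi and Wu}.

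\emph{Ergodicity of $R^s_*m$.} Let $m\in\cP^u$ be ergodic and suppose $R^s_*m = t m_1 + (1-t) m_2$ with $m_1,m_2\in\cP$ and $t\in(0,1)$. Since $R^s_*m\in\cP^s$ and $m_1,m_2\ll R^s_*m$, it suffices to show that every $F$-invariant set $A\subset\Sigma\times\SS^1$ has $R^s_*m(A)\in\{0,1\}$. Because $R^s$ commutes with $F$, the preimage $(R^s)^{-1}(A)$ is an $F$-invariant subset of $\Theta^s$. Moreover $R^s_*m(A)=m\bigl((R^s)^{-1}(A)\bigr)$, and this lies in $\{0,1\}$ by the ergodicity of $m$ under $F$. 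So $R^s_*m$ is ergodic as an $F$-invariant measure. An ergodic $F$-invariant measure is extremal in the set of $F$-invariant measures, hence in particular extremal in $\cP^s$; thus $R^s_*m$ is an ergodic $s$-state. Measurability of $R^s$ needs a short check: the right endpoint of $W^s(\omega,x)$ can be written as a supremum over rational points of the measurable set $\Theta^s$. The same argument applies to $L^s_*$, $R^u_*$ and $L^u_*$.

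\emph{Counting.} We show that $R^s_*$ is injective on ergodic $u$-states. Let $m\neq m'$ be ergodic $u$-states. By Theorem~\ref{thm: supports of stationary measures}, the stationary measures $Q_*m$ and $Q_*m'$ have disjoint supports. The fibre supports satisfy $\supp m_\omega\subset \supp Q_*m$ almost surely, because $Q_*m=\int m_\omega \dd\P(\omega)$; the same holds for $m'$. Hence $\supp m_\omega\cap\supp m'_\omega=\vn$ for a.e.\ $\omega$.

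Apply Lemma~\ref{lem: R injective} to $\tfrac12(m+m')$: for a.e.\ $\omega$, $x\mapsto Q R^s(\omega,x)$ is injective on $\supp m_\omega\cup\supp m'_\omega$. Therefore $(R^s_*m)_\omega=(R^s)_*m_\omega$ and $(R^s_*m')_\omega$ have disjoint supports for a.e.\ $\omega$, so the two measures are mutually singular and in particular distinct. Thus $R^s_*$ maps the ergodic $u$-states injectively into the ergodic $s$-states, giving $\#\{\text{ergodic }u\}\le\#\{\text{ergodic }s\}$. Applying the dual argument to $R^u_*$, with the inverse cocycle, gives the reverse inequality. Both counts are finite by Theorem~\ref{thm: supports of stationary measures} and Proposition~\ref{prop: stationary and state}, so they are equal.

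The main subtlety is the disjointness of fibre supports, which relies on $\supp m_\omega\subset\supp Q_*m$ holding almost surely; this follows because $m_\omega\le$ a measure whose integral is $Q_*m$, so $m_\omega(\SS^1\setminus\supp Q_*m)=0$ for a.e.\ $\omega$.
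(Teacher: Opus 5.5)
Your proof is correct, but it is organized differently from the paper's. The paper never shows directly that $R^s_*m$ is ergodic. It takes the ergodic $u$-states $m_0,\dots,m_{d-1}$, whose fibre supports partition $\Pi(\omega)$ because they are mutually singular with finitely supported fibres. By Lemma~\ref{lem: R injective}, the $R^s_*m_i$ are then $d$ $s$-states with pairwise disjoint fibre supports. Decomposing each into ergodic components gives $d'\ge d$, with equality if and only if every $R^s_*m_i$ is ergodic. The dual argument with $R^u$ gives $d\ge d'$, so equality holds and ergodicity falls out of the count.

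You instead get ergodicity at once: $R^s$ is a measurable map commuting with $F$ on the $F$-invariant set $\Theta^s$, and a factor of an ergodic system is ergodic. You then reduce the count to injectivity of $R^s_*$ and $R^u_*$ on ergodic states, taking disjointness of fibre supports from Theorem~\ref{thm: supports of stationary measures} rather than from mutual singularity. Your route is more direct. Ergodicity does not depend on finiteness of $d,d'$ or on ergodic decompositions inside $\mathcal{P}^s$, and it applies verbatim to $L^s,R^u,L^u$. The paper's squeeze gets both assertions from one counting argument that uses only the injectivity lemma.

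Your measurability remark needs one more step. For fixed $q$, the condition $[x,q]\subset W^s(\omega)$ involves uncountably many points of $\Theta^s$ and cannot be tested on rational points alone, since the finitely many points of $\Xi(\omega)$ may all be irrational. By Lemma~\ref{lem: expdecay f'} and compactness, it is equivalent to having $\sup_{z\in[x,q]}(f_\omega^n)'(z)\le C2^{-cn}$ for all $n\ge 0$, for some rational $C,c>0$. By continuity of $(f^n_\omega)'$, that supremum may be taken over a countable dense set, so the condition is Borel and $R^s$ is Borel on $\Theta^s$. The paper takes this point for granted.
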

\begin{proof}
Let $\{m_0,\cdots,m_{d-1}\} \sbs \cP^u$ be the set of ergodic $u$-states. 
Since they are singular to each other, for $\P$-almost every $\omega \in \Sigma$, $(\supp\, (m_i)_\omega)_{0 \leq i \leq d-1}$ form a partition of $\Pi(\omega)$.
Set $m_i'=R^s_* m_i.$ 
Then, by Lemma~\ref{lem: R injective}, $(\supp\, (m'_i)_\omega)_{0 \leq i \leq d-1}$ are pairwise disjoint.
It follows that the number of ergodic $s$-states, denoted by $d'$, is no less than $d$.
The equality $d'= d$ holds if and only if every $m_i'$ is ergodic. 
But, using $R^u$, we see that indeed $d \geq d'.$ 
Hence, $d=d'$ and the lemma is proved.
\end{proof}

\begin{lemma}
\label{lem: R is transitive}
The maps $R^s_*$ and $R^u_*$ together induce a transitive permutation of ergodic measures in $\cP$. 
In particular, there is an integer $r \geq 1$ such that for every ergodic $m \in \cP$, $\#\supp m_\omega = r$ for $\P$-almost every $\omega \in \Sigma$.
\end{lemma}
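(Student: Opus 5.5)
The plan is to read $R^s_*$ and $R^u_*$ as bijections between the finite sets of ergodic $u$-states $\{m_0,\dots,m_{d-1}\}$ and ergodic $s$-states $\{m'_0,\dots,m'_{d-1}\}$. Their composite $R^u_*R^s_*$ is then a permutation of the ergodic $u$-states, and we show this permutation is transitive.

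First, the bijections. By Lemma~\ref{lem: R ergodic is ergodic}, $R^s_*$ maps ergodic $u$-states to ergodic $s$-states, and dually $R^u_*$ maps ergodic $s$-states to ergodic $u$-states. Injectivity comes from the proof of Lemma~\ref{lem: R ergodic is ergodic}: the supports $\supp(R^s_*m_i)_\omega$ are pairwise disjoint for $\P$-a.e. $\omega$, so the images are distinct. The two sets have the same cardinality $d$, so both maps are bijections.

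Next, the key geometric observation, which uses Lemma~\ref{lem: Pi and Wu}. For a.e. $\omega$, the points of $\Pi(\omega)$ and $\Xi(\omega)$ alternate around the circle. Indeed, the intervals $W^s(\omega,x)$, $x\in\Pi(\omega)$, partition $\SS^1\setminus\Xi(\omega)$, and each contains exactly one point of $\Pi(\omega)$. Hence $Q\circ R^s(\omega,\cdot)$ is the map from $\Pi(\omega)$ to $\Xi(\omega)$ sending a point to the next point of $\Pi(\omega)\cup\Xi(\omega)$ in the positive direction. Symmetrically, the intervals $W^u(\omega,y)$ are the complementary arcs of $\Pi(\omega)$, and $Q\circ R^u(\omega,\cdot)$ sends $y\in\Xi(\omega)$ to the next point of $\Pi(\omega)$ in the positive direction. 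Therefore $R^u\circ R^s(\omega,\cdot)$ is the cyclic successor map on the cyclically ordered finite set $\Pi(\omega)$, and it generates a transitive cyclic action on $\Pi(\omega)$.

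Transitivity then follows. Let $A$ be an orbit of the permutation $R^u_*R^s_*$ on $\{m_i\}$, and set $S(\omega)=\bigcup_{m_i\in A}\supp(m_i)_\omega$. For a.e. $\omega$ we have $R^u\circ R^s(\omega,S(\omega))\subseteq S(\omega)$, since $R^u R^s$ pushes $m_i$ to a measure in $A$. Pointwise invariance requires a short justification. The pushforward of the uniform measure on $\supp (m_i)_\omega$ under an injective map is uniform on the image set, so the image of $\supp(m_i)_\omega$ is $\supp(R^u_*R^s_*m_i)_\omega$; this uses equivariance and Corollary~\ref{cor: atomic conditional measure}. A nonempty subset of $\Pi(\omega)$ invariant under the cyclic successor map is all of $\Pi(\omega)$. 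So $A$ contains every ergodic $u$-state, and the permutation is a single cycle. Transitivity on all ergodic measures in $\cP$ (both $u$- and $s$-states, by Theorem~\ref{thm: invariance principle} every ergodic element of $\cP$ is one of them) follows because $R^s_*$ links each $u$-state to an $s$-state. The successor argument also shows that $R^u_*R^s_*$ is an $d$-cycle, and that each step preserves $\#\supp m_\omega$ because $R^s,R^u$ are fiberwise injective and supports map onto supports. Hence $\#\supp m_\omega=r$ is the same for every ergodic $m$.

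The main obstacle I expect is the almost-sure identification of $Q\circ R^s(\omega,\cdot)$ and $Q\circ R^u(\omega,\cdot)$ with the successor maps in the alternating order. This needs Lemma~\ref{lem: Pi and Wu} on a common full-measure set of $\omega$, together with the fact that the fiberwise images of supports are exactly supports. Both reduce to finiteness of supports and equivariance~\eqref{eq:moemga equiv}, with full-measure sets intersected over the finitely many ergodic states.
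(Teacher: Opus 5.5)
Your proposal is correct and follows the paper's proof. The paper likewise extracts from the proof of Lemma~\ref{lem: Pi and Wu} that $R^s$ and $R^u$ act on $\Pi(\omega)\cup\Xi(\omega)$ as the successor maps in the alternating cyclic order, so $R^u\circ R^s$ is the cyclic shift on $\Pi(\omega)$. It then passes to measures using Lemma~\ref{lem: R ergodic is ergodic} and the a.e.\ disjointness of fiber supports of distinct ergodic states, with the ``in particular'' part coming from Lemma~\ref{lem: R injective}. Your orbit argument with $S(\omega)$ is a more explicit version of that transfer step. Two small points: it tacitly uses that same disjointness of supports, and the identity $(R^s_*m)_\omega=(Q\circ R^s(\omega,\cdot))_*m_\omega$ comes from $P\circ R^s=P$ rather than from equivariance.
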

\begin{proof}
The proof of Lemma~\ref{lem: Pi and Wu} shows also the following for $\P$-almost every $\omega \in \Sigma$. 
Write $\Pi(\omega) = \{x_0,\dotsc,x_{k-1}\}$ with $x_0,\dotsc,x_{k-1}$ arranged in cyclic order and set $y_j =Q\circ R^s(\omega, x_j)$ so that $y_j \in \Xi(\omega)$ for each $j$. 
Then $x_0,y_0, \dotsc, x_{k-1}, y_{k-1}$ are arranged in cyclic order.
Moreover $R^u(\omega,y_j) = (\omega,x_{j + 1 \mod k})$. In particular, $R^s$ and $R^u$ together induce a transitive permutation of points in $\Pi(\omega)\cup\Xi(\omega).$

Then the claim of the lemma follows from Lemma~\ref{lem: R ergodic is ergodic} together with the observation that two ergodic measures $m,m'$ in $\cP$ are equal if and only if $m_\omega$ and $m'_\omega$ have common atoms for a set of $\omega$ with positive $\P$ measure. 
The ``in particular'' part follows from the fact that $R^s_*$ and $R^u_*$ preserve $\#\supp m_\omega$, a direct consequence of Lemma \ref{lem: R injective}.
\end{proof}

Let $d$ denote the number of ergodic $u$-states.
We write $\Zmodd = \{0, \dotsc, d-1\}$ and the addition in $\Zmodd$ is to be understood reduction modulo $d$. 
Fix an arbitrary ergodic $u$-state $m^+_0 \in \cP^u$.
By Lemma~\ref{lem: R is transitive}, applying alternatively $R^u_*$ and $R^s_*$, we find in the sequence
\[m_0^+\mapsto m_0^-\mapsto m_1^+\mapsto\cdots m_{d-1}^+\mapsto m_{d-1}^-\mapsto m_0^+\]
all ergodic $u$-states $m_i^+$ and all ergodic $s$-states $m_i^-$, $i \in \Zmodd$.
For $i \in \Zmodd$ and $\omega \in \Sigma$, define 
\[\Pi(\omega,i) = \supp (m^+_i)_\omega \quad \text{and} \quad \Xi(\omega,i) = \supp (m^-_i)_\omega.\]

Then for each $i\in\Zmodd$, we have that $Q\circ (R^u \circ R^s)^d$ preserves $\Pi(\omega,i)$ and maps every element of $\Pi(\omega,i)$ to the element on its right.

\paragraph{The space $\SS_k$.}
For a positive integer $k,$ let $\SS_k$ denote the family of $k$-element subsets of $\SS^1.$
For later convenience, we equip $\SS_k$ with a natural metric, making it a metric space. 
This metric is defined as
\[\ul d(\ul x,\ul y)= \inf_{\tau\in \mathfrak S_k}\sum_{i=0}^{k-1} d(x_i,y_{\tau(i)})\]
where $\ul x=\lb{x_0,\cdots,x_{k-1}},\ul y=\lb{y_0,\cdots,y_{k-1}}\in\SS_k$ and $\mathfrak S_k$ is the symmetric group of $[k].$ 
Therefore, every element in $\Homeo(\SS^1)$ can be naturally regarded as an element in $\Homeo(\SS_k).$

For every $\ul x\in\SS_k,$ we define the probability measure $u_{\ul x}$ to be the uniform measure on $\ul x.$ 
Let $\cM(\SS^1)$ be the space of Radon measures on $\SS^1$ equipped with the weak-* topology. 
Then $u_{\bullet}:\ul x\mapsto u_{\ul x}$ is a topological embedding. 
In fact, it is a homothety onto its image if we equip $\cM(\SS^1)$ with the Wasserstein metric
\[\mr{dist}(\eta,\zeta)=\sup_\phi \abs{\int\phi\dd\eta-\int\phi\dd\zeta}, \]
where the supreme is taken over all $1$-Lipschitz functions and $\eta,\zeta$ are in $\cM(\SS^1).$

\paragraph{Structure of random walks on $\Diff_+^{1+\alpha}(\SS^1)$.}

The following is immediate from our discussion.
\begin{theorem}[Structure of random walks I: construction]\label{thm: structure of random walk 1}
Assume that $\supp\mu$ does not preserve any probability measure on $\SS^1.$ 
Then there exist two positive integers $d,r$ and two measurable maps $\Pi:\Sigma\times \Zmodd \to \SS_r$ and $\Xi:\Sigma\times \Zmodd \to\SS_r$ such that
\begin{enumerate}
\item Let $m_i^\pm$ be probability measures on $\Sigma\times\SS^1$ defined by
\[\dr m_i^+=\dr\P(\omega)\dr \unif_{\Pi(\omega,i)},\quad \dr m_i^-=\dr\P(\omega)\dr \unif_{\Xi(\omega,i)},\]
then the measures $m_i^+$ (resp. the measures $m_i^-$) are exactly the ergodic $u$-states (resp. $s$-states) that projects to $\P.$
\item Let $\nu_i^\pm$ be probability measures on $\SS^1$ defined by $\nu_i^+=Q_* m_i^+$ and $\nu_i^-=Q_*m_i^-$. They can also be expressed as
\[\nu_i^+=\int \unif_{\Pi(\omega,i)}\dd\P(\omega),\quad \nu_i^-=\int \unif_{\Xi(\omega,i)}\dd\P(\omega).\]
Then $\nu_i^+$'s (resp. $\nu_i^-$'s) are exactly the ergodic $\mu^+$-stationary (resp. $\mu^-$-stationary) measures.
\end{enumerate}
Moreover, for $\P$ almost every $\omega,$ the following holds
\begin{enumerate}
\item[(3)] $\Pi(\omega,i)$ only depends on $(\pi^-\omega,i)$ and $\Xi(\omega,i)$ only depends on $(\pi^+\omega,i).$ 
\item[(4)] Cocycle invariance: $\forall n\in\ZZ,$ we have $f_\omega^n\Pi(\omega,i)=\Pi(\sigma^n\omega,i),$ $f_\omega^n\Xi(\omega,i)=\Xi(\sigma^n\omega,i).$
\item[(5)] Define the sets on $\SS^1$ as
\[\Pi(\omega)=\bigcup_{i\in[d]}\Pi(\omega,i),\quad \Xi(\omega)=\bigcup_{i\in[d]}\Xi(\omega,i).\]
Then $\Pi(\omega)\cup\Xi(\omega)$ consists of $2dr$ distinct points. Let $x_0,x_1,\cdots,x_{2dr-1}$ be these points arranged in cyclic order on $\SS^1$ such that $x_0\in \Pi(\omega,0).$ 
Then 
\[x_{2j}\in\Pi(\omega,j\mod d),\ x_{2j+1}\in\Xi(\omega,j\mod d),\quad\forall 0\leq j\leq dr-1.\]
\end{enumerate}
\end{theorem}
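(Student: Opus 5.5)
The plan is to assemble Theorem~\ref{thm: structure of random walk 1} from the lemmas of this subsection, since the paper calls it "immediate from our discussion". Let $d$ be the number of ergodic $u$-states. By Lemma~\ref{lem: R ergodic is ergodic}, $d$ is also the number of ergodic $s$-states. Let $r$ be the integer given by Lemma~\ref{lem: R is transitive}. Label the ergodic states $m_i^\pm$, $i\in\Zmodd$, along the cycle $m_0^+\mapsto m_0^-\mapsto m_1^+\mapsto\cdots$ produced by alternating $R^s_*$ and $R^u_*$. Then set
\[
\Pi(\omega,i)=\supp (m_i^+)_\omega,\qquad \Xi(\omega,i)=\supp (m_i^-)_\omega .
\]
Each of these is an $r$-point set for $\P$-a.e.\ $\omega$; off a null set we redefine the maps to be constant. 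Measurability into $\SS_r$ follows from measurability of $\omega\mapsto m_\omega$ and the fact that $u_\bullet$ is an embedding. By Corollary~\ref{cor: atomic conditional measure}, $(m_i^\pm)_\omega$ is uniform on its support, so $(m_i^\pm)_\omega=\unif_{\Pi(\omega,i)}$ (resp.\ $\unif_{\Xi(\omega,i)}$). This gives item (1).

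Item (2) then follows from Proposition~\ref{prop: stationary and state}: $Q_*$ is an affine bijection from $\cP^u$ (resp.\ $\cP^s$) onto the set of $\mu^+$- (resp.\ $\mu^-$-) stationary measures, so it maps extreme points to extreme points. Integrating the disintegration gives the displayed formula for $\nu_i^\pm$.

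For item (3), $u$-states have disintegrations factoring through $\pi^-$, so $\Pi(\cdot,i)$ depends only on $\pi^-\omega$; the argument for $s$-states and $\pi^+$ is symmetric. For item (4), the equivariance \eqref{eq:moemga equiv} gives $(f_\omega)_*m_\omega=m_{\sigma\omega}$ a.e., and taking supports yields the case $n=1$. Iterating and inverting along the full-measure $\sigma$-invariant set $\bigcap_n\sigma^{-n}(\text{good set})$ gives all $n\in\ZZ$.

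Item (5) is where some care is needed. The proof of Lemma~\ref{lem: R is transitive} shows that for a.e.\ $\omega$ the points of $\Pi(\omega)\cup\Xi(\omega)$ alternate in cyclic order $x_0,y_0,x_1,y_1,\dots$, with $y_j=Q R^s(\omega,x_j)$ and $QR^u(\omega,y_j)=x_{j+1}$. Since $\#\Pi(\omega)=dr$ and the $\Pi(\omega,i)$ are pairwise disjoint (the ergodic states are mutually singular), we get $2dr$ distinct points. It remains to see that the labels advance by exactly one at each step. For $x\in\Pi(\omega,i)$, the point $(\omega,QR^s(\omega,x))$ lies in $R^s(\supp m_i^+)$. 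This carries full $R^s_*m_i^+=m_i^-$ measure and meets the fibre in $\Xi(\omega,i)$, since the fibre of $m_i^-$ is the image under the injective map of Lemma~\ref{lem: R injective}. So the right neighbour of a point of $\Pi(\omega,i)$ lies in $\Xi(\omega,i)$. Likewise, $R^u_*m_i^-=m_{i+1}^+$ puts the next point in $\Pi(\omega,i+1)$. Starting from $x_0\in\Pi(\omega,0)$ and inducting along the cyclic order gives the stated labelling $x_{2j}\in\Pi(\omega,j\bmod d)$, $x_{2j+1}\in\Xi(\omega,j\bmod d)$.

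The main obstacle is precisely this fibrewise identification. The lemmas relate measures, $R^s_*m_i^+=m_i^-$, whereas we need a pointwise statement: $QR^s(\omega,\cdot)$ maps $\Pi(\omega,i)$ into $\Xi(\omega,i)$ for a.e.\ $\omega$, simultaneously for all $i$. The plan handles this by intersecting finitely many full-measure sets, one for each $i$ and each of $R^s,R^u$. On that intersection, $R^s(\omega,\cdot)$ pushes the uniform measure on $\Pi(\omega,i)$ to a measure whose support must equal $\Xi(\omega,i)$ by uniqueness of disintegration.
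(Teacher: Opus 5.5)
Your proposal is correct and follows the paper's own route. The paper declares the theorem immediate from Lemmas~\ref{lem: R injective}--\ref{lem: R is transitive}, with $\Pi(\omega,i)=\supp(m_i^+)_\omega$ and $\Xi(\omega,i)=\supp(m_i^-)_\omega$ labelled along the alternating $R^s_*$/$R^u_*$ cycle. Your fibrewise step for (5) uses $P\circ R^s=P$ and uniqueness of disintegration to get $QR^s(\omega,\Pi(\omega,i))=\Xi(\omega,i)$ for a.e.\ $\omega$; this is exactly the detail the paper leaves implicit.
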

\begin{remark}
	Since $\Pi,\Xi$ only depends on $\pi^-\omega,\pi^+\omega,$ we will sometimes use the notation $\Pi(\omega^-)$ and $\Xi(\omega^+)$ for $\omega^-\in\Sigma^-$ and $\omega^+\in\Sigma^+,$ respectively.
\end{remark}

\begin{remark}
The map $\omega \mapsto \Pi(\omega,i)$ plays the role of the Furstenberg boundary map.
It can also be defined as follows (see \cite[Lemma 5.22]{Vi14}). 
For $\P$-almost every $\omega \in \Sigma$, we have
\begin{equation}
\label{eqn: F boundary}
(f_{\sigma^{-n}\omega}^n)_*\nu_i^+\lto{\text{weak-*}} \unif_{\Pi(\omega,i)},
\end{equation}
as $n \to +\infty$. An effective version of \eqref{eqn: F boundary} will be shown in Section \ref{sec: effective convergence}.
\end{remark}

Moreover, we can show the constants $d,r$ given in this theorem are topological invariants. 
The proof is deferred to Section \ref{sec: properties of stationary measures}, where we will show that $dr$ is indeed the least number of pairs of topologically hyperbolic fixed points in the semigroup $T_\mu,$ where we use the construction of hyperbolic elements in Section \ref{subsec: hyperbolic elements}.
\begin{theorem}\label{thm: d,r top inv}
Let $\mu$ be as in the previous theorem and $d,r$ be the constants given there.
Then $d,r$ are topological invariants of the semigroup $T_\mu$.
Specifically, if $\mu,\mu'$ are two probability measures on $\Diff^{1+\alpha}_+(\SS^1)$ satisfying the assumption of the previous theorem and that there exists $h\in\Homeo(\SS^1)$ with $T_\mu=hT_{\mu'}h^{-1},$ then the constants $d,r$ are the same for $\mu$ and $\mu'.$
\end{theorem}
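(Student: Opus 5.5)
We prove that $d$ and $dr$ are determined by the topological dynamics of $T_\mu$, namely by its minimal sets and by its hyperbolic elements. Both descriptions are conjugation invariant, so $d$ and $r$ are as well.

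\emph{Step 1: $d$ is topological.} By Theorem \ref{thm: structure of random walk 1}(2), the ergodic $\mu$-stationary measures are exactly $\nu_0^+,\dots,\nu_{d-1}^+$. By Theorem \ref{thm: supports of stationary measures}, their supports are exactly the $T_\mu$-minimal sets. Hence $d$ equals the number of $T_\mu$-minimal sets. If $T_\mu=hT_{\mu'}h^{-1}$, then $h$ maps $T_{\mu'}$-minimal sets bijectively onto $T_\mu$-minimal sets, so $d=d'$. The hypothesis that neither semigroup has an invariant probability measure is preserved under conjugation, and it is assumed anyway.

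\emph{Step 2: $dr$ as a minimal number of hyperbolic fixed-point pairs.} Call $g\in T_\mu$ \emph{topologically hyperbolic} if $\Fix(g)$ is finite and consists of alternating attracting and repelling points; let $p(g)$ be the number of attracting points. This notion and $p(g)$ are preserved by topological conjugacy. The claim is
\[ dr=\min\{p(g): g\in T_\mu \text{ topologically hyperbolic}\}. \]
Given the claim, $dr=d'r'$, and with Step 1 this gives $r=r'$.

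For the upper bound, we construct, using Theorem \ref{thm: structure of random walk 1}(5) and Lemma \ref{lem: expdecay f'}, an element with exactly $dr$ attracting points. Take a typical $\omega$ and a large $n$. Then $f^n_\omega$ sends the complement of small neighborhoods of $\Xi(\omega)$ into small neighborhoods of $\Pi(\sigma^n\omega)$, with uniform contraction there. By recurrence, choose $n$ so that $\sigma^n\omega$ is close to $\omega$ in the relevant sense: the points $\Pi(\sigma^n\omega)$ lie near $\Pi(\omega)$ and away from $\Xi(\omega)$. This closeness can be arranged by a Lusin-type argument, because $\Pi$ depends on the past and $\Xi$ on the future, and independence lets us choose the future freely. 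The element $g=f^n_\omega$ then maps each of the $dr$ gaps around $\Pi(\omega)$ strictly into itself and contracts it, giving $dr$ attracting fixed points. Dually, $g^{-1}$ has $dr$ attracting fixed points near $\Xi$. The interval dynamics on each arc between consecutive points forces exactly alternating fixed points. This is the construction of hyperbolic elements referred to in Section \ref{subsec: hyperbolic elements}.

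For the lower bound, let $g\in T_\mu$ be topologically hyperbolic. We show that each $T_\mu$-minimal set $\supp\nu_i^+$ contains at least $r$ attracting points of $g$, in separate complementary gaps; summing over $i$ gives $p(g)\ge dr$.

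The mechanism is to concatenate $g^N$ with random words. Since $g\in T_\mu$, the cylinder corresponding to a fixed word for $g^N$ has positive $\P$-measure. For $\omega^-$ whose last block is $g^N$, the finite set $\Pi(\omega,i)$ is the image under $g^N$ of $\Pi$ of the shifted past. Its $r$ points therefore accumulate at attracting points of $g$ as $N\to\infty$, unless they sit exactly on repellers, which has probability zero.

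What remains is to show these $r$ points go to $r$ distinct attractors. This is the main obstacle. Several points of $\Pi(\cdot,i)$ could converge to the same attractor, since they lie in one basin of $g$. We would rule this out with the interleaving property (5). Between consecutive points of $\Pi(\omega)$ there must be points of $\Xi(\omega)$. Choosing the future to also begin with $g^N$, the points of $\Xi(\omega)$ lie near repellers of $g$. The interleaving then forces consecutive $\Pi$-points into distinct basins of $g$, so each attracting point of $g$ absorbs at most one point of $\Pi(\omega)$. The independence of past and future under $\P$ is what lets us impose both blocks simultaneously with positive probability.
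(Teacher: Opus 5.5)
Your argument is correct, up to two small points at the end, and has the same skeleton as the paper's proof. There, $d$ is the number of $T_\mu$-minimal sets (Theorem~\ref{thm: supports of stationary measures}), and $dr$ is the least number of attractors of an element of $T_\mu$ all of whose fixed points are topologically hyperbolic. The upper bound comes from the hyperbolic-element construction (Lemma~\ref{lem: 2dr fixed points}), and the lower bound is Corollary~\ref{cor: lower bound of fixed points}.

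You genuinely depart from the paper only in the lower bound.

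\emph{The paper's route.} It constrains only the past: a past whose $\Pi$ avoids the repellor neighbourhoods, followed by the word of $g^m$. It then invokes the uniform $c$-separation of $\Pi(\omega)$ from Lemma~\ref{lem: uniform bound of Pi}. Pushing $dr$ points into $q<dr$ balls of radius $c/2$ contradicts separation by pigeonhole.

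\emph{Your route.} You impose both a past block and a future block. Since $\Pi$ is past-measurable and $\Xi$ is future-measurable, the two events are independent. Together with non-atomicity of both $\nu_i^+$ and $\nu_i^-$, this puts $\Pi(\omega)$ within $\delta$ of the attractors and $\Xi(\omega)$ within $\delta$ of the repellors for the same $\omega$. The alternation in Theorem~\ref{thm: structure of random walk 1}(5) then places a $\Xi$-point on each arc between two $\Pi$-points. Hence a $\delta$-ball around an attractor holds at most one $\Pi$-point.

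\emph{What each buys.} Your route avoids Lemma~\ref{lem: uniform bound of Pi}, and with it Theorem~\ref{thm: structure of random walk 2}(5) on which that lemma rests, at the price of two-sided control. The paper's route is one-sided but needs that extra structural input. Splitting by $i$ is unnecessary: the at-most-one argument applied to all of $\Pi(\omega)$ gives $p(g)\ge dr$ directly.

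Two minor points.
\begin{enumerate}
\item With your definition, a fixed-point-free $g\in T_\mu$ (any element of nonzero rotation number) is vacuously topologically hyperbolic with $p(g)=0$. So the minimum must be restricted to $g$ with $\Fix(g)\neq\emptyset$. Your lower-bound argument, like the paper's Corollary~\ref{cor: lower bound of fixed points}, tacitly uses at least one attractor.
\item In your sketch of the upper bound, set-level closeness of $\Pi(\sigma^n\omega)$ to $\Pi(\omega)$ only shows that $f^n_\omega$ permutes the $dr$ gaps by a cyclic rotation. That rotation can be nontrivial when $r\ge 2$, in which case $f^n_\omega$ may have no fixed points at all. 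The paper forces the identity permutation by recurrence for $m_0^+$ along a labelled point ($f^n_\omega\Pi(\omega,0,0)\in U_{0,0}$ in Lemma~\ref{lem: A R and f}). Alternatively, you can replace $g$ by $g^{dr}$, which keeps the mapping and contraction properties.
\end{enumerate}
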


The next statement supplements Theorem~\ref{thm: structure of random walk 1} with more dynamical information, that is, how $f^n_\omega$ behaves when $n$ is large.
\begin{theorem}[Structure of random walks II: dynamics]\label{thm: structure of random walk 2}
Assume that $\supp\mu$ does not preserve any probability measure on $\SS^1.$ Let $\nu_i^+$ and $\nu_i^-$, $i \in [d]$ be the ergodic stationary measures defined in the previous theorem corresponding to $\Pi(\cd,i)$ and $\Xi(\cd,i).$ 
Then for $\P$-almost every $\omega$ and $i\in[d],$ there exist subsets $W^s(\omega,i)\sbs\SS^1,W^u(\omega,i)\sbs\SS^1$ satisfying
\begin{enumerate}
	\item Each $W^s(\omega,i),W^u(\omega,i)$ is a disjoint union of $r$ open intervals.
	\item For every $i\in[d],$ $\Pi(\omega,i)\sbs W^s(\omega,i)\sbs\SS^1\sm\Xi(\omega)$ and $\Xi(\omega,i)\sbs W^u(\omega,i)\sbs\SS^1\sm\Pi(\omega).$
	\item the sets $W^s(\omega,i)$, $i \in [d]$ are pairwise disjoint and we have $\bigcup_{i\in[d]}W^s(\omega,i) =\SS^1\sm\Xi(\omega).$ The same holds for $\bigcup_{i\in[d]}W^u(\omega,i)=\SS^1\sm\Pi(\omega).$
	\item Cocycle invariance: $\forall n\in\ZZ,$ we have $f_\omega^nW^s(\omega,i)=W^s(\sigma^n\omega,i)$ and $f_\omega^nW^u(\omega,i)=W^u(\sigma^n\omega,i).$
	\item For every connected component $I$ of $W^s(\omega,i)$ and $J$ of $W^u(\omega,i)$, we have $\nu_i^+(I)=1/r$ and $\nu_i^{-}(J)=1/r.$ In particular, 
	\[\nu_i^+(W^s(\omega,i))=1,\quad \nu_i^-(W^u(\omega,i))=1,\quad\forall i\in[d]. \]
	\item For every $x\in\Pi(\omega,i)$ and $y\in\Xi(\omega,i),$
	\[\lim_{n\to\pm\infty}\frac{1}{n}\log(f^n_\omega)'(x)=\lambda_i^+,\quad \lim_{n\to\pm\infty}\frac{1}{n}\log(f^n_\omega)'(y)=\lambda_i^-.\]
	Here $\lambda_i^+<0$ and $\lambda_i^->0$ are the Lyapunov exponents corresponding to $\nu_i^+$ and $\nu_i^-.$
	\item For all closed subintervals $I\sbs W^s(\omega)$ and $ J\sbs W^u(\omega),$ as $n\to+\infty,$ we have
	\[|f_\omega^nI|\to 0,\quad |f^{-n}_\omega J|\to 0,\quad\text{exponentially fast.} \] 
\end{enumerate}
\end{theorem}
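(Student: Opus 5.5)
We define $W^s(\omega,i)$ as the union of the stable intervals $W^s(\omega,x)$ over $x\in\Pi(\omega,i)$, and dually $W^u(\omega,i)$ as the union of $W^u(\omega,y)$ over $y\in\Xi(\omega,i)$. The properties then follow from Lemmas~\ref{lem: Pi and Wu}--\ref{lem: R is transitive} and Theorem~\ref{thm: structure of random walk 1}, except (5), which needs a genuine argument.

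By the proof of Lemma~\ref{lem: Pi and Wu}, for $\P$-a.e.\ $\omega$ the intervals $W^s(\omega,x)$, $x\in\Pi(\omega)$, are pairwise disjoint open intervals with endpoints in $\Xi(\omega)$. Together with $\Xi(\omega)$ they partition $\SS^1$. By the cyclic order in Theorem~\ref{thm: structure of random walk 1}(5), the interval $W^s(\omega,x_{2j})$ is exactly $(x_{2j-1},x_{2j+1})$. Hence:
\begin{itemize}
\item (1) holds because $\#\Pi(\omega,i)=r$;
\item (2) and (3) hold by construction and the partition;
\item (4) follows from the $F$-invariance of $\Theta^s$ and the identity $f_\omega^n W^s(\omega,x)=W^s(\sigma^n\omega,f^n_\omega x)$, combined with the cocycle invariance of $\Pi$.
\end{itemize}
The unstable side is dual, via $F^{-1}$.

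For (6), the points $(\omega,x)$ with $x\in\Pi(\omega,i)$ are $m_i^+$-typical. Birkhoff's theorem for the invertible ergodic system $(F,m_i^+)$ gives the forward and backward limits $\lambda(m_i^+)=\lambda_i^+<0$ on a full-measure set of pairs. Since $(m_i^+)_\omega$ is uniform on the finite set $\Pi(\omega,i)$, a full-measure set of pairs means that for a.e.\ $\omega$ it holds at every point of $\Pi(\omega,i)$. Here $\lambda(m_i^+)=\lambda(\mu,\nu_i^+)$ by the remark that the projection to $\Sigma^+$ preserves the exponent. The $s$-state case is identical, with $\lambda_i^->0$ by Corollary~\ref{cor:u state and lambda}.

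For (7), each point of a closed $I\subset W^s(\omega)$ has a neighborhood as in Lemma~\ref{lem: expdecay f'}, since $W^s(\omega)$ is the slice of $\Theta^s$. Compactness gives a finite subcover, so $(f_\omega^n)'\le C2^{-cn}$ uniformly on $I$, and therefore $|f^n_\omega I|\le C2^{-cn}|I|$. For $J$ we apply the same argument to $F^{-1}$.

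The main step is (5). The plan is to prove $\nu_i^+(I)=1/r$ for the component $I=W^s(\omega,x)$, $x\in\Pi(\omega,i)$, through the Furstenberg boundary convergence \eqref{eqn: F boundary}, using the fact that $I$ depends only on $\pi^+\omega$. This holds because the endpoints of $I$ lie in $\Xi(\omega)$, which depends only on $\omega^+$, and $I$ is determined by the cyclic pattern. Using stationarity and independence, write
\[
\nu_i^+(I)=\int (f^n_{\sigma^{-n}\omega'})_*\nu_i^+(I)\,d\P^-(\omega'^-),
\]
where $\omega'$ agrees with $\omega$ on nonnegative coordinates. This is legitimate because $f^n_{\sigma^{-n}\omega'}$ depends only on the coordinates $-n,\dots,-1$, which are independent of $\omega^+$; so $(f^n_{\sigma^{-n}\omega'})_*\nu_i^+$, averaged over them, equals $\nu_i^+$.

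Letting $n\to\infty$, this measure converges weak-* to $\unif_{\Pi(\omega',i)}$. The boundary of $I$ lies in $\Xi(\omega)$, which is disjoint from $\Pi(\omega')$ for a.e.\ $\omega'$ because $\Pi(\omega')\cap\Xi(\omega')=\emptyset$ and $\Xi$ depends only on the future. Also $\nu_i^+$ has no atoms, since it has no finite orbits. Hence the mass converges to $\unif_{\Pi(\omega',i)}(I)$. By the partition structure applied at $\omega'$, which has the same future so $I$ is a component for it as well, this equals $1/r$.

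The delicate point is justifying that this identity holds for a.e.\ $\omega$ simultaneously with the boundary convergence. Fubini over $(\omega'^-,\omega^+)$ should handle it. The ``in particular'' clause then follows by summing over the $r$ components.
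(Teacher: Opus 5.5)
Your proposal is correct. For items (1)--(4), (6) and (7) it is the paper's argument: define $W^s(\omega,i)$ and $W^u(\omega,i)$ as unions of the stable (resp.\ unstable) intervals through $\Pi(\omega,i)$ (resp.\ $\Xi(\omega,i)$), then read everything off Lemmas~\ref{lem: Pi and Wu}--\ref{lem: R is transitive}, Birkhoff's theorem, and Lemma~\ref{lem: expdecay f'} with compactness.

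For (5) you take a detour. The paper argues directly from Theorem~\ref{thm: structure of random walk 1}(2). Since $\Pi(\omega,i)$ depends only on $\pi^-\omega$, one has $\nu_i^+=\int_{\Sigma^-}\unif_{\Pi(\omega^-,i)}\,d\P^-(\omega^-)$. Evaluating this on a component $I$ of $W^s(\omega^+,i)$, which is fixed by the future, gives
\[
\nu_i^+(I)=\frac1r\int_{\Sigma^-}\#\bigl(I\cap\Pi(\omega^-,i)\bigr)\,d\P^-(\omega^-)=\frac1r,
\]
because the integrand equals $1$ for almost every past, by the partition structure.

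You reach the same averaged identity $\nu_i^+(I)=\int\unif_{\Pi(\omega',i)}(I)\,d\P^-(\omega'^-)$ by a longer chain: the identity $\mu^{*n}*\nu_i^+=\nu_i^+$, the boundary convergence \eqref{eqn: F boundary} from \cite[Lemma~5.22]{Vi14}, portmanteau, and dominated convergence. This is valid, and the Fubini issue you flag is handled exactly as you suggest. However, it re-derives from an external result an identity already contained in Theorem~\ref{thm: structure of random walk 1}(2), and it adds a check that the limit measure does not charge $\partial I$.

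Atomlessness of $\nu_i^+$ plays no role in that check. Portmanteau needs $\unif_{\Pi(\omega',i)}(\partial I)=0$, which follows from $\partial I\subset\Xi(\omega)$ and $\Pi(\omega')\cap\Xi(\omega')=\emptyset$, as you also noted.

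Both routes rest on the same key point: $I$ is determined by the future, while $\Pi(\cdot,i)$ depends only on the past.
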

\begin{proof}
For $\omega\in\Sigma$ and $i\in[d],$ define 
\begin{equation}\label{eqn: su-manifolds}
W^s(\omega,i)=\bigcup_{x \in \Pi(\omega,i)} W^s(\omega,x)\quad \text{and}\quad W^u(\omega,i)=\bigcup_{x \in \Xi(\omega,i)} W^u(\omega,x),
\end{equation}
so that $W^s(\omega) = \bigcup_{i\in[d]}W^s(\omega,i)$ and $W^u(\omega) = \bigcup_{i\in[d]}W^u(\omega,i)$.
All items except (5) follow immediately from the discussion above.
In particular, (3) is Lemma~\ref{lem: Pi and Wu} and (7) follows from \eqref{eq:expdecay f'} and compactness.

It remains to prove (5). 
Without loss of generality, we show the argument for $\nu_i^+.$ 
Then the boundary points of $ W^s(\omega,i)$ only depends on $\pi^+(\omega)$ and hence $ W^s(\omega,i)$ only depends on $\pi^+(\omega).$ 
Fix an $\omega^+\in\Sigma^+$ and take a connected component of $I$ of $ W^s(\omega^+,i).$ 
Then for every $\omega\in\Sigma$ with $\pi^+\omega=\omega^+,$ we have $\#(I\cap \Pi(\omega,i))=1.$ 
Since $\Pi(\omega,i)$ only depends on $\pi^-\omega,$ it follows that
	\[\nu_i^+(I)=\frac{1}{r}\int_{\Sigma^-} \#\lb{I\cap \Pi(\omega^-,i)}\dr\P^-(\omega^-)=\frac{1}{r}.\qedhere \]
\end{proof}

\begin{example}
	Figure \ref{fig:417} illustrates the dynamics of a random walk in the case $d=r=2$. Here $n$ is a large positive integer and $\omega'=\sigma^n\omega$. The red points denote the points in $\Pi(\cd)$, and the blue points denote the points in $\Xi(\cd)$. The black arcs map to the black arcs with some contraction.

\begin{figure}[!ht]
    \begin{tikzpicture}
        \def\radius{2.5cm}
		\def\radone{2.9cm}
		\def\radtwo{3.3cm}
		\def\radthree{3.1cm}
		
		\draw[gray] (0,0) circle (\radius);
		\draw[-,very thick] (-35:\radius) arc[radius=\radius, start angle=-35, end angle=35];
		\draw[-,very thick] (55:\radius) arc[radius=\radius, start angle=55, end angle=125];
		\draw[-,very thick] (145:\radius) arc[radius=\radius, start angle=145, end angle=215];
		\draw[-,very thick] (235:\radius) arc[radius=\radius, start angle=235, end angle=305];
		
		\fill[red] (0,0) ++(0:\radius) circle[radius=2pt];
		\fill[red] (0,0) ++(90:\radius) circle[radius=2pt];
		\fill[red] (0,0) ++(180:\radius) circle[radius=2pt];
		\fill[red] (0,0) ++(270:\radius) circle[radius=2pt];
		
		\fill[blue] (0,0) ++(45:\radius) circle[radius=2pt];
		\fill[blue] (0,0) ++(135:\radius) circle[radius=2pt];
		\fill[blue] (0,0) ++(225:\radius) circle[radius=2pt];
		\fill[blue] (0,0) ++(315:\radius) circle[radius=2pt];
		
		\fill (0,0) ++(-35:\radius) circle[radius=1.5pt];
		\fill (0,0) ++(35:\radius) circle[radius=1.5pt];
		\fill (0,0) ++(55:\radius) circle[radius=1.5pt];
		\fill (0,0) ++(125:\radius) circle[radius=1.5pt];
		\fill (0,0) ++(145:\radius) circle[radius=1.5pt];
		\fill (0,0) ++(215:\radius) circle[radius=1.5pt];
		\fill (0,0) ++(235:\radius) circle[radius=1.5pt];
		\fill (0,0) ++(305:\radius) circle[radius=1.5pt];

		\node at (0:\radtwo) {$\Pi(\omega,0)$};
		\node at (90:\radone) {$\Pi(\omega,1)$};
		\node at (180:\radtwo) {$\Pi(\omega,0)$};
		\node at (270:\radone) {$\Pi(\omega,1)$};
		
		\node at (45:\radthree) {$\Xi(\omega,0)$};
		\node at (135:\radthree) {$\Xi(\omega,1)$};
		\node at (225:\radthree) {$\Xi(\omega,0)$};
		\node at (315:\radthree) {$\Xi(\omega,1)$};
		
		\draw[gray] (8,0) circle (\radius);
		\draw[-,very thick] (8,0)++(12.5:\radius) arc[radius=\radius, start angle=12.5, end angle=32.5];
		\draw[-,very thick] (8,0)++(102.5:\radius) arc[radius=\radius, start angle=102.5, end angle=122.5];
		\draw[-,very thick] (8,0)++(192.5:\radius) arc[radius=\radius, start angle=192.5, end angle=212.5];
		\draw[-,very thick] (8,0)++(282.5:\radius) arc[radius=\radius, start angle=282.5, end angle=302.5];
		
		\fill[red] (8,0) ++(22.5:\radius) circle[radius=2pt];
		\fill[red] (8,0) ++(112.5:\radius) circle[radius=2pt];
		\fill[red] (8,0) ++(202.5:\radius) circle[radius=2pt];
		\fill[red] (8,0) ++(292.5:\radius) circle[radius=2pt];
		
		\fill[blue] (8,0) ++(67.5:\radius) circle[radius=2pt];
		\fill[blue] (8,0) ++(157.5:\radius) circle[radius=2pt];
		\fill[blue] (8,0) ++(247.5:\radius) circle[radius=2pt];
		\fill[blue] (8,0) ++(337.5:\radius) circle[radius=2pt];
		
		\fill (8,0)++(12.5:\radius) circle[radius=1.5pt];
		\fill (8,0)++(32.5:\radius) circle[radius=1.5pt];
		\fill (8,0)++(102.5:\radius) circle[radius=1.5pt];
		\fill (8,0)++(122.5:\radius) circle[radius=1.5pt];
		\fill (8,0)++(192.5:\radius) circle[radius=1.5pt];
		\fill (8,0)++(212.5:\radius) circle[radius=1.5pt];
		\fill (8,0)++(282.5:\radius) circle[radius=1.5pt];
		\fill (8,0)++(302.5:\radius) circle[radius=1.5pt];

		\node at ($(8,0)+(22.5:\radtwo)$) {$\Pi(\omega',0)$};
		\node at ($(8,0)+(112.5:\radthree)$) {$\Pi(\omega',1)$};
		\node at ($(8,0)+(202.5:\radtwo)$) {$\Pi(\omega',0)$};
		\node at ($(8,0)+(292.5:\radthree)$) {$\Pi(\omega',1)$};
		
		\node at ($(8,0)+(67.5:\radthree)$) {$\Xi(\omega',0)$};
		\node at ($(8,0)+(157.5:\radtwo)$) {$\Xi(\omega',1)$};
		\node at ($(8,0)+(247.5:\radthree)$) {$\Xi(\omega',0)$};
		\node at ($(8,0)+(337.5:\radtwo)$) {$\Xi(\omega',1)$};
		
		\node at ($(5,2)+(260:4.5)$) {$f_\omega^n$};
		\draw[->,>=latex,very thick] (5,2)++(250:5) arc[radius=\radius, start angle=250, end angle=295];

   \end{tikzpicture}
   \stepcounter{theorem}
   \caption{A typical map $f^n_\omega : \SS^1 \to \SS^1$.}
   \label{fig:417}
\end{figure}
\end{example}

For convenience in later discussions, we define ``typical'' words in this system.

\begin{definition}\label{def: regular for random walk}
	We say $\omega\in\Sigma$ is \textit{regular for the random walk} if it
	\begin{enumerate}
		\item satisfies all the conditions in the previous two theorems, and
		\item $(\omega,x)$ is Birkhoff regular for every $m_i^+$ for every $x\in \Pi(\omega,i)$ and $(\omega,y)$ is Birkhoff regular for every $m_i^-$ for every $y\in \Xi(\omega,i).$
	\end{enumerate}
\end{definition}
The previous two theorems show that the regular words $\omega$ for the random walk form a full $\P$-measure subset of $\Sigma.$

\subsection{Basic properties of stationary measures}\label{sec: properties of stationary measures}
For the remainder of this section, we focus on the random walk induced by a finitely supported probability measure $\mu$ on $\Diff_+^{1+\alpha}(\SS^1)$ without common invariant probability measures on $\SS^1.$ 
We use the notation from the previous subsection.
Recall that $T_\mu$ denotes the semigroup generated by $\supp\mu.$ 
We first record several basic properties of $\mu$-stationary measures.

\begin{lemma}
\label{lem: not atomic}
Stationary measures are atomless.
\end{lemma}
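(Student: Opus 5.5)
It suffices to treat an ergodic $\mu$-stationary measure $\nu$. Indeed, by Theorem~\ref{thm: supports of stationary measures} there are finitely many ergodic stationary measures, and every stationary measure is a convex combination of them; a convex combination of atomless measures is atomless. We give two arguments for the ergodic case.

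The first argument is the classical maximal-atom trick. Suppose $\nu$ has an atom, and let $a=\max_x\nu(\{x\})>0$. The maximum is attained because $\nu$ has total mass $1$, so there are only finitely many atoms of mass at least $a/2$. Let $A$ be the finite set of atoms of mass exactly $a$. Stationarity gives, for $x\in A$,
\[a=\nu(\{x\})=\int \nu(\{f^{-1}x\})\dd\mu(f).\]
Each term satisfies $\nu(\{f^{-1}x\})\le a$, so equality forces $\nu(\{f^{-1}x\})=a$ for every $f\in\supp\mu$. Hence $f^{-1}A\subset A$, and since $A$ is finite and $f$ is injective, $f^{-1}A=A$. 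Thus $A$ is a finite set preserved by every element of $\supp\mu$. The uniform measure on $A$ is then a common invariant probability measure for $\supp\mu$, contradicting the standing assumption.

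The second argument uses the structure theory. By Theorem~\ref{thm: structure of random walk 1}(2), $\nu=\nu_i^+=\int \unif_{\Pi(\omega,i)}\dd\P(\omega)$ for some $i$. Since $\Pi(\omega,i)$ depends only on $\pi^-\omega$, an atom $x$ of $\nu$ would force $x\in\Pi(\omega^-,i)$ for a positive-measure set of $\omega^-$. Combined with the cocycle invariance in item~(4), one could then also derive a finite invariant set. The first argument is cleaner, so I would use it.

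The only point needing care is that the maximum mass is attained and that $A$ is finite; both follow from $\nu$ being a probability measure. The rest is routine, so I do not expect a real obstacle.
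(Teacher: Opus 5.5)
Your first argument is correct and is essentially the paper's proof: the paper also notes that the atoms of maximal weight form a finite $T_\mu$-invariant set, whose uniform measure is then invariant under $\supp\mu$, and you justify this explicitly through the stationarity equation. The reduction to ergodic $\nu$ and the second, structure-theoretic sketch are unnecessary, since the maximal-atom argument applies verbatim to any $\mu$-stationary measure.
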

\begin{proof}
Otherwise, for some $c>0,$ the atoms of $\nu$ with weights at least $c$ form a finite orbit of $T_\mu.$ Then, the uniform measure on this finite orbit is an invariant probability measure for $\supp\mu.$
\end{proof}

For $c > 0$, a subset of $\SS^1$ is said to be \textit{$c$-separated} if its elements are of distance strictly larger than $c$ one from each other.
\begin{lemma}
\label{lem: uniform bound of Pi}
There exists a constant $c>0$ depending only on $\mu$, such that for every $\omega$ regular for the random walk, $\Pi(\omega)$ and $\Xi(\omega)$ are $c$-separated.
\end{lemma}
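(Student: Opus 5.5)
The plan is to argue by contradiction, combining the uniform non-atomicity of stationary measures (Lemma \ref{lem: not atomic}) with the combinatorial structure in Theorems \ref{thm: structure of random walk 1} and \ref{thm: structure of random walk 2}. Since there are finitely many measures $\nu_i^\pm$ and each is atomless, continuity of measure on a compact circle gives a $\rho>0$ with
\[\sup_{x\in\SS^1}\nu_i^\pm\bigl(B(x,\rho)\bigr)<\tfrac{1}{2r}\quad\text{for all } i\in[d].\]

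\textbf{Step 1: components are long.} Fix a regular $\omega$ and let $x_0,x_1,\dots,x_{2dr-1}$ be the points of $\Pi(\omega)\cup\Xi(\omega)$ in cyclic order, as in Theorem \ref{thm: structure of random walk 1}(5). By Theorem \ref{thm: structure of random walk 2}(3), the components of $W^s(\omega)$ are the arcs $(x_{2j-1},x_{2j+1})$, each containing exactly one point $x_{2j}$ of $\Pi(\omega)$. Likewise, the components of $W^u(\omega)$ are the arcs $(x_{2j},x_{2j+2})$. Item (5) of Theorem \ref{thm: structure of random walk 2} says each such component carries $\nu_i^\pm$-mass exactly $1/r$. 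The choice of $\rho$ then forces each of these arcs to have length at least $\rho$. This already shows that points of $\Pi(\omega)$ are $\rho$-separated from each other, and likewise for $\Xi(\omega)$.

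\textbf{Step 2: separating a $\Pi$-point from a neighbouring $\Xi$-point.} It remains to bound $d(x_{2j},x_{2j+1})$ from below. Here I would use the product structure: $\Pi$ depends only on $\omega^-$, $\Xi$ depends only on $\omega^+$, and $\P=\P^-\times\P^+$. Fix $\omega^+$ and the component $I=(x_{2j-1},x_{2j+1})$ of $W^s(\omega^+,i)$. For $\P^-$-a.e.\ $\omega'^-$, the point of $\Pi(\omega'^-,i)\cap I$ is distributed according to $r\,\nu_i^+|_I$. Symmetrically, fix $\omega^-$ and the component $J=(x_{2j},x_{2j+2})$ of $W^u(\omega^-,i)$. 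For a.e.\ $\omega'^+$, the point of $\Xi(\omega'^+,i)\cap J$ is distributed according to $r\,\nu_i^-|_J$.

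\textbf{Step 3: contradiction argument.} Suppose there are regular $\omega_n$ with $d(x^n,y^n)\to0$, where $x^n\in\Pi(\omega_n,i)$ and $y^n\in\Xi(\omega_n,i')$ are adjacent. For definiteness take $y^n$ to the right of $x^n$. The right half $(x^n,y^n)$ of the $W^s$-component then has length tending to $0$, so its $\nu_i^+$-mass tends to $0$. Now mix words: for a.e.\ $\omega'^-$, the word $(\omega'^-,\omega_n^+)$ is again regular. Its $\Pi$-point in $I_n$ must lie in $(x_{2j-1}^n,y^n)$. Exchanging the roles of $\omega'^-$ and $\omega_n^-$ via the mixed word $(\omega_n^-,\omega'^+)$, which is also regular for a.e.\ $\omega'^+$, the $\Xi$-point of $J_n$ must avoid a shrinking neighbourhood of $x^n$ on one side only.

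Passing to limits (the $\ul x$ lie in the compact space $\SS_{2dr}$, after taking limits of the configurations), I expect to obtain a point $z$ that is a common endpoint of a limiting $W^s$-component and a limiting $W^u$-component. Moreover, near $z$ the measure $\nu_i^+$ would have to carry mass $\tfrac1r$ on one side while $\nu_{i'}^-$ does so on the other. The contradiction should then come from the effective Furstenberg boundary convergence \eqref{eqn: F boundary}, or from the exponential contraction in Theorem \ref{thm: structure of random walk 2}(7). These are incompatible with a $\Pi$-point, which is forward-attracting, sitting arbitrarily close to a $\Xi$-point, which is forward-repelling, uniformly over the finitely many generators with bounded $C^{1+\alpha}$ distortion.

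\textbf{Main obstacle.} The hard step is Step 3: extracting a genuine uniform contradiction for \emph{every} regular $\omega$, rather than for $\P$-almost every one. I would first try to exploit the bounded distortion of one step $f_{-1}$. Applying the cocycle relation in Theorem \ref{thm: structure of random walk 1}(4) with $n=\pm1$ moves the configuration to $\sigma^{\pm1}\omega$. This changes distances by at most a factor $\max_{f\in\supp\mu}\|f'\|^{\pm1}$, which should allow the near-collision to be transported to the mixed-word configurations, where the mass bounds of Step 2 apply.
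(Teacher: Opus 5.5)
\textbf{Step 1 already proves the lemma, and it is the paper's own argument.} Non-atomicity gives a scale below which every arc has $\nu_i^\pm$-mass less than $1/r$. By Theorem~\ref{thm: structure of random walk 2}(3),(5), consecutive points of $\Xi(\omega)$ (resp.\ $\Pi(\omega)$) bound a component of some $W^s(\omega,i)$ (resp.\ $W^u(\omega,i)$) of mass exactly $1/r$, so these arcs cannot be short. In this paper ``$c$-separated'' is a property of a single set: its points are pairwise at distance $>c$. The lemma asserts this for $\Pi(\omega)$ and for $\Xi(\omega)$ separately, and that is all that is used later, e.g.\ in Corollary~\ref{cor: lower bound of fixed points} and Lemma~\ref{lem: entropy of RW on Sr}. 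You should stop after Step 1.

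\textbf{Steps 2 and 3 should be discarded, because the claim they target is false.} They try to bound $d(x_{2j},x_{2j+1})$ from below, i.e.\ to show that $\Pi(\omega)\cup\Xi(\omega)$ is uniformly separated. This fails in general:
\begin{itemize}
\item $\Pi(\omega)$ depends only on $\omega^-$, $\Xi(\omega)$ depends only on $\omega^+$, and $\P=\P^-\times\P^+$.
\item Suppose $z\in\supp\nu_i^+\cap\supp\nu_{i'}^-$. This always happens when $\mu$ is symmetric, since then $d=1$ and $\nu^+=\nu^-$.
\item Fix $\epsilon>0$. The events $\Pi(\omega,i)\cap B(z,\epsilon)\ne\emptyset$ and $\Xi(\omega,i')\cap B(z,\epsilon)\ne\emptyset$ each have positive probability, at least $\nu_i^+(B(z,\epsilon))$ and $\nu_{i'}^-(B(z,\epsilon))$ respectively.
\item These two events are independent, so their intersection has positive probability. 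Since regular words have full measure, there are regular $\omega$ with a $\Pi$-point within $2\epsilon$ of a $\Xi$-point.
\end{itemize}
For $d=r=1$ this is the familiar fact that the attracting and repelling directions of an $\SL_2(\RR)$ random product are independent with atomless laws. So the heuristic in Step 3, that a forward-attracting point cannot sit arbitrarily close to a forward-repelling one, is wrong, and no uniform contradiction can be extracted. The ``main obstacle'' you identify is not an obstacle to the lemma; it comes from misreading the statement.
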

\begin{proof}
By Lemma~\ref{lem: not atomic} and the compactness of $\SS^1,$ we can take $c>0$ such that for every interval $I$ on $\SS^1$ with length $|I| \leq c,$ it holds that
\[ \forall i\in\Zmodd,\quad \nu_i^+(I) < \frac{1}{r}.\]
Two consecutive points in $\Xi(\omega)$ bound an open interval $I$, which is a connected component of $W^s(\omega,i)$ for some $i \in \Zmodd$.
By Theorem~\ref{thm: structure of random walk 2}(5), $\nu_i^+(I) = 1/r$.
Hence $\abs{I} > c$.
We deduce that $\Xi(\omega)$ is $c$-separated. 
The proof for $\Pi(\omega)$ is similar.
\end{proof}

As a corollary of this lemma, the number of fixed points of a hyperbolic element in the semigroup $T_\mu$ is bounded from below. 
This will be helpful in showing that $d,r$ are topological invariants.
\begin{definition}\label{def: attractors and repellors}
Let $f\in \Homeo(\SS^1)$.
A fixed point $x$ of $f$ is called \textit{topologically hyperbolic} if there exists $\ve>0$ such that 
\begin{enumerate}
\item either for every $y\in B(x,\ve),$ $f^n(y)\to x(n\to+\infty),$
\item or for every $y\in B(x,\ve),$ $f^{-n}(y)\to x(n\to+\infty).$
\end{enumerate}
We call $x$ an \textit{attractor} of $f$ in the first case and \textit{repellor} otherwise.
\end{definition}
Observe that if every fixed point of $f\in\Homeo(\SS^1)$ is topologically hyperbolic, then necessarily, $f$ has finitely many fixed points and its attractors and repellors are arranged alternately. 

\begin{corollary}\label{cor: lower bound of fixed points}
	Let $f\in T_\mu$ be a diffeomorphism and assume that every fixed point is topologically hyperbolic. Then $f$ has at least $dr$ attractors and $dr$ repellors.
\end{corollary}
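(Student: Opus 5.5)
The plan is to compare the fixed points of $f$ with the $2dr$ points of $\Pi(\omega)\cup\Xi(\omega)$ for a suitable random word $\omega$, using that these points are uniformly separated (Lemma~\ref{lem: uniform bound of Pi}). Write $f=g_{\ell}\cdots g_1$ with $g_j\in\supp\mu$. Since every fixed point of $f$ is topologically hyperbolic, $f$ has finitely many fixed points, attractors $a_1,\dots,a_p$ and repellors $b_1,\dots,b_p$ alternating around the circle. For every compact set $K$ disjoint from $\{b_1,\dots,b_p\}$, the iterates $f^n(K)$ converge to $\{a_1,\dots,a_p\}$. Symmetrically, $f^{-n}(K')$ converges to the repellors for compact $K'$ avoiding the attractors. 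We must show $p\ge dr$.

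Fix $\delta>0$ much smaller than the constant $c$ of Lemma~\ref{lem: uniform bound of Pi} and smaller than half the minimal gap between fixed points of $f$. By compactness and the basin dynamics, there is $N$ such that $f^{N}$ maps the complement of the $\delta$-neighbourhood of the repellors into the $\delta$-neighbourhood $A_\delta$ of the attractors. Likewise $f^{-N}$ maps the complement of the $\delta$-neighbourhood of the attractors into the $\delta$-neighbourhood $B_\delta$ of the repellors.

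Next consider the cylinder set $C\subset\Sigma$ of words $\omega$ whose coordinates $f_{-\ell N},\dots,f_{-1}$ spell $f^N$. That is, $f^{\ell N}_{\sigma^{-\ell N}\omega}=f^N$. The set $C$ has positive $\P$-measure, so it contains regular words $\omega$; moreover $\sigma^{-\ell N}\omega$ can also be taken regular. Set $\omega'=\sigma^{-\ell N}\omega$. By cocycle invariance (Theorem~\ref{thm: structure of random walk 1}(4)), $f^N$ maps $\Pi(\omega')$ onto $\Pi(\omega)$ bijectively. Then:
\begin{enumerate}
\item The set $\Pi(\omega')$ has $dr$ points, pairwise $c$-separated, so at most $p$ of them lie in $B_\delta$ (one per repellor, since $2\delta<c$).
\item The remaining points are mapped by $f^N$ into $A_\delta$, and their images are still $c$-separated as points of $\Pi(\omega)$. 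Hence at most $p$ images occur there.
\end{enumerate}
This gives $dr\le 2p$, which is not yet sharp.

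To sharpen this, I would use $\Pi$ and $\Xi$ jointly, exploiting the interlacing of Theorem~\ref{thm: structure of random walk 1}(5). I would take $\omega$ whose coordinates on both sides of time $0$ spell long powers of $f$: the coordinates $f_{-\ell N},\dots,f_{-1}$ spell $f^N$ and the coordinates $f_0,\dots,f_{\ell N-1}$ also spell $f^N$. Such $\omega$ form a positive-measure cylinder, so regular ones exist.

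Consider the points of $\Pi(\sigma^{-\ell N}\omega)$. Those lying outside $B_\delta$ land in $A_\delta$ after applying $f^N$. Points in $B_\delta$ cannot all stay put, so I instead use the forward stable structure. By Theorem~\ref{thm: structure of random walk 2}(7), $f^{n}_\omega$ contracts compact subsets of $W^s(\omega)=\SS^1\setminus\Xi(\omega)$. Combining with $f^N$ again, points of $\Xi(\omega)$ must be repelling-type: the $\Xi(\omega)$ points are pulled back by $f^{-N}$ into $B_\delta$ unless they lie near attractors. The resulting alternating pattern of $2dr$ $c$-separated points, each of which after applying $f^{\pm N}$ lies within $\delta$ of an attractor (for $\Pi$) or a repellor (for $\Xi$), and which interlace, forces $p\ge dr$. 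Indeed, two consecutive $\Pi$-points separated by a $\Xi$-point cannot both be near the same attractor, because the arc between them contains a repellor-type point.

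The main obstacle is making this last step rigorous. Points of $\Pi$ sitting exactly near repellors do not move under $f^N$, so one cannot naively count images. I would handle it by replacing $f$ with a large power $f^M$ and choosing $\omega$ so that $\Pi(\omega)$ is the image under $f^{M}$ of a generic configuration. Since $\Pi(\omega)$ is $\P$-distributed with atomless marginals $\nu_i^+$ (Lemma~\ref{lem: not atomic}), the configuration avoids $B_\delta$ with probability close to one. With this choice all $dr$ points of $\Pi(\omega)$ lie in $A_\delta$ and remain $c$-separated. Since $\delta<c/2$, no two of them can share an attractor's $\delta$-neighbourhood, which gives $p\ge dr$. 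The symmetric argument with $\Xi$ and $f^{-M}$ gives at least $dr$ repellors.
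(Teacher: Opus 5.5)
Your final paragraph is essentially the paper's proof: the paper takes the past event that $\Pi(\omega)$ avoids small neighbourhoods of the repellors and intersects it with a future cylinder spelling $f^m$. It then applies cocycle invariance to get $\Pi(\sigma^{nm}\omega)=f^m\Pi(\omega)$ inside the $c/2$-neighbourhood of the attractors, and uses $c$-separation and pigeonhole to contradict $q<dr$. Your middle ``sharpening'' paragraph is not needed and can be dropped, and so can the separate $\Xi$ argument, since attractors and repellors alternate and are therefore equal in number.

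One step needs to be stated explicitly. The joint event has positive measure because $\Pi(\sigma^{-\ell M}\omega)$ depends only on the coordinates before time $-\ell M$ (Theorem~\ref{thm: structure of random walk 1}(3)), so it is independent of the cylinder, and the measure of the intersection is the product of the two probabilities. ``Probability close to one'' alone would not suffice, because the cylinder's probability shrinks as $M$ grows when $\delta\to0$.
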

\begin{proof}
Assume for a contradiction that there exists $f\in T_\mu$ with $q$ attractors and $q$ repellors with $q<dr.$ 
Denote these fixed points by $a_1,r_1,a_2,r_2,\cdots,a_q,r_q$ ordered in the cyclic order and where $a_i$'s are the attractors and $r_i$'s are the repellors.
Take $\ve'>0$ such that for every interval $I \subset \SS^1$ of length $|I| \leq 2\ve',$ $\nu_i^+(I) < 1/qdr$ for every ergodic $\mu^+$-stationary measure. 
Consider the set $\Sigma_1=\lb{\omega^- \in \Sigma^-: \Pi(\omega^-)\cap(\bigcup_j B(r_j,\ve'))\ne\vn}.$ Then from Theorem~\ref{thm: structure of random walk 1}(2),
\[\P^-(\Sigma_1)\leq r\sum_{i \in \Zmodd} \sum_{j=1}^q \nu_i^+(B(r_j,\ve'))<1.\]
Let $\Sigma_2=\Sigma^-\sm\Sigma_1$ with positive $\P^-$ measure.
	
Assume that $f\in\cS^{*n}$ where $\cS=\supp\mu.$ Let $m$ be a positive integer large enough so that \[f^m\bigl(\SS^1\sm \bigcup_{j=1}^q B(r_j,\ve')\bigr)\sbs\bigcup_{j=1}^q B(a_j,c/2),\] where $c$ is the constant in Lemma~\ref{lem: uniform bound of Pi}. 
Consider the set
\[\Sigma'=\lb{\omega\in\Sigma:\pi^-\omega\in\Sigma_2,f_\omega^{nm}=f^m},\]
then $\P(\Sigma')=\P^-(\Sigma_2)  \mu^{*nm}(f^m)>0.$ But by the definition of $\Sigma_2$ and the cocycle invariance of $\Pi(\cd),$ for $\P$-almost every $\omega\in\Sigma',$ we have
\[\Pi(\sigma^{nm}\omega) = f^{m} \Pi(\omega) \sbs \bigcup_{j=1}^q B(a_j,c/2).\]
Since $q<dr$, $\Pi(\sigma^{nm}\omega)$ has two points within a distance of at most $c$. 
This contradicts Lemma~\ref{lem: uniform bound of Pi}.
\end{proof}

Now we prove Theorem \ref{thm: d,r top inv} assuming Lemma \ref{lem: 2dr fixed points}.

\begin{proof}[Proof of Theorem \ref{thm: d,r top inv}]
By Theorem \ref{thm: supports of stationary measures}, the constant $d$ is precisely  the number of $T_\mu$ minimal sets, which is obviously a topological invariant.
By Lemma~\ref{lem: 2dr fixed points} and Corollary~\ref{cor: lower bound of fixed points}, $2dr$ is the least number of fixed points of elements in $T_\mu$ having only topologically hyperbolic fixed points. 
Thus, $dr$ is also invariant under $C^0$ conjugates.
Hence, so is $r.$
\end{proof}

\subsection{Uniform good words}\label{sec: good words}

In this subsection, we will introduce a powerful tool: the set of uniform good words. 
Roughly speaking, it is a set of words with large probability and uniform control, and it has global contraction properties on each connected component of the $s$-manifolds $W^s(\omega)$. 
It enables us to handle smooth actions in a manner similar to linear actions, except that in the smooth case there are $dr$ cones instead of one cone in the linear case.

\begin{proposition}\label{prop: good words}
	For any $\ve>0,$ there exists a subset $\Sigma_\ve\sbs\Sigma$ satisfying the following.
	\begin{enumerate}
		\item $\P(\Sigma_\ve)>1-\ve$.
		\item For every $(\omega,i) \in \Sigma_\ve\times [d]$ and every $x\in\Pi(\omega,i)$, we have  $\lim_{n\to+\infty}\frac{1}{n}\log (f_\omega^n)'(x)=\lambda_i^+$. The convergence is uniform in $\omega$,$i$ and $x$.
		\item there exist a constant $C=C(\ve)>0$ and an open set $U(\omega)=U(\omega,\ve)$ for every $\omega\in\Sigma_\ve,$ such that
		\begin{itemize}
			\item $\Pi(\omega)\sbs U(\omega)\sbs W^s(\omega),$
			\item $\SS^1\sm U(\omega)\sbs \Xi(\omega)^{(\ve)},$ where $\Xi(\omega)^{(\ve)}=\bigcup_{x\in \Xi(\omega)}B(x,\ve).$
			\item for all $n\geq 0$, $\vk_\alpha (f_\omega^n,I)\leq C(\ve)$ for every connected component $I$ of $U(\omega,\ve).$
		\end{itemize}
	\end{enumerate}
\end{proposition}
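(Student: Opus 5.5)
The plan is to build $\Sigma_\ve$ as a finite intersection of full-measure sets that we truncate by Egorov-type arguments. Each truncation costs at most $\ve/3$ in measure.

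\textbf{Uniform exponents (item 2).} For regular $\omega$, Theorem \ref{thm: structure of random walk 2}(6) gives $\frac1n\log(f_\omega^n)'(x)\to\lambda_i^+$ for each of the finitely many points $x\in\Pi(\omega,i)$. List these points measurably, say by cyclic order starting from the first point to the right of $0$. Then $\omega\mapsto \max_{i,x}\sup_{m\ge n}\bigl|\frac1m\log(f_\omega^m)'(x)-\lambda_i^+\bigr|$ is measurable and tends to $0$ almost surely. Egorov's theorem yields a set $\Sigma^{(1)}$ with $\P(\Sigma^{(1)})>1-\ve/3$ on which this convergence is uniform. Set $\lambda=\max_i\lambda_i^+<0$. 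After shrinking the set, there is $C_1$ with $(f_\omega^n)'(x)\le C_1 2^{\lambda n/2}$ for all $n\ge0$, all $\omega\in\Sigma^{(1)}$ and all $x\in\Pi(\omega)$.

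\textbf{Choice of $U(\omega)$ (item 3).} We may assume $\ve<c/4$, where $c$ is the constant of Lemma \ref{lem: uniform bound of Pi}. By Theorem \ref{thm: structure of random walk 2}(3), each component of $W^s(\omega)$ is an interval $(y,y')$ with consecutive endpoints $y,y'\in\Xi(\omega)$, and it contains exactly one point of $\Pi(\omega)$. Let $U(\omega)$ be the union of the closed intervals $[y+\ve,y'-\ve]$ over these components, slightly enlarged to be open and still compactly contained in $W^s(\omega)$. Then $\Pi(\omega)\subset U(\omega)$, because by $c$-separation each point of $\Pi(\omega)$ lies at distance more than $c>\ve$ from $\Xi(\omega)$. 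Also $\SS^1\sm U(\omega)\subset\Xi(\omega)^{(\ve)}$ holds by construction. Every component $I$ of $U(\omega)$ is a closed subinterval of $W^s(\omega)$ containing some $x\in\Pi(\omega)$.

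\textbf{Distortion bound.} This is the main point. For each such $I$, Lemma \ref{lem: expdecay f'} and Theorem \ref{thm: structure of random walk 2}(7), combined with compactness of $\overline I$, give $\sup_n\vk_\alpha(f_\omega^n,I)<\infty$. To see this, iterate \eqref{eqn: vk alpha fg}:
\[\vk_\alpha(f_\omega^n,I)\le\sum_{k<n}\vk_\alpha(f_k)\,\|(f_\omega^k)'|_I\|^\alpha,\]
and the derivatives $(f_\omega^k)'$ decay exponentially uniformly on $I$. Hence
\[D(\omega):=\max_I\sup_n\vk_\alpha(f_\omega^n,I)\]
is finite for $\P$-a.e. $\omega$. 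The delicate step is measurability of $D$. The endpoints of $U(\omega)$ are measurable functions of $\omega$, since $\Xi$ is measurable. The supremum over $x\ne y$ in $I$ may be taken over rational pairs in the interior together with the measurable endpoints, so $D$ is measurable. Choose $C(\ve)$ with $\P(D\le C(\ve))>1-\ve/3$, and let $\Sigma^{(2)}$ be this set.

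\textbf{Conclusion.} Set $\Sigma_\ve=\Sigma^{(1)}\cap\Sigma^{(2)}\cap\{\omega\text{ regular}\}$. Then $\P(\Sigma_\ve)>1-\ve$, and items (1)–(3) hold.

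If one prefers to avoid the measurability bookkeeping, $D$ can be replaced by an explicit upper bound. Use the uniform rates from $\Sigma^{(1)}$, a uniform radius $\delta$ from the induction in Lemma \ref{lem: expdecay f'} applied at the points of $\Pi(\omega)$, and a bound on the time $N(\omega)$ after which $f_\omega^n(I)$ falls inside $B(f_\omega^n x,\delta)$. Each of these quantities is a.s. finite and measurable, and can be truncated at a large constant on a set of measure greater than $1-\ve/3$.
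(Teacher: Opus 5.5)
Your overall strategy is sound, but the inclusion $\Pi(\omega)\subset U(\omega)$ rests on a misreading of Lemma~\ref{lem: uniform bound of Pi}, and with a fixed margin it fails. That lemma says that $\Pi(\omega)$ is $c$-separated and that $\Xi(\omega)$ is $c$-separated. It gives no lower bound on $d(\Pi(\omega),\Xi(\omega))$, and none holds in general. $\Pi(\omega)$ depends only on $\pi^-\omega$ and $\Xi(\omega)$ only on $\pi^+\omega$, so they are independent. For instance, for a symmetric $\mu$ generating a group one has $\nu^+=\nu^-$, hence $d(\Pi(\omega),\Xi(\omega))<t$ with positive probability for every $t>0$. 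This is why Lemma~\ref{lem: A R and f} has to assume that $A\cup R$ is $3\rho$-separated.

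So with the fixed margin $\ve$, your $U(\omega)$ misses a point of $\Pi(\omega)$ on a set of positive measure. There is no reason for that set to have measure below $\ve/3$, so you cannot simply discard it. The repair is easy. The quantity $\delta(\omega)=d(\Pi(\omega),\Xi(\omega))$ is measurable, and it is positive almost surely by Theorem~\ref{thm: structure of random walk 1}(5). Trim each component $(y,y')$ of $W^s(\omega)$ by the margin $\min\{\ve,\delta(\omega)/2\}$ instead of $\ve$; alternatively, use a fixed $\ve'\le\ve$ with $\P(\delta\le\ve')<\ve/3$. Then every component still contains its point of $\Pi(\omega)$, the complement still lies in $\Xi(\omega)^{(\ve)}$, and the closures stay in $W^s(\omega)$. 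Your proof that $D(\omega)$ is measurable and almost surely finite goes through verbatim, and you no longer need the $c$-separation of $\Xi(\omega)$.

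With this fix, your route is genuinely different from the paper's. The paper makes the constant explicit. On the Egorov set $\Sigma'$ it applies Lemma~\ref{lem: Pesin good distortion estimate} to get a uniform radius $\rho$ with $\vk_\alpha(f^n_\omega,B(x,\rho))\le C'$ for all $x\in\Pi(\omega)$ and all $n$. It then uses Theorem~\ref{thm: structure of random walk 2}(7) for the backward dynamics to choose a single time $N$. For $\omega$ in a large set $\Sigma''$, $f^{-N}_\omega$ shrinks the complementary arcs $K(\omega)=\SS^1\setminus\bigcup_{x\in\Pi(\omega)}B(x,\rho)$ below length $\ve$. It sets $\Sigma_\ve=\sigma^{-N}\Sigma''$ and $U(\omega)=f^{-N}_{\sigma^N\omega}\bigl(\bigcup_{x\in\Pi(\sigma^N\omega)}B(x,\rho)\bigr)$, and transfers the bound through the finitely many words of length $N$ by \eqref{eqn: vk alpha fg}. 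Since $U(\omega)$ is a pullback of balls centred at $\Pi(\sigma^N\omega)$, the inclusion $\Pi(\omega)\subset U(\omega)$ is automatic there.

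You instead build $U(\omega)$ directly from $\Xi(\omega)$. You use the deterministic fact that the distortion of $f^n_\omega$ is bounded uniformly in $n$ on every compact subinterval of $W^s(\omega)$. You then obtain $C(\ve)$ by truncating an almost surely finite measurable random variable. This is shorter and needs neither the uniform Pesin-type balls nor the shift by $N$. The price is a non-explicit $C(\ve)$ and some measurability bookkeeping; the paper's construction controls $C(\ve)$ through the uniform rate and $N$.

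Your closing ``explicit'' variant is unnecessary. As stated, it would need uniform constants for the shifted words $\sigma^n\omega$ at the points $f^n_\omega x$, and $\Sigma^{(1)}$ does not supply these. Fixing one $N$ and passing to $\sigma^{-N}\Sigma''$ is exactly how the paper gets an explicit version.
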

We call the set $\Sigma_\ve\sbs\Sigma$ constructed above a set of \textit{uniform good words}.

\begin{lemma}\label{lem: Pesin good distortion estimate}
	Let $\Sigma'\sbs\Sigma$ be a subset and $x\in\SS^1.$ Assume there exists $C>0,c>0$ such that for every $\omega\in\Sigma',$ $(f_\omega^n)'(x)\leq C 2^{{-c} n}.$ 
	Then there exist constants $C',\rho>0$ depending only on $C$ and $c$ such that
	\[\vk_\alpha(f_\omega^n,B(x,\rho))\leq C' ,\quad \forall \omega\in\Sigma',\ n\in\NN. \]
\end{lemma}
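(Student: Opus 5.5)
The argument runs as in the proof of Lemma~\ref{lem: expdecay f'}, but with every constant made uniform in $\omega\in\Sigma'$. Fix $\rho>0$, to be chosen depending only on $C$, $c$, $\alpha$ and $\cS$. Set
\[
M=\max_{f\in\cS}\vk_\alpha(f,\SS^1),\qquad L=\max_{f\in\cS}\|f'\|_\infty .
\]
Both are finite since $\cS$ is a finite subset of $\Diff_+^{1+\alpha}(\SS^1)$. Write $B=B(x,\rho)$ and $\omega=(\dots,f_0,f_1,\dots)$. The plan has two steps.

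\emph{Step 1: a uniform bound $\vk(f_\omega^k,B)\le 1$ for all $k$.} I will prove this by induction on $k$, simultaneously for all $\omega\in\Sigma'$. Suppose it holds for all $k<n$. By the mean value theorem,
\[
|f_\omega^k(B)|\le 2\rho\,(f_\omega^k)'(x)\,2^{1}\le 4C\rho\,2^{-ck}.
\]
The chain rule for distortion then gives
\[
\vk(f_\omega^n,B)\le\sum_{k<n}\vk(f_k,f_\omega^kB)\le M\sum_{k\ge0}(4C\rho)^\alpha 2^{-c\alpha k}
= M(4C\rho)^\alpha\,(1-2^{-c\alpha})^{-1}.
\]
Choosing $\rho$ so small that the right-hand side is at most $1$ closes the induction. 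This choice depends only on $C$, $c$ (and the fixed data $M$, $\alpha$).

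\emph{Step 2: upgrade to the H\"older norm.} Iterating \eqref{eqn: vk alpha fg} with $g=f_\omega^k$ and $f=f_k$ gives
\[
\vk_\alpha(f_\omega^n,B)\le\sum_{k<n}\|(f_\omega^k)'|_B\|^\alpha\,\vk_\alpha(f_k,f_\omega^kB).
\]
Here the sup norm is taken over $B$ only. The restricted form of \eqref{eqn: vk alpha fg} is legitimate: in its derivation, the factor $\|g'\|$ only enters through $d(gy,gz)\le \sup_B g'\cdot d(y,z)$ for $y,z\in B$, and this holds because $B$ is an interval. By Step 1,
\[
\sup_B(f_\omega^k)'\le 2(f_\omega^k)'(x)\le 2C2^{-ck}.
\]
Hence
\[
\vk_\alpha(f_\omega^n,B)\le M(2C)^\alpha\sum_k 2^{-c\alpha k}=:C',
\]
which depends only on $C$, $c$ (and $\cS$, $\alpha$).

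The only delicate point is exactly this restriction of $\|g'\|$ to the relevant interval. Using the global sup norm would give a factor of order $L^{k\alpha}$, which is not summable. The bound must therefore come from the local derivative control supplied by Step 1.
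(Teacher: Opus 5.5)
Your proof is correct and follows the paper's argument. A uniform $\rho$ from the induction in the proof of Lemma~\ref{lem: expdecay f'} gives $\sup_{B(x,\rho)}(f^k_\omega)'\le 2C2^{-ck}$, and summing the H\"older distortions of the $f_k$ gives exactly the paper's constant $C'=(2C)^\alpha\max_{f\in\cS}\vk_\alpha(f)\sum_{k\ge0}2^{-c\alpha k}$; the only cosmetic difference is that the paper bounds $\vk(f^n_\omega,I)$ for each subinterval $I\subset B(x,\rho)$ and divides by $|I|^\alpha$, whereas you iterate \eqref{eqn: vk alpha fg} with the derivative norm restricted to $B(x,\rho)$, which you justify correctly.
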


\begin{proof}
	By the proof of Lemma \ref{lem: expdecay f'}, there is $\rho>0$ depending only on $C>0,c>0$ such that
	\[ (f_\omega^n)'(y)\leq 2C \cdot 2^{{-c} n},\quad\forall y\in B(x,\rho), n\geqslant 0.\]
	Then for every subinterval $I\sbs B(x,\rho),$ we have
	\[\vk(f_\omega^n, I)\leqslant  \sum_{k=0}^{n-1}\vk(f_{k},f_\omega^k(I))\leqslant \sum_{k=0}^{n-1}\vk_\alpha(f_{k})\cdot |f_\omega^k(I)|^\alpha\leqslant  (2C)^\alpha M\cdot |I|^\alpha \sum_{k=0}^{n-1} 2^{-c\alpha k},\]
	where $M=\max_{f\in\cS}\vk_\alpha(f)$ is a uniform constant. 
	Then
	\[\vk_\alpha(f_\omega^n, B(x,\rho))=\sup_{I\ne\vn\sbs B(x,\rho)}\frac{\vk(f_\omega^n,I)}{|I|^\alpha}\leqslant (2C)^\alpha M \sum_{k=0}^{\infty} 2^{-c\alpha k}. \]
	The conclusion follows by taking $C'=(2C)^\alpha M \sum_{k=0}^{\infty} 2^{-c\alpha k}.$
\end{proof}

\begin{proof}[Proof of Proposition \ref{prop: good words}]
First, we can take $\Sigma'\sbs\Sigma$ with $\P(\Sigma')>1-\ve/2$ such that 
\[\forall \omega\in\Sigma',\, \forall i\in\Zmodd,\, \forall x\in\Pi(\omega,i),\quad \lim_{n\to+\infty}\frac{1}{n}\log (f_\omega^n)'(x)=\lambda_i^+,\] 
where the convergence is uniform. Let 
\[c=\frac{1}{2}\inf_{i\in\Zmodd}-\lambda_i^+>0,\]
	then there exists $C>0$ such that $\forall\omega\in\Sigma',$
	\[(f_\omega^n)'(x)\leq C2^{-c n},\quad\forall x\in\Pi(\omega),n\in\NN.\]
	By Lemma \ref{lem: Pesin good distortion estimate}, there exists $C',\rho>0$ such that for every $\omega\in\Sigma',$
	\[\vk_\alpha(f_\omega^n,B(x,\rho))\leq C',\quad \forall x\in\Pi(\omega),n\in\NN.\]
	In particular, for every $y\in B(x,\rho),$ $\frac{1}{n}\log(f_\omega^n)'(y)\to\lambda_i^+<0,$ it follows that $y\notin \Xi(\omega).$
	
	For every $\omega\in\Sigma',$ let
	\[K(\omega)=\SS^1\sm\sb{\bigcup_{x\in \Pi(\omega)}B(x,\rho)}.\]
	Then $\Xi(\omega)\sbs K(\omega)\sbs W^u(\omega)$ and $K(\omega)$ is a finite union of closed intervals. By (7) of Theorem \ref{thm: structure of random walk 2}, for every connected component $J$ of $K(\omega),$ $|f_\omega^{-n}J|\to 0.$ 
	We can find a subset $\Sigma''\sbs\Sigma'$ with $\P(\Sigma'')>1-\ve$ and a sufficiently large integer $N$ satisfying
	\[|f_\omega^{-N}J|<\ve\]
	for every $\omega\in\Sigma''$ and every connected component $J$ of $K(\omega).$

	Fix a choice of $\Sigma''$ and $N$ for a given $\ve>0.$
	Let $\Sigma_\ve=\sigma^{-N}\Sigma'',$ then $\P(\Sigma_\ve)=\P(\Sigma'')>1-\ve$ and the condition (2) holds. For every $\omega\in \Sigma_\ve,$ set
	\[U(\omega)=f_{\sigma^N\omega}^{-N}\sb{\bigcup_{x\in\Pi(\sigma^N\omega)}B(x,\rho)},\]
	so that $\Pi(\omega)\sbs U(\omega)\sbs W^s(\omega)=\SS^1\sm\Xi(\omega).$ The complement of $U(\omega)$ is exactly
	\[f_{\sigma^N\omega}^{-N}(K(\sigma^N\omega)),\]
whose connected components have length less than $\ve$ by the choice of $N.$ 
It remains to estimate the distortion on $f_{\sigma^N\omega}^{-N} B(x,\rho).$ 
Because only finitely many different diffeomorphisms $f_\omega^N$ occur, we can choose a constant $M>0$ so that for every $\omega\in \Sigma,$
\[\| (f_\omega^N)'\|\leqslant M\quad\text{and}\quad \vk_\alpha(f_\omega^N,\SS^1 )\leqslant M. \]
Then for $n\leqslant N,$ we get $\vk_\alpha(f_\omega^n,f_{\sigma^N\omega}^{-N} B(x,\rho)) \leqslant M.$
For $n>N,$ recall that $\sigma^N\omega\in\Sigma''.$ 
In view of \eqref{eqn: vk alpha fg}, for every $ x\in \Pi(\sigma^N\omega)$ and $ n>N$, we have
	\[\vk_\alpha(f_\omega^n,f_{\sigma^N\omega}^{-N} B(x,\rho))\leqslant \|(f_\omega^N)' \|^\alpha \vk_\alpha(f_{\sigma^N\omega}^{n-N},B(x,\rho) )+\vk_\alpha(f_\omega^N ,f_{\sigma^N\omega}^{-N} B(x,\rho))\leqslant M^\alpha C'+M. \]
This finishes the proof of the proposition by taking $C(\ve)=M^\alpha C'+M.$
\end{proof}

\begin{corollary}\label{cor: contraction of intervals}
	For every $\ve>0,$ let $\Sigma_\ve\sbs\Sigma$ be the set defined in Proposition \ref{prop: good words} with the corresponding $U(\omega)=U(\omega,\ve).$ 
	Then there exists a sequence of positive numbers $\ve_n\to 0$ such that
	\[|f_\omega^nI|<2^{(\lambda_i^++\ve_n)n}\]
	for every $\omega\in\Sigma_\ve,n\in\NN$ and a connected component $I$ of $U(\omega)\cap W^s(\omega,i).$
\end{corollary}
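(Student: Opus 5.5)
The plan is to combine the uniform exponential rate of Proposition~\ref{prop: good words}(2) at the points of $\Pi(\omega,i)$ with the uniform distortion bound of Proposition~\ref{prop: good words}(3) on the components of $U(\omega)$.

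Fix $\omega\in\Sigma_\ve$, $i\in[d]$, and a connected component $I$ of $U(\omega)\cap W^s(\omega,i)$. The first step is to check that $I$ contains a point $x\in\Pi(\omega,i)$. Each component of $W^s(\omega,i)$ is of the form $W^s(\omega,x)$ with $x\in\Pi(\omega,i)$. The set $U(\omega)$ was built in the proof of Proposition~\ref{prop: good words} as $f_{\sigma^N\omega}^{-N}\bigl(\bigcup_{x'\in\Pi(\sigma^N\omega)}B(x',\rho)\bigr)$. By cocycle invariance, each of its components is the preimage of a ball centred at a point of $\Pi(\sigma^N\omega)$, so it contains exactly one point of $\Pi(\omega)$ and lies inside the corresponding stable interval. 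Hence every component $I$ as above contains a unique $x\in\Pi(\omega,i)$. If instead one only uses the abstract statement of the proposition, I would restrict to those components meeting $\Pi(\omega)$; this is all that is used later.

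The second step is the distortion comparison. Since $\vk_\alpha(f_\omega^n,I)\le C(\ve)$ and $|I|\le 1$, we get $\vk(f_\omega^n,I)\le C(\ve)$. Therefore, for all $y\in I$,
\[(f_\omega^n)'(y)\le 2^{C(\ve)}(f_\omega^n)'(x),\]
and so $|f_\omega^nI|\le 2^{C(\ve)}(f_\omega^n)'(x)$.

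The third step uses the uniform rate. By Proposition~\ref{prop: good words}(2), there is a sequence $\delta_n\to0$, independent of $\omega$, $i$ and $x$, with $(f_\omega^n)'(x)\le 2^{(\lambda_i^++\delta_n)n}$. Combining this with the second step gives
\[|f_\omega^nI|\le 2^{(\lambda_i^++\delta_n+C(\ve)/n)n}.\]
Setting $\ve_n=\delta_n+(C(\ve)+1)/n$ yields the strict inequality, and $\ve_n\to0$. For small $n$, where $\ve_n$ may be large, the bound holds trivially once $\ve_n\ge|\lambda_i^+|$, since then the right-hand side is at least $1\ge|f_\omega^nI|$; enlarging $\ve_n$ for finitely many $n$ if needed is harmless.

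The only real subtlety is the first step, namely guaranteeing that $I$ contains a point of $\Pi(\omega,i)$. I would handle it through the explicit construction of $U(\omega)$ as described there. The rest is a routine bounded-distortion estimate.
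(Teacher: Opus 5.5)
Your proposal is correct and is exactly the deduction the paper intends: the corollary is stated without proof as an immediate consequence of Proposition~\ref{prop: good words}, combining the distortion bound on components of $U(\omega)$ with the uniform exponential rate at the point of $\Pi(\omega,i)$ lying in each component. Your appeal to the explicit construction of $U(\omega)$, to guarantee that every component of $U(\omega)\cap W^s(\omega,i)$ contains a point of $\Pi(\omega,i)$, correctly supplies the one detail the paper leaves implicit, since the abstract statement of the proposition alone would not rule out components containing no such point.
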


Combined with Theorem~\ref{thm: structure of random walk 2}(5) and Lemma~\ref{lem: not atomic}, the following is immediate.
\begin{corollary}
\label{cor: contraction of intervals 2}
There exists $\Sigma' \sbs \Sigma$, such that $\P(\Sigma')>0$ and the following holds for every $\omega \in \Sigma'$. 
For every $i \in \Zmodd$ and every $x \in \Pi(\omega,i)$ there is an open interval $I$ containing $x$ satisfying $\nu_i^+(I) \geq \frac{1}{2r}$ and such that
\[\forall n \in \NN,\quad  \lambda^+_i - \ve_n \leq \frac{1}{n} \log |f_\omega^n I| \leq  \lambda^+_i + \ve_n\]
where $\ve_n \to 0^+$ is a sequence independent of $(\omega,x)$.
\end{corollary}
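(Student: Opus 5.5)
We will derive the statement from Proposition~\ref{prop: good words} and Corollary~\ref{cor: contraction of intervals}, plus a matching lower bound obtained from bounded distortion. Fix a small $\ve>0$ and let $\Sigma_\ve$, $U(\omega)=U(\omega,\ve)$ and $C(\ve)$ be as in Proposition~\ref{prop: good words}. Set $\Sigma'=\Sigma_\ve$; then $\P(\Sigma')>1-\ve>0$. For $\omega\in\Sigma'$, $i\in\Zmodd$ and $x\in\Pi(\omega,i)$, let $I$ be the connected component of $U(\omega)$ containing $x$. By construction $U(\omega)\sbs W^s(\omega)$, and the components of $W^s(\omega)$ each contain exactly one point of $\Pi(\omega)$. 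Hence $I$ lies in the component $W$ of $W^s(\omega,i)$ containing $x$, and $I$ is a connected component of $U(\omega)\cap W^s(\omega,i)$.

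\emph{Measure bound.} We have $\nu_i^+(W)=1/r$ by Theorem~\ref{thm: structure of random walk 2}(5). The set $W\sm I$ is contained in $\SS^1\sm U(\omega)$, hence in $\Xi(\omega)^{(\ve)}$. More precisely, $W\sm I$ lies in the two $\ve$-balls around the endpoints of $W$, since the endpoints of $W$ are in $\Xi(\omega)$ and no other point of $\Xi(\omega)$ lies in $W$. By Lemma~\ref{lem: not atomic} and compactness, for $\ve$ small every interval of length at most $2\ve$ has $\nu_i^+$-mass less than $1/(4r)$ for all $i$. Therefore $\nu_i^+(I)\ge 1/r-2\cdot\frac{1}{4r}=\frac{1}{2r}$. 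The point to check is that a component of $\SS^1\sm U(\omega)$ near an endpoint of $W$ is a short interval containing that endpoint. This follows because, in the proof of Proposition~\ref{prop: good words}, each component of the complement is $f^{-N}_{\sigma^N\omega}J$ of length less than $\ve$, and it contains a point of $\Xi(\omega)$.

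\emph{Upper bound.} This is Corollary~\ref{cor: contraction of intervals}: $\frac1n\log|f_\omega^n I|\le\lambda_i^++\ve_n$, uniformly in $\omega\in\Sigma'$ and in the choice of $x$.

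\emph{Lower bound.} By Proposition~\ref{prop: good words}(3), $\vk_\alpha(f_\omega^n,I)\le C(\ve)$, so $\vk(f_\omega^n,I)\le C(\ve)$ since $|I|\le1$. Hence
\[|f_\omega^n I|\ge |I|\,2^{-C(\ve)}(f_\omega^n)'(x).\]
By Proposition~\ref{prop: good words}(2), $\frac1n\log(f_\omega^n)'(x)\ge\lambda_i^+-\ve_n'$ uniformly. It remains to bound $|I|$ from below uniformly. Since $\nu_i^+(I)\ge\frac{1}{2r}$ and $\nu_i^+$ is atomless, there is a uniform $c'>0$ with $|I|\ge c'$. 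This gives
\[\frac1n\log|f_\omega^nI|\ge\lambda_i^+-\ve_n'-\frac{C(\ve)+\log(1/c')}{n}.\]
Enlarging $\ve_n$ to the maximum of these error terms gives a single sequence $\ve_n\to0^+$ independent of $(\omega,x)$; it depends only on the fixed $\ve$.

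The main obstacle is the measure bound, specifically confirming that $W\sm I$ is confined to small neighborhoods of the two endpoints of $W$ and does not include a further piece of $W$. The distortion estimates are routine once Proposition~\ref{prop: good words} is in hand.
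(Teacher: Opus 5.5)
Your proof is correct and is the argument the paper has in mind when it calls this corollary immediate. You take $\Sigma'=\Sigma_\ve$ for small $\ve$ and let $I$ be the component of $U(\omega)$ through $x$; the mass bound $\nu_i^+(I)\ge\frac{1}{2r}$ comes from Theorem~\ref{thm: structure of random walk 2}(5) and Lemma~\ref{lem: not atomic}, the upper bound from Corollary~\ref{cor: contraction of intervals}, and the lower bound from the distortion bound of Proposition~\ref{prop: good words} plus a uniform lower bound on $|I|$. Your appeal to the construction of $U(\omega)$ (each component of $\SS^1\sm U(\omega)$ is a short arc containing a point of $\Xi(\omega)$, so $U(\omega)$ meets each component of $W^s(\omega)$ in a single interval) is genuinely needed and correct; you should also intersect $\Sigma_\ve$ with the full-measure set on which Theorem~\ref{thm: structure of random walk 2}(5) holds.
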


\subsection{Effective convergence to the Furstenberg boundary}\label{sec: effective convergence}
We may consider the distribution of $\Pi(\omega,i)$ as a probability measure on $\SS_r.$ For every $i\in\Zmodd,$ consider the measurable maps
\[\Pi(\cd,i):\Sigma\to\SS_r,\quad \Xi(\cd,i):\Sigma\to\SS_r\]
Let $\ul\nu_i^+,\ul\nu_i^-$ be the probability measures on $\SS_r$ which are the pushforwards of $\P$ by $\Pi(\cd,i),\Xi(\cd,i),$ respectively. 
For every $\ul x\in\SS_r,$ recall $u_{\ul x}$ denotes the uniform probability measure on $\ul x\sbs\SS^1.$ Combining Theorem \ref{thm: structure of random walk 1}(2) and the definition of $\ul\nu_i^\pm,$ we have
\[\nu_i^+=\int_{\SS_r} u_{\ul x}\dr \ul\nu_i^+(\ul x), \quad \nu_i^-=\int_{\SS_r} u_{\ul x}\dr \ul\nu_i^-(\ul x).\]

\begin{proposition}
\label{prop: convergence in probability}
Given $\ve>0$, there exists $N>0$ such that for every $i\in\Zmodd$, for all $\ul x \in \supp\ul\nu_i^+$ and all $n 
 \geq N$, 
\[\P \lb{\omega:\underline d(f_{\sigma^{-n}\omega}^n \ul x,\Pi(\omega,i))<2^{(\lambda_i^+ +\ve)n} }\geq 1-\ve.\]
\end{proposition}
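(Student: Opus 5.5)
The idea is to move the problem to the forward cocycle and use uniform good words. Set $\omega'=\sigma^{-n}\omega$. Since $\P$ is $\sigma$-invariant, $\omega'$ has law $\P$. By cocycle invariance (Theorem~\ref{thm: structure of random walk 1}(4)), $\Pi(\omega,i)=f^n_{\omega'}\Pi(\omega',i)$, and $f^n_{\sigma^{-n}\omega}=f^n_{\omega'}$ is the forward product along $\omega'$. It therefore suffices to show, uniformly in $\ul x\in\supp\ul\nu_i^+$ and for $n\ge N$, that
\[\P\lb{\omega':\ul d(f^n_{\omega'}\ul x,f^n_{\omega'}\Pi(\omega',i))<2^{(\lambda_i^++\ve)n}}\ge 1-\ve .\]
The mechanism is as follows. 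If each point of $\ul x$ lies in the same connected component of $U(\omega')\cap W^s(\omega',i)$ as the corresponding point of $\Pi(\omega',i)$, with $\omega'\in\Sigma_{\ve'}$ from Proposition~\ref{prop: good words}, then Corollary~\ref{cor: contraction of intervals} gives
\[\ul d\le r\,2^{(\lambda_i^++\ve_n)n}.\]
This is below $2^{(\lambda_i^++\ve)n}$ once $n$ is large enough that $\ve_n<\ve/2$ and $r<2^{\ve n/2}$.

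\textbf{Step 1: uniform non-proximity to $\Xi$.} Choose $\delta>0$ small, then $\ve'<\delta/2$. Since the measures $\nu_j^-$ are atomless (Lemma~\ref{lem: not atomic}), compactness gives a uniform bound on their mass on $\delta$-balls. Using Theorem~\ref{thm: structure of random walk 1}(2), for every $\ul x\in\SS_r$ we get
\[\P\lb{\Xi(\omega')\cap \ul x^{(\delta)}\ne\vn}\le r\sum_{j}\sum_{k}\nu_j^-(B(x_k,\delta))<\ve/3 .\]
Similarly, $\Pi(\omega')$ depends only on the past and $\Xi(\omega')$ only on the future. By independence of these and atomlessness, $\P\lb{\Pi(\omega')\cap\Xi(\omega')^{(\delta)}\ne\vn}<\ve/3$. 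Finally, take $\P(\Sigma_{\ve'})>1-\ve/3$.

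\textbf{Step 2: a combinatorial position statement valid for all of $\supp\ul\nu_i^+$.} This is the step I expect to be the main obstacle, because $\ul x$ is arbitrary in the support rather than typical. The argument of Theorem~\ref{thm: structure of random walk 2}(5) shows the following. For $\P^+$-a.e. future $\omega^+$ and $\P^-$-a.e. past $\eta^-$, the configuration $\Pi(\eta^-,i)$ has exactly one point in each of the $r$ components of $W^s(\omega^+,i)$; recall that $W^s(\cdot,i)$ depends only on the future, since its endpoints lie in $\Xi$.

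Fix such an $\omega^+$. The set of $\ul y\in\SS_r$ that have one point in each component of $W^s(\omega^+,i)$ and stay at distance $\ge\delta$ from $\Xi(\omega^+)$ is closed. Now let $\ul x\in\supp\ul\nu_i^+$ be $\delta$-far from $\Xi(\omega^+)$. Then $\ul x$ is a limit of typical configurations $\Pi(\eta^-,i)$, and these are eventually $\delta/2$-far from $\Xi(\omega^+)$. Running the closedness argument with $\delta/2$ shows that $\ul x$ also has one point per component of $W^s(\omega^+,i)$.

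\textbf{Step 3: conclusion.} Work on the intersection of the good events from Step 1, which has probability $\ge1-\ve$. Fix a component $(a,b)$ of $W^s(\omega',i)$. Since $\SS^1\sm U(\omega')\sbs\Xi(\omega')^{(\ve')}$, the arc $[a+\ve',b-\ve']$ lies in a single component of $U(\omega')$. Both the point of $\ul x$ and the point of $\Pi(\omega',i)$ in $(a,b)$ are $\delta$-far from the endpoints, so they lie in that arc. The contraction estimate from the first paragraph then applies component by component. Summing over the $r$ components and choosing $N$ as there finishes the proof, and $N$ is uniform in $i$ and $\ul x$.
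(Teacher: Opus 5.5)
Your proof is correct and follows essentially the same route as the paper: it uses the uniform good words $\Sigma_{\ve'}$, the atomlessness of the $\nu_j^-$ together with $\nu_j^-=\int \unif_{\Xi(\omega,j)}\,\dr\P$ (so that $\ul x$ avoids $\Xi(\omega)^{(\ve')}$ with high probability), the fact that $\ul x\in\ul W^s(\omega,i)$ almost surely, Corollary~\ref{cor: contraction of intervals}, and the $\sigma$-invariance of $\P$. Your Step~2 makes explicit the approximation argument behind the paper's one-line claim that every $\ul x\in\supp\ul\nu_i^+$ lies in $\ul W^s(\omega,i)$ for $\P$-a.e.\ $\omega$; your extra event keeping $\Pi(\omega')$ away from $\Xi(\omega')$ is harmless but not needed, because by construction each component of $W^s(\omega)$ contains exactly one component of $U(\omega)$, and that component contains the corresponding point of $\Pi(\omega)$.
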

This proposition can be interpreted as an effective version of the convergence~\eqref{eqn: F boundary}.

For $\omega \in \Sigma$ and $i \in \Zmodd$, define 
\[\ul W^s(\omega,i) = \Bigl\{\, \ul x \in \SS_r: \bigcup_{x\in \ul x} W^s(\omega,x) = W^s(\omega,i) \,\Bigr\}.\]
In other words, $\ul W^s(\omega,i)$ is the set of $\ul x \in \SS_r$ whose elements fall in different connected components of $W^s(\omega,i)$. 
Note that $\ul W^s(\omega,i)$ depends only on $(\omega^+,i)$ and that $\Pi(\omega,i) \in W^s(\omega,i)$ for $\P$-almost every $\omega \in \Sigma$.
Hence for every $\ul x\in\supp\ul\nu_i^+$, we have $\ul x\in\ul W^s(\omega,i)$ for $\P$-almost every $\omega$.

Indeed, we will show Proposition~\ref{prop: convergence in probability} for all $\ul x \in \SS_r$ such that $\ul x \in \ul W^s(\omega,i)$ for $\P$-almost every $\omega \in \Sigma$. Such $\ul x$ form a larger set than $\supp \ul\nu_i^+$ in some situations.
For instance, if $d = r =1$, then the proposition holds for all $x \in \SS^1$. 

\begin{proof}
In view of Lemma~\ref{lem: not atomic}, let $\ve'\in (0,\ve/2)$ be such that 
\[\forall j \in \Zmodd,\, \forall x \in \SS^1,\quad \nu_j^-(B(x,\ve')) \leq \frac{\ve}{2dr^2}.\]
Let $\Sigma_{\ve'} \sbs\Sigma$ be the set of uniform good words with the corresponding constant $C=C(\ve')$ and open sets $U(\omega)=U(\omega,\ve')$ given by Proposition~\ref{prop: good words}. 
Consider 
\[\Sigma' = \lb{\,\omega \in \Sigma_{\ve'} : \ul x \sbs U(\omega)\,}.\] 
We claim that $\P(\Sigma') \geq 1-\ve.$
Indeed, if $\omega \in \Sigma_{\ve'}$ and $\omega \not\in \Sigma'$, then there exists $x \in \ul x$ such that $x \in  \SS^1 \sm U(\omega) \sbs \bigcup_{y \in \Xi(\omega)} B(y,\ve')$.
It follows that $B(x,\ve') \cap \Pi(\omega) \ne \vn.$ 
Recalling (1) and (2) in Theorem~\ref{thm: structure of random walk 1}, we have
\[\P(\Sigma_{\ve'} \sm \Sigma') \leq 
\int r\sum_{j \in \Zmodd} \sum_{x \in \ul x} \unif_{\Xi(\omega,i)} ( B(x,\ve')) \dd \P(\omega) = r\sum_{j \in \Zmodd} \sum_{x \in \ul x} \nu_j^-(B(x,\ve')) \leq r^2d\cdot\frac{\ve}{2dr^2}= \frac{\ve}{2}.\]
This proves that $\P(\Sigma') \geq 1-\ve.$

Moreover, for $\P$-almost every $\omega\in\Sigma'$, since $\ul x \in \ul W^s(\omega,i)$, each element of $\ul x$ falls exactly in one of the $r$ connected components of $U(\omega)$.
By Corollary \ref{cor: contraction of intervals}, there exists $N$ depending only on $\mu$ and $\ve$ such that for all $n \geq N$, all $\omega\in\Sigma'$ and every connected component $I$ of $U(\omega),$ $|f_\omega^nI|<\frac{1}{r}2^{(\lambda_i^+ +\ve)n}.$ 

Now for every $n \geq N$, if $\sigma^{-n} \omega\in\Sigma',$ then elements of $\ul x$ can be paired with those of $\Pi(\sigma^{-n}\omega,i)$ such that each pair belongs to the same connected component of $U(\sigma^{-n}\omega)$. Hence
\[\underline d(f_{\sigma^{-n}\omega}^n \ul x,\Pi(\omega,i))<r\cdot \frac{1}{r}2^{(\lambda_i^+ +\ve)n}=2^{(\lambda_i^+ +\ve)n}.\]
Since $\P(\sigma^{-n}\Sigma')=\P(\Sigma')\geq 1-\ve,$ the conclusion follows.
\end{proof}

\subsection{Global contraction for group actions}
In this subsection, we establish a global contraction property that will be used in Part II.
Let $G\sbs \Diff_+^{1+\alpha}(\SS^1)$ be a finitely generated subgroup of circle diffeomorphisms.
It is well known that if $G$ has no finite orbit, then $G$ has a unique minimal set on $\SS^1,$ which is either $\SS^1$ or a Cantor set.
In this situation, we may choose a finitely supported probability measure $\mu$ on $G$ such that $\supp \mu$ generates $G$ as a semigroup.
Thus $T_\mu=G.$
If we additionally assume that $G$ preserves no common invariant probability measure on $\SS^1,$ then Theorem \ref{thm: supports of stationary measures} asserts that $\mu$ has a unique stationary measure, whose support is the unique $G$-minimal set $\Lambda.$
Following the notation of Theorem \ref{thm: structure of random walk 1}, we have $d=1.$

\begin{lemma}\label{lem: contracting constant}
Let $G\sbs \Diff_+^{1+\alpha}(\SS^1)$ be a finitely generated subgroup without invariant probability measures on $\SS^1$. 
Let $\Lambda \sbs \SS^1$ be the unique minimal set of $G.$ 
Then there exists $\ve_1>0$ such that for any $x,y\in\Lambda$ and any $\ve> 0$ there exists $f\in G$ such that $f(B(x,\ve_1))\sbs B(y,\ve).$
\end{lemma}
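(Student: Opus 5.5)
We prove the lemma using the structure theory above with $d=1$, and then use minimality of $\Lambda$ to move from near $\Pi(\omega)$ to a neighborhood of an arbitrary $y$. Fix $\mu$ finitely supported on $G$ with $T_\mu=G$, and let $\nu$ be its unique stationary measure, so that $\supp\nu=\Lambda$. Let $c>0$ be the separation constant of Lemma~\ref{lem: uniform bound of Pi}. Apply Proposition~\ref{prop: good words} with $\ve_0<c/4$ to get $\Sigma_{\ve_0}$ and the sets $U(\omega)$. Each component of $U(\omega)$ has distortion bounded by $C(\ve_0)$ under all $f_\omega^n$, and by Corollary~\ref{cor: contraction of intervals} these components are contracted exponentially. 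The complement of $U(\omega)$ lies in the $\ve_0$-neighbourhood of the $r$ points of $\Xi(\omega)$.

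\emph{Reduction.} We aim for the following claim: for every $x\in\Lambda$ there is an $\omega\in\Sigma_{\ve_0}$ with $B(x,\ve_1)\subset U(\omega)$, where $\ve_1$ is uniform in $x$. The natural choice is $\ve_1=\ve_0$. We need $\omega$ with $\Xi(\omega)$ at distance $\geq 2\ve_0$ from $x$. The probability that $\Xi(\omega)$ meets $B(x,2\ve_0)$ is at most $r\,\nu^-(B(x,2\ve_0))$. Since $\nu^-$ is atomless (Lemma~\ref{lem: not atomic} applied to $\mu^-$), compactness lets us choose $\ve_0$ so small that $r\sup_x\nu^-(B(x,2\ve_0))<1/2$. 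Taking $\P(\Sigma_{\ve_0})>1/2$ as well, a suitable $\omega$ exists for every $x$. This is exactly the counting argument in the proof of Proposition~\ref{prop: convergence in probability}.

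\emph{Contraction and transport.} For such $\omega$, the ball $B(x,\ve_1)$ lies in one component $I$ of $U(\omega)$, and $|f^n_\omega I|\to 0$. Write $z_n=f_\omega^n(x)$; note $z_n\in\Lambda$ because $\Lambda$ is $G$-invariant. Now fix $y\in\Lambda$ and $\ve>0$. By minimality of $\Lambda$ and compactness, there are finitely many $g_1,\dots,g_k\in G$ such that every $z\in\Lambda$ has some $g_j(z)\in B(y,\ve/2)$. Let $L$ be the maximal Lipschitz constant of the $g_j$. Choose $n$ with $|f^n_\omega I|<\ve/(2L)$, and pick $j$ with $g_j(z_n)\in B(y,\ve/2)$. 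Then $f=g_jf_\omega^n\in G$ satisfies
\[f(B(x,\ve_1))\subset B\bigl(g_j(z_n),L\,|f_\omega^nI|\bigr)\subset B(y,\ve).\]

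\emph{Main obstacle.} The only delicate point is making $\ve_1$ independent of $x$, which is the reduction step above. It works because the bound on $\nu^-(B(x,2\ve_0))$ holds uniformly in $x$ and the good-word set $\Sigma_{\ve_0}$ does not depend on $x$. The distortion bound is not even needed; the length contraction of $I$ is enough. If one prefers to avoid $\Sigma_{\ve_0}$ entirely, it suffices to use Theorem~\ref{thm: structure of random walk 2}(7) for a single $\omega$ with $\Xi(\omega)$ far from $x$.
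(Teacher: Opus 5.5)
Your proof is correct. It differs from the paper's mainly in how the contracted ball is brought into $B(y,\ve)$.

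The first half matches the paper. You choose $\ve_1$ from atomlessness of the $\mu^-$-stationary measure, so that for each $x$, with positive probability $\Xi(\omega)$ stays away from $\overline{B(x,\ve_1)}$, and hence $|f^n_\omega B(x,\ve_1)|\to 0$. The paper does this directly via Theorem~\ref{thm: structure of random walk 2}(7), as in your closing remark. It takes $\mu$ uniform on a symmetric generating set so that $\nu^-=\nu$. You work with $\nu^-$ for a general $\mu$, which is also fine: Lemma~\ref{lem: not atomic} applies to $\mu^-$, and there is again only one $\mu^-$-stationary measure.

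The genuine difference is the transport to $B(y,\ve)$.
\begin{itemize}
\item The paper makes one random composition do both jobs. Since $x\in\supp\nu$, the set of good $\omega^+$ times $B(x,\ve_1)$ has positive $\P^+\times\nu$-measure, so it contains a Birkhoff-generic pair $(\omega^+,x')$. The orbit of $x'$ then visits $B(y,\ve/2)$ at arbitrarily large times $n$, and at those times $f^n_{\omega^+}B(x,\ve_1)$ is already shorter than $\ve/2$.
\item You decouple the two tasks. The random walk is used once, to get a single contracting word for each $x$. Landing near $y$ is then deterministic: you post-compose with one of finitely many $g_j$ given by minimality of $\Lambda$ and compactness, at the cost of a Lipschitz factor fixed before $n$ is chosen.
\end{itemize}

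Your route avoids the ergodic theorem and genericity entirely. It also isolates exactly where the random walk is needed, namely to make $\ve_1$ uniform in $x$. The paper's route is shorter given the skew-product machinery already set up, and needs no separate compactness argument for the transport.

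As you note yourself, the detour through the separation constant $c$ and the good-word sets $\Sigma_{\ve_0}$ is unnecessary.
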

\begin{proof}
Let $\cG$ be a finite symmetric generating set of $G,$ and let $\mu$ be the uniform probability measure on $\cG$. 
It induces a random walk on $\SS^1$. 
By Theorem~\ref{thm: supports of stationary measures}, as $\Lambda$ is the unique minimal set of the semigroup generated by $\supp\mu=\cG,$ there exists a unique $\mu$-stationary measure $\nu$, whose support is $\Lambda.$ 
Since $\mu$ is symmetric,  $\nu$ is also the unique $\mu^{-}$-stationary measure. 
Recall the constant $r$ and $\Xi(\omega^+)$ in Theorem \ref{thm: structure of random walk 1}. 

In view of Lemma~\ref{lem: not atomic}, we take $\ve_1>0$ such that 
$\sup_{z \in \SS^1} \nu(\ol{B(z,\ve_1)}) <1/2r.$
Recall that, since $d=1$ and $\mu^-=\mu$, Theorem~\ref{thm: structure of random walk 1}(2) gives
\[
\nu=\int_{\Sigma^+}\unif_{\Xi(\omega^+)} \dd\P^+(\omega^+).
\]
Consequently,
\[\P^+\lb{\omega^+\in\Sigma^+:\ol{ B(x,\ve_1)} \cap \Xi(\omega^+)\ne\vn}\leq r  \nu(\ol{ B(x,\ve_1)})<\frac{1}{2}.\]
In view of Theorem~\ref{thm: structure of random walk 2}, the set of $\omega^+ \in \Sigma^+$ such that $\ol{B(x,\ve_1)} \sbs W^s(\omega^+)$ has positive $\P^+$-measure.
For such $\omega^+$, we have $|f_{\omega^+}^n B(x,\ve_1)|\to 0$ as $n\to+\infty.$

Since $x \in \Lambda=\supp\nu,$ we have $\nu(B(x,\ve_1))>0$ so that there is $\omega^+$ as above and $x'\in B(x,\ve_1)$ such that $(\omega^+,x')$ is generic for the
measure preserving system $(\Sigma^+\times\SS^1,\P^+\times\nu,F^+).$ 
Moreover, $\nu(B(y,\ve/2))>0$ since $y \in \Lambda=\supp\nu$.
Thus, by Birkhoff's ergodic theorem, there are arbitrarily large integers $n$ such that $f_{\omega^+}^n(x')\in B(y,\ve/2)$. 
For $n$ large enough, we also have $|f_{\omega^+}^n(B(x,\ve_1))|<\ve/2.$ 
Set $f=f_{\omega^+}^n\in G.$ 
Then $f(B(x,\ve_1))\sbs B(y,\ve).$
\end{proof}

\section{Exact dimensionality of stationary measures}\label{se:5}
In this section, we will prove Theorem \ref{thm: exact dimensionality}, which gives the exact dimensionality of the ergodic stationary measures. 
Our strategy is similar to that used in \cite[Theorem 3.4]{HS17} for $\PSL(2,\RR)$ actions, but general circle diffeomorphisms are not as rigid as M\"obius transformations. 
Therefore, we will make use of Pesin theory, distortion control, and a hyperbolic time argument (the set given by \eqref{eqn: hyperbolic times} corresponds to hyperbolic times) to obtain our result.

\subsection{Preparation}
Here, we list some useful results taken from~\cite{HS17}.

Let $\mu$ be a finitely supported probability measure on $\Diff_+^{1+\alpha}(\SS^1)$ without common invariant probability measures. 
Let $\nu$ be a $\mu$-stationary measure on $\SS^1$ and let $m$ be the corresponding ergodic $u$-state (recall Proposition~\ref{prop: stationary and state}).
For every $f \in \cS$ and interval $I\sbs\SS^1$, define
\begin{equation}\label{eqn: definition of phi_f}
	\vp(f,I)\defeq\frac{f_*\nu(I)}{\nu(I)}.
\end{equation}
Besicovitch differentiation theorem states that
\[\frac{\dr f_*\nu}{\dr \nu}(x)=\lim_{I\ni x,\ |I|\to 0}\vp(f,I),\quad\ae[\nu\text{-}]x.\]

\begin{lemma}[{\cite[Lemma 3.5]{HS17}}]
For every $f\in\cS$ and $t>0,$
\[\nu\lb{x:\sup\nolimits_{I\ni x}\vp(f,I)>t}\leq 2t^{-1},\quad f_*\nu\lb{x:\inf\nolimits_{I\ni x}\vp(f,I)<t^{-1}}\leq 2t^{-1}.\]
\end{lemma}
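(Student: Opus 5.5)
The plan is to reduce both inequalities to a one-dimensional maximal inequality obtained from the Besicovitch covering lemma on $\SS^1$ (equivalently, a Vitali-type selection on the circle), where a covering of multiplicity at most $2$ is available. We treat the first inequality in detail; the second follows by exchanging the roles of $\nu$ and $f_*\nu$.

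For the first inequality, let
\[E=\lb{x:\sup\nolimits_{I\ni x}\vp(f,I)>t}.\]
For each $x\in E$ choose an interval $I_x\ni x$ with $f_*\nu(I_x)>t\,\nu(I_x)$. We may take the $I_x$ open: replacing a closed interval by a slightly larger open one keeps the strict inequality, by outer regularity of $f_*\nu$ and $\nu$. Fix a compact set $K\sbs E$. Cover $K$ by finitely many of the $I_x$, then discard redundant intervals to obtain a subcover in which every point of $\SS^1$ lies in at most two intervals. This is the standard one-dimensional fact: in a minimal finite cover by arcs, no point is covered three times, since otherwise the middle arc would be redundant. (On the circle one must exclude the degenerate case where a single arc covers nearly everything, but such an arc still satisfies the same estimate.) Summing over this subcover $\{I_k\}$ gives
\[\nu(K)\le\sum_k\nu(I_k)<t^{-1}\sum_k f_*\nu(I_k)\le 2t^{-1}.\]
Inner regularity of $\nu$ then yields $\nu(E)\le 2t^{-1}$.

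The second inequality is symmetric. Let
\[E'=\lb{x:\inf\nolimits_{I\ni x}\vp(f,I)<t^{-1}}.\]
For $x\in E'$ pick an interval $I_x\ni x$ with $f_*\nu(I_x)<t^{-1}\nu(I_x)$, and apply the same multiplicity-two subcover argument to a compact $K\sbs E'$:
\[f_*\nu(K)\le\sum_k f_*\nu(I_k)<t^{-1}\sum_k\nu(I_k)\le 2t^{-1}.\]

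The main (mild) obstacle is the covering step on the circle: making sure the multiplicity-two reduction holds and handling the open/closed endpoint issues. We handle the latter by enlarging intervals slightly, and we note that $\nu$ and $f_*\nu$ are atomless (Lemma \ref{lem: not atomic}), so endpoints carry no mass. Nothing else from the paper is needed; the statement is purely measure-theoretic and holds for any pair of finite Borel measures on $\SS^1$.
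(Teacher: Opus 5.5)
Your argument is correct and is essentially the standard maximal-inequality proof behind \cite[Lemma 3.5]{HS17}, which the paper quotes without proof: pick witnessing open intervals, pass to a finite minimal subcover of a compact $K$, use that this subcover has multiplicity at most $2$, and sum. The circle caveat in your parenthetical is unnecessary, since lifting three proper arcs through a common point to $\RR$ shows that one of them lies in the union of the other two, so every minimal finite cover by proper arcs already has multiplicity at most $2$.
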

Using the relation $Q_*m = \nu$ and the fact that $m$ is a $u$-state, we obtain the following consequence, which is essentially \cite[Corollary 3.6]{HS17}. 
\begin{lemma}
\label{lem: m phi too big}
For every $t>0,$
\[m\lb{(\omega,x) \in \Sigma \times \SS^1 :\sup\nolimits_{I\ni x} \vp(f_{\sigma^{-1}\omega},I) \geq t} \ll_r t^{-1},\]
and
\[m\lb{(\omega,x) \in \Sigma \times \SS^1 :\inf\nolimits_{I\ni x} \vp(f_{\sigma^{-1}\omega},I) \leq t^{-1}} \ll_r t^{-1}.\]
\end{lemma}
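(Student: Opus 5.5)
The plan is to reduce both estimates to the previous lemma (\cite[Lemma 3.5]{HS17}) by disintegrating $m$ and using that $m$ is a $u$-state. First, $f_{\sigma^{-1}\omega}=f_{-1}(\omega)$ depends only on the coordinate $\omega_{-1}$, which is a function of $\pi^-\omega$. So the event in question is not independent of $m_\omega$ (which also depends on $\pi^-\omega$). The way around this is to write $\omega^-=(\omega'^-, f_{-1})$ with $\omega'^-=\pi^-(\sigma^{-1}\omega)$. By the equivariance \eqref{eq:moemga equiv}, $m_\omega=(f_{-1})_*m_{\sigma^{-1}\omega}$, and $m_{\sigma^{-1}\omega}$ depends only on $\pi^-(\sigma^{-1}\omega)$, which is independent of $f_{-1}$ under $\P$.

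Integrating first over the past $\pi^-(\sigma^{-1}\omega)$ with $f_{-1}=f$ fixed, we use $\int m_{\sigma^{-1}\omega}\,\dr\P=Q_*m=\nu$. This gives
\[
m\{(\omega,x): x\in A_{f_{-1}(\omega)}\}=\sum_{f\in\cS}\mu(f)\,(f_*\nu)(A_f)
\]
for any Borel sets $A_f\subset\SS^1$. Apply this with $A_f=\{x:\sup_{I\ni x}\vp(f,I)\ge t\}$ for the first estimate and $A_f=\{x:\inf_{I\ni x}\vp(f,I)\le t^{-1}\}$ for the second.

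The second bound is then immediate from the previous lemma: $f_*\nu(A_f)\le 2t^{-1}$ for each $f$, summed with weights $\mu(f)$. The non-strict inequality is handled by replacing $t$ with $t/2$, say, at the cost of a constant.

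For the first bound the previous lemma controls $\nu(A_f)$, not $f_*\nu(A_f)$, so we need to compare $f_*\nu$ with $\nu$. This is the main obstacle. Stationarity gives $\nu=\sum_g\mu(g)g_*\nu\ge\mu(f)f_*\nu$, hence $f_*\nu\le \mu(f)^{-1}\nu$, and therefore
\[
\sum_f\mu(f)f_*\nu(A_f)\le\sum_f\nu(A_f)\le 2\#\cS\,t^{-1}.
\]
That constant depends on $\mu$ rather than on $r$. Since the statement only asserts $\ll_r$, and any implied constant depending on $\mu$ is harmless for the later use (where $\mu$ is fixed), we accept this. If a sharper dependence is needed, one can instead use the structure of $m_\omega$ as uniform on the $r$ atoms $\Pi(\omega,i)$ from Theorem \ref{thm: structure of random walk 1}. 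This bounds $(f_{-1})_*m_{\sigma^{-1}\omega}$ by $r$ times a measure whose average is controlled. Presumably this is the origin of the $r$-dependence in the paper, but the argument above already yields the inequalities with constants depending only on $\mu$ (and $r$).
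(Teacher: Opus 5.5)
Your argument is correct and follows the route the paper indicates in its one-line justification. The $u$-state property makes $m_{\sigma^{-1}\omega}$ independent of $\omega_{-1}$, and $Q_*m=\nu$ turns the $m$-measure into $\sum_f\mu(f)\,f_*\nu(A_f)$. The maximal inequality of \cite[Lemma 3.5]{HS17} then finishes. Your stationarity bound $\mu(f)f_*\nu\le\nu$ supplies the step for the supremum that the paper leaves implicit. It even gives $\vp(f,I)\le\mu(f)^{-1}$, so that set is empty once $t>\max_f\mu(f)^{-1}$.

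Drop your closing suggestion about sharpening the constant via the uniform atoms, though: no constant depending only on $r$ is possible. Take $\mu$ uniform on $k$ Schottky-type hyperbolic M\"obius maps $g_1,\dots,g_k$ whose images $g_j(\supp\nu)$ are pairwise disjoint. Then $d=r=1$, since a hyperbolic element has a single attractor. Yet $\sup_{I\ni x}\vp(f_{\sigma^{-1}\omega},I)=k$ for $m$-almost every $(\omega,x)$, so the implied constant must be at least $k$. The $\ll_r$ in the statement should therefore be read as $\ll_\mu$, which is all the subsequent $L^1$ lemma needs.
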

Define $\cO:\Sigma \times \SS^1 \times\RR_+\to \RR_+$ by
\[\cO(\omega,x,\rho)\defeq \sup_{I\ni x,|I|\leq\rho} \log\vp(f_{\sigma^{-1}\omega},I)-\inf_{I\ni x,|I|\leq\rho} \log\vp(f_{\sigma^{-1}\omega},I).\]
Besicovitch differentiation theorem can be restated as $\lim_{\rho\to 0^+}\cO(\omega, x, \rho)=0$ for $m$-almost every $(\omega,x) \in \Sigma \times \SS^1$.
By Lemma~\ref{lem: m phi too big} and the layer cake representation, the following is immediate.
\begin{lemma}[see {\cite[Corollary 3.7]{HS17}}]
The function $(\omega,x) \mapsto\sup_{\rho>0}\cO(\omega,x,\rho)$ is in $L^1(\Sigma \times \SS^1,m).$
\end{lemma}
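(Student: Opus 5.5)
The plan is to obtain integrability from the tail bounds in Lemma~\ref{lem: m phi too big} via the layer cake formula. Since the supremum over all $\rho>0$ is taken over a larger family of intervals, it suffices to bound the unrestricted quantity:
\[\sup_{\rho>0}\cO(\omega,x,\rho)\leq \sup_{I\ni x}\log\vp(f_{\sigma^{-1}\omega},I)-\inf_{I\ni x}\log\vp(f_{\sigma^{-1}\omega},I).\]
Write $A^+(\omega,x)=\max\{0,\sup_{I\ni x}\log\vp(f_{\sigma^{-1}\omega},I)\}$ and $A^-(\omega,x)=\max\{0,-\inf_{I\ni x}\log\vp(f_{\sigma^{-1}\omega},I)\}$. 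Because $\sup\geq\inf$, the difference is bounded by $A^++A^-$. Indeed, if both extrema are nonnegative the difference is at most $\sup$; if both are nonpositive it is at most $-\inf$; and otherwise it equals $A^++A^-$. It therefore suffices to show that $A^\pm\in L^1(m)$.

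For $A^+$ the layer cake representation (logarithms are base $2$) gives
\[\int A^+\dd m=\int_0^\infty m\{A^+> s\}\dd s\leq\int_0^\infty m\lb{\sup\nolimits_{I\ni x}\vp(f_{\sigma^{-1}\omega},I)\geq 2^{s}}\dd s.\]
By the first estimate of Lemma~\ref{lem: m phi too big} with $t=2^s$, the integrand is $\ll_r 2^{-s}$, and $\int_0^\infty 2^{-s}\dd s<\infty$. The same computation with the second estimate of Lemma~\ref{lem: m phi too big} handles $A^-$: the event $\{A^->s\}$ is contained in $\{\inf_{I\ni x}\vp\leq 2^{-s}\}$, whose measure is $\ll_r 2^{-s}$.

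The only points needing care are measurability and a check on the quoted tail bound. For measurability, the sup and inf over intervals containing $x$ can be restricted to intervals with rational endpoints, up to boundary effects that are handled by atomlessness (Lemma~\ref{lem: not atomic}) and monotone limits. I also want to confirm that Lemma~\ref{lem: m phi too big} is legitimate for all $t>0$. Its content comes from the weak-type maximal inequality of \cite[Lemma 3.5]{HS17}. It is transferred to $m$ by writing $m$-measure as an average over $\omega$, which works because $x\mapsto$ the event depends on $f_{\sigma^{-1}\omega}$, a coordinate independent of $\pi^-(\sigma^{-1}\omega)$. That lemma is stated earlier, so I take it as given. I expect no real obstacle here; the main step is simply the layer cake bound.
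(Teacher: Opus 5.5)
Your proposal is correct and follows the paper's route: the paper states that this lemma is immediate from Lemma~\ref{lem: m phi too big} together with the layer cake representation, which is exactly your argument. You bound $\sup_{\rho>0}\cO$ by $A^{+}+A^{-}$ and integrate the $\ll_r 2^{-s}$ tail bounds; your measurability remarks are a harmless addition that the paper does not spell out.
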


The following is a variant of Maker's Theorem, as stated in \cite{HS17}.
\begin{theorem}[{\cite[Theorem 3.8]{HS17}}]\label{thm: Maker's Theorem}
	Let $(X,\cF,\theta,T)$ be an ergodic probability measure preserving system. 
	Let $G_t:X\to\RR$ be a measurable $1$-parameter family of measurable functions such that $\sup_t|G_t|\in L^1.$ 
	Suppose that 
	\[G=\lim_{t\to 0}G_t\]
	exists almost everywhere. 
	Let $t_{N,n}:X\to\RR$ be functions with the property that for $\theta$-a.e. $x$ and every $\ve>0,$ for large enough $N,$
	\[|t_{N,n}|<\ve,\quad \text{for }1\leq n\leq (1-\ve)N. \]
	Then
	\[\lim_{N \to +\infty}\frac{1}{N}\sum_{n=1}^N G_{t_{N,n}(x)}(T^nx)=\int G \dd\theta,\quad \ae[\theta\text{-}] x.\]
\end{theorem}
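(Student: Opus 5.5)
This is a variant of Maker's theorem, and the plan is to reduce it to Birkhoff's ergodic theorem by a truncation argument. For $\delta>0$ define the modulus
\[H_\delta(x)\defeq\sup_{|t|\leq\delta}|G_t(x)-G(x)|.\]
We have $H_\delta\leq 2\sup_t|G_t|$, since $|G|\leq\sup_t|G_t|$ almost everywhere. Hence $H_\delta\in L^1$, uniformly in $\delta$. Since $G_t\to G$ almost everywhere, $H_\delta\to 0$ almost everywhere as $\delta\to0^+$. Dominated convergence then gives $\int H_\delta\,d\theta\to0$. Measurability of $H_\delta$ uses the measurability of the family $(t,x)\mapsto G_t(x)$; if needed, restrict the supremum to a countable dense set of $t$ or use the standard measurable-selection remark.

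Fix $\delta>0$ and a typical $x$. Apply the hypothesis with $\ve\leq\delta$, and split the sum $\frac1N\sum_{n=1}^N G_{t_{N,n}(x)}(T^nx)$ into two ranges. On the range $n\leq(1-\ve)N$, where $|t_{N,n}|<\ve\leq\delta$, we have
\[|G_{t_{N,n}}(T^nx)-G(T^nx)|\leq H_\delta(T^nx).\]
By Birkhoff, $\frac1N\sum_{n\leq N}G(T^nx)\to\int G\,d\theta$ and $\frac1N\sum_{n\leq N}H_\delta(T^nx)\to\int H_\delta\,d\theta$. On the remaining tail $(1-\ve)N<n\leq N$, bound each term by $G^*(T^nx)$, where $G^*=\sup_t|G_t|\in L^1$. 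Then
\[\frac1N\sum_{(1-\ve)N<n\leq N}G^*(T^nx)=\frac1N\sum_{n\leq N}G^*(T^nx)-\frac1N\sum_{n\leq(1-\ve)N}G^*(T^nx),\]
which by Birkhoff converges to $\ve\int G^*\,d\theta$. The same estimate controls the tail of $\sum G(T^nx)$.

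Combining the two ranges gives
\[\limsup_N\Bigl|\frac1N\sum G_{t_{N,n}}(T^nx)-\int G\Bigr|\leq\int H_\delta+2\ve\int G^*.\]
Taking a countable sequence $\delta_k\to0$ with $\ve_k\leq\delta_k$ removes the dependence of the null sets on the parameters. Letting $k\to\infty$ finishes the proof.

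The only delicate point is that the threshold $N_0(x,\ve)$ depends on $x$. This is harmless because the argument is pointwise. The main (mild) obstacle is ensuring the measurability of $H_\delta$ and $G^*$, which is implicitly part of the hypothesis that the family is measurable with $\sup_t|G_t|\in L^1$.
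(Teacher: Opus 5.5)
Your argument is correct: it is the standard Maker-type truncation proof. The bulk $n\le(1-\varepsilon)N$ is handled by Birkhoff applied to the modulus $H_\delta=\sup_{|t|\le\delta}|G_t-G|$, with $\int H_\delta\to0$ by dominated convergence, and the tail by $2\varepsilon\int\sup_t|G_t|\,d\theta$. The paper gives no proof of its own; it simply quotes the result from Hochman--Solomyak.

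Two small caveats, neither of which affects the paper's use, where $G_t=\mathcal{O}(\cdot,\cdot,t)$ is monotone in $t$ (so $H_\delta=\mathcal{O}(\cdot,\cdot,\delta)$) and $t=|I_n|>0$:
\begin{enumerate}
\item Restricting the supremum to a countable dense set of $t$ would not give a valid majorant without continuity in $t$. In general, measurability of $H_\delta$ should come from joint measurability and the projection theorem.
\item Putting $t=0$ into the supremum tacitly uses $G_0=G$. This is in fact forced: with a punctured limit the statement fails for $G_t=\mathbf{1}_{\{t=0\}}$ and $t_{N,n}\equiv0$.
\end{enumerate}
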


\subsection{Exact dimensionality (Proof of Theorem \ref{thm: exact dimensionality})}
\label{subsec: proof of exact dimensionality}

Let $\nu$ be an ergodic $\mu$-stationary measure and let 
$m \in \cP^u$ be the corresponding ergodic $u$-state. 
Let $\lambda = \lambda(\mu,\nu) = \lambda(m)$ denote the Lyapunov exponent, which is negative under our assumption.
Let $h=\hFur(\mu,\nu)$ be the Furstenberg entropy.
Using the relation between $\nu$ and $m$ and a change of variable, note that the Furstenberg entropy can be expressed as
\begin{equation*}
h = \int \log \frac{\dr (f_{\sigma^{-1}\omega})_* \nu}{\dr \nu}(x) \dd m(\omega,x).
\end{equation*}

\begin{proof}[Proof of Theorem \ref{thm: exact dimensionality}]
We aim to show that for $\nu$-almost every $x\in \SS^1$, or, in other words, for $m$-almost every $(\omega,x) \in \Sigma \times \SS^1$, 
\begin{equation}
\label{eqn: local dim at x}
\lim_{\rho \to 0^+}\frac{\log\nu(B(x,\rho))}{\log\rho}=-\frac{h}{\lambda}. 
\end{equation}

Fix an arbitrary element $\omega\in\Sigma$ regular for the random walk (Definition \ref{def: regular for random walk}) and a point $x\in\supp m_\omega.$ Let $I_0 = B(x,\rho)$ and for integers $n \geq 0$, set 
\[x_n = f_\omega^{-n} x \quad\text{and}\quad I_n = f_\omega^{-n}I_0.\]
For any integer $N \geq 1$, we can write
\[\nu(I_0)=\nu(I_N)\prod_{n=0}^{N-1}\frac{\nu(I_{n})}{\nu(I_{n+1})}=\nu(I_N)\prod_{n=0}^{N-1}\frac{\nu(I_n)}{(f_{\sigma^{-(n+1)}\omega})_*\nu(I_n)}.\]
Recall the definition of $\vp(f,I)$ in \eqref{eqn: definition of phi_f}, 
\[-\log \nu(I_0)=-\log\nu(I_N)+\sum_{n=0}^{N-1}\log\vp(f_{\sigma^{-(n+1)}\omega},I_n).\]
Since
\[\abs{\log\frac{\dr (f_{\sigma^{-(n+1)}\omega})_*\nu}{\dr \nu}(x_n)-\log\vp(f_{\sigma^{-(n+1)}\omega},I_n) }\leq\cO(\sigma^{-n}\omega,x_n,|I_n|),\]
we have
\begin{equation}
\label{eqn: nuI0 and sum}
\abs{-\log\nu(I_0)- \sum_{n=0}^{N-1}\log\frac{\dr (f_{\sigma^{-(n+1)}\omega})_*\nu}{\dr \nu}(x_n)}\leq -\log\nu(I_N)+\sum_{n=0}^{N-1}\cO(\sigma^{-n}\omega,x_n,|I_n|).
\end{equation}
Note that the sum in the left-hand side is independent of $\rho$ and it is a Birkhoff sum of the function $(\omega,x) \mapsto \log\frac{\dr(f_{\sigma^{-1}\omega})_*\nu }{\dr\nu}(x)$ for the transformation $F^{-1}$.
Hence, by Birkhoff's ergodic theorem, outside a null set, as $N \to +\infty$,
\begin{equation}
\label{eqn: average to hFur}
\frac{1}{N}\sum_{n=0}^{N-1}\log\frac{\dr (f_{\sigma^{-(n+1)}\omega})_*\nu}{\dr \nu}(x_n)\to h.
\end{equation}

Recall that our objective is to estimate $\lim_{\rho \to 0^+}\frac{\log \nu(I_0)}{\log \rho}$. 
To make use of \eqref{eqn: nuI0 and sum}, we still have the freedom to choose $N = N(\omega,\rho)$ according to $\rho > 0$.
This choice will guarantee $N(\omega,\rho) \sim\frac{\log\rho}{\lambda}$ while simultaneously ensuring that the right-hand side of \eqref{eqn: nuI0 and sum} is negligible compared to $N(\omega,\rho)$ as $\rho \to 0^+$.

By Corollary~\ref{cor: contraction of intervals 2}, we first  fix a subset $\Sigma' \sbs \Sigma$ and a sequence $\ve_n \to 0^+$, such that $\P(\Sigma') > 0$ and that for $m$-almost every $(\omega',y) \in \Sigma' \times \SS^1$, there is an interval $I = I(\omega',y)$ containing $y$ and satisfying $\nu(I) \geq \frac{1}{2r}$ and for any $n \in \NN$, $\log |f_{\omega'}^n I| \leq (\lambda + \ve_n) n$.

For every $\omega\in\Sigma,$ we define
\begin{equation}
\label{eqn: hyperbolic times}
	A_\omega\defeq \lb{N : \sigma^{-N}\omega \in \Sigma'},
\end{equation}
which is the set of \textit{hyperbolic times} at which $f_{\sigma^{-N}\omega}^N$ exhibits substantial contraction for $N\in A_\omega$. For $\P$-almost every $\omega \in \Sigma$, the set $A_\omega$ is infinite and thus,
\[N(\omega,\rho) \defeq \min\lb{ N \in A_\omega : 2^{(\lambda + \ve_N)N} \leq \rho}\]
 is well defined for all $\rho > 0$.
We claim that
\begin{equation}
\label{eqn: rho and N}
\frac{\log \rho}{N(\omega,\rho)} \to \lambda \quad \text{as } \rho \to 0^+.
\end{equation}
Indeed, if we list elements of $A_\omega = \{N_1, N_2, \dotsc \}$ in increasing order, then by Birkhoff's ergodic theorem, for $\P$-almost every $\omega$, $N_k/k \to \P(\Sigma')^{-1}$ and hence $N_{k-1}/N_k \to 1$ as $k\to +\infty$. 
By definition, $N(\omega,\rho)$ is some $N_k$ such that $(\lambda + \ve_{N_k})N_k \leq \log \rho < (\lambda + \ve_{N_{k-1}})N_{k-1}$.
Dividing by $N_k$ and taking limits proves \eqref{eqn: rho and N}.

We write $N = N(\omega,\rho)$ as shorthand.
From $N \in A_\omega$ and the point $x \in \supp m_\omega$, we know that there is an interval $I = I(F^{-N}(\omega,x))$ containing $f_\omega^{-N} x$ and satisfying $\nu(I) \geq \frac{1}{2r}$ and 
\[  |f_{\sigma^{-N}\omega}^N I| \leq 2^{(\lambda + \ve_N)N} \leq \rho.\]
Hence $f_{\sigma^{-N}\omega}^N I \subset B(x,\rho) = I_0$ and $I_N = f_\omega^{-N} I_0 \sps I$ has measure
\begin{equation}
\label{eqn: nu of IN}
\nu(I_N) \geq \frac{1}{2r}.
\end{equation}

We claim that for $\P$-almost every $\omega \in \Sigma$, there is $\wt\ve_\rho \to 0$ as $\rho \to 0^+$ (depending on $\omega$) such that for every $1\leq n\leq N$ we have
\[|I_n| \leq 2^{\lambda(N-n)+\wt\ve_\rho N}. \]	
Thus, applying Theorem \ref{thm: Maker's Theorem} (let $(G_t, t_{N,n}, T, X)$ be $(\cO(\cdot, t), |I_n|, F^{-1}, \Sigma \times \SS^1)$) 
we obtain that
\[\frac{1}{N}\sum_{n=0}^{N-1}\cO(\sigma^{-n}\omega,x_n,|I_n|)=\frac{1}{N}\sum_{n=0}^{N-1}\cO(F^{-n}(\omega,x),|I_n|)\to 0\]
for $m$-almost every $(\omega,x) \in \Sigma \times \SS^1$.
Combining~\eqref{eqn: nuI0 and sum}, \eqref{eqn: average to hFur}, \eqref{eqn: nu of IN} and \eqref{eqn: rho and N}
we obtain the desired convergence \eqref{eqn: local dim at x}.

To prove the claim, fix $\ve>0$. By regularity, there exists
$C_\ve\geq1$ such that
\[
(f_\omega^{-n})'(x)
\leq C_\ve 2^{(-\lambda+\ve)n},
\qquad n\geq0.
\]
Since $\supp m_\omega$ is finite, we may choose $C_\ve$
uniformly over $x\in\supp m_\omega$.

Let $M=\max_{f\in\cS}\vk_\alpha(f^{-1})$, and choose $0<\gamma<1$ such that $M\gamma^\alpha\leq\ve$.
For $\rho<\gamma /2$, let $k$ be the first integer such that
$|I_k|>\gamma $, with $k=\infty$ if no such integer exists.
For every $n\leq k$, the distortion composition rule gives
\[\vk(f_\omega^{-n},I_0)\leq M\sum_{j=0}^{n-1}|I_j|^\alpha\leq nM\gamma^\alpha\leqslant n\ve .\]
Consequently, for every $n\leqslant k,$ $|I_n|\leq2C_\ve\rho\,2^{(-\lambda+2\ve)n}.$
In particular, if $k<\infty,$ the case $n=k$ leads to the estimate
\[2C_\ve\rho\,2^{(-\lambda+2\ve)k}\geqslant |I_k|\geqslant\gamma.\]
Thus,  for every $n\geqslant 0,$
\[|I_n|\leq \frac{2C_\ve}{\gamma}\rho\,2^{(-\lambda+2\ve)n}\ll_\ve \rho 2^{(-\lambda+2\ve)n},\]
By \eqref{eqn: rho and N}, we have $\rho=2^{\lambda N+o(N)}$.
Hence, uniformly for $0\leq n\leq N$,
\[ \log|I_n| \leqslant \lambda(N-n)+2\ve N+o(N).\]
Since $\ve>0$ is arbitrary, the claim follows.
\end{proof}

\section{Dimension formula}\label{se:6}
In the previous section, we established the exact dimensionality of ergodic stationary measures; its dimension is expressed as the ratio between the Furstenberg entropy and the Lyapunov exponent.
However, computing the Furstenberg entropy can be challenging in practice.
In this section, we will demonstrate that the random walk entropy can be used as a substitute for the Furstenberg entropy under a suitable separation condition, namely the local discreteness of the group generated by the support of the measure.

\subsection{Dimension and entropy}
Let $\nu$ be a Borel probability measure on $\SS^1$, and let $\cA$ be a finite Borel partition of $\SS^1.$ 
Recall the \textit{Shannon entropy} of $\nu$ with respect to $\cA$ is 
\[H(\nu,\cA)\defeq -\sum_{A\in\cA}\nu(A)\log\nu(A).\]
Note that $H(\nu,\cA)\leq\log\#\cA.$ Let $\cB$ be another finite measurable partition of $\SS^1,$ the \textit{conditional entropy} is 
\[H(\nu,\cA|\cB)\defeq-\sum_{B\in\cB}\sum_{A\in\cA}\nu(A\cap B)\log\frac{\nu(A\cap B)}{\nu(B)}.\]
Similarly, an upper bound for the conditional entropy is given by
\begin{equation}
\label{eqn: bound for cond entropy}
H(\nu,\cA|\cB)\leq \max_{B\in\cB}\log\#\lb{A\in\cA:\nu(A\cap B)>0}.
\end{equation}

Fix a finite measurable partition $\cA$. The function $\nu \mapsto H(\nu,\cA)$ is concave and almost convex: 
for any probability vector $\bm{p}=(p_1,\cdots,p_k)$ and any Borel probability measures $\nu_1,\dotsc,\nu_k$, we have
\begin{equation}
\label{eqn: entropy concave}
\sum_{i=1}^k p_i H(\nu_i,\cA) \leq H\sb{\sum_{i=1}^k p_i\nu_i,\cA}\leq \sum_{i=1}^k p_i H(\nu_i,\cA)+H(\bm{p}).
\end{equation}
where $H(\bm{p})=-\sum p_i\log p_i$.

We often consider entropy with respect to dyadic partitions on $\SS^1$.
We identify $\SS^1$ with $[0,1[$. 
For a positive integer $n$, let
\[\cD_n\defeq\lb{{\left[\frac{k}{2^n},\frac{k+1}{2^n}\right[}: 0\leq k\leq 2^n-1}.\]
Moreover, for a positive number $t,$ let $\cD_t= \cD_{\lfloor t\rfloor}.$
If the sequence $(\frac{1}{n}H(\nu,\cD_n))_n$ converges and
\[\lim_{n\to+\infty}\frac{1}{n}H(\nu,\cD_n)=\beta,\]
we say that $\nu$ has \textit{entropy dimension} $\beta$.
The following standard proposition is proved in  \cite[Theorem 4.4]{Y82}; see also~\cite[Theorem 1.3]{FLR}.
\begin{proposition}
	If $\nu$ is exact dimensional then the entropy dimension of $\nu$ exists and coincides with its exact dimension.
\end{proposition}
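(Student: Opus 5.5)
We fix an exact dimensional $\nu$ with $\dim\nu=\beta$ and aim to show $\frac1n H(\nu,\cD_n)\to\beta$. Throughout we pass freely between balls and dyadic intervals: each $x\in\SS^1$ lies in a unique $D_n(x)\in\cD_n$, and $D_n(x)\sbs B(x,2^{-n})$. We prove the two inequalities separately, after the identity
\[
H(\nu,\cD_n)=\int -\log\nu(D_n(x))\dd\nu(x).
\]

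\textbf{Lower bound.} Since $D_n(x)\sbs B(x,2^{-n})$, we have $-\log\nu(D_n(x))\geq -\log\nu(B(x,2^{-n}))$. By exact dimensionality, $\frac1n$ times the right-hand side tends to $\beta$ for $\nu$-a.e.\ $x$. Both sides are nonnegative, so Fatou's lemma gives
\[
\liminf_{n\to\infty}\frac1n H(\nu,\cD_n)\geq\beta .
\]

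\textbf{Upper bound.} This is the main obstacle. The pointwise comparison goes the wrong way: $x$ may sit very close to the boundary of $D_n(x)$, in which case $\nu(D_n(x))$ can be much smaller than $\nu(B(x,2^{-n}))$. We avoid pointwise control and use a counting argument. Fix $\ve>0$ and let
\[
G_N=\bigl\{x:\nu(B(x,2^{-n}))\leq 2^{-n(\beta-\ve)}\ \text{and}\ \nu(B(x,2^{-n}))\geq 2^{-n(\beta+\ve)}\ \text{for all } n\geq N\bigr\}.
\]
Then $\nu(G_N)\to1$. We split the partition $\cD_n$ according to the partition $\{G_N,G_N^c\}$, using the almost convexity \eqref{eqn: entropy concave}, which introduces an error of at most $1$ bit. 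On $G_N^c$ we use the trivial bound $H\leq n$, which contributes at most $\nu(G_N^c)\,n$.

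On $G_N$, let $\cD_n'$ be the dyadic intervals of $\cD_n$ meeting $G_N$. Each $D\in\cD_n'$ contains a point $y\in G_N$, and $D\sbs B(y,2^{-n})$. Any interval $D$ of length $2^{-n}$ lies within the ball of radius $2^{-n}$ around each of its points, so $D$ is contained in $B(y,2^{-n})$ for such a $y$. Hence the intervals $B(y,2^{-n+1})$, with one point $y\in G_N\cap D$ chosen per $D$, cover $G_N$ with bounded multiplicity. Each has mass at least $2^{-(n-1)(\beta+\ve)}$, so
\[
\#\cD_n'\ll 2^{n(\beta+\ve)}.
\]
By \eqref{eqn: bound for cond entropy}, or simply $H\leq\log\#$, the normalized restriction of $\nu$ to $G_N$ has entropy at most $n(\beta+\ve)+O(1)$. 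Combining the three contributions,
\[
\limsup_{n\to\infty}\frac1n H(\nu,\cD_n)\leq \beta+\ve+\nu(G_N^c).
\]
Letting $N\to\infty$ and then $\ve\to0$ gives the upper bound, and together with the lower bound, $\lim_{n\to\infty}\frac1n H(\nu,\cD_n)=\beta$.
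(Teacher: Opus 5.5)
Your proof is correct and complete. The paper gives no argument of its own for this proposition; it only cites \cite[Theorem 4.4]{Y82} and \cite[Theorem 1.3]{FLR}, so what you have written is a self-contained replacement rather than a variant of an in-paper proof.

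A common alternative route goes through a pointwise comparison of dyadic and ball masses. Let $\widehat D$ denote $D$ together with its two neighbours, so that $\widehat{D_n(x)}\supset B(x,2^{-n})$. One has $\nu\{x:\nu(D_n(x))\le\delta\,\nu(\widehat{D_n(x)})\}\le 3\delta$. Borel--Cantelli with $\delta=n^{-2}$ then gives $\frac1n\bigl(-\log\nu(D_n(x))\bigr)\to\beta$ for $\nu$-a.e.\ $x$. The bound $\int_E-\log\nu(D_n(x))\,d\nu(x)\le\nu(E)\,n+\nu(E)\log\frac{1}{\nu(E)}$ supplies the uniform integrability needed to integrate this limit.

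Your argument avoids the dyadic-versus-ball comparison entirely. The lower bound needs only $D_n(x)\subset B(x,2^{-n})$ and Fatou. The upper bound replaces pointwise control by counting the cells that meet $G_N$, and it uses only the lower mass bound in the definition of $G_N$; the condition $\nu(B(x,2^{-n}))\le 2^{-n(\beta-\varepsilon)}$ is never needed. Consequently your two halves prove, without change, the more general inequality
$\int\underline d_\nu\,d\nu\le\liminf_n\frac1nH(\nu,\mathcal D_n)\le\limsup_n\frac1nH(\nu,\mathcal D_n)\le\mathrm{ess\,sup}\,\overline d_\nu$.
Here $\underline d_\nu(x)$ and $\overline d_\nu(x)$ are the $\liminf$ and $\limsup$ of $\log\nu(B(x,\rho))/\log\rho$, and the proposition is the special case $\underline d_\nu=\overline d_\nu=\beta$.

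One cosmetic point: the count does not need the balls to cover $G_N$. What you actually use is that the balls $B(y_D,2^{-n})$, $D\in\mathcal D_n'$, have multiplicity at most $3$ and each has mass at least $2^{-n(\beta+\varepsilon)}$ for $n\ge N$. This gives $\#\mathcal D_n'\le 3\cdot 2^{n(\beta+\varepsilon)}$ directly, so the enlargement to radius $2^{-n+1}$ and the accompanying covering sentence can be dropped.
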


\subsection{Dimension formula on the circle}\label{subsec: dimension formula}

Let $\mu$ be a probability measure on $\Diff_+^{1+\alpha}(\SS^1)$ with finite support such that $\supp\mu$ does not preserve any Borel probability measure on $\SS^1$.
Let $\nu$ be an ergodic $\mu$-stationary measure on $\SS^1.$
We know that $\nu$ is exact dimensional by Theorem \ref{thm: exact dimensionality}, and we denote its dimension by $\beta$.

Let us first explain the idea of the proof. 
As explained in the previous subsection, the entropy dimension of $\nu$ coincides with its exact dimension.
Proposition \ref{prop: convergence in probability} gives rise to some $\ul x \in \SS_r$ for which we can write $\nu$ as the weak-* limit 
\[\nu=\lim_{n\to\infty}\mu^{*n}* u_{\ul x}.\]
Combining the convergences above, we can expect that
\[\frac{1}{n}H(\mu^{*n}*u_{\ul x},\cD_{-\lambda n})\to -\lambda\beta,\]
where $\lambda=\lambda(\mu,\nu)<0$ is the corresponding Lyapunov exponent. 
It relates the action of $n$-step iterations of $\mu$ and the dimension $\beta.$

If there were a gap between $h_{\mr{RW}}(\mu)$ and $-\lambda \beta,$ then there would be exponentially many elements in $\supp\mu^{*n}$ mapping a point in $\ul x$ into the same atom of $\cD_{-\lambda n}.$ 
Combining the derivative and distortion controls for good words in $\supp\mu^{*n}$ (see Proposition \ref{prop: good words}), the restrictions of these elements to a fixed interval would be close to each other. 
This allows us to construct elements of the form $g^{-1} f$ that are arbitrarily $C^1$-close to the identity on an interval. 
This contradicts the local discreteness assumption.

Now we start the proof.
We use the notation of Theorem~\ref{thm: structure of random walk 1} and without loss of generality, assume that $\nu=\nu_0^+$.
Recall $\ul\nu_0^+$ is the probability measure on $\SS_r$ which is the pushforward of $\P$ under $\Pi(\cd,0)$. 
We abbreviate $\ul\nu_0^+$ to $\ul\nu$ in this section. 
\begin{lemma}
\label{lem: entropy of RW on Sr}
For every $\ul x\in\supp\ul\nu,$ we have
\[\lim_{n\to + \infty}\frac{1}{n}H(\mu^{*n}*\unif_{\ul x},\cD_{-\lambda n})=-\beta\lambda. \]
\end{lemma}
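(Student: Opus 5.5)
The plan is to compare $\mu^{*n}*\unif_{\ul x}$ with $\nu$ at scale $2^{\lambda n}$. The comparison goes through the effective convergence of Proposition~\ref{prop: convergence in probability}, combined with the exact dimensionality of Theorem~\ref{thm: exact dimensionality} and its consequence that the entropy dimension of $\nu$ equals $\beta$. Write $m=\lfloor -\lambda n\rfloor$.

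\textbf{Step 1 (a limit at the finer scale).} Since $\nu$ is exact dimensional, the proposition of Young gives $\frac1m H(\nu,\cD_m)\to\beta$. Stationarity of $\nu$ gives the decomposition
\[\nu=\int \unif_{\Pi(\omega,0)}\dd\P(\omega).\]

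\textbf{Step 2 (distributional form).} Write $\mu^{*n}*\unif_{\ul x}=\int (f^n_{\sigma^{-n}\omega})_*\unif_{\ul x}\dd\P(\omega)$. This is the law of $f_{\sigma^{-n}\omega}^n\ul x$ averaged as uniform measures on the $r$ points. Fix $\ve>0$. By Proposition~\ref{prop: convergence in probability}, for $n$ large there is a set $G_n$ with $\P(G_n)\ge1-\ve$ on which $\ul d(f^n_{\sigma^{-n}\omega}\ul x,\Pi(\omega,0))<2^{(\lambda+\ve)n}$.

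\textbf{Step 3 (coupling on $G_n$).} On $G_n$, pair each point of $f^n_{\sigma^{-n}\omega}\ul x$ with a point of $\Pi(\omega,0)$ at distance $<2^{(\lambda+\ve)n}$. Such a pair lies in dyadic cells of $\cD_m$ that are at most $O(2^{\ve n})$ cells apart. Let $\eta_1$ and $\eta_2$ be the two mixtures restricted to $G_n$ and normalized. The standard coupling estimate gives
\[|H(\eta_1,\cD_m)-H(\eta_2,\cD_m)|\le \log O(2^{\ve n})=O(\ve n).\]
This follows from the bound \eqref{eqn: bound for cond entropy} applied to the refinement $\cD_m\vee$(translated partition), or equivalently to conditional entropies $H(\cdot,\cD_m\mid \cD_{m-\ve n})$ and vice versa.

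\textbf{Step 4 (the bad set).} Both full measures are convex combinations $(1-p)\eta_i+p\eta_i'$ with $p\le\ve$. By \eqref{eqn: entropy concave}, $H(\cdot,\cD_m)$ differs from $(1-p)H(\eta_i,\cD_m)$ by at most $p\log\#\cD_m+H(p,1-p)\le \ve m+1$. Combining Steps 3 and 4,
\[\bigl|H(\mu^{*n}*\unif_{\ul x},\cD_m)-H(\nu,\cD_m)\bigr|\le C\ve n+O(1).\]
Dividing by $n$, using $m/n\to-\lambda$ and $H(\nu,\cD_m)/m\to\beta$, and letting $\ve\to0$ yields the limit $-\beta\lambda$.

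\textbf{Main obstacle.} The delicate point is that Step 3 does not literally hold with a fixed proportion of mass. The perturbation bound for entropy under moving each atom by $\le\delta$ must be applied measure-wise, to measures that are mixtures over $\omega$. The cleanest route is to write the coupling explicitly as a measure on pairs $(y,y')$ with $|y-y'|<2^{(\lambda+\ve)n}$. Then $H(\text{first marginal},\cD_m)\le H(\text{joint},\cD_m\times\cD_m)\le H(\text{second marginal},\cD_m)+\log O(2^{\ve n})$, and the same holds symmetrically. The points of $\ul x$ are paired within each $\omega$ as in the proof of Proposition~\ref{prop: convergence in probability}, and uniform weights $1/r$ match on both sides.
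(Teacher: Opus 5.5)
Your proposal is correct and is essentially the paper's proof. The paper likewise uses Proposition~\ref{prop: convergence in probability} to build the coupling $\theta_n=\int\frac1r\sum_{i}\delta_{(f^n_{\sigma^{-n}\omega}x_i,\,y_i(\omega))}\dd\P(\omega)$ of $\mu^{*n}*\unif_{\ul x}$ and $\nu$, applies an elementary coupling--entropy lemma that is your Steps 3 and 4 combined, and uses that the entropy dimension of $\nu$ equals $\beta$. The one detail you leave implicit is measurability of the pairing on $G_n$; the paper settles it by noting that $\Pi(\omega,0)$ is $c$-separated (Lemma~\ref{lem: uniform bound of Pi}), so for large $n$ the pairing within distance $2^{(\lambda+\ve)n}$ is unique.
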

Recall that $\unif_{\ul x}$ denotes the uniform probability measure on $\ul x \in \SS_r$.
\begin{proof}
By Proposition \ref{prop: convergence in probability}, for every $\ve>0$, and every $n$ large enough depending on $\ve$, we have 
	\[\P(\Sigma') \geq 1-\ve\]
where 
\[\Sigma'=\lb{\omega \in \Sigma :\underline d(f_{\sigma^{-n}\omega}^n \ul x,\Pi(\omega,0)) < 2^{(\lambda+\ve)n}}.\]

From this we can construct a Borel probability measure $\theta_n$ on $\SS^1 \times \SS^1$ whose marginal distributions are $\mu^{*n}*\unif_{\ul x}$ and $\nu$ and moreover
\begin{equation}
\label{eqn: property of theta}
\theta_n \lb{(x,y):d(x,y)\leq 2^{( \lambda+ \ve)n}} \geq 1 - \ve.
\end{equation}
Indeed, write $\ul x = \{x_0,\dotsc,x_{r-1}\}$ and for each $\omega \in \Sigma$, label elements of $\Pi(\omega,0)$ as $y_0(\omega), \dotsc, y_{r-1}(\omega)$ (in a measurable way). 
Define
\[\theta_n \defeq \int_\Sigma \frac{1}{r} \sum_{i=0}^{r-1}\delta_{(f_{\sigma^{-n}\omega}^nx_i,y_i(\omega))} \dd \P(\omega),\]
so that $\theta$ is a probability measure on $\SS^1 \times \SS^1$.
Its projection to the first coordinate is $\mu^{*n}*\unif_{\ul x}$ and to the second coordinate is $\nu$.
At this stage, we still have the freedom to choose the labelling $\Pi(\omega,0) = \{y_0(\omega), \dotsc, y_{r-1}(\omega) \}$.
For $\omega \in \Sigma'$, there is a unique labelling such that for every $i \in[r]$,  $d(f_{\sigma^{-n}\omega}^nx_i,y_i(\omega)) \leq 2^{(\lambda+\ve)n}$, because $\Pi(\omega,0)$ is $c$-separated for some $c = c(\mu) > 0$ by Lemma~\ref{lem: uniform bound of Pi} and $d(f_{\sigma^{-n}\omega}^n \ul x,\Pi(\omega,0)) < 2^{(\lambda+\ve)n} < c/2$.
This shows \eqref{eqn: property of theta}.

We have the following elementary fact on entropies. 

\begin{lemma}
\label{lem: distance and entropy}
Let $\theta$ be a Borel probability measure on $\SS^1\times\SS^1$ such that 
\[\theta \lb{(x,y):d(x,y)\leq 2^{-(1-\ve)n}} \geq 1 - \ve\]
for some $n \in \NN$ and $\ve>0.$ Let $\eta$ and $\zeta$ be the marginal distributions of $\theta$ on the two coordinates. Then
\[|H(\eta,\cD_n)-H(\zeta,\cD_n)|\ll  \ve n+1. \]
\end{lemma}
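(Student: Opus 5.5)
The plan is to compare both $H(\eta,\cD_n)$ and $H(\zeta,\cD_n)$ with the entropy of the joint measure $\theta$ with respect to the product partition $\cD_n\times\cD_n$ of $\SS^1\times\SS^1$. For a point $(x,y)$, write $D(x)$ and $D(y)$ for the atoms of $\cD_n$ containing $x$ and $y$. By the chain rule,
\[H(\theta,\cD_n\times\cD_n)=H(\eta,\cD_n)+H(\theta,\cD_n\times\cD_n\mid \cD_n\times\{\SS^1\}),\]
and symmetrically with the roles of the coordinates exchanged. It therefore suffices to show that each conditional entropy, namely the entropy of $D(y)$ given $D(x)$ and vice versa, is $\ll \ve n+1$. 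By symmetry, I will treat only the entropy of $D(y)$ given $D(x)$.

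To bound this conditional entropy, introduce the auxiliary partition $\{G,G^c\}$ of $\SS^1\times\SS^1$, where $G=\{(x,y):d(x,y)\le 2^{-(1-\ve)n}\}$, so that $\theta(G)\ge 1-\ve$. Conditioning additionally on $\{G,G^c\}$ costs at most $H(\theta(G))\le 1$. It then remains to bound the entropy of $D(y)$ given $D(x)$ and $\{G,G^c\}$, which splits into two pieces:
\begin{itemize}
\item On $G$: once $D(x)$ is fixed, $y$ lies within $2^{-(1-\ve)n}$ of an interval of length $2^{-n}$. Hence the number of possible atoms $D(y)$ is at most $2\cdot 2^{\ve n}+3$, and by \eqref{eqn: bound for cond entropy} this piece contributes at most $\ve n+O(1)$.
\item On $G^c$: the number of atoms is trivially at most $2^n$, so this piece contributes at most $\theta(G^c)\,n\le \ve n$.
\end{itemize}
Weighting by $\theta(G)$ and $\theta(G^c)$ gives a total of $\ll \ve n+1$.

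Substituting this bound into the two chain rule identities and subtracting them yields $|H(\eta,\cD_n)-H(\zeta,\cD_n)|\ll \ve n+1$. There is no real obstacle in this argument; the only point requiring care is the atom-counting step on $G$, where one must account for the circle's identification with $[0,1[$. This only adds a bounded number of extra atoms near $0$, which is absorbed into the constant.
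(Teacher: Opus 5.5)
Your proof is correct. The chain rule through $\cD_n\times\cD_n$, the extra conditioning on $\{G,G^c\}$ at cost at most $1$, and the count of at most $2\cdot 2^{\ve n}+3$ atoms on $G$ together bound both conditional entropies by $2\ve n+O(1)$, which gives the claim. The paper gives no proof of this lemma (it is invoked only as an ``elementary fact on entropies''), and your argument is the standard one it leaves to the reader.
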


Applying this lemma, we obtain
\[|H(\mu^{*n}*\unif_{\ul x},\cD_{-\lambda n})-H(\nu,\cD_{-\lambda n})| \ll \ve (-\lambda) n+ 1.\]
Since $\ve > 0$ is arbitrarily small and $\lim_{n \to +\infty}\frac{1}{n} H(\nu,\cD_n)\to \beta$, the desired convergence follows.
\end{proof}

\begin{lemma}\label{lem: generate similar elements}
Assume that $-\lambda\beta< \hRW(\mu).$ 
Then there are constants $c,C>0$, a closed interval $J'\subset \SS^1$ centered at a point of $\supp\nu$ and infinitely many integers $n \in \NN$, together with positive numbers $\ve_n\to 0$ and subsets $T_n\sbs T_\mu,$ satisfying 
\begin{enumerate}
\item $\# T_n\geq 2^{cn},$
\item for every $f\in T_n,$ $f'|_{J'}\in[2^{(\lambda-\ve_n)n},2^{(\lambda+\ve_n)n}]$ and $\vk_\alpha(f,J')\leq C,$ 
\item the intervals $f(J')$, $f\in T_n$, lie in a common interval of length at most $2^{(\lambda+\ve_n)n}$.
\end{enumerate} 
\end{lemma}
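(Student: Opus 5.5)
We compare the entropy of $\mu^{*n}$ with the entropy of $\mu^{*n}*\unif_{\ul x}$ at scale $2^{\lambda n}$, and then pigeonhole using the uniform good words of Proposition~\ref{prop: good words}.

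Fix a small $\ve>0$ with $-\lambda\beta+10\ve<\hRW(\mu)$, and let $\Sigma_\ve$, $U(\omega)$, $C(\ve)$ be as in Proposition~\ref{prop: good words}. Shrinking $\Sigma_\ve$ slightly, we may assume the neighbourhoods $U(\omega)$ contain balls of a fixed radius $\rho_0>0$ around each point of $\Pi(\omega,0)$. The conclusion of Proposition~\ref{prop: good words} only involves the future coordinates $f_0,f_1,\dots$ together with $\Pi(\omega)$. We therefore disintegrate over $\omega^-$ and pick one past $\omega^-$ for which the conditional $\P^+$-measure of the good future words is at least $1-2\ve$. Set $\ul x=\Pi(\omega^-,0)\in\supp\ul\nu$. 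Choose $x_0\in\ul x$ and let $J'$ be the closed interval of radius $\rho_0/2$ centred at $x_0$. Then $J'$ lies in one component $I$ of $U(\omega)\cap W^s(\omega,0)$ for all good continuations. By Corollary~\ref{cor: contraction of intervals}, the uniform exponent convergence in Proposition~\ref{prop: good words}(2), and the bound $\vk_\alpha(f^n_\omega,I)\le C(\ve)$, every word $f=f^n_\omega$ with $\omega$ in this good set satisfies $f'|_{J'}\in[2^{(\lambda-\ve_n)n},2^{(\lambda+\ve_n)n}]$ and $\vk_\alpha(f,J')\le C$. Here $\ve_n\to0$ is produced from the uniform convergence together with the distortion bound, since $f'$ at any point of $J'$ is comparable to $f'(x_0)$.

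Next we count. Let $G_n\subset\cS^{*n}$ be the set of products $f^n_{\omega}$ with $\omega$ good, so $\mu^{*n}(G_n)\ge1-2\ve$. Since $\frac1nH(\mu^{*n})\to\hRW(\mu)$, the restriction of $\mu^{*n}$ to $G_n$ still has entropy at least $(\hRW(\mu)-3\ve)n$ for large $n$, by concavity \eqref{eqn: entropy concave}.

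On the other hand, Lemma~\ref{lem: entropy of RW on Sr} gives
\[
H(\mu^{*n}*\unif_{\ul x},\cD_{-\lambda n})\le(-\lambda\beta+\ve)n .
\]
Using \eqref{eqn: entropy concave} again, the pushforward of $\mu^{*n}|_{G_n}$ under $f\mapsto f(x_0)$ has entropy at most $(-\lambda\beta+2\ve)n+O(\log r)$ at scale $\cD_{-\lambda n}$; the $\log r$ term accounts for discarding the other points of $\ul x$.

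We now compare these two entropies via the map $f\mapsto$ the dyadic atom containing $f(x_0)$. The conditional entropy of $f$ given this atom is at least $(\hRW-\lambda\beta... )$, more precisely at least $(\hRW(\mu)+\lambda\beta-5\ve)n\ge cn$ with $c=5\ve$. This rules out all fibres being small, so some atom $D$ of $\cD_{-\lambda n}$ has a fibre $T_n=\{f\in G_n: f(x_0)\in D\}$ of cardinality at least $2^{cn}$; this follows from the bound \eqref{eqn: bound for cond entropy}. For $f\in T_n$, the image $f(J')$ has length at most $2^{(\lambda+\ve_n)n}$ and contains a point of $D$, where $|D|\approx2^{\lambda n}$. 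Hence all the $f(J')$ lie in a common interval of length at most $3\cdot2^{(\lambda+\ve_n)n}$. The factor $3$ is absorbed by slightly enlarging $\ve_n$, which gives item~(3).

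The main obstacle is the first step: making the good-word estimates independent of $\omega^-$ so that a single point $\ul x$ and a single interval $J'$ serve all words simultaneously. This requires that $U(\omega)$ contain balls of uniform size around $\Pi(\omega)$, and it is why we fix one past $\omega^-$ rather than averaging over it. A secondary point is that Lemma~\ref{lem: entropy of RW on Sr} must be applied to this particular $\ul x$, which is legitimate because $\ul x\in\supp\ul\nu$ for $\P^-$-almost every choice of past.
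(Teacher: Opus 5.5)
The one step that fails as written is the passage from $\unif_{\ul x}$ to the single point $x_0$. You pick $x_0\in\ul x$ before making any entropy estimate, and then assert ``by concavity'' that the pushforward under $f\mapsto f(x_0)$ has entropy at most $(-\lambda\beta+2\ve)n+O(\log r)$. Concavity \eqref{eqn: entropy concave} only gives $\frac1r\sum_{x\in\ul x}H(\mu^{*n}*\delta_x,\cD_{-\lambda n})\le H(\mu^{*n}*\unif_{\ul x},\cD_{-\lambda n})$, which bounds the average. For an individual point it yields only $H(\mu^{*n}*\delta_{x_0},\cD_{-\lambda n})\le r\,H(\mu^{*n}*\unif_{\ul x},\cD_{-\lambda n})$, and this factor $r$ destroys the entropy gap.

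The additive $\log r$ belongs to the other inequality in \eqref{eqn: entropy concave}. That inequality bounds a mixture by its components, not a component by the mixture: one component can carry essentially all the entropy while the mixture carries only about $1/r$ of it. A bound valid for every point of $\ul x$ would need a separate argument.

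The repair is the one the paper uses. For each $n$, some $x\in\ul x$ satisfies $H(\mu^{*n}*\delta_x,\cD_{-\lambda n})\le H(\mu^{*n}*\unif_{\ul x},\cD_{-\lambda n})$. Since $\ul x$ is finite, a single $x_0$ does so for infinitely many $n$. This gives \eqref{eqn: entropy of RW on S} with a $\liminf$, and the lemma only asks for infinitely many $n$. Your $J'$ works for any point of $\ul x$, because $U(\omega)$ contains the $\rho_0$-balls around all of $\Pi(\omega,0)$, so nothing else changes.

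Also, with $\tau_n=1-\mu^{*n}(G_n)$, restricting to $G_n$ costs $O(\tau_n n\log\#\supp\mu)$ in $H(\mu_n)$ and $O(\tau_n|\lambda| n)$ in the entropy of the pushforward, not $\ve n$. The good-word parameter must therefore be small compared to the entropy gap divided by $\max\{\log\#\supp\mu,|\lambda|\}$; this is the role of $\ell$ in the paper.

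With that fixed, your argument is a valid variant of the paper's. The paper fixes an arbitrary $\ul x\in\supp\ul\nu$ and shows that a fixed short $J'$ around $x$ lies in $U(\omega)\cap W^s(\omega,0)$ for most $\omega$. It uses $\SS^1\sm U(\omega)\sbs\Xi(\omega)^{(\ve)}$, the fact that $\Xi(\omega)$ depends only on the future, and the fact that the $\nu_j^-$ are atomless (Lemma~\ref{lem: not atomic}); no conditioning is needed.

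You instead condition on a typical past so that $\Pi(\omega,0)=\ul x$ exactly. This requires a uniform-radius property of $U(\omega)$ around $\Pi(\omega)$. That property is not in the statement of Proposition~\ref{prop: good words}, but it follows from its construction (the maps $f^{-N}_{\sigma^N\omega}$ range over a finite set) or from shrinking $\Sigma_\ve$. The paper's route uses only the stated conclusions of the proposition; yours trades the $\nu^-$ estimate for this extra uniformity.
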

\begin{proof}
By the assumption, we can take $c>0$ such that $-\lambda\beta+10\ell c<h_{\mr{RW}}(\mu),$ where $\ell=\max\lb{\#\supp\mu,-\lambda}.$ 

By concavity~\eqref{eqn: entropy concave}, for all $n \in \NN$,
\[ \frac{1}{r}\sum_{x \in \ul x} H(\mu^{*n}*\delta_{x},\cD_{-\lambda n}) \leq H(\mu^{*n}*u_{\ul x},\cD_{-\lambda n}).\]
Thus, by Lemma~\ref{lem: entropy of RW on Sr}, there is $x \in \ul x$ such that
\begin{equation}
\label{eqn: entropy of RW on S}
\liminf_{n\to + \infty}\frac{1}{n}H(\mu^{*n}*\delta_x,\cD_{-\lambda n}) \leq -\lambda \beta.
\end{equation}
Recall Lemma~\ref{lem: not atomic}. 
Let $\ve'>0$ be small so that 
\[\forall i\in\Zmodd,\quad\nu_i^-(B(x,3\ve'))\leq \frac{c}{dr}. \]
Let
\[J' = [x - \ve', x + \ve'].\]

Let $\Sigma_\ve\sbs\Sigma$, $C=C(\ve)$ and $U(\omega)=U(\omega,\ve)$ be given by Proposition \ref{prop: good words} applied to $\ve=\min\lb{c,\ve'}$.
By the uniform convergence and a uniform distortion bound in $\Sigma_\ve,$ there exists positive constants $\ve_n\to 0$ such that for every $y\in U(\omega)\cap W^s(\omega,0),$
\[|\frac{1}{n}\log (f_\omega^n)'(y)-\lambda|\leq\ve_n.\] 
Since $\ul x \in \supp \ul \nu$, for $\P$-almost every $\omega \in \Sigma$, we have $\ul x \sbs W^s(\omega,0)$.
In particular, $J' \cap W^s(\omega,0) \ne \vn$. 
Thus, for $\omega\in \Sigma_\ve,$ if $J' \not\sbs U(\omega),$ then the $2\ve'$-neighborhood of $J'$ must contain a point in $\Xi(\omega).$ 
Thus,
\[\P\lb{\omega \in \Sigma_\ve: J' \not\sbs U(\omega)} \leq \P\lb{\omega: \Xi(\omega) \cap B(x,3\ve') \ne\vn } \leq r\sum_{j \in \Zmodd} \nu_j^-( B(x,3\ve'))\leq c.\]
It follows that the set
\[\Sigma'=\lb{\omega\in\Sigma_\ve :  J'  \sbs U(\omega) \cap W^s(\omega,0)},\]
satisfies $\P(\Sigma')\geq 1-2c.$ 

For every $n\in\NN,$ define $\Sigma_n'\defeq\lb{f_\omega^n:\omega\in\Sigma'},$ then $\mu^{*n}(\Sigma_n')\geq 1-2c.$ 
Denote $\tau_n=1-\mu^{*n}(\Sigma_n')\leq 2c$ and let
\[\mu_n=\frac{1}{1-\tau_n}\mu^{*n}|_{\Sigma_n'}.\]
By the concavity and almost convexity of the entropy~\eqref{eqn: entropy concave}, we have
\[|H(\mu_n)-H(\mu^{*n})|\leq 1+\tau_n(\#\supp\mu)n\leq 1+2\ell cn\]
and
\[|H(\mu_n*\delta_x,\cD_{-\lambda n})-H(\mu^{*n}*\delta_x,\cD_{-\lambda n})|\leq 1+ \tau_n(-\lambda)n\leq 1+2\ell cn.\]
Remembering $\frac{1}{n}H(\mu^{*n})\to h_{\mr{RW}}(\mu)$, the choice of $c$ and \eqref{eqn: entropy of RW on S}, we have 
\[H(\mu_n)\geq H(\mu_n*\delta_x,\cD_{-\lambda n})+cn\]
for infinitely many $n$.

Define $e_x:\Diff^{1+\alpha}_+(\SS^1)\to \SS^1$ to be the map $f\mapsto f(x),$ then $e_x^{-1}\cD_{-\lambda n}$ is a finite partition of $\Diff^{1+\alpha}_+(\SS^1).$ We have
\[H(\mu_n|e_x^{-1}\cD_{-\lambda n})=H(\mu_n)-H(\mu_n,e_x^{-1}\cD_{-\lambda n})=H(\mu_n)-H(\mu_n*\delta_x,\cD_{-\lambda n})\geq cn.\]
This implies the existence of $I \in \cD_{-\lambda n}$ such that
the set $T_n = \lb{f \in \Sigma_n' : f(x) \in I}$ has cardinality $\# T_n\geq 2^{cn}.$ 
By the definition of $\Sigma_n'$, for any $f \in T_n$,
\begin{enumerate}
	\item $\vk_\alpha(f,J')\leq C,$ and
	\item $\forall y\in J'$, $\log f'(y)\in [(\lambda-\ve_n)n,(\lambda+\ve_n)n]$.
\end{enumerate}
Moreover, since $f(x) \in I$, we conclude that $f(J')$ is contained in the  $2^{(\lambda+\ve_n)n}$-neighborhood of $I$. 
After replacing $\ve_n$ by $\ve_n+2/n,$ we obtain that the intervals $f(J')$ lie in a common interval of length at most $2^{(\lambda+\ve_n)n}$ for $f\in T_n.$
\end{proof}

\begin{lemma}\label{lem: approximate identity}
Let $c,C,J',\ve_n, T_n$ be as in Lemma~\ref{lem: generate similar elements}.
Let $J=\frac{1}{2}J'$ be the closed interval of the same center as $J'$ and half the length.
Then there exists $g_n\ne f_n\in T_n$ such that $f_n(J)\sbs g_n(J')$ and the sequence $g_n^{-1}f_n$ converges to the identity on $J$ in the $C^1$-topology.
\end{lemma}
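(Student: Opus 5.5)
The plan is a pigeonhole argument in a compact space of normalized maps. For each $f\in T_n$, write $\ell_n = 2^{\lambda n}$ and consider the renormalization $\tilde f = A_n^{-1}\circ f|_{J'}$, where $A_n$ is the affine map sending a fixed reference point $p_n$ (say the left endpoint of the common interval $L_n$ of length at most $2^{(\lambda+\ve_n)n}$ containing all $f(J')$ from Lemma~\ref{lem: generate similar elements}(3)) to $0$, and scaling by the factor $\ell_n^{-1}$. By Lemma~\ref{lem: generate similar elements}(2), $\tilde f'$ takes values in $[2^{-\ve_n n},2^{\ve_n n}]$ on $J'$, and $\log \tilde f'$ has $\alpha$-H\"older norm at most $C$. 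This is the same as for $f$, since an affine rescaling does not change $\log$-derivative oscillation. Moreover, $\tilde f(J')\subset A_n^{-1}L_n$, an interval of length at most $2^{\ve_n n}$.

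The first real step is to fix the subexponential scale. The difficulty is that $2^{\ve_n n}$ may grow, so the family $\{\tilde f\}$ need not be precompact. I would rather normalize each $f$ individually by its own derivative at the center $x$ of $J'$: set $a_f=f'(x)$ and $\hat f(y)=(f(y)-f(x))/a_f$ for $y\in J'$ (using a lift). Then $\hat f(0\text{-point})=0$, $\hat f'(x)=1$, and the H\"older bound gives $|\log\hat f'(y)|\le C|J'|^\alpha$ and $\|\log \hat f'\|_{C^\alpha}\le C$. By Arzel\`a--Ascoli, these normalized maps lie in a fixed compact subset $\mathcal K$ of $C^1(J')$.

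The derivative parameters $\log a_f$ lie in an interval of length $2\ve_n n$, and the positions $f(x)$ lie in $L_n$. We then pigeonhole on three things:
\begin{enumerate}
\item $\log a_f$ in cells of size $\delta$, giving $O(\ve_n n/\delta)$ cells;
\item $f(x)$ in cells of size $\delta\cdot 2^{(\lambda-\ve_n)n}$, giving $2^{2\ve_n n}/\delta$ cells;
\item $\hat f$ in a $\delta$-net of $\mathcal K$, giving $N(\delta)$ cells independent of $n$.
\end{enumerate}
Since $\#T_n\ge 2^{cn}$ and the total number of cells is $2^{o(n)}$, for any fixed $\delta$ and large $n$ there are $f_n\neq g_n$ in the same cell. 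Choosing $\delta=\delta_n\to0$ slowly still leaves the count $2^{cn}$ dominant.

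Then comes the comparison. For such a pair, $g_n^{-1}f_n$ on $J$ is computed as $\hat g^{-1}\circ\Phi\circ\hat f$, where $\Phi(t)=(a_f t + f(x)-g(x))/a_g$. The ratio $a_f/a_g$ is within $2^{\pm\delta}$ of $1$. The offset satisfies $(f(x)-g(x))/a_g=O(\delta\, 2^{(\lambda-\ve_n)n}/2^{(\lambda-\ve_n)n})=O(\delta)$; note that $a_g\ge 2^{(\lambda-\ve_n)n}$, so we choose the position cell size as $\delta\cdot 2^{(\lambda-\ve_n)n}$, which matches. Hence $\Phi$ is $O(\delta)$-close to the identity in $C^1$ on bounded sets.

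Since $\hat f,\hat g$ are $\delta$-close in $C^1$ and $\hat g^{-1}$ is uniformly $C^1$ with uniformly continuous derivative (equicontinuity from the H\"older bound), $\hat g^{-1}\circ\Phi\circ\hat f$ is $o_\delta(1)$-close to $\hat g^{-1}\circ\hat g=\mathrm{Id}$ in $C^1(J)$. The inclusion $f_n(J)\subset g_n(J')$ follows because $\hat f(J)$ is within $O(\delta)$ of $\hat g(J)$, which sits strictly inside $\hat g(J')$ with a margin bounded below by $\tfrac14|J'|\,2^{-C|J'|^\alpha}$. This also justifies that the composition is defined on $J$.

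The main obstacle is keeping the cell counts subexponential while making $\delta_n\to0$. The derivative window $2^{\pm\ve_n n}$ only gives $2^{O(\ve_n n)}$ position cells, which is fine since $\ve_n\to0$. So we just need $\delta_n\to0$ with $N(\delta_n)\cdot \delta_n^{-2}\cdot \ve_n n\,2^{2\ve_n n}\le 2^{cn/2}$, which is achievable by a diagonal choice.
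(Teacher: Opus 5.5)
Your argument is correct. It follows the same overall scheme as the paper: pigeonhole against $\#T_n\ge 2^{cn}$ with a subexponential number of cells, then use the H\"older distortion bound $C$ to show that $g_n^{-1}f_n$ is $C^1$-close to the identity. Where you differ is in how you discretize.

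The paper pigeonholes directly on the vectors $(f(y_0),\dots,f(y_{4k}),\log f'(y_0),\dots,\log f'(y_{4k}))$ at $4k+1$ equally spaced points of $J'$. It uses position resolution $2^{(\lambda-\varepsilon_n)n-\sqrt n}$ and log-derivative resolution $1/n$, with $k=k(n)\to\infty$ chosen so the box count stays below $2^{cn}$. It then proves closeness of $g^{-1}f$ to the identity at the sample points and interpolates to all of $J$ via the H\"older bound. The inclusion $f(J)\subset g(J')$ is checked separately, by comparing $f(y_k)$ with $g(y_0)$.

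You instead split off the affine data at the center, $f(x)$ and $\log f'(x)$, pigeonholed at resolutions $\delta\,2^{(\lambda-\varepsilon_n)n}$ and $\delta$. The normalized shape $\hat f$ is then a point of a fixed Arzel\`a--Ascoli compact subset of $C^1(J')$, so it costs only $N(\delta)$ cells, independent of $n$. The factorization $g^{-1}f=\hat g^{-1}\circ\Phi\circ\hat f$ reduces both the inclusion and the $C^1$ estimate to one uniform-continuity statement in normalized coordinates.

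The trade-off is this. The paper's version is fully explicit, with a concrete convergence rate governed by $k(n)^{-\alpha}$ and $1/n$, and no appeal to compactness. Yours is softer: the rate depends on the unspecified covering numbers $N(\delta)$ and a diagonal choice of $\delta_n\to 0$. In exchange it is conceptually cleaner. In effect, the paper's finitely many sample values with $1/n$-precise log-derivatives are an explicit version of your $\delta$-net.
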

\begin{proof}
Define $k=k(n)$ for large $n$ to be the largest integer such that
\[4k+1<\frac{cn}{\lceil2\ve_nn+\sqrt n\rceil+\log(\lceil2\ve_n n^2\rceil+1)}. \]
Since $\ve_n \to 0$, $k$ is positive for large enough $n$ and $k(n)\to +\infty$ as $n \to +\infty$. 
Let $y_0,y_1,\cdots,y_{4k} \in J'$ be equally spaced points in $J',$ with $J' = [y_0, y_{4k}]$.
Then $d(y_j,y_{j+1})\leq 1/(4k).$ 
For every $f\in T_n,$ each $f(y_i)$ takes value in a fixed interval of length $2^{(\lambda+\ve_n)n}$ and $\log f'(y_i)$ takes value in $[(\lambda-\ve_n)n,(\lambda+\ve_n)n].$ 
Arrange vectors 
\[(f(y_0),\cdots,f(y_{4k}),\log f'(y_0),\cdots,\log f'(y_{4k}) ), f\in T_n\]
into boxes of dimension $2^{(\lambda-\ve_n)n-\sqrt{n}} \times \dotsm \times 2^{(\lambda-\ve_n)n-\sqrt{n}}\times \frac{1}{n} \times \dotsm \times \frac{1}{n}$.
The choice of $k$ guarantees that
\[\sb{2^{\lceil 2\ve_n n+\sqrt n\rceil}}^{4k+1} \sb{\lceil 2\ve_n n^2\rceil +1}^{4k+1}<2^{cn}.\]
Thus, by the pigeonhole principle there exist $f \ne g\in T_n$ such that
\[\forall i = 0, \dotsc, 4k,\quad d(f(y_i),g(y_i)) \leq 2^{(\lambda-\ve_n)n-\sqrt{n}} \quad \text{and}\quad |\log f'(y_i)-\log g'(y_i)| \leq \frac{1}{n}.\]
	
The endpoints of $J$ are exactly $y_k$ and $y_{3k}.$ 
Note that, for $n$ large enough,
\[f(y_k) \geq g(y_k) - 2^{(\lambda-\ve_n)n-\sqrt{n}} \geq g(y_0) + \frac{|J'|}{4}\min_{y \in J'} g'(y) - 2^{(\lambda-\ve_n)n-\sqrt{n}}  \geq g(y_0).\]
Similarly $f(y_{3k}) \leq g(y_{4k})$.
It follows that \[f(J)\sbs g(J').\]

In what follows, we will estimate the $C^1$ distance between $g^{-1}f$ and $\Id$ on $J$.
First, since $g^{-1}:f(J)\to J'$, $g^{-1}$ is $2^{(-\lambda+\ve_n)n}$-Lipschitz on $f(J).$ 
Hence for every $i = 0, \dotsc, 4k$, 
\[d(g^{-1}f(y_i),y_i)\leq2^{(-\lambda+\ve_n)n}d(f(y_i),g(y_i))\leq 2^{-\sqrt{n}}.\]
Then, using $\vk_\alpha(g,J')\leq C$,
\[|\log (g^{-1}f)'(y_i)|\leq\abs{\log\frac{f'(y_i)}{g'(y_i)}}+\abs{\log\frac{g'(y_i)}{g'(g^{-1}fy_i)}}\leq \frac{1}{n}+C d(g^{-1}f(y_i),y_i)^\alpha\leq \frac{1}{n}+C2^{-\alpha\sqrt n}.\]
	
More generally, let $y\in J$ be arbitrary. 
There is $i \in \{k,\dotsc, 3k\}$ such that $y\in [y_{i},y_{i+1}].$
Then $g^{-1}f(y)\in [g^{-1}f(y_i),g^{-1}f(y_{i+1})],$ both points being contained in the $2^{-\sqrt{n}}$ neighborhood of $[y_i,y_{i+1}]$.
Hence
\[d(y,g^{-1}f(y))\leq \frac{1}{4k}+2^{-\sqrt{n}+1}.\]
Recall that we have both the estimates $\vk_\alpha(f,J')\leqslant C$ and $\vk_\alpha(g,J')\leqslant C.$
The logarithm of derivatives at $y$ can be bounded by comparing with $\log (g^{-1}f)'(y_i)$:
	\[\abs{\log\frac{(g^{-1}f)'(y)}{(g^{-1}f)'(y_i)} }\leq\abs{\log\frac{f'(y)}{f'(y_i)}}+\abs{\log\frac{g'(g^{-1}f y_i)}{g'(g^{-1}fy)}}\leq C\frac{1}{(4k)^\alpha}+C\sb{\frac{1}{2k}+2^{-\sqrt n+1}}^\alpha .\]
	
Now, for each $n$, construct $(f_n,g_n)$ in this way. 
Since $k\to +\infty$ as $n\to +\infty$, we conclude that $g_n^{-1}f_n$ tends to the identity on $J$ in the $C^1$-topology.
\end{proof}

\begin{proof}[Proof of Theorem \ref{thm: stationary measure dimension}: the case $I=\SS^1$]	
Let $\lambda=\lambda(\mu,\nu).$
By the property of Furstenberg entropy and Theorem \ref{thm: exact dimensionality}, we have
	\[\beta=\dim\nu=-\frac{h_{\mr F}(\mu,\nu)}{\lambda}= -\frac{h_{\mr F}(\mu^{*n},\nu)}{n\lambda}\leq -\frac{H(\mu^{*n})}{n\lambda}\]
	for every positive integer $n.$ 
Letting $n \to +\infty$, we obtain that $\beta\leq -h_{\mr{RW}}/\lambda.$ 
Assume that (1) is violated, that is $-\lambda\beta<h_{\mr{RW}}(\mu)$.
Recall that $J$ is centered at a point of $\supp\nu$.
Combining Lemma~\ref{lem: generate similar elements} and Lemma~\ref{lem: approximate identity} yields the required  subinterval $J$ and two sequences $(f_n),(g_n)\sbs T_\mu.$
\end{proof}
\begin{proof}[Proof of Theorem \ref{thm: dim formula C1+}]
	Let $\Lambda$ be the unique minimal set of $\pair{\supp\mu}.$
	We first show that $\supp\nu\sbs\Lambda.$
	Assume without loss of generality that $\Lambda\ne\SS^1.$
	Let $\cJ$ be the collection of connected components of $\SS^1\sm\Lambda.$
	
	If $\supp\nu\not\sbs\Lambda$ then $\nu(J)>0$ for some $J\in \cJ.$
	Let $c\defeq\max\lb{\nu(J):J\in\cJ}$ and consider the finite subset $\cJ_c\defeq\lb{J\in\cJ:\nu(J)=c}\sbs\cJ.$
	Since $\nu$ is $\mu$-stationary, $g(J)\in\cJ_c$ for every $g\in\supp\mu$ and $J\in\cJ_c.$
	Set $F\defeq\bigcup_{J\in \cJ_c}\partial J.$
	Then $F$ is a finite subset of $\SS^1$ which is invariant under $\supp\mu$ and hence $\pair{\supp\mu},$ which violates the assumption.
	
	Since $\supp\nu\subset\Lambda$, local discreteness excludes alternative \eqref{alt: locally nondiscrete} for every ergodic stationary measure. 
	Combining Theorem \ref{thm: exact dimensionality} and the case $I=\SS^1$ of Theorem \ref{thm: stationary measure dimension} gives the claimed dimension formula and entropy identity.
\end{proof}

\subsection{Dimension formula on an interval}\label{subsec: dim formula on intervals}

The purpose of this subsection is to prove the remaining case of Theorem \ref{thm: stationary measure dimension}.
That is, $I$ is a closed subinterval of $\SS^1$ which is preserved by the support of $\mu$ and there is no common $T_\mu$-invariant probability measure on $I.$
In this case, $T_\mu$ is allowed to preserve a common invariant measure outside $I.$

Let $I\sbs \SS^1$ be a closed subinterval.
Let $C^{1+\alpha}_+(I,I)$ be the semigroup of orientation preserving $C^{1+\alpha}$-maps from $I$ to $I$.
For a diffeomorphism $f\in \Diff_+^{1+\alpha}(\SS^1)$ which preserves the interval $I,$ let $\cR_I(f)\in C_+^{1+\alpha}(I,I)$ be the restriction $f|_I.$
Then $\cR_I: \Diff_+^{1+\alpha}(\SS^1)\to C_+^{1+\alpha}(I,I)$ is well-defined on the sub-semigroup of diffeomorphisms that preserve $I.$
For a probability measure $\mu$ on $\Diff_+^{1+\alpha}(\SS^1)$ whose support preserves $I,$ we usually consider the probability measure $(\cR_I)_*\mu$, the induced probability measure on $C_+^{1+\alpha}(I,I).$

Note that $(\cR_I)_*\mu$ induces a random walk on $I.$ 
We can also consider the random walk entropy of $(\cR_I)_*\mu$ in the semigroup $C_+^{1+\alpha}(I,I).$
We denote $h_{\mr{RW},I}(\mu)\defeq h_{\mr{RW}}((\cR_I)_*\mu).$
It follows immediately from the definition that $ h_{\mr{RW},I}(\mu)\leqslant h_{\mr{RW}}(\mu)$ and the equality holds if $\cR_I: T_\mu\to C_+^{1+\alpha}(I,I)$ is injective. 
Note that if $\cR_I$ is not injective on $T_\mu$ then we can find $f\ne g\in T_\mu$ which are identical on $I.$
Letting $f_n\equiv f$ and $g_n\equiv g$ gives item (2) of Theorem \ref{thm: stationary measure dimension}.
Hence the case that $I$ is a closed interval of Theorem \ref{thm: stationary measure dimension} follows directly from the following proposition.

\begin{proposition}\label{prop: dim formula circle interval}
	Let $\mu$ be a finitely supported probability measure on $\Diff_+^{1+\alpha}(\SS^1)$ such that $\supp\mu$ does not preserve any common probability measure on $\SS^1$. 
	Let $I\sbs\SS^1$ be a closed interval which is preserved by $T_{\mu}.$ 
	Let $\nu$ be an ergodic $\mu$-stationary measure with $\supp\nu\sbs I.$ 
	Then $\nu$ is exact dimensional and
	\begin{enumerate}
		\item either $|\lambda(\mu,\nu)|\geq h_{\mr{RW},I}(\mu)$ and $\dim\nu=\frac{h_{\mr{RW},I}(\mu)}{|\lambda(\mu,\nu)|},$
		\item or there exists a closed interval $J\sbs I$ with $J^\circ\cap\supp\nu\ne\vn$ and two sequences of diffeomorphisms $\{g_n\},\{f_n\}\sbs T_\mu$ with $g_n|_I\ne f_n|_{I}$ and $f_n(J)\sbs g_n(I),$ such that $g_n^{-1}f_n$ tends to $\Id$ on $J$ in the $C^1$-topology.
	\end{enumerate}
\end{proposition}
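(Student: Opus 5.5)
Exact dimensionality and $\dim\nu=h_{\mr F}(\mu,\nu)/|\lambda(\mu,\nu)|$ come for free from Theorem~\ref{thm: exact dimensionality}, since $\supp\mu$ preserves no probability measure on $\SS^1$. The plan is to rerun the circle argument of Subsection~\ref{subsec: dimension formula} with every entropy computed for the restricted walk $(\cR_I)_*\mu$ rather than for $\mu$.

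\textbf{Upper bound.} Since $\nu$ is supported on $I$, the pushforward $f_*\nu$ depends only on $f|_I$. Hence $h_{\mr F}(\mu^{*n},\nu)$ is the Furstenberg entropy of the image of $\mu^{*n}$ under $\cR_I$, and it is bounded by the Shannon entropy $H((\cR_I)_*\mu^{*n})$. Dividing by $n$ and letting $n\to\infty$ gives $h_{\mr F}(\mu,\nu)\le h_{\mr{RW},I}(\mu)$, so $\beta\le h_{\mr{RW},I}(\mu)/|\lambda|$. If equality holds, then $|\lambda|\ge h_{\mr{RW},I}(\mu)$ is automatic because $\beta\le 1$, and alternative (1) holds. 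It remains to show that the strict inequality $-\lambda\beta<h_{\mr{RW},I}(\mu)$ forces alternative (2).

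\textbf{Lower bound via near-identity maps.} All of the circle machinery (Theorems~\ref{thm: structure of random walk 1}, \ref{thm: structure of random walk 2}, Propositions~\ref{prop: good words}, \ref{prop: convergence in probability}) is stated for $\mu$ on $\SS^1$ and applies without change. Take $\nu=\nu_0^+$ and $\ul x\in\supp\ul\nu$. Since $\Pi(\omega,0)\subset\supp\nu\subset I$, we have $\ul x\subset I$. Lemma~\ref{lem: entropy of RW on Sr} then holds verbatim. In Lemma~\ref{lem: generate similar elements}, replace $\mu_n$ by its image $(\cR_I)_*\mu_n$ on $C^{1+\alpha}_+(I,I)$. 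The conditional entropy estimate becomes
\[H((\cR_I)_*\mu_n\mid e_x^{-1}\cD_{-\lambda n})\ge cn,\]
where $e_x$ is evaluation at $x\in I$, which is well defined on restrictions. It yields sets $T_n$ of at least $2^{cn}$ elements of $T_\mu$ with pairwise distinct restrictions to $I$, satisfying the derivative, distortion and localization bounds on $J'$. One point to check is that the truncation from $\mu^{*n}$ to $\mu_n$ changes $H((\cR_I)_*\cdot)$ by only $O(1+cn)$; the almost-convexity bound \eqref{eqn: entropy concave} applies just as before. I expect this to be the main subtlety of the proof.

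\textbf{Choice of $J'$ and conclusion.} I will take $J'$ to be a small closed interval around $x$, intersected with $I$ if $x$ lies near $\partial I$. It still contains a point of $\supp\nu$ in its interior relative to $I$, which is enough since $\nu$ has no atoms. The pigeonhole step of Lemma~\ref{lem: approximate identity} then produces $f_n\ne g_n$ in $T_n$ with $f_n(J)\subset g_n(J')\subset g_n(I)$ and $g_n^{-1}f_n\to\Id$ in $C^1$ on $J=\frac12 J'$. Because the elements of $T_n$ have distinct restrictions to $I$, we get $f_n|_I\ne g_n|_I$, which is alternative (2). If $x$ is an endpoint of $I$, I would instead pick the point of $\ul x$ used in \eqref{eqn: entropy of RW on S} together with a nearby interior point of $\supp\nu$, or simply shrink $J'$ inside $I$; the estimates only require $J'$ to have fixed positive length.
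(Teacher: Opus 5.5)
Your proposal is correct and follows the paper's proof essentially step for step: the bound $\beta\le h_{\mr{RW},I}(\mu)/|\lambda|$ comes from $f_*\nu$ depending only on $f|_I$, and in the strict case you run the variant of Lemma~\ref{lem: generate similar elements} with $(\cR_I)_*\mu^{*n}$, which yields $2^{cn}$ elements with pairwise distinct restrictions to $I$, and feed it into Lemma~\ref{lem: approximate identity}. The paper also routes the argument through the auxiliary measure $\wt\mu$ of the preceding lemma, which is superfluous under the stated hypothesis, whereas you work with $\mu$ directly and make explicit two points the paper leaves implicit: $|\lambda|\ge h_{\mr{RW},I}(\mu)$ via $\beta\le 1$, and the choice of $J'\subset I$ near $\partial I$ (for the latter, merely intersecting with $I$ can make $\frac12 J'$ miss $\supp\nu$, so the clean fix is your second option of centering $J'$ at a point of $\supp\nu\cap I^\circ$ close to $x$ while keeping $x\in J'$).
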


\begin{lemma}
	Let $\mu$ be a finitely supported probability measure on $\Diff^{1+\alpha}_+(\SS^1).$
	Let $I$ be $\SS^1$ or a closed subinterval of $\SS^1$ which is preserved by $T_\mu.$
	Assume that there is no common $T_\mu$-invariant probability measure on $I.$
	Then there exists a finitely supported probability measure $\wt\mu$ on $\Diff_+^{1+\alpha}(\SS^1)$ such that the following hold:
	\begin{enumerate}[\indent(1)]
		\item $T_{\wt\mu}$ also preserves $I$ and $(\cR_I)_*\wt\mu=(\cR_I)_*\mu.$
		\item There is no common $T_{\wt\mu}$-invariant probability measure on $\SS^1.$
	\end{enumerate}
\end{lemma}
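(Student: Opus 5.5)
The plan is to keep the action on $I$ untouched and change each generator only on the complementary arc $\SS^1\sm I$, so that the modified semigroup loses every invariant probability measure of $\SS^1$. If $I=\SS^1$ there is nothing to do: we take $\wt\mu=\mu$. So assume $I=[a,b]$ is a proper closed arc, and let $K=\SS^1\sm I^\circ=[b,a]$ be the complementary closed arc. Write $\supp\mu=\{f_1,\dots,f_k\}$ with weights $p_1,\dots,p_k$.

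\textbf{Step 1: adjust each generator on the complement.} Every $f_j$ maps $I$ into $I$. Hence $f_j^{-1}$ maps $K$ into $K$, and $f_j(K)$ is an arc containing $K$. I would replace each $f_j$ by $\wt f_j\in\Diff_+^{1+\alpha}(\SS^1)$ with $\wt f_j=f_j$ on $I$. On $K$ we want $\wt f_j$ to map $K$ onto the same arc $f_j(K)$ and to agree with $f_j$ to first order, with $\alpha$-H\"older derivative, at the endpoints $a,b$. There is considerable freedom in the interior of $K$. In particular, we can require that $\wt f_j$ has a derivative at $a$ and $b$ matching $f_j$, and that the orbit structure inside $K^\circ$ is controlled; gluing along $C^1$-matching derivative data keeps the map $C^{1+\alpha}$. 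Set $\wt\mu=\sum_j p_j\delta_{\wt f_j}$. Property (1) is then immediate, since $\wt f_j$ preserves $I$ and $\wt f_j|_I=f_j|_I$.

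\textbf{Step 2: analyse a hypothetical invariant measure.} Suppose $\eta$ is a $T_{\wt\mu}$-invariant probability measure on $\SS^1$, and decompose $\eta=\eta|_I+\eta|_{K^\circ}$.
\begin{itemize}
\item[(a)] Each $\wt f_j^{-1}$ maps $K^\circ$ into $K^\circ$. Invariance gives $\eta(K^\circ)=\eta(\wt f_j(K^\circ))$, and $\wt f_j(K^\circ)\supseteq K^\circ$. So $\eta$ gives no mass to $\wt f_j(K^\circ)\sm K^\circ\subset I$.
\item[(b)] Suppose $\eta(K^\circ)>0$. Then $\eta|_{K^\circ}$ is invariant under the inverse maps $\wt f_j^{-1}|_K$, which form a self-map semigroup of $K$. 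We arrange in Step 1 that some $\wt f_{j}^{-1}$ has no invariant probability measure on $K^\circ$. One way is to make it push every point of $K^\circ$ monotonically toward an endpoint, with no interior fixed points.
\item[(c)] Such a map has no invariant probability measure on $K^\circ$. Hence $\eta(K^\circ)=0$.
\end{itemize}

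\textbf{Step 3: conclude using the hypothesis on $I$.} After Step 2, $\eta$ is supported on $I$. On $I$ the maps $\wt f_j$ coincide with $f_j$, so $\eta$ is a $T_\mu$-invariant probability measure on $I$. This contradicts the assumption, so property (2) holds.

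\textbf{Main obstacle.} The delicate step is the freedom needed in (b). If $f_j(K)\supsetneq K$ for some $j$, the required monotone behaviour inside $K$ is easy to build. The hard case is when every $f_j$ fixes $a$ and $b$. Then $\wt f_j$ maps $K$ onto $K$ and must fix the endpoints. We can still choose $\wt f_1|_K$ with no interior fixed points, for instance $x\mapsto$ a map pushing toward $b$, with derivatives at $a,b$ equal to $f_1'(a),f_1'(b)$. Orbits of such a map tend to the endpoints, so no invariant probability measure lives on $K^\circ$.

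The derivative constraint must be handled carefully. If $f_1'(a)>1$ and $f_1'(b)<1$, pushing toward $b$ is compatible with the endpoint data; the other sign patterns need adaptation. If the endpoint behaviour is incompatible, for example $f_1'(a)<1$ and $f_1'(b)>1$ when pushing toward $b$, we can push toward $a$ instead. If both derivatives are less than $1$, or both greater, a single fixed-point-free map is impossible. In that case I would add one interior repelling (respectively attracting) fixed point $c$ for $\wt f_1$, and give $\wt f_2$ a fixed-point-free behaviour near $c$, or use an extra generator. Adjoining a generator is allowed only if it does not change $(\cR_I)_*$: we cannot add new maps with new weights. So the construction must be done within the existing generators, which requires $k\ge 2$, or a single generator whose $K$-part is chosen suitably.

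I expect this case analysis to be the real work. The aim is to ensure that the modified inverse semigroup on $K$ has no invariant measure on $K^\circ$, while all modified maps still glue $C^{1+\alpha}$-smoothly at $a$ and $b$.
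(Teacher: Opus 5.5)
Your framework is the paper's: keep each $f_j$ on $I$, show that a $T_{\widetilde\mu}$-invariant $\eta$ gives no mass to $K^\circ=\SS^1\setminus I$, then restrict to $I$. Steps 1, 2(a) and 3 are fine. The gap is the mechanism in Step 2(b)--(c), which needs a \emph{single} generator whose modified inverse has no invariant probability measure on $K^\circ$. This is impossible in general, and in particular in the case you call easy.

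\begin{itemize}
\item If $f_j$ moves both endpoints, then $f_j(I)\subset I^\circ$. So for every extension, $\widetilde f_j^{-1}$ maps the closed arc $K$ into $K^\circ$ and has a fixed point there, i.e.\ $\widetilde f_j$ has an invariant Dirac mass on $K^\circ$.
\item If $f_j$ fixes exactly one endpoint, say $b$, with $f_j'(b)<1$, then $\widetilde f_j^{-1}$ pushes points of $K$ near $b$ away from $b$, while $\widetilde f_j^{-1}(a)\in K^\circ$. This again forces an interior fixed point.
\end{itemize}
For example, take $\supp\mu=\{f_1,f_2\}$ with $f_1(I)$, $f_2(I)$ disjoint subarcs of $I^\circ$. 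This satisfies the hypothesis, yet no choice of extensions realizes Step 2(b).

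Conversely, the case you treat as the hard one (all $f_j$ fixing $a$ and $b$) is vacuous: $\delta_a$ would then be a $T_\mu$-invariant probability measure on $I$. The hypothesis should be used at the outset, on $\delta_a$ and $\delta_b$. It yields generators $f_{j_1}$ with $f_{j_1}(a)\neq a$ and $f_{j_2}$ with $f_{j_2}(b)\neq b$.

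The missing idea is to kill the mass on $K^\circ$ with two maps jointly; this is what the paper does. Since $\widetilde f_{j_1}(a)=f_{j_1}(a)\in I^\circ$, the point $\widetilde f_{j_1}^{-1}(a)$ can be placed anywhere in $K^\circ$. So one may take $\widetilde f_{j_1}^{-1}(K^\circ)\subset U_b$ for an arc $U_b\subset K^\circ$ adjacent to $b$, and likewise $\widetilde f_{j_2}^{-1}(K^\circ)\subset U_a$, with $U_a\cap U_b=\emptyset$. Invariance gives $\eta(K^\circ\setminus U_b)=\eta(K^\circ\setminus U_a)=0$, hence $\eta(K^\circ)=0$. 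Any interior fixed points forced by the endpoint derivatives lie in the region killed by the other map, so no case analysis is needed.

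This needs $j_1\neq j_2$. If the only endpoint-moving generator moves both endpoints, you may split its atom into two atoms carrying different extensions whose inverses send $K$ into disjoint arcs. This leaves $(\cR_I)_*\widetilde\mu$ unchanged, contrary to your remark that you must stay within the existing generators.

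A variant closer to your own attempt: each $\widetilde f_j^{-1}|_{K^\circ}$ is an increasing self-embedding of an open arc, so its invariant probability measures are carried by its fixed points. Hence it suffices to extend two generators so that their fixed points in $K^\circ$ are disjoint. Two generators exist, since a single self-map of $I$ has a fixed point. This is always possible, because the interior fixed points of an extension can be confined to any prescribed arc of $K^\circ$.
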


\begin{proof}
		Take a closed interval $J$ on $\SS^1$ disjoint from $I.$ 
		Take $f_1,f_2\in\supp\mu$ such that $f_1$ does not preserve the left endpoint $x_-$ of $I $ and $f_2$ does not preserve the right endpoint $x_+$ of $I$ (it is possible that $f_1=f_2$). 
		Write $\SS^1=I\sqcup U_1\sqcup J\sqcup U_2,$ where $x_-$ is an endpoint of $U_1$ and $x_+$ is an endpoint of $U_2.$ 
		Take $\wt f_1\in\Diff_+^{1+\alpha}(\SS^1)$ such that $\wt f_1(U_1\cup J)\sbs I$ and $\wt f_1|_I=f_1,$ then take $\wt f_2\in\Diff_+^{1+\alpha}(\SS^1)$ such that $\wt f_2(J\cup U_2)\sbs I$ and $\wt f_2|_I=f_2.$ 
		Then the common invariant probability measure of $\wt f_1$ and $\wt f_2$ must be supported on $I.$ 
		For every other element $f\in \supp\mu,$ extend $f$ to $\wt f\in\Diff_+^{1+\alpha}(\SS^1)$ arbitrarily. 
		Write $\mu=\sum_{i=1}^k{p_i}\delta_{f_i}$ and set $\wt\mu=\sum_{i=1}^k{p_i}\delta_{\wt f_i}$.
\end{proof}

\begin{proof}[Proof of Proposition \ref{prop: dim formula circle interval}]
	Let $\wt\mu$ be a finitely supported probability measure given by the previous lemma.
	Then $\nu$ is also an ergodic $\wt\mu$-stationary measure.
	Hence $\nu$ is exact dimensional, with dimension $\beta.$
	Denoting $\lambda=\lambda(\wt\mu,\nu),$ we have
	\[\beta=\dim\nu=-\frac{h_{\mr F}(\wt\mu,\nu)}{\lambda}=-\frac{h_{\mr F}(\wt\mu^{*n},\nu)}{n\lambda}=-\frac{h_{\mr F}((\cR_I)_*\wt\mu^{*n},\nu)}{n\lambda}\leq -\frac{H((\cR_I)_*\wt\mu^{*n})}{n\lambda}.\]
	This implies that $ \beta\leqslant -h_{\mr{RW},I}(\wt\mu)/\lambda .$
	
	If $\beta<-h_{\mr{RW},I}(\wt\mu)/\lambda$, we aim to construct an interval $J$ and two sequences of diffeomorphisms satisfying (2). 
	To achieve this, we need a variant of Lemma \ref{lem: generate similar elements}.
	
	\begin{lemma}\label{lem: generate similar elements 2}
	Let $\wt \mu$ be a finitely supported probability measure on $\Diff_+^{1+\alpha}(\SS^1).$
	Assume that there is no common $T_{\wt\mu}$-invariant probability measure on $\SS^1$ and there is a closed interval $I$ preserved by $T_{\wt\mu}.$
	If $-\lambda\beta< h_{\mr{RW},I}(\wt\mu)$ then there are constants $c,C>0$, a closed interval $J'\subset I$ centered at a point of $\supp\nu$ and infinitely many integers $n \in \NN$ with a sequence of positive numbers $\ve_n\to 0$ and subsets $T_n\sbs T_{\wt\mu},$ satisfying 
	\begin{enumerate}
	\item $\#\cR_I(T_n)\geq 2^{cn},$
	\item for every $f\in T_n,$ $f'|_{J'}\in[2^{(\lambda-\ve_n)n},2^{(\lambda+\ve_n)n}]$ and $\vk_\alpha(f,J')\leq C,$ 
	\item $f(J')$, $f\in T_n$, falls in a common interval of length at most $2^{(\lambda+\ve_n)n}$.
	\end{enumerate} 
	\end{lemma}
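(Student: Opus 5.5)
The plan is to repeat the proof of Lemma~\ref{lem: generate similar elements} almost verbatim, with $\wt\mu$ in place of $\mu$. The one change is that the entropy count is done for the pushforward of $\wt\mu^{*n}$ under $\cR_I$, not for $\wt\mu^{*n}$ itself.

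Since $T_{\wt\mu}$ has no invariant probability measure on $\SS^1$, the whole structure theory of Section~\ref{sec: random walks} applies to $\wt\mu$. This includes Theorems~\ref{thm: structure of random walk 1} and \ref{thm: structure of random walk 2}, the uniform good words of Proposition~\ref{prop: good words}, and Proposition~\ref{prop: convergence in probability}. We may take $\nu=\nu_0^+$, and its support lies in $I$. Lemma~\ref{lem: entropy of RW on Sr} then gives
\[\frac1n H(\wt\mu^{*n}*\unif_{\ul x},\cD_{-\lambda n})\to-\lambda\beta\qquad\text{for }\ul x\in\supp\ul\nu.\]
Every point of $\ul x$ lies in $\supp\nu\sbs I$. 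By concavity we can pick $x\in\ul x$ with
\[\liminf_n \frac1n H(\wt\mu^{*n}*\delta_x,\cD_{-\lambda n})\le-\lambda\beta.\]
Choose $c$ with $-\lambda\beta+10\ell c<h_{\mr{RW},I}(\wt\mu)$. Then take $\ve'$, $J'=[x-\ve',x+\ve']$ and the good set $\Sigma'$ exactly as before, shrinking $\ve'$ if needed so that $J'\sbs I$.

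One point needs care: $x$ might be an endpoint of $I$. If so, I would replace $x$ by a nearby point of $\supp\nu$ in the interior of $I$. This is possible because $\nu$ is atomless by Lemma~\ref{lem: not atomic}. Since $\wt\mu^{*n}*\delta_x$ only moves by $O(1)$ in entropy at scale $2^{\lambda n}$ under such a change, one can instead carry out the concavity selection on a slightly perturbed $\ul x\in\supp\ul\nu$, which is dense enough in $\supp\nu$. As a fallback, one can use a one-sided interval $J'$ with an endpoint at $x$; this is harmless for the later application.

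The key new step is the counting. Set $\Sigma_n'=\{f^n_\omega:\omega\in\Sigma'\}$, let $\mu_n$ be $\wt\mu^{*n}$ conditioned on $\Sigma_n'$, and let $\bar\mu_n=(\cR_I)_*\mu_n$. The evaluation map $e_x$ factors through $\cR_I$ because $x\in I$. Hence
\[H(\bar\mu_n\mid e_x^{-1}\cD_{-\lambda n})=H(\bar\mu_n)-H(\mu_n*\delta_x,\cD_{-\lambda n}).\]
By almost convexity, $H(\bar\mu_n)\ge H((\cR_I)_*\wt\mu^{*n})-1-2\ell cn$. Also
\[\frac1n H((\cR_I)_*\wt\mu^{*n})\to h_{\mr{RW},I}(\wt\mu),\]
since $(\cR_I)_*(\wt\mu^{*n})=((\cR_I)_*\wt\mu)^{*n}$ ($\cR_I$ is a semigroup homomorphism on maps preserving $I$). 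So for infinitely many $n$ the conditional entropy is at least $cn$. This yields an atom $A\in\cD_{-\lambda n}$ such that
\[\#\{\cR_I(f): f\in\Sigma_n',\ f(x)\in A\}\ge 2^{cn}.\]
Choose one representative $f$ for each such restriction and call this set $T_n$. Then $\#\cR_I(T_n)\ge 2^{cn}$. Items (2) and (3) follow from membership in $\Sigma_n'$ exactly as in Lemma~\ref{lem: generate similar elements}: uniform distortion $C(\ve)$ on $J'\sbs U(\omega)$, derivative bounds $2^{(\lambda\pm\ve_n)n}$, and $f(x)\in A$, after enlarging $\ve_n$ by $2/n$.

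I expect the main obstacle to be the endpoint issue, i.e.\ guaranteeing that $J'$ is centered at a point of $\supp\nu$ and still contained in $I$. The entropy bookkeeping under $\cR_I$ itself is routine.
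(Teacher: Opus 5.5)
Your proposal is correct and follows the paper's proof. The paper also reruns the argument of Lemma~\ref{lem: generate similar elements} with $(\cR_I)_*\wt\mu^{*n}$ in place of $\mu^{*n}$, using $\frac1n H((\cR_I)_*\wt\mu^{*n})\to h_{\mr{RW},I}(\wt\mu)$; your choice of one representative per restriction in $T_n$ and your check that $J'\sbs I$ fill in details the paper leaves implicit. For the endpoint issue, the cleanest fix is to choose $\ul x\in\supp\ul\nu$ with $\ul x\cap\partial I=\vn$ from the start, which is possible since $\ul\nu\{\ul x:\ul x\cap\partial I\ne\vn\}\le r\,\nu(\partial I)=0$ by Lemma~\ref{lem: not atomic}; then neither your unproved $O(1)$ perturbation claim nor the one-sided fallback is needed.
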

	\begin{proof}
	The proof is the same as that of Lemma \ref{lem: generate similar elements} and we give only an outline. 
	The only adaptation needed is considering $(\cR_I)_*\wt\mu^{*n} $ instead of $\mu^{*n}.$
	Then for a point $x\in\supp\nu,$ we obtain
	\[\lim_{n\to\infty} \frac{1}{n}H( (\cR_I)_*\wt\mu^{*n}*\delta_x,\cD_{-\lambda n})=-\beta\lambda <h_{\mr{RW},I}(\wt\mu)=h_{\mr{RW}}((\cR_I)_*\mu)=\lim_{n\to\infty} \frac{1}{n}H( (\cR_I)_*\wt\mu^{*n}).\]
	Then we can use a similar pigeonhole argument to find exponentially many elements in $\supp(\cR_I)_*\mu^{*n}\sbs\cR_I(T_{\wt\mu})$ satisfying (2) and (3).
	\end{proof}
	By our choice of $\wt\mu,$ we have $\cR_I(T_\mu)=\cR_I(T_{\wt\mu}).$
	Combining this lemma with Lemma \ref{lem: approximate identity}, we conclude that 
	\begin{enumerate}
		\item either $h_{\mr{RW},I}(\mu)=h_{\mr{RW},I}(\wt\mu)=h_{\mr F}(\wt\mu,\nu)=h_{\mr F}(\mu,\nu)$ and hence $ \beta= -h_{\mr{RW},I}(\mu)/\lambda$,
		\item or there exist two sequences of maps $(g_n),(f_n)\sbs \cR_I(T_{\wt\mu})=\cR_I(T_\mu)$ with $g_n\ne f_n$ and a subinterval $J\sbs I$ centered at a point of $\supp\nu$ that satisfies $f_n(J)\sbs g_n(I)$ and $g_n^{-1}f_n\to\Id$ on $J$ in the $C^1$-topology, which coincides with (2) in the proposition. \qedhere
	\end{enumerate}
\end{proof}

\section{Approximation by uniformly hyperbolic subsystems}\label{se:7}
\subsection{Hyperbolic elements}\label{subsec: hyperbolic elements}\label{se:7.1}
The construction of hyperbolic fixed points and hyperbolic elements in subgroups and subsemigroups of $\Diff(\SS^1)$ dates back to Sacksteder's Theorem~\cite{Sa65} for $C^2$ pseudo-groups.
Deroin-Kleptsyn-Navas~\cite[Théorème F]{DKN07} showed the existence of hyperbolic elements in subgroups of $\Diff_+^1(\SS^1)$ that do not preserve any probability measure on $\SS^1$.

Here, we prove analogous results for $C^{1+\alpha}$ subsemigroups.
Similar to the proof in~\cite{DKN07}, we use random walks to find hyperbolic elements.
With the description of the random walks given by Theorem~\ref{thm: structure of random walk 1}, the proof is rather straightforward.

Recall that a fixed point $x \in \SS^1$ of a diffeomorphism $f \in \Diff_+(\SS^1)$ is hyperbolic if $f'(x) \neq 1$.
\begin{lemma}
\label{lem: 2dr fixed points}
Let $\mu$, $d$ and $r$ be as in Theorem~\ref{thm: structure of random walk 1}.
Then there is an element in $T_\mu$ having exactly $2dr$ fixed points in $\SS^1$, all of which are hyperbolic.  
\end{lemma}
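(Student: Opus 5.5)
We produce the element as a long random composition $f^n_{\omega}$ that maps a neighbourhood of the complement of the repelling points into itself, contracting strongly. The structure theorems supply such words with positive probability, so they lie in $T_\mu$.

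\textbf{Step 1 (choose a good word).} Fix a small $\ve>0$, smaller than $c/10$, where $c$ is the separation constant of Lemma~\ref{lem: uniform bound of Pi}. Take the set $\Sigma_\ve$ of uniform good words from Proposition~\ref{prop: good words}; it carries the open sets $U(\omega)\supset\Pi(\omega)$ with $\SS^1\sm U(\omega)\sbs\Xi(\omega)^{(\ve)}$. By Corollary~\ref{cor: contraction of intervals}, every component of $U(\omega)$ is contracted by $f^n_\omega$ at rate $2^{(\lambda_i^++\ve_n)n}$. Dually, apply the same proposition to the inverse walk. For $\omega$ in a set of measure $>1-\ve$, the complement $K(\omega)=\SS^1\sm\Pi(\omega)^{(\ve)}$ lies in $W^u(\omega)$, and $f^{-n}_\omega$ contracts its components exponentially, uniformly in $\omega$ (Theorem~\ref{thm: structure of random walk 2}(7)).

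\textbf{Step 2 (force return).} The sets $\Pi(\omega)$ and $\Xi(\omega)$ depend only on $\pi^-\omega$ and $\pi^+\omega$ respectively. The shift is mixing, and $\Pi(\sigma^n\omega)=f^n_\omega\Pi(\omega)$ by cocycle invariance. Hence for large $n$ there is a positive-measure set of $\omega$ satisfying all of the following:
\begin{itemize}
\item $\omega$ is good in the sense of Step 1;
\item $\Pi(\sigma^n\omega)$ is $\ve$-close to $\Pi(\omega)$ (a return of $\Pi$);
\item $\Xi(\omega)$ is $\ve$-close to $\Xi(\sigma^n\omega)$ (a return of $\Xi$).
\end{itemize}
To arrange this, use Lusin/continuity of $\omega^-\mapsto\Pi$ on a large compact set together with recurrence on the two-sided shift.

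Set $h=f^n_\omega\in T_\mu$. Let $V$ be the union of the $2\ve$-neighbourhoods of the points of $\Pi(\omega)$. Then $h$ maps the closure of $V$ into $V$: each component of $U(\omega)$ has image of length $\ll 2^{-cn}$, and that image surrounds a point of $\Pi(\sigma^n\omega)$, which is $\ve$-close to the corresponding point of $\Pi(\omega)$. Symmetrically, let $V'$ be the union of the small neighbourhoods of the points of $\Xi(\omega)$, read from the $\sigma^n\omega$ side. Then $h^{-1}$ maps the closure of $V'$ into $V'$, again with exponential contraction.

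Here the cyclic order in Theorem~\ref{thm: structure of random walk 1}(5) matters. There are $2dr$ points, alternating between $\Pi$ and $\Xi$. Closeness of the $2dr$-point configurations means each component of $V$ is mapped into itself, not permuted. A nontrivial rotation of the configuration would move points by at least $c>\ve$, so it is excluded.

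\textbf{Step 3 (count fixed points).} Each component of $V$ is an interval mapped into itself with derivative $\le 2^{-cn/2}<1$, using the distortion bound $\vk_\alpha\le C(\ve)$. So $h$ has exactly one fixed point there, and it is attracting and hyperbolic. Likewise each component of $V'$ contains exactly one fixed point, which is repelling and hyperbolic. The complement $\SS^1\sm(V\cup V')$ consists of arcs going from a $V'$-component to a $V$-component. On such an arc, points are pushed by $h$ toward $V$ and by $h^{-1}$ toward $V'$. Indeed, $h$ maps the arc into $V$ up to an exponentially small error, by the contraction of $U(\omega)$ and since $\SS^1\sm U(\omega)\sbs V'$. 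So $h$ has no fixed points on these arcs. In total $h$ has exactly $dr+dr=2dr$ fixed points, all hyperbolic.

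The main obstacle is Step 2. The forward contraction information is attached to $\omega$, while the return must be controlled simultaneously for $\Pi$, which is past-measurable, and for $\Xi$, which is future-measurable, at a common time $n$. I would first try independence: choose the future block $\omega_{[0,n)}$ and the past separately, exploiting that $\Pi(\omega)$ depends only on the past. Then use Proposition~\ref{prop: convergence in probability}, which says that for most $\omega$ the configuration $f^n_{\sigma^{-n}\omega}\ul x$ lands near $\Pi(\omega)$ for any fixed $\ul x$. Concretely, fix $\ul x=\Pi(\eta)$ for a generic $\eta$, and concatenate a future word that is good for $\eta$ and returns $\ul x$ near itself; the set of such words has positive $\mu^{*n}$-measure.
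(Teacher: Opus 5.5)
There is a genuine gap in Step 2, at the claim that closeness of configurations forces $h=f^n_\omega$ to map each component of $V$ into itself. Your return conditions only say that the \emph{unordered} sets $h\,\Pi(\omega)=\Pi(\sigma^n\omega)$ and $\Pi(\omega)$ are $\ve$-close in $\ul d$, and likewise for $\Xi$. They say nothing about where $h$ sends an individual point. Since $h$ preserves cyclic order and maps $\Pi(\omega,i)$ onto $\Pi(\sigma^n\omega,i)$, it acts on the $dr$ components of $V$ as a cyclic shift by a multiple of $d$ places. For $r\geq 2$, a nontrivial shift is fully compatible with your conditions: then $d(h(x),x)> c-\ve$ for every $x\in\Pi(\omega)$, while $\ul d(h\,\Pi(\omega),\Pi(\omega))<\ve$. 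So your ``a rotation moves points by at least $c$'' argument does not apply, and $h$ has no fixed point in $V$.

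This really occurs. Take $\SL_2(\RR)$ acting on the circle of rays with $\supp\mu=\{\pm g_1,\pm g_2\}$, where $g_1,g_2$ generate a Zariski-dense subgroup. Then $d=1$ and $r=2$. Flipping the sign of one letter of $\omega_{[0,n)}$ preserves $\P$. It leaves $\Pi(\omega),\Xi(\omega),\Pi(\sigma^n\omega),\Xi(\sigma^n\omega)$ and all derivative bounds unchanged, but replaces $h$ by $-h$. The map $-h$ swaps the two components of $V$ and has no fixed point at all. Your fallback via Proposition~\ref{prop: convergence in probability} has the same defect, since that proposition is also stated in the unordered metric $\ul d$. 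The underlying obstruction is the $r$-fold rotational symmetry: there is no equivariant labelling of $\Pi(\omega)$, so recurrence in the base $\Sigma$ alone cannot control the permutation.

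What is missing is recurrence of a \emph{marked point} in the fibre. The paper gets this in Lemma~\ref{lem: A R and f}. It fixes $A\in\supp\ul\nu^+$ and $R\in\supp\ul\nu^-$ with $A\cup R$ $3\rho$-separated. It lets $\Sigma_1$ (resp.\ $\Sigma_2$) be the forward (resp.\ backward) good words with $\Pi(\omega)$ near $A$ and $\Xi(\omega)$ near $R$. It then applies Birkhoff's theorem to the ergodic $F$-invariant measure $m_0^+$ on $\Sigma\times\SS^1$, finding $n$ with $\sigma^n\omega\in\Sigma_2$ \emph{and} $f^n_\omega\Pi(\omega,0,0)\in U_{0,0}$. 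The induced cyclic-order-preserving bijection on components of $A^{(\rho)}$ then fixes $U_{0,0}$, hence is the identity. After that, your Step 3 is exactly the paper's fixed-point count.

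The anchoring to $A$ and $R$ also repairs two smaller problems in your set-up:
\begin{itemize}
\item Lemma~\ref{lem: uniform bound of Pi} separates the points of $\Pi(\omega)$ among themselves and the points of $\Xi(\omega)$ among themselves, but a point of $\Pi(\omega)$ can be arbitrarily close to a point of $\Xi(\omega)$. So with $\ve$ fixed in advance, your $V$ need not lie in $U(\omega)$, nor be disjoint from $V'$.
\item The backward contraction of $h^{-1}=f^{-n}_{\sigma^n\omega}$ requires $\sigma^n\omega$, not $\omega$, to be a backward good word, so this must be part of the return condition.
\end{itemize}
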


In fact, the proof of Lemma \ref{lem: 2dr fixed points} allows us to localize the attracting and repelling fixed points, as formalized in the next lemma. We follow the notation in Theorem~\ref{thm: structure of random walk 1}. 
Let $\ul\nu^+ = \Pi_* \P$ and $\ul \nu^- = \Xi_*\P$.
These are probability measures on $\SS_{dr}$, the space of all $dr$-element subsets of $\SS^1$.
For a subset $A \sbs \SS^1$ and $\rho > 0$ write 
\[A^{(\rho)} = \bigcup_{a \in A} B(a,\rho)\]
for the $\rho$-neighborhood of $A$ in $\SS^1$.

\begin{lemma}
\label{lem: A R and f}
Let $A \in \supp \ul\nu^+$ and $R \in \supp \ul\nu^-$.
Assume that $A \cap R = \vn$ and that $A \cup R$ is $3\rho$-separated for some $\rho > 0$.
Then there is $f \in T_\mu$ such that
\begin{equation}
\label{eqn: f sm R to A}
f(\SS^1 \sm R^{(\rho)}) \sbs A^{(\rho)},
\end{equation}
it preserves each connected component of $A^{(\rho)}$, that is,
\begin{equation}
\label{eqn: f A to A id}
\forall a \in A,\quad f(B(a,\rho)) \sbs B(a,\rho),
\end{equation}
and
\begin{equation}
\label{eqn: f contracts on sm R}
f'|_{\SS^1\sm R^{(\rho)}}<1 \quad \text{and} \quad (f^{-1})'|_{\SS^1\sm A^{(\rho)}}<1.
\end{equation}
\end{lemma}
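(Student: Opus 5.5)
We find a single word $\omega$ for which $\Pi(\omega)$ is close to $A$ and $\Xi(\omega)$ is close to $R$, and we take $f=f^n_\omega$ for some large $n$, chosen so that the word is also, approximately, periodic. Here $\Pi(\omega)\in\SS_{dr}$ depends only on $\pi^-\omega$ and $\Xi(\omega)$ depends only on $\pi^+\omega$. Moreover $\P=\P^-\times\P^+$. Since $A\in\supp\ul\nu^+$ and $R\in\supp\ul\nu^-$, the set
\[E_\delta=\lb{\omega:\ul d(\Pi(\omega),A)<\delta,\ \ul d(\Xi(\omega),R)<\delta}\]
has positive $\P$-measure for every $\delta>0$. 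We intersect $E_\delta$ with a set of uniform good words $\Sigma_{\ve}$ from Proposition~\ref{prop: good words}, with $\ve\ll\rho$ and $\P(\Sigma_\ve)>1-\P(E_\delta)/2$. On the resulting set $E$, $U(\omega)\supset\SS^1\sm\Xi(\omega)^{(\ve)}\supset\SS^1\sm R^{(\rho/2)}$. Corollary~\ref{cor: contraction of intervals} then gives uniform exponential contraction of each component of $U(\omega)$ under $f_\omega^n$, and these components contain the points of $\Pi(\omega)$, which lie near $A$. Applied to the inverse cocycle, the same reasoning gives uniform contraction of $f_\omega^{-n}$ on $\SS^1\sm A^{(\rho/2)}$, with the components shrinking onto $\Xi(\sigma^n\omega)$.

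Next we arrange returns. By Poincaré recurrence, or Birkhoff's theorem applied to $\sigma$, for $\P$-a.e. $\omega\in E$ there are arbitrarily large $n$ with $\sigma^n\omega\in E$. For such $n$, set $f=f^n_\omega\in T_\mu$. By cocycle invariance (Theorem~\ref{thm: structure of random walk 1}(4)), $f\Pi(\omega)=\Pi(\sigma^n\omega)$, which is $\delta$-close to $A$. The map $f$ sends each component of $U(\omega)$ onto an interval of length at most $2^{(\lambda+\ve_n)n}$ around a point of $\Pi(\sigma^n\omega)$. For $n$ large this gives \eqref{eqn: f sm R to A}. It also gives \eqref{eqn: f A to A id}, provided the component of $U(\omega)$ containing $B(a,\rho)$ is mapped near the same point $a$.

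That last point is where the rotational symmetry must be controlled, and we expect it to be the main obstacle. The point of $\Pi(\omega)$ near $a$ is sent to some point of $\Pi(\sigma^n\omega)$ near $A$, but possibly near a different $a'$, because $f$ may permute the points cyclically. To fix this, we use the structure of Theorem~\ref{thm: structure of random walk 1}(5): $f$ preserves the cyclic order and the labels $i\in[d]$, so the induced map on the $dr$ points is a rotation of the cyclic order by some $s$. We then strengthen the return condition: by ergodicity of $\sigma$ we require $\sigma^n\omega\in E$ and, in addition, that the induced rotation is trivial. The induced rotation is a measurable function of $(\omega,n)$, determined by the labelling of the points near $A$. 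To produce returns with trivial rotation, we would pass to a finite extension: consider the skew product of $\sigma$ over $\Sigma$ with the cyclic group $\ZZ/dr\ZZ$, where the cocycle records this rotation. If the rotation along returns to $E$ is nonzero, we instead compose $dr$ successive returns, i.e. take $f=f^{n_{dr}}_\omega$ with all intermediate $\sigma^{n_j}\omega\in E$. Each return induces a rotation by a bounded amount relative to the nearby fixed configuration $A$. Pigeonholing over $dr+1$ returns yields two return times $n_j<n_k$ with the same rotation. Then $f^{n_k-n_j}_{\sigma^{n_j}\omega}$, which lies in $T_\mu$, maps near $A$ to near $A$ with trivial rotation. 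It is also contracting, since $\sigma^{n_j}\omega\in E$ is itself a good word.

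Finally, the derivative bounds \eqref{eqn: f contracts on sm R} follow from the uniform distortion bound $\vk_\alpha(f^n_\omega,I)\le C(\ve)$ on the components $I$ of $U(\omega)$. Combined with the fact that $|f^n_\omega I|\to0$ exponentially, this gives $(f^n_\omega)'<1$ on $\SS^1\sm R^{(\rho)}$ for $n$ large. The symmetric argument for the inverse cocycle, applied to $\Xi$ and $W^u$, gives $(f^{-1})'<1$ on $\SS^1\sm A^{(\rho)}$. This uses the uniform good words for $\mu^-$, intersected into $E$ as well.
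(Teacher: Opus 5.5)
Your proof is correct. Its skeleton matches the paper's: positivity of $\P\{\Pi(\omega)\approx A,\ \Xi(\omega)\approx R\}$ from the independence of past and future, intersection with forward and backward uniform good words, a return time $n$, and $f=f^n_\omega$. The genuine difference is how you kill the cyclic rotation.

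The paper tracks a single point. It fixes $a_0\in A_0$ and lets $\Pi(\omega,0,0)$ be the point of $\Pi(\omega,0)$ in $U_{0,0}=B(a_0,\rho)$. It then uses the ergodicity of the $u$-state $m_0^+$ under $F$ (an $r$-point extension of $\sigma$) to find $n$ with $\sigma^n\omega$ good for the inverse cocycle and $f^n_\omega\Pi(\omega,0,0)\in U_{0,0}$. At that point $(f^n_\omega)_*$ is a cyclic-order-preserving bijection of the components of $A^{(\rho)}$ that fixes one of them, hence the identity.

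You instead use only recurrence of $\sigma$ to $E$ and pigeonhole the cumulative permutations $\tau_0=\mathrm{id},\tau_1,\dots$ of $A$ induced by $f^{n_j}_\omega$. Taking $g=f^{n_k-n_j}_{\sigma^{n_j}\omega}$ with $\tau_j=\tau_k$, $g$ sends the point of $\Pi(\sigma^{n_j}\omega)$ near each $a$ to the point of $\Pi(\sigma^{n_k}\omega)$ near the same $a$. This is more elementary. No ergodicity of $m_0^+$ is needed, and pigeonholing over $(dr)!+1$ returns even removes the cyclic-order input.

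You must, however, select the return times with gaps $n_{j+1}-n_j\geq N_0$, with $N_0$ large enough for the uniform contraction and distortion estimates; otherwise $n_k-n_j$ could be too small for $g$ to contract. You do not say this, though it costs nothing.

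What the paper's single-return formulation buys is quantitative. The admissible $\omega$ at a fixed time $n$ form a set whose measure is controlled by correlations of $m_0^+$ under $F$. This is exactly what is reused in Lemma~\ref{lem: effective approximation} and Proposition~\ref{prop: choose uniformly hyperbolic elements} to bound $\mu^{*n}(\cA_{n,\ve})$ from below. Your element appears only after an $\omega$-dependent shift $\sigma^{n_j}$, so it does not directly give such a bound.

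One minor slip: $f^{-n}_\omega$ shrinks components onto $\Xi(\sigma^{-n}\omega)$, not $\Xi(\sigma^{n}\omega)$.
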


\begin{proof}
Consider
\begin{equation}
\label{eqn: Sigma0 A R}
\Sigma_0 = \lb{\, \omega\in \Sigma : \ul d(\Pi(\omega),A) < \rho/2 \text{ and } \ul d(\Xi(\omega),R) < \rho/2 \,}.
\end{equation}
The two conditions defining $\Sigma_0$ are independent because $\Pi(\omega)$ depends only on $\omega^-$ and $\Xi(\omega)$ depends only on $\omega^+$.
Thus, since $A \in \supp \ul \nu^+$ and $R \in \supp \ul \nu^-$, we have $\P(\Sigma_0) > 0$. By Theorem \ref{thm: structure of random walk 1} (5), elements of $A$ and elements of $R$ arrange alternatively on $\SS^1$.
By Theorem~\ref{thm: supports of stationary measures}, the closed sets $\supp \nu_i$, $i \in [d]$ are disjoint.
Thus we can partition $A = \bigsqcup_{i \in [d]} A_i$ with $A_i = A \cap \supp \nu_i$.
Moreover, if $\rho > 0$ is small enough, then the condition $\ul d(\Pi(\omega),A) < \rho/2$ implies that for all $i \in [d]$, $\ul d(\Pi(\omega,i),A_i) < \rho / 2$.
Thus,
\begin{equation}
\label{eqn: d Pi omega i A i}
\forall \omega \in \Sigma_0,\, \forall i \in [d],\quad \ul d(\Pi(\omega,i),A_i) < \rho / 2.
\end{equation}

By Proposition~\ref{prop: good words} there is a subset $\Sigma' \subset \Sigma$ and constants $c,C > 0$ such that $\P(\Sigma') \geq 1 - \P(\Sigma_0)/2$ and that for all $\omega \in \Sigma'$,
\begin{equation}
\label{eqn: sm Xi contracts}
\forall n \in \NN,\, \forall x \in \SS^1 \sm \Xi(\omega)^{(\rho/2)},\quad  (f^n_\omega)'(x) \leq C 2^{-cn} .
\end{equation}
By Proposition~\ref{prop: good words} applied to $F^{-1}$, similarly, there is a subset $\Sigma'' \subset \Sigma$ such that $\P(\Sigma'') \geq 1 - \P(\Sigma_0)/2$ and that for all $\omega \in \Sigma''$,
\begin{equation}
\label{eqn: sm Pi contracts}
\forall n \in \NN,\, \forall x \in \SS^1 \sm \Pi(\omega)^{(\rho/2)},\quad (f^{-n}_\omega)'(x) \leq  C 2^{-cn}.
\end{equation}
In particular, writing $\Sigma_1 = \Sigma_0 \cap \Sigma'$ and $\Sigma_2 = \Sigma_0 \cap \Sigma''$, we have $\P(\Sigma_1) > 0$ and  $\P(\Sigma_2) > 0$.

Fix $a_0\in A_0$ for the rest of the proof of Lemma \ref{lem: A R and f}.
Let $U_{0,0}$ denote $B(a_0,\rho)$.
By~\eqref{eqn: d Pi omega i A i}, for every $\omega \in \Sigma_0$ there is a unique element in $\Pi(\omega,0) \cap U_{0,0}$, which we will denote by $\Pi(\omega,0,0)$.
Then (recall Theorem~\ref{thm: structure of random walk 1})
\[m^+_0(\Sigma_1 \times U_{0,0}) = \int_{\Sigma_1} u_{\Pi(\omega,0)}(U_{0,0}) \dd \P(\omega) = \frac{1}{r}\P(\Sigma_1) > 0.\]
Similarly, $m^+_0(\Sigma_2 \times U_{0,0}) > 0$. 
By Birkhoff's ergodic theorem, for $m^+_0$-almost every $(\omega,x) \in \Sigma_1 \times U_{0,0}$, there are infinitely many $n \in \NN$ such that $F^n(\omega,x) \in \Sigma_2 \times U_{0,0}$.
In other words, for $\P$-almost every $\omega \in \Sigma_1$, there are infinitely many $n \in \NN$ such that 
\begin{equation}
\label{eqn: f Pi omega 0 0}
\sigma^n\omega \in \Sigma_2\quad \text{and} \quad f^n_\omega \Pi(\omega,0,0) \in U_{0,0}.
\end{equation}
Choose $n$ sufficiently large so that it satisfies \eqref{eqn: f Pi omega 0 0} and $C2^{-cn} < \rho/2$.
We claim that $f^n_\omega$ satisfies the desired properties.

Indeed since $\omega \in \Sigma_0$, we have $\Xi(\omega) \sbs R^{(\rho/2)}$ and hence $\SS^1\sm R^{(\rho)} \sbs \SS^1 \sm \Xi(\omega)^{(\rho/2)}$.
Also, $\ul d(\Pi(\omega), A) < \rho/2$. 
By the separation of $A \cup R$, $\Pi(\omega)\sbs A^{(\rho)} \sbs \SS^1 \sm R^{(\rho)}$.
For a subset $X \sbs \SS^1$, we denote by $\CCs(X)$ the set of its connected components. We call $\ul x \sbs \SS^1$  a  
\textit{representative of $\CCs(X)$} if it contains exactly one point from each component of $X$.
Then $\Pi(\omega)$ is a representative of $\CCs(\SS^1 \sm R^{(\rho)})$.
Thus, by $\omega \in \Sigma'$ and~\eqref{eqn: sm Xi contracts}, for any $\ul x \in \SS_{dr}$ which is a representative of $\CCs(\SS^1 \sm R^{(\rho)})$,
\[\ul d(f^n_\omega \ul x, \Pi(\sigma^n \omega)) = \ul d(f^n_\omega \ul x, f^n_\omega \Pi(\omega)) \leq C 2^{-cn} < \rho/2.\]
But $\sigma^n \omega \in \Sigma_0$, hence $\ul d(\Pi(\sigma^n \omega), A) < \rho/2$.
By the triangle inequality, 
\[\ul d(f^n_\omega \ul x, A) < \rho,\]
showing $f^n_\omega(\SS^1 \sm R^{(\rho)}) \sbs A^{(\rho)}$ and that $(f^n_\omega)_* \colon \CCs( \SS^1 \sm R^{(\rho)} ) \to \CCs(A^{(\rho)})$ is bijective.
It follows that $(f^n_\omega)_* \colon \CCs(A^{(\rho)}) \to \CCs(A^{(\rho)})$ is bijective.
Moreover, it preserves the cyclic order and maps $U_{0,0} \in \CCs(A^{(\rho)})$ to itself.
Hence $(f^n_\omega)_* \colon \CCs(A^{(\rho)}) \to \CCs(A^{(\rho)})$ is the identity map.
Also, from $\omega \in \Sigma'$ and~\eqref{eqn: sm Xi contracts}, we obtain
\[\forall x \in \SS^1 \sm R^{(\rho)},\quad  (f^n_\omega)'(x) \leq C 2^{-cn} < 1.\]
Similarly, from $\sigma^n \omega \in \Sigma_2$ and ~\eqref{eqn: sm Pi contracts}, we obtain $\SS^1 \sm A^{(\rho)} \subset \SS^1 \sm \Pi(\sigma^n \omega)^{(\rho/2)}$ and then
\[\forall x \in \SS^1 \sm A^{(\rho)},\quad (f^{-n}_{\sigma^n \omega})'(x) \leq C 2^{-cn}  < 1.\]
This completes the proof because $(f^n_\omega)^{-1} = f^{-n}_{\sigma^n \omega}$.
\end{proof}

\begin{proof}[Proof of Lemma~\ref{lem: 2dr fixed points}]
Let $h \in T_\mu$ be the map supplied by Lemma~\ref{lem: A R and f} for arbitrary $A \in \supp \ul\nu^+$ and $R \in \supp \ul\nu^-$ with $A \cap R = \vn$ and sufficiently small $\rho > 0$. The existence of such a pair of $A$ and $R$ follows from the continuity of stationary measures, Lemma \ref{lem: not atomic}.

For every $a \in A$, by~\eqref{eqn: f A to A id} and~\eqref{eqn: f contracts on sm R}, $h|_{B(a,\rho)}$ is a contraction on $B(a,\rho)$, hence has a unique attracting fixed point in $\ol{B(a,\rho)}$.
Note that the condition \eqref{eqn: f A to A id} implies that $h^{-1}$ preserves each of the connected components of $R^{(\rho)}$.
Thus similarly, $h$ has a unique repelling fixed point on each of the $dr$ connected components of $R^{(\rho)}$.
Finally, $h$ has no other fixed points because $h(\SS^1 \sm R^{(\rho)}) \sbs A^{(\rho)}$.
\end{proof}

\subsection{Perfect ping-pong pairs}\label{sec: perfect ping-pong pairs}\label{se:7.2}
In this subsection, we will define and construct the perfect ping-pong pairs in every finitely generated sub-semigroup of $\Diff_+^{1+\alpha}(\SS^1)$ that preserves no probability measure on $\SS^1$. 
Roughly speaking, a perfect ping-pong pair is a pair of diffeomorphisms generating several independent ping-pong systems.

Recall the notion of attractors and repellors in Definition \ref{def: attractors and repellors}. A hyperbolic fixed point $x$ of a diffeomorphism $f$ is an attractor (resp. repellor) if and only if $f'(x)<1$ (resp. $f'(x)>1$).

\begin{definition}\label{def: perfect ping-pong pair}
Let $q$ be a positive integer.
A pair $(h_1,h_2)\sbs\Diff_+^1(\SS^1)$ is called a \textit{($q$-)perfect ping-pong pair} if there exist subsets $U_1^+,U_2^+,U_1^-,U_2^-\sbs\SS^1$ satisfying the following conditions:
	\begin{enumerate}[\indent(a)]
		\item Each $h_i$ has exactly $2q$ fixed points, all of which are hyperbolic.
		\item For every attractor $a_1 \in \SS^1$ of $h_1$ there is an attractor $a_2 \in \SS^1$ of $h_2$ such that either $I = [a_1,a_2]$ or $I = [a_2,a_1]$ satisfies $I \cap \Fix(h_i) = \{a_i\}$, for $i \in \{1,2\}$.
        Moreover, the same holds for repellors. 
 		\item The set $U_1^+$ (resp. $U_2^+,U_1^-,U_2^-$) is a union of $q$ disjoint open intervals, each of which is an open neighborhood of an attractor of $h_1$ (resp. $h_2,h_1^{-1},h_2^{-1}$).
		\item The closures of the sets $U_1^+,U_2^+,U_1^-,U_2^-$ are pairwise disjoint.
		\item The collection $\lb{U_i^\pm}$ forms a ping-pong partition for $h_1$ and $h_2$ in the following sense: 
		\[h_1(\SS^1\sm U_1^-)\sbs U_1^+,~h_2(\SS^1\sm U_2^-)\sbs U_2^+,~h_1^{-1}(\SS^1\sm U_1^+)\sbs U_1^-,~h_2^{-1}(\SS^1\sm U_2^+)\sbs U_2^-.\]
		\item The maps $h_i$ and their inverses uniformly contract outside the corresponding cones:
		\[h_1'|_{\SS^1\sm U_1^-}<1,~h_2'|_{\SS^1\sm U_2^-}<1,~(h_1^{-1})'|_{\SS^1\sm U_1^+}<1,~(h_2^{-1})'|_{\SS^1\sm U_2^+}<1.\]
	\end{enumerate}
We call the sets $U_{i}^\pm$ the \textit{ping-pong cones} of the pair $(h_1,h_2).$
\end{definition}
\begin{remark}
Condition (b) is crucial and can be equivalently formulated as follows:
 $\Fix(h_1)$ and $\Fix(h_2)$ are disjoint and we may label the attractors in $\Fix(h_1) \cup \Fix(h_2)$ as $a_1, \dotsc, a_{2q}$ and the repellors as $r_1, \dotsc, r_{2q}$ so that in cyclic order we have
 \[a_1 \prec a_2 \prec r_1 \prec r_2 \prec a_3 \prec a_4 \prec \dotsi \prec a_{2q - 1} \prec a_{2q} \prec r_{2q - 1} \prec r_{2q} \prec a_1.\]
Under this labelling, each interval of the form $[a_{2i-1},a_{2i}]$ is preserved by both $h_1$ and $h_2$, and $(h_1,h_2)$ acts on it with ping-pong dynamics.
Likewise, each interval of the form $[r_{2i-1},r_{2i}]$ is  preserved by both $h_1^{-1}$ and $h_2^{-1}$, and on it $(h_1^{-1}, h_2^{-1})$ exhibits ping-pong dynamics.
\end{remark}

\begin{figure}[!ht]
	\begin{minipage}[c]{7cm}
      \begin{figureexample}\label{eg: perfect pingpong pair}
	Here is an example of a $2$-perfect pingpong pair. The red points represent attractors, and the blue points represent repellors. The black arcs denote the pingpong cones $U_i^\pm$. The arrows inside the circle show the action of $h_1$, and arrows outside the circle represent the action of $h_2$.

    Note that the fixed points of $h_1$ and $h_2$ are not necessarily arranged alternately. Therefore, there is some flexibility in the orders of each pair of attractors or repellors.
	\end{figureexample}
   \end{minipage}%
   \hspace{1cm}
   \begin{minipage}[c]{\textwidth}
       \begin{tikzpicture}
          \def\radius{2.6cm}
		\def\radone{2.2cm}
		\def\radtwo{3cm}
		\draw[gray] (0,0) circle (\radius);
		\draw[-,very thick] (10:\radius) arc[radius=\radius, start angle=10, end angle=30];
		\draw[-,very thick] (60:\radius) arc[radius=\radius, start angle=60, end angle=80];
		\draw[-,very thick] (100:\radius) arc[radius=\radius, start angle=100, end angle=120];
		\draw[-,very thick] (150:\radius) arc[radius=\radius, start angle=150, end angle=170];
		\draw[-,very thick] (190:\radius) arc[radius=\radius, start angle=190, end angle=210];
		\draw[-,very thick] (240:\radius) arc[radius=\radius, start angle=240, end angle=260];
		\draw[-,very thick] (280:\radius) arc[radius=\radius, start angle=280, end angle=300];
		\draw[-,very thick] (330:\radius) arc[radius=\radius, start angle=330, end angle=350];
		
		\fill[red] (0,0) ++(70:\radius) circle[radius=2pt];
		\fill[red] (0,0) ++(110:\radius) circle[radius=2pt];
		\fill[red] (0,0) ++(250:\radius) circle[radius=2pt];
		\fill[red] (0,0) ++(290:\radius) circle[radius=2pt];
		
		\fill[blue] (0,0) ++(20:\radius) circle[radius=2pt];
		\fill[blue] (0,0) ++(-20:\radius) circle[radius=2pt];
		\fill[blue] (0,0) ++(160:\radius) circle[radius=2pt];
		\fill[blue] (0,0) ++(200:\radius) circle[radius=2pt];
		
		\fill (0,0) ++(10:\radius) circle[radius=1.5pt];
		\fill (0,0) ++(30:\radius) circle[radius=1.5pt];
		\fill (0,0) ++(60:\radius) circle[radius=1.5pt];
		\fill (0,0) ++(80:\radius) circle[radius=1.5pt];
		\fill (0,0) ++(100:\radius) circle[radius=1.5pt];
		\fill (0,0) ++(120:\radius) circle[radius=1.5pt];
		\fill (0,0) ++(150:\radius) circle[radius=1.5pt];
		\fill (0,0) ++(170:\radius) circle[radius=1.5pt];
		\fill (0,0) ++(190:\radius) circle[radius=1.5pt];
		\fill (0,0) ++(210:\radius) circle[radius=1.5pt];
		\fill (0,0) ++(240:\radius) circle[radius=1.5pt];
		\fill (0,0) ++(260:\radius) circle[radius=1.5pt];
		\fill (0,0) ++(280:\radius) circle[radius=1.5pt];
		\fill (0,0) ++(300:\radius) circle[radius=1.5pt];
		\fill (0,0) ++(330:\radius) circle[radius=1.5pt];
		\fill (0,0) ++(350:\radius) circle[radius=1.5pt];

		\node at (70:\radone) {$a_{h_1}$};
		\node at (250:\radone) {$a_{h_1}$};
		\node at (200:\radone) {$r_{h_1}$};
		\node at (-20:\radone) {$r_{h_1}$};
		
		\draw[->,>=latex,semithick] (-10:\radone) arc[radius=\radone, start angle=-10, end angle=60];
		\draw[<-,>=latex,semithick] (80:\radone) arc[radius=\radone, start angle=80, end angle=190];
		\draw[->,>=latex,semithick] (210:\radone) arc[radius=\radone, start angle=210, end angle=240];
		\draw[<-,>=latex,semithick] (260:\radone) arc[radius=\radone, start angle=260, end angle=330];
		
		\node at (110:\radtwo) {$a_{h_2}$};
		\node at (290:\radtwo) {$a_{h_2}$};
		\node at (20:\radtwo) {$r_{h_2}$};
		\node at (160:\radtwo) {$r_{h_2}$};
		
		\draw[->,>=latex,semithick] (30:\radtwo) arc[radius=\radtwo, start angle=30, end angle=100];
		\draw[<-,>=latex,semithick] (120:\radtwo) arc[radius=\radtwo, start angle=120, end angle=150];
		\draw[->,>=latex,semithick] (170:\radtwo) arc[radius=\radtwo, start angle=170, end angle=280];
		\draw[<-,>=latex,semithick] (300:\radtwo) arc[radius=\radtwo, start angle=300, end angle=370];

       \end{tikzpicture}
   \end{minipage}
   
\end{figure}

\begin{figure}[!ht]
\begin{minipage}[c]{8cm}
       \begin{tikzpicture}
          \def\radius{2.6cm}
        \def\radthree{2.9cm}
		\def\radone{2.2cm}
		\def\radtwo{1.8cm}
		\draw[gray] (0,0) circle (\radius);
		\draw[-,very thick] (10:\radius) arc[radius=\radius, start angle=10, end angle=30];
		\draw[-,very thick] (60:\radius) arc[radius=\radius, start angle=60, end angle=80];
		\draw[-,very thick] (100:\radius) arc[radius=\radius, start angle=100, end angle=120];
		\draw[-,very thick] (150:\radius) arc[radius=\radius, start angle=150, end angle=170];
		\draw[-,very thick] (190:\radius) arc[radius=\radius, start angle=190, end angle=210];
		\draw[-,very thick] (240:\radius) arc[radius=\radius, start angle=240, end angle=260];
		\draw[-,very thick] (280:\radius) arc[radius=\radius, start angle=280, end angle=300];
		\draw[-,very thick] (330:\radius) arc[radius=\radius, start angle=330, end angle=350];
		
		\fill[blue] (0,0) ++(70:\radius) circle[radius=2pt];
		\fill[red] (0,0) ++(110:\radius) circle[radius=2pt];
		\fill[blue] (0,0) ++(250:\radius) circle[radius=2pt];
		\fill[red] (0,0) ++(290:\radius) circle[radius=2pt];
		
		\fill[blue] (0,0) ++(20:\radius) circle[radius=2pt];
		\fill[red] (0,0) ++(-20:\radius) circle[radius=2pt];
		\fill[blue] (0,0) ++(160:\radius) circle[radius=2pt];
		\fill[red] (0,0) ++(200:\radius) circle[radius=2pt];
		
		\fill (0,0) ++(10:\radius) circle[radius=1.5pt];
		\fill (0,0) ++(30:\radius) circle[radius=1.5pt];
		\fill (0,0) ++(60:\radius) circle[radius=1.5pt];
		\fill (0,0) ++(80:\radius) circle[radius=1.5pt];
		\fill (0,0) ++(100:\radius) circle[radius=1.5pt];
		\fill (0,0) ++(120:\radius) circle[radius=1.5pt];
		\fill (0,0) ++(150:\radius) circle[radius=1.5pt];
		\fill (0,0) ++(170:\radius) circle[radius=1.5pt];
		\fill (0,0) ++(190:\radius) circle[radius=1.5pt];
		\fill (0,0) ++(210:\radius) circle[radius=1.5pt];
		\fill (0,0) ++(240:\radius) circle[radius=1.5pt];
		\fill (0,0) ++(260:\radius) circle[radius=1.5pt];
		\fill (0,0) ++(280:\radius) circle[radius=1.5pt];
		\fill (0,0) ++(300:\radius) circle[radius=1.5pt];
		\fill (0,0) ++(330:\radius) circle[radius=1.5pt];
		\fill (0,0) ++(350:\radius) circle[radius=1.5pt];

		\node at (70:\radone) {$r_{h_1}$};
		\node at (250:\radone) {$r_{h_1}$};
		\node at (200:\radone) {$a_{h_1}$};
		\node at (-20:\radone) {$a_{h_1}$};
		
		\draw[<-,>=latex,semithick] (-10:\radone) arc[radius=\radone, start angle=-10, end angle=60];
		\draw[->,>=latex,semithick] (80:\radone) arc[radius=\radone, start angle=80, end angle=190];
		\draw[<-,>=latex,semithick] (210:\radone) arc[radius=\radone, start angle=210, end angle=240];
		\draw[->,>=latex,semithick] (260:\radone) arc[radius=\radone, start angle=260, end angle=330];
		
		\node at (110:\radtwo) {$a_{h_2}$};
		\node at (290:\radtwo) {$a_{h_2}$};
		\node at (20:\radtwo) {$r_{h_2}$};
		\node at (160:\radtwo) {$r_{h_2}$};

		\draw[->,>=latex,semithick] (30:\radtwo) arc[radius=\radtwo, start angle=30, end angle=100];
		\draw[<-,>=latex,semithick] (120:\radtwo) arc[radius=\radtwo, start angle=120, end angle=150];
		\draw[->,>=latex,semithick] (170:\radtwo) arc[radius=\radtwo, start angle=170, end angle=280];
		\draw[<-,>=latex,semithick] (300:\radtwo) arc[radius=\radtwo, start angle=300, end angle=370];
		
		\draw[->,>=latex,semithick] (80:\radthree) arc[radius=\radthree, start angle=80, end angle=285];
		
		\draw[<-,>=latex,semithick] (295:\radthree) arc[radius=\radthree, start angle=295, end angle=420];

       \end{tikzpicture}
   \end{minipage}%
	\begin{minipage}[c]{\textwidth-9cm}
      \begin{figureexample}
	Here is an example of a pair of uniformly hyperbolic elements $(h_1,h_2)$ with a ``wrong" order on circle. We can obtain this example by replacing $h_1$ by $h_1^{-1}$ in previous example.
	
	Let $h_3=h_2h_1$ (the arrows outside the circle), then $h_3$ only has one repellor and one attractor.
	
	This example shows that for a wrong arrangement, it is possible to find a further subsystem with fewer fixed points.
	
	\end{figureexample}
   \end{minipage}
\end{figure}

\vspace{-0.2cm}
\begin{figure}[!ht]
	\begin{minipage}[c]{7cm}
      \begin{figureexample}
	Here is another example of a pair of uniformly hyperbolic elements $(h_1,h_2)$ with a ``wrong" order. 
	
	Let $h_3=h_2h_1$ (the arrows outside circle). Then, $h_3$ does not have any fixed points, which means that the semigroup generated by $(h_1,h_2)$ is not uniformly hyperbolic in some sense.

    These examples show that for a wrong order of fixed points, the dynamics of the semigroup generated by $(h_1,h_2)$ may not be as clear.
	
	\end{figureexample}
   \end{minipage}%
   \hspace{1cm}
   \begin{minipage}[c]{\textwidth}
       \begin{tikzpicture}
          \def\radius{2.6cm}
        \def\radthree{2.9cm}
        \def\radfour{3cm}
		\def\radone{2.2cm}
		\def\radtwo{1.8cm}
		\draw[gray] (0,0) circle (\radius);
		\draw[-,very thick] (10:\radius) arc[radius=\radius, start angle=10, end angle=30];
		\draw[-,very thick] (60:\radius) arc[radius=\radius, start angle=60, end angle=80];
		\draw[-,very thick] (100:\radius) arc[radius=\radius, start angle=100, end angle=120];
		\draw[-,very thick] (150:\radius) arc[radius=\radius, start angle=150, end angle=170];
		\draw[-,very thick] (190:\radius) arc[radius=\radius, start angle=190, end angle=210];
		\draw[-,very thick] (240:\radius) arc[radius=\radius, start angle=240, end angle=260];
		\draw[-,very thick] (280:\radius) arc[radius=\radius, start angle=280, end angle=300];
		\draw[-,very thick] (330:\radius) arc[radius=\radius, start angle=330, end angle=350];
		
		\fill[blue] (0,0) ++(70:\radius) circle[radius=2pt];
		\fill[red] (0,0) ++(110:\radius) circle[radius=2pt];
		\fill[blue] (0,0) ++(250:\radius) circle[radius=2pt];
		\fill[red] (0,0) ++(290:\radius) circle[radius=2pt];
		
		\fill[red] (0,0) ++(20:\radius) circle[radius=2pt];
		\fill[blue] (0,0) ++(-20:\radius) circle[radius=2pt];
		\fill[blue] (0,0) ++(160:\radius) circle[radius=2pt];
		\fill[red] (0,0) ++(200:\radius) circle[radius=2pt];
		
		\fill (0,0) ++(10:\radius) circle[radius=1.5pt];
		\fill (0,0) ++(30:\radius) circle[radius=1.5pt];
		\fill (0,0) ++(60:\radius) circle[radius=1.5pt];
		\fill (0,0) ++(80:\radius) circle[radius=1.5pt];
		\fill (0,0) ++(100:\radius) circle[radius=1.5pt];
		\fill (0,0) ++(120:\radius) circle[radius=1.5pt];
		\fill (0,0) ++(150:\radius) circle[radius=1.5pt];
		\fill (0,0) ++(170:\radius) circle[radius=1.5pt];
		\fill (0,0) ++(190:\radius) circle[radius=1.5pt];
		\fill (0,0) ++(210:\radius) circle[radius=1.5pt];
		\fill (0,0) ++(240:\radius) circle[radius=1.5pt];
		\fill (0,0) ++(260:\radius) circle[radius=1.5pt];
		\fill (0,0) ++(280:\radius) circle[radius=1.5pt];
		\fill (0,0) ++(300:\radius) circle[radius=1.5pt];
		\fill (0,0) ++(330:\radius) circle[radius=1.5pt];
		\fill (0,0) ++(350:\radius) circle[radius=1.5pt];

		\node at (70:\radone) {$r_{h_1}$};
		\node at (250:\radone) {$r_{h_1}$};
		\node at (200:\radone) {$a_{h_1}$};
		\node at (20:\radone) {$a_{h_1}$};
		
		\draw[<-,>=latex,semithick] (30:\radone) arc[radius=\radone, start angle=30, end angle=60];
		\draw[->,>=latex,semithick] (80:\radone) arc[radius=\radone, start angle=80, end angle=190];
		\draw[<-,>=latex,semithick] (210:\radone) arc[radius=\radone, start angle=210, end angle=240];
		\draw[->,>=latex,semithick] (260:\radone) arc[radius=\radone, start angle=260, end angle=370];
		
		\node at (110:\radtwo) {$a_{h_2}$};
		\node at (290:\radtwo) {$a_{h_2}$};
		\node at (-20:\radtwo) {$r_{h_2}$};
		\node at (160:\radtwo) {$r_{h_2}$};

		\draw[->,>=latex,semithick] (-10:\radtwo) arc[radius=\radtwo, start angle=-10, end angle=100];
		\draw[<-,>=latex,semithick] (120:\radtwo) arc[radius=\radtwo, start angle=120, end angle=150];
		\draw[->,>=latex,semithick] (170:\radtwo) arc[radius=\radtwo, start angle=170, end angle=280];
		\draw[<-,>=latex,semithick] (300:\radtwo) arc[radius=\radtwo, start angle=300, end angle=330];

		\draw[->,>=latex,semithick] (100:\radthree) arc[radius=\radthree, start angle=100, end angle=285];
		\draw[->,>=latex,semithick] (280:\radfour) arc[radius=\radfour, start angle=280, end angle=465];

       \end{tikzpicture}
   \end{minipage}
\end{figure}

\newpage
The main goal of this subsection is to prove the following result on the existence of perfect ping-pong pairs.
\begin{proposition}\label{prop: generate ping-pong pair}
Let $T\sbs \Diff_+^{1+\alpha}(\SS^1)$ be a finitely generated semigroup without invariant probability measures on $\SS^1.$ 
Let $\Delta$ be a minimal set of $T.$ 
Given a sufficiently short open interval $I\sbs \SS^1$ that intersects $\Delta$, there exists a perfect ping-pong pair $(h_1,h_2)\sbs T$, with ping-pong cones $(U^\pm_i)$, such that both $U_1^+$ and $U_2^+$ have a unique connected component contained in $I$, and these components have nonempty intersections with $\Delta$.
\end{proposition}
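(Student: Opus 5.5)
Let $\mu$ be a finitely supported probability measure with $T_\mu=T$, for instance uniform on a finite generating set. Let $d,r$ be as in Theorem~\ref{thm: structure of random walk 1} and set $q=dr$. The minimal set $\Delta$ is $\supp\nu_{i_0}^+$ for some $i_0$ (Theorem~\ref{thm: supports of stationary measures}). The plan is to apply Lemma~\ref{lem: A R and f} twice, to two configurations $(A_1,R_1)$ and $(A_2,R_2)$. This produces $h_1,h_2\in T$, each with exactly $2q$ hyperbolic fixed points, by the proof of Lemma~\ref{lem: 2dr fixed points}. The cones will be $U_k^+=A_k^{(\rho)}$ and $U_k^-=R_k^{(\rho)}$. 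Properties (a), (c), (e), (f) of Definition~\ref{def: perfect ping-pong pair} then follow directly from \eqref{eqn: f sm R to A}, \eqref{eqn: f A to A id}, \eqref{eqn: f contracts on sm R}. The real work is choosing $A_1,A_2\in\supp\ul\nu^+$ and $R_1,R_2\in\supp\ul\nu^-$ so that (b) and (d) hold and the localization in $I$ holds.

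\textbf{Choice of configurations.} Take a regular $\omega$ and put $A_1=\Pi(\omega)$, $R_1=\Xi(\omega)$. These alternate in cyclic order by Theorem~\ref{thm: structure of random walk 1}(5), and they are $c$-separated by Lemma~\ref{lem: uniform bound of Pi}. Arrange that one point of $A_1\cap\Delta$ lies in $I$. This is possible because $\nu_{i_0}^+(I)>0$, so with positive probability $\Pi(\omega,i_0)$ meets $I$. Since $I$ is short (shorter than $c$), at most one point of $A_1$ lies in $I$.

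Next perturb to $A_2,R_2$. Use independence of $\Pi(\omega^-)$ and $\Xi(\omega^+)$, and the fact that $\ul\nu^\pm$ have no atoms (Lemma~\ref{lem: not atomic}) and in fact have nondiscrete supports. Choose $\omega'$ with $\ul d(\Pi(\omega'),A_1)<\eta$ and $\ul d(\Xi(\omega'),R_1)<\eta$, but $\Pi(\omega')\cap A_1=\vn$ and $\Xi(\omega')\cap R_1=\vn$. Here $\eta\ll c$. Set $A_2=\Pi(\omega')$ and $R_2=\Xi(\omega')$. Each point of $A_2$ is then $\eta$-close to a unique point of $A_1$, and likewise for $R_2$ and $R_1$.

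Now take $\rho$ smaller than half the minimum separation of $A_1\cup A_2\cup R_1\cup R_2$, and also with $3\rho<c$. This makes the four $\rho$-neighbourhoods have pairwise disjoint closures, which is (d). Apply Lemma~\ref{lem: A R and f} to each pair $(A_k,R_k)$ with this $\rho$.

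\textbf{Verifying (b) and the localization.} Every fixed point of $h_k$ lies in $A_k^{(\rho)}\cup R_k^{(\rho)}$, with exactly one fixed point in each component. The attractor $a_1$ of $h_1$ near a point $x\in A_1$ is paired with the attractor $a_2$ of $h_2$ near the $\eta$-close partner $x'\in A_2$. The arc $I$ between them lies in a $2\eta$-neighbourhood of $x$, which contains no other point of $A_1\cup A_2\cup R_1\cup R_2$. Hence it meets $\Fix(h_1)\cup\Fix(h_2)$ only in $\{a_1,a_2\}$. The same argument works for repellors. For the localization, the component $B(a,\rho)$ of $U_1^+$ about the point $a\in A_1\cap\Delta\cap I$ is contained in $I$ once $\rho$ and $\eta$ are small. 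Since $a$ lies in the interior of $I$, the partner in $A_2$ is close to $a$, and its ball is also contained in $I$. Both components meet $\Delta$ because $a\in\Delta$ and the partner lies in $\supp\nu_{i_0}^+=\Delta$.

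\textbf{Main obstacle.} The delicate step is the perturbation. We need $A_2\ne A_1$ pointwise while keeping the pairs in $\supp\ul\nu^+\times\supp\ul\nu^-$ and close to each other. We also need the alternation pattern of Theorem~\ref{thm: structure of random walk 1}(5), which is required to apply Lemma~\ref{lem: A R and f}. The alternation is automatic for $\Pi(\omega'),\Xi(\omega')$ with $\omega'$ regular. Disjointness from $A_1$ holds almost surely, since for fixed $A_1$ the set of $\omega'$ with $\Pi(\omega')\cap A_1\neq\vn$ is $\P$-null by atomlessness of the $\nu_i^+$. The positive-measure condition "$\eta$-close" then allows us to pick such a generic $\omega'$.
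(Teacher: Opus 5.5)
Your proof is correct. Its skeleton matches the paper's: two applications of Lemma~\ref{lem: A R and f} with cones $U_k^+=A_k^{(\rho)}$ and $U_k^-=R_k^{(\rho)}$, with (a) and (c)--(f) read off from that lemma and the proof of Lemma~\ref{lem: 2dr fixed points}. You differ in how the two configurations are chosen, and hence in how (b) is verified.

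\textbf{The paper's route.} It takes arbitrary disjoint $A_1,A_2\in\supp\ul\nu^+$, each meeting $I$, and $R_1,R_2\in\supp\ul\nu^-$ avoiding $I$; this is where the shortness of $I$ relative to the $\nu_i^-$ enters. It then proves (b) combinatorially. It uses that every $A_i$ alternates with every $R_j$, which follows from Theorem~\ref{thm: structure of random walk 1}(5) through the sets \eqref{eqn: Sigma0 A R}. It then runs an induction around the circle, starting from the adjacent pair $a_1,a_2\in I$.

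\textbf{Your route.} You take $(A_2,R_2)$ to be a generic $\eta$-perturbation of a single typical configuration $(\Pi(\omega),\Xi(\omega))$. The pairing required in (b) is then just the nearest-partner matching, and all cyclic-order information comes from one configuration. You need neither the cross-alternation of $A_i$ with $R_j$ for $i\ne j$, nor the induction, nor $R_j\cap I=\vn$. The price is that $h_1,h_2$ are near-copies with cones of radius $\rho<\eta/2$, whereas the paper leaves $A_1,A_2$ essentially free apart from meeting $I$.

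\textbf{Bookkeeping fixes.} All of these are harmless, because $\eta$ and $\rho$ are chosen after $\omega$ is fixed.
\begin{itemize}
\item Lemma~\ref{lem: A R and f} requires $A_k\cup R_k$ to be $3\rho$-separated, so take $\rho$ below a third, not a half, of the minimum separation.
\item Lemma~\ref{lem: uniform bound of Pi} separates $\Pi(\omega)$ and $\Xi(\omega)$ only individually. So $\eta$ must be small relative to the separation of $A_1\cup R_1$, which may be much smaller than $c$, and not merely $\eta\ll c$.
\item $\eta$ must also be small relative to $d(a,\partial I)$.
\item $\eta$ must be smaller than the distance between distinct $\supp\nu_i^+$. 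This, together with $\Pi(\omega',i)\subset\supp\nu_i^+$ for $\P$-a.e.\ $\omega'$, justifies that the partner of $a$ lies in $\Delta$.
\end{itemize}
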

\begin{proof}
Let $\mu$ be the uniform measure on a finite generating set of $T$ and consider the random walk it induces on $\SS^1$.
Then $T=T_\mu$ and $\Delta$ supports an ergodic $\mu$-stationary measure $\nu.$
Without loss of generality, we may assume that $|I|$ is small enough such that for every $i\in [d],$ $\nu_i^-(I)<\frac{1}{2d}.$ 
Since $I$ contains $x\in\Delta=\supp\nu,$ there exists $A_1,A_2\in\supp\ul\nu^+$ with $A_1\cap A_2=\vn$ and both $A_1\cap I,A_2\cap I$ are nonempty. 
Then we can choose $R_1,R_2\in \supp\ul\nu^-$ such that $A_1,A_2,R_1,R_2$ are pairwise disjoint and $R_1\cap I=R_2\cap I=\vn. $

Take $\ve>0$ sufficiently small so that
\begin{enumerate}
    \item $A_1\cup A_2\cup R_1\cup R_2$ is $3\ve$-separated and
    \item both $A_1^{(\ve)}$ and $A_2^{(\ve)}$ have a connected component contained in $I.$
\end{enumerate}

Let $U_i^+=A_i^{(\ve)}$ and $U_i^-=R_i^{(\ve)}$ for $i=1,2.$ 
Lemma~\ref{lem: A R and f} applied twice gives rise to $h_1,h_2\in T_\mu$ such that for every $i=1,2,$
\begin{enumerate}
    \item $h_i(\SS^1\sm U_i^-)\sbs U_i^+,$
    \item $h_i'|_{\SS^1\sm U_i^-}<1$ and $(h_i^{-1})'|_{\SS^1\sm U_i^+}<1,$
    \item $h_i$ preserves each connected component of $ U_i^+.$
\end{enumerate}

It remains to verify Condition~(b) of Definition~\ref{def: perfect ping-pong pair}.
Note that the attractors and repellors of $h_1$ and $h_2$ have the same cyclic order as $\CCs(U_1^+)$, $\CCs(U_2^+)$, $\CCs(U_1^-)$ and $\CCs(U_2^-)$, and hence as $A_1$, $A_2$, $R_1$ and $R_2$.
Thus it suffices to show that for every $a_1 \in A_1$ there is $a_2 \in A_2$ such that between $a_1$ and $a_2$ there is no other element of $A_1\cup A_2 \cup R_1 \cup R_2$.
\begin{figure}[!ht]
\centering
	\begin{tikzpicture}
		\draw[-,gray] (0,0) to (12.5,0);
		\draw[-,dashed] (4.5,0) to (4.5,0.7);
		\draw[-,dashed] (8,0) to (8,0.7);
		\draw[-,very thick] (4.8,0) to (5.7,0);
		\draw[-,very thick] (6.8,0) to (7.7,0);
		\draw[-,very thick] (4.5,0.7) to (8,0.7);
		
		\draw[-,very thick] (0.2,0) to (0.8,0);
		\draw[-,very thick] (1.2,0) to (1.8,0);
		\draw[-,very thick] (10.7,0) to (11.3,0);
		\draw[-,very thick] (11.7,0) to (12.3,0);
		
		\draw[-,very thick] (2.2,0) to (2.8,0);
		\draw[-,very thick] (3.2,0) to (3.8,0);
		\draw[-,very thick] (8.7,0) to (9.3,0);
		\draw[-,very thick] (9.7,0) to (10.3,0);
		
		\fill (4.5,0) circle[radius=1.5pt];
		\fill (8,0) circle[radius=1.5pt];
		\fill (4.5,0.7) circle[radius=1.5pt];
		\fill (8,0.7) circle[radius=1.5pt];
		\fill[red] (5.25,0) circle[radius=2pt];
		\fill[red] (7.25,0) circle[radius=2pt];
		
		\fill[red] (0.5,0) circle[radius=2pt];
		\fill[red] (1.5,0) circle[radius=2pt];
		\fill[red] (11,0) circle[radius=2pt];
		\fill[red] (12,0) circle[radius=2pt];
		
		\fill[blue] (2.5,0) circle[radius=2pt];
		\fill[blue] (3.5,0) circle[radius=2pt];
		\fill[blue] (9,0) circle[radius=2pt];
		\fill[blue] (10,0) circle[radius=2pt];

		\node at (6.25,1) {$I$};
		\node at (5.25,0.3) {\footnotesize $A_1$};
		\node at (7.25,0.3) {\footnotesize $A_2$};
		\node at (5.25,-0.4) {$U_1^+$};
		\node at (7.25,-0.4) {$U_2^+$};
		
		\node at (2.5,0.3) {\footnotesize $R$};
		\node at (3.5,0.3) {\footnotesize $R$};
		\node at (2.5,-0.4) {$U^-$};
		\node at (3.5,-0.4) {$U^-$};
		\node at (9,0.3) {\footnotesize $R$};
		\node at (10,0.3) {\footnotesize $R$};
		\node at (9,-0.4) {$U^-$};
		\node at (10,-0.4) {$U^-$};
		
		\node at (0.5,0.3) {\footnotesize $A$};
		\node at (1.5,0.3) {\footnotesize $A$};
		\node at (0.5,-0.4) {$U^+$};
		\node at (1.5,-0.4) {$U^+$};
		\node at (11,0.3) {\footnotesize $A$};
		\node at (12,0.3) {\footnotesize $A$};
		\node at (11,-0.4) {$U^+$};
		\node at (12,-0.4) {$U^+$};
	\end{tikzpicture}
\stepcounter{theorem}
\caption{Arrangement of attractors and repellors}
\end{figure}

Indeed, by Theorem \ref{thm: structure of random walk 1}(5) and the construction \eqref{eqn: Sigma0 A R}, for every $i,j\in\lb{1,2},$ the elements of $A_i$ and those of $R_j$ alternate in cyclic order on $\SS^1.$ 
By construction, the interval $I$ contains a point $a_1 \in A_1$ and a point $a_2 \in A_2$ and no other elements of $A_1\cup A_2 \cup R_1 \cup R_2$.
Let $a$ be the next element of $A_1 \cup A_2$ right to $I$. Let $i \in \{1,2\}$ such that $a \in A_i$, then for each $j = 1,2$, $R_j \cap [a_i,a]$ contains exactly one point.
This shows that necessarily, the next two elements of $A_1\cup A_2 \cup R_1 \cup R_2$ right to $a_1$ and $a_2$ must be one from $R_1$ and one from $R_2$.
Thus, by a simple induction, the claim is verified.

Recalling that $R_1\cap I=R_2\cap I=\vn,$ the arrangement of points in $A_1\cup A_2\cup R_1\cup R_2$ implies that each $U_i^+$ has a unique connected component that intersects $I$ and is contained in $I.$
It remains to prove the final assertion of the proposition.
Since $x\in I$, the iterates $h_1^nx$ of $x$ converge to the unique attracting fixed point of $h_1$ in $U_1^+\cap I$ as $n\to+\infty.$ 
Since $\Delta$ is a $T$-invariant closed set and $x\in\Delta,$ this fixed point is contained in $\Delta.$
Hence this connected component intersects $\Delta.$
\end{proof}

\subsection{Proof of Theorems \ref{thm: ACW approximation} and \ref{thm: separating approximation}}\label{sec: ACW approximation}\label{se:7.3}
We first prove Theorem \ref{thm: ACW approximation}. 
Theorem \ref{thm: separating approximation} will follow as a byproduct of the proof.
Our proof relies on the fact that the approximation of the entropy (condition (2)) and the approximation of the dimension (condition (4)) can be obtained independently, and the two approximations can be combined. 

The proof has three steps. 
First, we construct many uniformly hyperbolic elements with a common family of cones (Proposition \ref{prop: choose uniformly hyperbolic elements}). 
Second, a covering argument produces subfamilies whose images are disjoint and whose cardinalities reflect the dimension of the stationary measure (Proposition \ref{prop: approximate the dimension}). 
Third, we combine the entropy and dimension subfamilies while preserving uniform hyperbolicity (Proposition \ref{prop: approximation of the whole space}). 
The resulting iterated function systems satisfy the open set condition, which gives the required lower bounds for the dimensions of their minimal sets.

Throughout this subsection, as in Theorem \ref{thm: ACW approximation}, $\mu$ is a finitely supported probability measure on $\Diff_+^{1+\alpha}(\SS^1)$ without a common invariant probability measure. 
We follow the notation of Theorem~\ref{thm: structure of random walk 1} and Theorem~\ref{thm: structure of random walk 2}.

\subsubsection*{Step I. Finding uniformly hyperbolic elements}
Recall that $\ul\nu^+=\Pi_*\P$ and $\ul\nu^-=\Xi_*\P.$ 
Fix $A\in\supp\ul\nu^+$ and $R\in\supp\ul\nu^-$ with $A\cap R=\vn$. 
Take $\rho>0$ small enough such that $A\cup R$ is $3\rho$-separated. 

As in the proof of Lemma~\ref{lem: A R and f} we partition $A$ into
\[A=A_0\sqcup A_1 \sqcup \dotsb \sqcup A_{d-1}\]
where $A_i=A\cap \supp\nu_i^+.$ 
Set $U^+:=A^{(\rho)}, U^-:=R^{(\rho)}$, and for each $i \in [d]$, $U_i:= A_i^{(\rho)}$.
Each $U_i$ has exactly $r$ connected components. 
Denote the components by $U_{i,j}$, $j\in[r]$, in cyclic order. 
Without loss of generality we can let $U_{0,0}$ be the same as that in the proof of Lemma~\ref{lem: A R and f}. 
For every $\omega\in\Sigma$, if $\Pi(\omega,0)\cap U_{0,0}$ is not empty, then it contains at most one point which we denote by $\Pi(\omega,0,0)$. 
This notation is consistent with the one used in the proof of Lemma \ref{lem: A R and f}.

For $n \in \NN$ and $\ve >0$, let $\cA_{n,\ve}$ be the set of $f \in \cS^{*n}$ such that
\begin{equation}
\label{eqn: def of A 1}
	\forall i \in [d],\,\forall j \in [r],\quad \ol{f(U_{i,j})} \sbs U_{i,j}
\end{equation}
and
\begin{equation}
\label{eqn: def of A 2}
	\forall i \in [d],\, \forall x \in U_i,\quad  \frac{1}{n} \log f'(x)  \in [\lambda_i^+ -\ve, \lambda_i^+ +\ve].
\end{equation}

\begin{lemma}\label{lem: effective approximation}
There exist $\Sigma_1, \Sigma_2 \sbs \Sigma$ with $\P(\Sigma_1)>0$ and $\P(\Sigma_2) > 0$, such that $\Pi(\omega)\sbs U^+$ for every $\omega\in\Sigma_1\cup\Sigma_2.$
Moreover, there exists a positive integer $n_1$ such that for all $n \geq n_1$ and all $\omega \in \Sigma_1$, if $F^n(\omega, \Pi(\omega,0,0)) \in \Sigma_2 \times U_{0,0}$ then $f^n_\omega \in \cA_{n,\ve}$. 
\end{lemma}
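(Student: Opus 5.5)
The plan is to mimic the proof of Lemma~\ref{lem: A R and f}, adding uniform control of the derivative on each $U_i$ so that \eqref{eqn: def of A 2} also holds. Start from the set $\Sigma_0$ of \eqref{eqn: Sigma0 A R}, built with $\rho/2$ replaced by a smaller $\rho'\ll\rho$. Then $\Pi(\omega)\sbs A^{(\rho')}\sbs U^+$ and $\Xi(\omega)\sbs R^{(\rho')}$ for $\omega\in\Sigma_0$, and \eqref{eqn: d Pi omega i A i} holds; also $\P(\Sigma_0)>0$.

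For $\Sigma_1$, apply Proposition~\ref{prop: good words} with a small parameter $\ve'\le\rho'$ and intersect with $\Sigma_0$, keeping positive measure. For $\omega\in\Sigma_1$ this gives:
\begin{itemize}
\item[(i)] $U(\omega)\supset \SS^1\sm\Xi(\omega)^{(\ve')}\supset \SS^1\sm R^{(\rho)}\supset U^+$;
\item[(ii)] on each component $I$ of $U(\omega)$ the bound $\vk_\alpha(f^n_\omega,I)\le C$ holds uniformly in $n$;
\item[(iii)] $\frac1n\log(f^n_\omega)'(x)\to\lambda_i^+$ uniformly for $x\in\Pi(\omega,i)$.
\end{itemize}
Since $U_i$ is a union of $r$ intervals, each containing one point of $\Pi(\omega,i)$ and contained in a component of $U(\omega)\cap W^s(\omega,i)$, (ii) and (iii) give $\frac1n\log(f^n_\omega)'(x)\in[\lambda_i^+-\ve,\lambda_i^++\ve]$ for all $x\in U_i$ once $n\ge n_1$, with $n_1$ uniform over $\Sigma_1$. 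The distortion contributes only $C|U_i|^\alpha/n$, so it is negligible. This is \eqref{eqn: def of A 2}.

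For $\Sigma_2$, take $\Sigma_2=\Sigma_0$, or $\Sigma_0\cap\Sigma''$ as in Lemma~\ref{lem: A R and f}. It remains to prove \eqref{eqn: def of A 1}. Corollary~\ref{cor: contraction of intervals} gives $|f^n_\omega U_{i,j}|\le 2^{(\lambda_i^++\ve_n)n}<\rho-\rho'$ for large $n$, uniformly. Because $f^n_\omega$ maps $\Pi(\omega)$ to $\Pi(\sigma^n\omega)\sbs A^{(\rho')}$, each $f^n_\omega(U_{i,j})$ lies in the $(\rho-\rho')$-neighborhood of a point of $\Pi(\sigma^n\omega)$ that is $\rho'$-close to $A$. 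Hence its closure lies in some $U_{i',j'}$; the cocycle invariance in Theorem~\ref{thm: structure of random walk 1}(4) forces $i'=i$.

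To get $j'=j$, argue as in Lemma~\ref{lem: A R and f}. The induced map on $\CCs(U^+)$ is a bijection that preserves cyclic order and the color classes $i$. The hypothesis $f^n_\omega\Pi(\omega,0,0)\in U_{0,0}$ says it fixes $U_{0,0}$. An order-preserving bijection of the $dr$ cyclically arranged components fixing one of them is the identity, so every $U_{i,j}$ is mapped into itself. This gives \eqref{eqn: def of A 1}.

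The main subtlety I expect is making all constants uniform, namely $n_1$ independent of $\omega\in\Sigma_1$, and checking that $U_{i,j}$ indeed lies in a single component of $U(\omega)$ on which the distortion is controlled. Uniformity comes from Proposition~\ref{prop: good words}(2),(3) together with the inclusion $\SS^1\sm R^{(\rho)}\sbs U(\omega)$, which follows from choosing $\ve'\le\rho'$ so that $\Xi(\omega)^{(\ve')}\sbs R^{(2\rho')}$.
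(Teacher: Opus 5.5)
Your proposal is correct and follows essentially the paper's route. The paper (only sketching it) takes $\Sigma_1=\Sigma_0\cap\Sigma_{\ve'}$ and $\Sigma_2=\Sigma_0$ with $\ve'=\min\{\rho/2,\P(\Sigma_0)/2\}$, and reuses the distortion bounds and the cyclic-order/fixed-component argument of Lemma~\ref{lem: A R and f}. Your extra shrinking $\rho'\ll\rho$ is harmless. The one point you left implicit is that $\ve'$ must also be below $\P(\Sigma_0)$ so that $\P(\Sigma_1)>0$.
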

\begin{proof}
Lemma \ref{lem: effective approximation} was proven implicitly in Lemma~\ref{lem: A R and f}. We only need to track the exponents more carefully in~\eqref{eqn: sm Xi contracts}.
More precisely, let $\Sigma_0$ be the set given by \eqref{eqn: Sigma0 A R}. 
We choose $\ve' = \min\lb{\rho/2, \P(\Sigma_0)/2}$ and apply Proposition~\ref{prop: good words} to obtain the set $\Sigma_{\ve'}$ of uniform good words. 
Then we define $\Sigma_1 = \Sigma_0 \cap \Sigma_{\ve'}$ and $\Sigma_2 = \Sigma_0$. 
It can be shown that these sets satisfy the desired properties.
\end{proof}

\begin{proposition}\label{prop: choose uniformly hyperbolic elements}
For every $\ve>0,$ there exist $c>0$ and infinitely many $n>0$ satisfying $\mu^{*n}( \cA_{n,\ve}) \geq c$.
\end{proposition}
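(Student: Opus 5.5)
The plan is to derive the proposition directly from Lemma~\ref{lem: effective approximation} combined with Birkhoff's ergodic theorem for the ergodic system $(\Sigma\times\SS^1,m_0^+,F)$. Let $\Sigma_1,\Sigma_2$ and $n_1$ be given by Lemma~\ref{lem: effective approximation} for the given $\ve$. Consider the measurable sets
\[E_1=\lb{(\omega,x):\omega\in\Sigma_1,\ x=\Pi(\omega,0,0)},\qquad E_2=\Sigma_2\times U_{0,0}.\]
Since $\Pi(\omega)\sbs U^+$ for $\omega\in\Sigma_1\cup\Sigma_2$, each of $\Pi(\omega,0)$ for such $\omega$ has exactly one point in $U_{0,0}$ (there are $r$ points, one in each $U_{0,j}$). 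So, as computed in the proof of Lemma~\ref{lem: A R and f},
\[m_0^+(E_1)=\tfrac1r\P(\Sigma_1)>0\quad\text{and}\quad m_0^+(E_2)\ge \tfrac1r\P(\Sigma_2)>0.\]

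By Lemma~\ref{lem: effective approximation}, for $n\ge n_1$ we have $\P\lb{\omega: f_\omega^n\in\cA_{n,\ve}}\ge m_0^+(E_1\cap F^{-n}E_2)\cdot r$. Indeed, each $\omega\in\Sigma_1$ contributes the single point $\Pi(\omega,0,0)$ with weight $1/r$, and on $E_1\cap F^{-n}E_2$ the word satisfies $f^n_\omega\in\cA_{n,\ve}$. Since $f_\omega^n$ depends only on $(f_0,\dots,f_{n-1})$, whose law is $\mu^{*n}$ after taking products, we get $\mu^{*n}(\cA_{n,\ve})\ge \P\lb{\omega:f^n_\omega\in\cA_{n,\ve}}$. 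It therefore suffices to show that $m_0^+(E_1\cap F^{-n}E_2)\ge c'>0$ for infinitely many $n$.

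This last step follows from ergodicity of $m_0^+$ under $F$. By the mean ergodic theorem (von Neumann), $\frac1N\sum_{n=1}^N m_0^+(E_1\cap F^{-n}E_2)\to m_0^+(E_1)m_0^+(E_2)>0$. Hence for $c'=\frac12 m_0^+(E_1)m_0^+(E_2)$, infinitely many $n\ge n_1$ satisfy $m_0^+(E_1\cap F^{-n}E_2)\ge c'$, and we may take $c=rc'$. Ergodicity of $m_0^+$ as an $F$-invariant measure holds because it is an extremal point of $\cP$ (Theorem~\ref{thm: structure of random walk 1}); extremal points of the convex set $\cP$ are ergodic, since any $F$-invariant set of intermediate measure would split $m_0^+$ into two elements of $\cP$ (the projection to $\Sigma$ being a factor of the Bernoulli shift, and the $u$-states being characterized as extremal).

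The only delicate point is the bookkeeping that passes from the $m_0^+$-measure of the return set to a lower bound for $\mu^{*n}(\cA_{n,\ve})$. Concretely, one must check that the map $\omega\mapsto(\omega,\Pi(\omega,0,0))$ on $\Sigma_1$ is measurable and pushes $\P|_{\Sigma_1}$ to $r\,m_0^+|_{E_1}$, and that the event depends on $\omega$ only through the word $f_\omega^n$ when testing membership in $\cA_{n,\ve}$. All the analytic content has already been placed in Lemma~\ref{lem: effective approximation}.
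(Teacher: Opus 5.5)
Your argument is correct and is essentially the paper's: both pull back the return set of the graph of $\omega\mapsto\Pi(\omega,0,0)$ over $\Sigma_1$ into $\Sigma_2\times U_{0,0}$, invoke Lemma~\ref{lem: effective approximation}, and conclude by ergodicity of $m_0^+$. (The one-point-in-$U_{0,0}$ property you use comes from $\Sigma_1,\Sigma_2\subset\Sigma_0$ in the construction, not from $\Pi(\omega)\subset U^+$ alone.) Your use of the mean ergodic theorem, which gives $m_0^+(E_1\cap F^{-n}E_2)\ge\frac12 m_0^+(E_1)m_0^+(E_2)$ for infinitely many $n$, is a bit more careful than the paper's closing line: that line only records positivity for infinitely many $n$, and so does not by itself give a uniform constant $c$.
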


\begin{proof}
	Let $\Sigma_1,\Sigma_2$ be the sets given by the previous lemma. Let
	\[\wt\Sigma_i=\lb{(\omega,\Pi(\omega,0,0)):\omega\in\Sigma_i}\sbs \Sigma\times\SS^1,\quad\forall i=1,2.\]
	Then $m_0^+(\wt\Sigma_i)=\frac{1}{r}\P(\Sigma_i)>0$ for $i=1,2.$ Recall that $P:\Sigma\times\SS^1\to\Sigma$ is the natural projection. By the previous lemma, for every $n$ sufficiently large and an element
	\[\omega\in P(\wt\Sigma_1\cap F^{-n}\wt\Sigma_2)\sbs P(\wt\Sigma_1\cap F^{-n}(\Sigma_2\times U_{0,0})),\]
	we have $f_\omega^n\in\cA_{n,\ve}.$ Since $P_*m_0^+=\P,$ we have
	\[\mu^{*n}(\cA_{n,\ve})\geqslant \P(P(\wt\Sigma_1\cap F^{-n}\wt\Sigma_2 ))\geqslant m_0^+(\wt\Sigma_1\cap F^{-n}\wt\Sigma_2).\]
	Then the conclusion follows from the fact that $m_0^+(\widetilde{\Sigma}_1 \cap F^{-n}\widetilde{\Sigma}_2)>0$ for infinitely many $n>0$ since $m_0^+$ is an ergodic $F$-invariant measure.
\end{proof}

\subsubsection*{Step II. Approximating the dimension}

\begin{proposition}\label{prop: approximate the dimension}
For every $\ve > 0$ and every $(i,j) \in [d] \times [r]$, there is a sequence of subsets of $\cA_{n,\ve}$, say $(\cB_{n})_{n\in\NN}$, such that
\begin{equation*}
\limsup_{n \to +\infty} \frac{1}{n} \log \#\cB_{n} \geq (|\lambda_i^+|-\ve) \dimH \nu_i^+
\end{equation*}
and for each $n \in \NN$, the intervals  $f(U_{i,j})$, $f\in\cB_{n}$ are pairwise disjoint.
\end{proposition}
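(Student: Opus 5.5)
We prove Proposition~\ref{prop: approximate the dimension} by a covering/pigeonhole argument on $U_{i,j}$: produce many elements of $\cA_{n,\ve}$ whose images $f(U_{i,j})$ are tiny intervals of length about $2^{\lambda_i^+ n}$. Then select a disjoint subfamily whose size is controlled from below by the dimension of $\nu_i^+$.

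Fix $(i,j)$ and write $\lambda=\lambda_i^+$ and $\beta=\dimH\nu_i^+$. By Theorem~\ref{thm: exact dimensionality}, $\nu_i^+$ is exact dimensional, so $\beta=\dim\nu_i^+$. The first step mimics Lemma~\ref{lem: effective approximation} and Proposition~\ref{prop: choose uniformly hyperbolic elements}, but with $m_i^+$ and the component $U_{i,j}$ in place of $m_0^+$ and $U_{0,0}$.

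1. For $\omega$ in a suitable positive measure set $\Sigma_1$, consider the point $\Pi(\omega,i,j)=\Pi(\omega,i)\cap U_{i,j}$. Whenever $F^n(\omega,\Pi(\omega,i,j))\in\Sigma_2\times U_{i,j}$ and $n\ge n_1$, we have $f^n_\omega\in\cA_{n,\ve}$.
2. Moreover $f^n_\omega(U_{i,j})$ is an interval containing $\Pi(\sigma^n\omega,i,j)$. Its length is at most $2^{(\lambda+\ve)n}$ by \eqref{eqn: def of A 2}.
3. Set $\omega'=\sigma^n\omega$ and $y=\Pi(\omega',i,j)$. By cocycle invariance, the condition above reads $\sigma^{-n}\omega'\in\Sigma_1$ and $\omega'\in\Sigma_2$, with $y\in U_{i,j}$. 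The point $y$ is distributed according to $\nu_i^+$ restricted to $U_{i,j}$ (up to the factor $1/r$), as $\omega'$ ranges.
4. By stationarity of $m_i^+$ and the ergodic theorem, for infinitely many $n$ the set $E_n$ of such $\omega'$ has $\P(E_n)\ge c>0$.

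The second step turns this into a covering bound. Let $Y_n=\{\Pi(\omega',i,j):\omega'\in E_n\}$. Then $\nu_i^+(Y_n)\ge c/r$. Every $y\in Y_n$ lies in $f(U_{i,j})$ for some $f\in\cA_{n,\ve}$, namely $f=f^n_{\sigma^{-n}\omega'}$, and this image has length $\le 2^{(\lambda+\ve)n}$. So $Y_n$ is covered by the family $\{f(U_{i,j}):f\in\cA'_n\}$ for some $\cA'_n\subset\cA_{n,\ve}$, and each interval in the family has length at most $\delta_n:=2^{(\lambda+\ve)n}$.

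By exact dimensionality and Egorov, we may assume (shrinking $Y_n$ by a set of measure $c/2r$) that $\nu_i^+(B(y,3\delta_n))\le(3\delta_n)^{\beta-\ve}$ for all $y\in Y_n$ and all large $n$. Apply the Vitali covering lemma in one dimension: pick a maximal subfamily $\cB_n$ with pairwise disjoint images. The threefold enlargements of these images then cover $Y_n$. Hence
\[\frac{c}{2r}\le \#\cB_n\cdot(3\delta_n)^{\beta-\ve},\]
which gives $\frac1n\log\#\cB_n\ge(|\lambda|-\ve)(\beta-\ve)-o(1)$. Replacing $\ve$ by a smaller value yields the claimed $\limsup$. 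For the $n$ outside the chosen subsequence, take $\cB_n=\vn$.

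The main obstacle is step 1 for general $(i,j)$. We need that the elements so produced satisfy \eqref{eqn: def of A 1} simultaneously for all components $U_{i',j'}$, not merely for $U_{i,j}$. I would handle this exactly as in the proof of Lemma~\ref{lem: A R and f}. There, the ergodic return was only used to fix one component. The bijection on $\CCs(A^{(\rho)})$ preserves cyclic order, and it also preserves each class $A_{i'}\subset\supp\nu_{i'}^+$, because the $T_\mu$-minimal sets $\supp\nu_{i'}^+$ are invariant. With $r$ components per class, fixing $U_{i,j}$ forces the induced permutation to be the identity. So returning to $U_{i,j}$ instead of $U_{0,0}$ suffices.
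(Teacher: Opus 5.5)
Your proof is correct, but it reaches the dimension bound by a different route from the paper's.

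\textbf{The paper's route.} It takes $\cB_n$ maximal disjoint inside the whole family $\{f(U_{i,j}):f\in\cA_{n,\ve}\}$ and proves a general covering lemma for the limsup set $E=\limsup_n\bigcup_{f\in\cA_{n,\ve}}f(U_{i,j})$. Covering $E$ by the enlarged selected intervals at all scales $n\ge N$ gives, directly from the Hausdorff measure, $\dimH E\le(|\lambda_i^+|-\ve)^{-1}\limsup_n\frac1n\log\#\cB_n$. It then only needs $\nu_i^+(E)>0$. This follows from pointwise recurrence of $m_0^+$ under $F^{-1}$: each point of a positive-measure set is covered at infinitely many scales, and these scales may depend on the point. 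Exact dimensionality is used once, to pass from $\nu_i^+(E)>0$ to $\dimH E\ge\dim\nu_i^+$.

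\textbf{Your route.} You work scale by scale. The mean ergodic theorem gives $\P(E_n)\ge c$ along a set of $n$ of positive density. A Frostman bound on a large set (exact dimensionality plus Egorov) then converts the covered mass $c/2r$ into a lower bound on $\#\cB_n$ at each such $n$. This needs a uniform constant $c$, which the paper never requires. In exchange you get a more quantitative statement, with the estimate holding at each good scale.

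\textbf{Points of precision.}
\begin{itemize}
\item The intervals $f(U_{i,j})$ have lengths between roughly $2^{(\lambda-\ve)n}$ and $2^{(\lambda+\ve)n}$. They are not comparable, so a threefold enlargement of a selected interval need not contain a longer interval meeting it. Use instead its $\delta_n$-neighbourhood, of length at most $3\delta_n$. Bound its $\nu_i^+$-mass by that of a ball of radius $O(\delta_n)$ centred at a good point of $Y_n$ inside it. This is what your inequality actually uses.
\item The final replacement of $\ve$ by a smaller value is legitimate because $\cA_{n,\ve'}\sbs\cA_{n,\ve}$ for $\ve'<\ve$. You should say so.
\item What you call the main obstacle is not one. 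The paper keeps the returns to $U_{0,0}$ under $m_0^+$ for every $(i,j)$. An element of $\cA_{n,\ve}$ already preserves every $U_{i',j'}$, so cocycle invariance forces $f^n_{\sigma^{-n}\omega}\Pi(\sigma^{-n}\omega,i,j)=\Pi(\omega,i,j)$.
\item Your adaptation to $U_{i,j}$ and $m_i^+$ also works. A cyclic-order-preserving bijection of the $dr$ components that fixes one of them is already the identity. So your remark that the classes $A_{i'}$ are preserved is not needed.
\end{itemize}
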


\begin{proof}
For $n \in \NN$, let $\cE_n = \lb{\, f(U_{i,j}) : f \in \cA_{n,\ve} \,}$.
By the definition of $\cA_{n,\ve}$, for all $I \in \cE_n$, $|I| \leq 2^{(\lambda_i^+ + \ve)n}$.
Let $\cB_{n}$ be a maximal subset of $\cA_{n,\ve}$ such that the intervals $f(U_{i,j})$, $f \in \cB_{n}$ are pairwise disjoint.
We need the following covering lemma for general fractals.
\begin{lemma}
\label{lem: cover and dim}
Let $C > 0$ and $\lambda > 0$ be parameters. 
For $n \in \NN$, let $\cE_n$ be a collection of intervals in $\SS^1$ of length at most $C2^{-\lambda n}$.
Let $E = \limsup_{n \to +\infty} \bigcup_{J \in \cE_n} J$.
Let $\wt \cE_n \sbs \cE_n$ be a maximal subcollection consisting of pairwise disjoint intervals.
Then
\[\limsup_{n \to +\infty} \frac{1}{n} \log \#\wt\cE_n \geq \lambda \dimH E.\]
\end{lemma}
\begin{proof}

For every $J \in \cE_n \sm \wt\cE_n$, by the maximality of $\wt\cE_n$, there is $I \in \wt\cE_n$ such that $J\cap I \ne \vn$. Then $J \sbs \wt I$ where $\wt I$ denotes the interval centered at the midpoint of $I$ and having length $C 2^{-\lambda n + 2}$.
It follows that
\[\bigcup_{J\in\cE_n } J \sbs \bigcup_{I\in\wt\cE_n} \wt I.\]

Given $\rho > 0$, for any $N \geq \lambda^{-1}( |\log \rho| + \log C + 2)$, we have
\[E \sbs \bigcup_{n \geq N} \bigcup_{I\in\wt\cE_n} \wt I,\]
which is a cover of $E$ by intervals of length at most $\rho$.
Thus for any $s > 0$ and $\rho > 0$, 
\begin{equation*}
H_{\rho}^s(E)
\leq \sum_{n \geq N} \sum_{I\in\wt\cE_n} |\wt I|^s\\
\ll_{C,s} \sum_{n \geq N} 2^{-s \lambda n} \#\wt\cE_n.
\end{equation*}
The right-hand side is the tail of a convergent series whenever $- s \lambda + \beta < 0$ where $\beta = \limsup_{n\to +\infty} \frac{1}{n} \log \#\wt \cE_n$.
In this case, $H^s(E) = 0$. 
Hence $\dimH E \leq \frac{\beta}{\lambda}$.
\end{proof}

Applying Lemma~\ref{lem: cover and dim} to the subcollections $\wt\cE_n = \lb{\, f(U_{i,j}) : f \in \cB_{n} \,}$, we find 
\[\limsup_{n\to\infty}\frac{1}{n}\log\# \cB_{n} \geq (|\lambda_i^+| - \ve)\dimH E,\]
where $E = \limsup_{n \to +\infty} \bigcup_{I \in \cE_n} I$, that is, the set of points which belong to $\bigcup_{I \in \cE_n} I$ for infinitely many $n$.
Note that $E$ is a Borel set.
To conclude, it remains to show that $\nu_i^+(E) > 0$. 

Let $\Sigma_0$ be as in~\eqref{eqn: Sigma0 A R}.
For $\omega \in \Sigma_0$, for each $(i,j) \in [d] \times [r]$, there is a unique element in 
$\Pi(\omega) \cap U_{i,j} = \Pi(\omega,i) \cap U_{i,j}$. We denote it by $\Pi(\omega,i,j)$.
Let $\Sigma_1, \Sigma_2 \sbs \Sigma_0$ be as in Lemma~\ref{lem: effective approximation}. 
Let 
\[\Sigma_2' = \lb{\, \omega\in\Sigma_2 : \text{ there exists infinitely many } n\in \NN,\, F^{-n}(\omega,\Pi(\omega,0,0)) \in \Sigma_1 \times U_{0,0} \,}.\]
By Theorem~\ref{thm: structure of random walk 1}, $m_0^+$ is ergodic for $F^{-1}$; hence we have $\P(\Sigma_2') = \P(\Sigma_2)> 0$.

We claim that for all $\omega \in \Sigma_2'$, $\Pi(\omega,i,j) \in E$.
Indeed, for $\omega \in \Sigma_2'$ there are infinitely many $n \in \NN$ such that $F^{-n}(\omega,\Pi(\omega,0,0)) \in \Sigma_1 \times U_{0,0}$.
Then, $\sigma^{-n}\omega \in \Sigma_1$ and $f_\omega^{-n} \Pi(\omega,0,0)  \in \Pi(\sigma^{-n}\omega,0) \cap U_{0,0}$, hence $f_\omega^{-n} \Pi(\omega,0,0) = \Pi(\sigma^{-n}\omega,0,0)$
and 
$F^n\bigl(\sigma^{-n}\omega,\Pi(\sigma^{-n}\omega,0,0)\bigr) \in \Sigma_2 \times U_{0,0}$.
By Lemma~\ref{lem: effective approximation}, $f^n_{\sigma^{-n}\omega} \in \cA_{n,\ve}$.
Then, $\Pi(\omega,i,j) = f^n_{\sigma^{-n}\omega} \Pi(\sigma^{-n}\omega,i,j) \in f^n_{\sigma^{-n}\omega}(U_{i,j}) \in \cE_n$. This proves the claim.

Then by Theorem~\ref{thm: structure of random walk 1}(2),
	\[\nu_i^+(E)\geq\frac{1}{r} \P\lb{\,\omega: \Pi(\omega,i)\cap E\ne\vn \,} \geq\frac{1}{r}\P(\Sigma_2')>0.\]
Hence $\dimH E \geq \dimH \nu_i^+$, which completes the proof of the proposition.
\end{proof}

\subsubsection*{Step III. Completion of the proof}
Proposition \ref{prop: choose uniformly hyperbolic elements} allows us to approximate the original system by a uniformly hyperbolic system with a ``large set'' in the base space $\Sigma.$ 
Proposition \ref{prop: approximate the dimension} says that we can approximate the original system by a uniformly hyperbolic system with a large dimension along the fiber $\SS^1$. 
It remains to put together these approximations.

\begin{proposition}\label{prop: approximation of the whole space}
For every $\ve>0,$ there exist a constant $c'>0$ and infinitely many positive integers $N$ such that 
\begin{enumerate}
	\item $\mu^{*N}(\cA_{N,\ve})\geq c'.$
	\item For every $(i,j) \in [d] \times [r]$, there are at least $2^{\dim\nu_i^+ (|\lambda_i^+| - 2\ve)  N}$ elements $f\in \cA_{N,\ve}$ such that $f(U_{i,j})$ are pairwise disjoint.
\end{enumerate}
\end{proposition}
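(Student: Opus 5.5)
The plan is to combine Propositions \ref{prop: choose uniformly hyperbolic elements} and \ref{prop: approximate the dimension} by concatenation. The basic observation is that $\cA_{n,\ve}$ is almost closed under composition. If $f\in\cA_{n,\ve}$ and $g\in\cA_{m,\ve}$, then $\ol{gf(U_{i,j})}\sbs U_{i,j}$. Moreover $(gf)'(x)=g'(fx)f'(x)$ with $fx\in U_i$, so $\frac{1}{n+m}\log(gf)'(x)\in[\lambda_i^+-\ve,\lambda_i^++\ve]$. Hence $\cA_{m,\ve}\circ\cA_{n,\ve}\sbs\cA_{n+m,\ve}$. The difficulty is that the lengths along which each property holds are different: Step I gives mass $\geq c$ at infinitely many $n$, while Step II gives many disjoint images at other infinitely many $n$. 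So the idea is to build $N=n+m+\dots$ from these pieces.

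Concretely, fix $\ve$. By Proposition \ref{prop: choose uniformly hyperbolic elements} pick one $n_0$ with $\mu^{*n_0}(\cA_{n_0,\ve})\geq c$. For each $(i,j)$, Proposition \ref{prop: approximate the dimension} gives infinitely many $m$ and sets $\cB^{(i,j)}_m\sbs\cA_{m,\ve}$ with disjoint images $f(U_{i,j})$ and $\#\cB^{(i,j)}_m\geq 2^{m(\dimH\nu_i^+(|\lambda_i^+|-\ve)-\ve)}$. Note $\dimH\nu_i^+=\dim\nu_i^+$ by Theorem \ref{thm: exact dimensionality}.

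There are $dr$ pairs, and the good scales $m$ differ for each. I would handle this by concatenating one block for every pair: take $N=n_0+\sum_{(i,j)}m_{i,j}$, and consider the elements
\[
\cA_{n_0,\ve}\circ g_{0,0}\circ\cdots\circ g_{d-1,r-1},\qquad g_{i,j}\in\cA_{m_{i,j},\ve}.
\]
For item (2) and a fixed $(i,j)$, I would take $h\circ b\circ h'$, where $b$ ranges over $\cB^{(i,j)}_{m_{i,j}}$ and $h,h'$ are fixed compositions of elements from the other blocks. Disjointness of the images of $U_{i,j}$ is preserved: $h'$ maps $U_{i,j}$ into itself, the images of $U_{i,j}$ under the maps $b$ are disjoint, and $h$ is injective. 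For this count to reach the required rate, $m_{i,j}$ must dominate $N$. This conflicts across different $(i,j)$ unless one block dominates.

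Instead I would prove item (2) separately for each $(i,j)$ but at a common $N$. Write $N=n_0+k+m$, with $m$ a good scale for $(i,j)$ and $k$ a filler. Fillers for the other pairs contribute factors of cardinality at least $1$, provided some element of $\cA_{k,\ve}$ exists for the required $k$. So the real need is a \emph{single} $N$ and factorizations for every $(i,j)$ simultaneously. To get this, pick one $h\in\cA_{p,\ve}$; its powers lie in $\cA_{pt,\ve}$, so any length in $p\ZZ_{\geq0}$ can be filled. Choose the good scales $m_{i,j}$ huge, pass to residues mod $p$ via $n_0$-type adjustments, and set $N$ large. For each $(i,j)$, choose $m'_{i,j}\leq m_{i,j}$ with $m'_{i,j}\geq(1-\ve)N$? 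This does not work directly, because good scales are sparse.

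The main obstacle is therefore synchronizing the scales. I expect to fix it by using a single good scale $m$ for $(i,j)$ together with powers: the $t$-fold composition set $\cB_m^{\circ t}$, i.e. all $b_1\circ\cdots\circ b_t$. This set still has disjoint images of $U_{i,j}$, by the nested-disjoint (tree) structure, and has cardinality $(\#\cB_m)^t$, which is the correct exponential rate at length $tm$. So for each pair choose one good $m_{i,j}$. Let $L$ be a common multiple of all $m_{i,j}$ and $n_0$, and set $N=tL+n_0$ for $t$ large. Then $\cB^{\circ(tL/m_{i,j})}_{m_{i,j}}$ composed with one fixed element of $\cA_{n_0,\ve}$ gives item (2) at rate at least $\dim\nu_i^+(|\lambda_i^+|-\ve)-\ve-O(n_0/N)$, which is at least $\dim\nu_i^+(|\lambda_i^+|-2\ve)$ after shrinking $\ve$ at the start.

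For item (1), $\mu^{*N}(\cA_{N,\ve})\geq\mu^{*N-n_0}(\cA_{N-n_0,\ve})\,\mu^{*n_0}(\cA_{n_0,\ve})$. I would bound $\mu^{*tL}(\cA_{tL,\ve})$ from below not by powers, which decay, but by rerunning the ergodic argument of Proposition \ref{prop: choose uniformly hyperbolic elements} along the progression $tL$. The measure $m_0^+(\wt\Sigma_1\cap F^{-tL}\wt\Sigma_2)$ is positive for infinitely many $t$. This follows by applying ergodic recurrence to $F^L$: pass to an ergodic component of $F^L$ charging $\wt\Sigma_1$, and shrink $\Sigma_1,\Sigma_2$ so they lie in the same component. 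Finally set $c'=c\cdot\inf$ of these measures, restricting to the recurrent subsequence of $t$.
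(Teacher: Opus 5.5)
Your final construction is correct in outline. For item (2) it uses the same mechanism as the paper: powers of one good block $\cB_{i,j}$ from Proposition~\ref{prop: approximate the dimension}. The tree argument gives pairwise disjoint images of $U_{i,j}$, hence $(\#\cB_{i,j})^{s}$ distinct maps, at any $N$ divisible by all the dimension scales.

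The difference is item (1). You set aside powers of the Step I set because they decay, but they only decay if the number of factors grows. The paper keeps the number of factors fixed and lets the Step I scale vary instead. With $N_0$ the least common multiple of the scales $n_{i,j}$, it takes $N=nN_0$ for the infinitely many $n$ with $\mu^{*n}(\cA_{n,\ve})\geq c$ from Proposition~\ref{prop: choose uniformly hyperbolic elements}. Then $\mu^{*nN_0}(\cA_{nN_0,\ve})\geq\mu^{*nN_0}(\cA_{n,\ve}^{*N_0})\geq c^{N_0}$, and $N_0\mid N$ gives item (2) for free. This is a two-line argument that uses only the two preceding propositions as black boxes.

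Your choice $N=tL+n_0$ (fixed $n_0$, growing dimension blocks) forces you to re-run the ergodic argument along the progression $tL$. Two steps there need repair.

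First, positivity of $m_0^+(\wt\Sigma_1\cap F^{-tL}\wt\Sigma_2)$ for infinitely many $t$ does not make the infimum over those $t$ positive. You need the mean ergodic theorem for $F^L$ on one of its ergodic components $E$; there are finitely many, cyclically permuted by $F$. Its Cesàro averages tend to $m_0^+(\wt\Sigma_1\cap E)\,m_0^+(\wt\Sigma_2\cap E)/m_0^+(E)$, so a uniform lower bound holds along infinitely many $t$. (The same remark is what makes the uniform $c$ in Proposition~\ref{prop: choose uniformly hyperbolic elements} legitimate.)

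Second, you need $\wt\Sigma_1$ and $\wt\Sigma_2$ to charge a \emph{common} component. $F^L$ preserves each component, so if they sat in different ones the intersections would be null for every $t$. The statement of Lemma~\ref{lem: effective approximation} does not give this, but its proof does: there $\Sigma_1=\Sigma_0\cap\Sigma_{\ve'}\sbs\Sigma_0=\Sigma_2$, and shrinking either set preserves the lemma's conclusion.

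With these fixes your proof goes through, but it is strictly more work for the same statement.
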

\begin{proof}
Observe that for any $n, N \in \NN$, we have $\cA_{n,\ve}^{*N} \sbs \cA_{nN,\ve}$. 
Recall each $\nu_i^+$ is exact dimensional by Theorem \ref{thm: exact dimensionality} and hence $\dim\nu_i^+=\dimH\nu_i^+.$ 
For each $(i,j) \in [d] \times [r],$ by Proposition \ref{prop: approximate the dimension}, there is a positive integer $n_{i,j}$ and a set $\cB_{i,j} \sbs \cA_{n_{i,j},\ve}$ of cardinality $\# \cB_{i,j} \geq 2^{\dim \nu_i^+ (|\lambda_i^+| -2\ve) n_{i,j}}$ and such that $f(U_{i,j})$, $f \in \cB_{i,j}$ are pairwise disjoint subintervals of $U_{i,j}$.
Then item (2) holds
whenever $N$ is a common multiple of $n_{i,j}$, $(i,j) \in [d] \times [r]$. Indeed, the product set $\cB_{i,j}^{*N/n_{i,j}}$ is a subset of $\cA_{N,\ve}$. It has cardinality
\[\# \cB_{i,j}^{*N/n_{i,j}} = (\# \cB_{i,j})^{N/n_{i,j}} \geq 2^{\dim \nu_i^+ (|\lambda_i^+| -2\ve) N}\]
and $f(U_{i,j})$, $f \in \cB_{i,j}^{*N/n_{i,j}}$ are pairwise disjoint.

Let $N_0$ be the least common multiple of  $n_{i,j}$, $(i,j) \in [d] \times [r]$.
By Proposition \ref{prop: choose uniformly hyperbolic elements}, there is a constant $c > 0$ such that there are infinitely many $n \in \NN$ such that $\mu^{*n}(\cA_{n,\ve})\geq c.$
Then for such $n$, we have
\[\mu^{*n N_0}(\cA_{n N_0,\ve}) \geq \mu^{*n N_0}(\cA_{n,\ve}^{*N_0}) \geq c^{N_0}.\]
So the required properties hold for $N = n N_0$ with $c' = c^{N_0}$, finishing the proof.
\end{proof}
\begin{proof}[Proof of Theorem \ref{thm: ACW approximation}]
	Property (1) follows from Theorems \ref{thm: structure of random walk 1}, \ref{thm: d,r top inv} and \ref{thm: structure of random walk 2}, where $\lambda_i=\lambda_i^+$ and $\nu_i=\nu_i^+.$
	We will show that there are infinitely many $N$ such that $\cA_{N,\ve}$ is a suitable choice of $\Gamma.$
	
	We consider $\wt\Gamma=\lb{(f_1,\cdots,f_N)\in\cS^N: f_1\cdots f_N\in\cA_{N,\ve}}.$ 
	By the first item in the previous proposition, we have $\mu(\wt\Gamma)=\mu^{*N}(\cA_{N,\ve})\geqslant c'$ for infinitely many positive integers $N.$
	By the Shannon-McMillan-Breiman theorem, there exists a subset of $\cS^N$ with at least $(1-o(1))$ $\mu^N$-measure such that each element in this subset has  $\mu^N$-measure of $2^{-N(H(\mu)+o(1))}.$ 
	Therefore, for $N$ large enough, $\wt\Gamma$ contains at least $2^{N(H(\mu)-\ve)}$ elements. 
	Hence, (2) holds for a sufficiently large $N$.
	
	The property (3) is indeed the definition of $\cA_{N,\ve},$ the conditions \eqref{eqn: def of A 1} and \eqref{eqn: def of A 2}. 
	
	It remains to show (4). 
	Fix a pair of $(i,j)$. 
	Let $T$ be the semigroup generated by $ \cA_{N,\ve}$ and let $T_{i,j}$ be the semigroup generated by $2^{\dim\nu_i(|\lambda_i|-2\ve)N}$ elements $f\in\cA_{N,\ve}$ such that $f(U_{i,j})$ are pairwise disjoint given by Proposition \ref{prop: approximation of the whole space}. 
	By the uniform contraction of $T|_{U_{i,j}}$ and the fact that $T$ strictly preserves $U_{i,j}$, $T$ admits a unique minimal set $K_{i,j}\sbs U_{i,j}.$ 
	
	We consider a random walk on the interval $\ol{U_{i,j}}.$ Let $\mu'$ be the uniform measure supported on these $2^{\dim\nu_i(|\lambda_i|-2\ve)N}$ elements. 
	Then $\mu'$ has a unique stationary $\nu'$ on $\ol{U_{i,j}}.$ 
	Moreover,
	\[\supp\nu'=\bigcap_{n=1}^\infty\bigcup_{f\in(\supp\mu')^{*n}}f(\ol{U_{i,j}})= K_{i,j}.\]
	Note that for every $f\in \supp\mu',$ we have $f'(x)\geqslant 2^{(\lambda_i-\ve)N}$ for every $x\in \ol{U_{i,j}}.$ 
	Hence every interval of diameter $2^{(\lambda_i-\ve)nN}$ intersects at most $2$ intervals of the form $f(\ol{U_{i,j}}),$ $f\in(\supp\mu')^{*n}.$ 
	We obtain
	\[\forall\rho>0,\quad\sup_{x\in\SS^1}\log\nu'(B(x,\rho))\leqslant \frac{\log\#\supp\mu'}{(|\lambda_i|+\ve)N}\log\rho+ \log\#\supp\mu'+10.\]
	This implies that
	\[\dim_{\mr H}K_{i,j}\geqslant\dim_{\mr H}\nu'\geqslant \frac{\log\#\supp\mu'}{(|\lambda_i|+\ve)N}\geqslant \dim\nu_i~\frac{|\lambda_i|-2\ve}{|\lambda_i|+\ve}.\]
	Then the conclusion follows by shrinking $\ve>0$ if necessary.
\end{proof}

\begin{proof}[Proof of Theorem \ref{thm: separating approximation}]
	By Theorem \ref{thm: dim formula C1+}, we have $(\dim \nu)|\lambda|=h_{\mr{RW}}(\mu).$ Then the statement directly follows from Proposition \ref{prop: approximate the dimension}, using the cardinality estimate established there. 
	For (5), the proof is analogous to the proof above.
\end{proof}

\begin{small}

\addcontentsline{toc}{section}{References}
\bibliographystyle{abbrv}
\bibliography{ref.bib}

\vspace{0.3cm}

\noindent Weikun He\\
Institute of Mathematics, Academy of Mathematics and System Science, Chinese Academy of Sciences\\
No.55 Zhongguancun East Road, Haidian District, Beijing, China, 100190\\
heweikun@amss.ac.cn

\vspace{0.3cm}

\noindent Yuxiang Jiao\\
Beijing International Center for Mathematical Research, Peking University\\
No.5 Yiheyuan Road, Haidian District, Beijing, China, 100871\\
ajorda@pku.edu.cn

\vspace{0.3cm}

\noindent Disheng Xu\\
Department of Science, Great Bay University\\
Building A5, Songshanhu international community, Dongguan, Guangdong, China, 523000\\
xudisheng@gbu.edu.cn

\end{small}

\end{document}